\newtheorem{Th}{Theorem}[section]
\newtheorem{Prop}[Th]{Proposition}
\newtheorem{Cor}[Th]{Corollary}
\newtheorem{Lem}[Th]{Lemma}
\newtheorem{Def}[Th]{Definition}
\theoremstyle{definition}
\newtheorem{rem}[Th]{Remark}
\newtheorem{Ex}[Th]{Example}
\newtheorem*{notation}{Notation}	
\begin{document}
%
%
\vskip 1.0 true cm  
\begin{center}\noindent
\textbf{\Large Properties of the moduli set of complete\\ connected projective special real manifolds}\\[2em]
{\fontseries{m}\fontfamily{cmss}\selectfont \large David\ Lindemann}\\[1em] 
{\small
Department of Mathematics and Center for Mathematical Physics\\
University of Hamburg,
Bundesstra{\ss}e 55, D-20146 Hamburg, Germany\\  
\texttt{david.lindemann@uni-hamburg.de}
}
\end{center}
\vspace{1em}
\begin{abstract}  
\noindent
We construct a compact convex generating set $\mathcal{C}_n$ of the moduli set of closed connected projective special real manifolds of fixed dimension $n$. We show that a closed connected projective special real manifold corresponds to an inner point of $\mathcal{C}_n$ if and only if it has regular boundary behaviour. Our results can be used to describe deformations of 5d supergravity theories with complete scalar geometries.\\

\noindent
\textbf{Keywords:} affine differential geometry, centro-affine hypersurfaces, K\"ahler cones, projective special real manifolds, special geometry\\
\textbf{MSC classification:} 53A15 (primary), 53C26 (secondary)
\end{abstract}
\tableofcontents
\section{Introduction}
Projective special real (short: PSR) manifolds are centro-affine hypersurfaces that are contained in a positive level set of a hyperbolic homogeneous cubic polynomial and consist only of hyperbolic points of said polynomial. Their study is closely related to the theory of supergravity in five spacetime dimensions, where they are the scalar manifolds \cite{GST,DV}. By the so-called supergravity r-map and c-map their geometry is related to the study of projective special K\"ahler manifolds \cite{ACD,F} and quaternionic K\"ahler manifolds \cite{DV}.
Completeness properties of the r- and c-map have been studied in \cite{CHM} where it was shown that both the supergravity r-map (which associates to an $n$-dimensional PSR manifold a $2n+2$-dimensional projective special K\"ahler manifold) and the supergravity c-map (which associates to an $m$-dimensional projective special K\"ahler domain a $2m+4$-dimensional quaternionic K\"ahler manifold) preserve geodesic completeness. This yields an explicit way to obtain examples of complete non-compact quaternionic K\"ahler manifolds of negative scalar curvature by taking a complete connected PSR manifold and composing the supergravity r- and c-map (this is called the supergravity q-map \cite{DV}). Note that it has been proven in \cite{FS} that all manifold in the image of the supergravity c-map have negative scalar curvature, independent of completeness. In \cite{CNS}, hyperbolic centro-affine hypersurfaces have been studied and it was shown PSR manifolds are geodesically complete if and only if they are closed in their ambient space \cite[Thm.\,2.5]{CNS}.

The mentioned results motivate the study of the moduli set of all closed connected PSR manifolds of fixed dimension $n$. By the term ``moduli set'' we mean the set of isomorphism classes of closed PSR manifolds. Two closed connected PSR manifold are isomorphic if they are contained in the same $\mathrm{GL}(n+1)$-orbit, where the corresponding $\mathrm{GL}(n+1)$-action is induced by the action on the ambient space $\mathbb{R}^{n+1}$. We will call two isomorphic PSR manifolds equivalent. In dimension $1$ and $2$, all PSR manifolds without the restriction of being closed in the ambient space have been completely classified up to equivalence, cf. \cite{CHM} for curves and \cite{CDL} for surfaces, but in higher dimensions no complete classification has been found yet. However, there exist classification results when further restricting the geometry. PSR manifolds that are homogeneous spaces have been classified in \cite{DV}, and PSR manifolds related to reducible cubic polynomials have been classified in \cite{CDJL}.

Our main result in this work is the construction of a generating set of the moduli set of closed connected PSR manifolds in all dimensions:
\begin{Th}
\label{thm_Cn}
Let $\left(\begin{smallmatrix}x\\ y\end{smallmatrix}\right)=(x,y_1,\ldots,y_n)^T$ denote linear coordinates on $\mathbb{R}^{n+1}$, let $\langle\cdot,\cdot\rangle$ denote the standard Euclidean scalar product on $\mathbb{R}^n$ induced by the choice of the coordinates $(y_1,\ldots,y_n)^T$, and let $\|\cdot\|$ denote the corresponding norm.
For all $n\in\mathbb{N}$, the set of hyperbolic homogeneous cubic polynomials
\begin{equation*}
\mathcal{C}_n:=\left\{x^3-x\langle y,y\rangle + P_3(y)\ \left| \  \max\limits_{\|y\|=1}P_3(y)\leq \frac{2}{3\sqrt{3}}\right\}\right.
\end{equation*}
is a generating set for the moduli set of $n$-dimensional closed connected PSR manifolds under the action of $\mathrm{GL}(n+1)$, meaning that for every closed connected PSR manifold $\mathcal{H}$ of dimension $n$ there exists an element $\widetilde{h}\in\mathcal{C}_n$, such that the connected component $\widetilde{\mathcal{H}}\subset\{\widetilde{h}=1\}$ containing the point $\left(\begin{smallmatrix}
x\\ y
\end{smallmatrix}
\right)=\left(\begin{smallmatrix}
1\\ 0
\end{smallmatrix}
\right)\in\left\{\widetilde{h}=1\right\}\subset\mathbb{R}^{n+1}$ is equivalent to $\mathcal{H}$
and, conversely, each $h\in\mathcal{C}_n$ defines a closed connected PSR manifold which is given by the connected component of $\{h=1\}$ that contains the point $\left(\begin{smallmatrix}
x\\ y
\end{smallmatrix}
\right)=\left(\begin{smallmatrix}
1\\ 0
\end{smallmatrix}
\right)$.
The set $\mathcal{C}_n\subset\mathrm{Sym}^3\left(\mathbb{R}^{n+1}\right)^*$ is a uniformly bounded compact convex subset of the $\frac{n^3+3n^2+2n}{6}$-dimensional affine subspace
\begin{equation*}
\left\{x^3-x\langle y,y\rangle + P_3(y)\ \left|\ P_3\in\mathrm{Sym}^3\left(\mathbb{R}^n\right)^*\right\}\right.\subset \mathrm{Sym}^3\left(\mathbb{R}^{n+1}\right)^*.
\end{equation*}
The boundary of $\mathcal{C}_n$, that is $\partial \mathcal{C}_n$, is a continuous submanifold of $\mathrm{Sym}^3\left(\mathbb{R}^{n+1}\right)^*$.
Furthermore, $\widetilde{h}\in\partial\mathcal{C}_n$ if and only if the initial $\mathcal{H}$ does not have regular boundary behaviour.
\end{Th}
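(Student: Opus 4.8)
The plan is to break the statement into its constituent claims and address them in a logical order, since several parts build on each other.

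First, the generating-set property. Given a closed connected PSR manifold $\mathcal H\subset\{h=1\}$, I would use the transitivity of the centro-affine structure to normalize: by applying a suitable element of $\mathrm{GL}(n+1)$ we may assume the base point $\left(\begin{smallmatrix}1\\0\end{smallmatrix}\right)$ lies on $\mathcal H$, that the tangent space there is $\{x=0\}$, and that the centro-affine (second) fundamental form at that point equals $-\langle\cdot,\cdot\rangle$ — this is where hyperbolicity enters, guaranteeing the fundamental form is negative definite so it can be brought to standard Euclidean form. A short computation with the homogeneity of the cubic then forces $h=x^3-x\langle y,y\rangle+P_3(y)$ for some $P_3\in\mathrm{Sym}^3(\mathbb R^n)^*$. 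The remaining point is the bound $\max_{\|y\|=1}P_3(y)\le\frac{2}{3\sqrt3}$: I expect this to follow from closedness of $\mathcal H$ in its ambient cone, analyzing for each unit $y$ the cubic $t\mapsto h\bigl(\begin{smallmatrix}1\\ ty\end{smallmatrix}\bigr)=1-t^2+t^3P_3(y)$ and observing that the PSR component stays closed precisely when this one-variable cubic does not develop a spurious second positive branch, which translates exactly into the stated inequality (the constant $\frac{2}{3\sqrt3}$ being the critical value where $1-t^2+ct^3$ acquires a double root). Conversely, for $h\in\mathcal C_n$ one checks directly that the component through $\left(\begin{smallmatrix}1\\0\end{smallmatrix}\right)$ is nonempty, consists of hyperbolic points, and is closed — again by the one-dimensional slice analysis plus a convexity/coercivity argument.

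Next, the structural properties of $\mathcal C_n$ as a subset of the affine space $\mathcal A_n:=\{x^3-x\langle y,y\rangle+P_3(y)\}$. Convexity is immediate: the map $P_3\mapsto\max_{\|y\|=1}P_3(y)$ is a supremum of the linear functionals $P_3\mapsto P_3(y)$, hence convex, so the sublevel set $\mathcal C_n$ is convex. Boundedness is equally direct: $\max_{\|y\|=1}P_3(y)$ together with $\max_{\|y\|=1}P_3(-y)=-\min_{\|y\|=1}P_3(y)$ are both $\le\frac{2}{3\sqrt3}$ when $-P_3$ also satisfies the bound — but one must be slightly careful, since $\mathcal C_n$ is not obviously symmetric; instead I would note that the norm $\|P_3\|_\infty:=\max_{\|y\|=1}|P_3(y)|$ is bounded because hyperbolicity of all elements of $\mathcal C_n$ forces a two-sided bound (if $P_3(y_0)$ were very negative for some unit $y_0$, the slice $1-t^2+t^3P_3(y_0)$ with $t<0$ produces non-hyperbolic points), and all norms on the finite-dimensional space $\mathrm{Sym}^3(\mathbb R^n)^*$ are equivalent. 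Compactness then follows from closedness, which holds because $\{P_3:\max_{\|y\|=1}P_3(y)\le\frac{2}{3\sqrt3}\}$ is an intersection of closed half-spaces. The dimension count $\dim\mathrm{Sym}^3(\mathbb R^n)^*=\binom{n+2}{3}=\frac{n^3+3n^2+2n}{6}$ is combinatorial.

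Finally, the boundary statements. That $\partial\mathcal C_n$ is a continuous (topological) submanifold should follow from writing $\mathcal C_n$ as $\{P_3:F(P_3)\le\frac{2}{3\sqrt3}\}$ with $F(P_3)=\max_{\|y\|=1}P_3(y)$ a convex function, combined with the fact that $\mathcal C_n$ has nonempty interior (e.g.\ $P_3=0$ is interior) and is compact: the boundary of a compact convex body with nonempty interior in $\mathbb R^N$ is always a topological sphere, hence a continuous submanifold; I would phrase it via radial projection from an interior point. The substantive part — and what I expect to be the main obstacle — is the equivalence ``$\widetilde h\in\partial\mathcal C_n\iff\mathcal H$ does not have regular boundary behaviour.'' Here one must connect the analytic condition $F(\widetilde h)=\frac{2}{3\sqrt3}$, i.e.\ the existence of a unit $y_0$ with $P_3(y_0)=\frac{2}{3\sqrt3}$, to the geometric notion of regular boundary behaviour of the hypersurface $\widetilde{\mathcal H}$. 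The idea is that such a $y_0$ is exactly a direction in which the one-variable slice $1-t^2+t^3P_3(y_0)$ attains the critical profile with a double root at the finite value $t_0=\sqrt3$, so the curve $t\mapsto\left(\begin{smallmatrix}1\\ ty_0\end{smallmatrix}\right)$ on $\widetilde{\mathcal H}$ runs into the boundary of the domain at finite affine parameter in a degenerate (non-transverse) way — the defining polynomial's gradient and Hessian degenerate there — which is precisely the failure of regular boundary behaviour; conversely, regular boundary behaviour forces every slice to be strictly subcritical, i.e.\ a strict inequality, placing $\widetilde h$ in the interior. Making ``regular boundary behaviour'' precise (presumably it is defined earlier in the paper as smooth extendability of the closure, or transversality of $\overline{\mathcal H}$ to the cone boundary) and matching it to this slice criterion in all directions simultaneously is the delicate step; I would handle it by first reducing to a single critical direction via the convexity of $F$, then performing the local analysis in that direction.
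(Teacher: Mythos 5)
Your outline reproduces the overall architecture of the paper's proof (normalize to the standard form $h=x^3-x\langle y,y\rangle+P_3(y)$, derive the bound $\max_{\|y\|=1}P_3(y)\le\frac{2}{3\sqrt3}$ from the one-variable slices $1-t^2+t^3P_3(y)$, get convexity from the max of linear functionals, compactness from norm equivalence and closedness of the sublevel set, and the boundary dichotomy from the singular-at-infinity criterion), and some of your shortcuts are legitimately cleaner than the paper's (e.g.\ boundedness via equivalence of norms on $\mathrm{Sym}^3(\mathbb{R}^n)^*$, and the topological-sphere argument for $\partial\mathcal{C}_n$). However, the proposal assumes away the two results that carry essentially all of the difficulty.

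First, the converse direction of the generating-set claim: that \emph{every} $h\in\mathcal{C}_n$ defines a CCPSR manifold. Your slice analysis in the directions $t\mapsto(1,ty)^T$ only controls closedness/precompactness of the section of the cone; it says nothing about hyperbolicity of points $(1,z)^T$ with $z\neq 0$, which requires the full two-variable condition $3\langle v,v\rangle-9P_3(z,v,v)+\langle z,v\rangle^2>0$ for all $v$, not just $v\parallel z$. The phrase ``one checks directly \ldots plus a convexity/coercivity argument'' hides what is in fact the longest proof in the paper (Theorem \ref{thm_convex_compact_PSR_generating_set}): a reduction to $n=2$, a normalization to $P_3=\frac{2}{3\sqrt3}y^3+kyz^2+\ell z^3$ with the constraint $k\in[-\frac{2}{\sqrt3},\frac{1}{\sqrt3}]$, a deformation argument (Proposition \ref{prop_PSR_pathconnectedness}) reducing to the extremal case, a further reduction via explicit $\mathrm{GL}(3)$-transformations to the one-parameter family $h_\ell=x^3-x(y^2+z^2)+\frac{2}{3\sqrt3}y^3+\ell z^3$, and finally a direct determinant computation. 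Without some substitute for this, the claim that $\mathcal{C}_n$ consists of hyperbolic polynomials whose distinguished level-set components are PSR manifolds is unproven.

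Second, the boundary equivalence. You correctly prove ``$\widetilde h\in\partial\mathcal{C}_n\Rightarrow$ not regular'' (the gradient of $h$ vanishes along the degenerate ray) and ``regular $\Rightarrow$ interior''. But the remaining implication ``interior $\Rightarrow$ regular'' requires verifying condition (ii) of Definition \ref{def_regular_bdr_GPSR}: that $-\partial^2h$ restricted to $T(\partial U\setminus\{0\})$ is positive semidefinite with exactly one-dimensional kernel at every boundary point of the cone. Nonvanishing of $dh$ on $\partial U\setminus\{0\}$ does not formally imply this Hessian condition; the paper needs Theorem \ref{thm_irregularity_implications_CCPSR}, proved by reducing to two-dimensional sections and then checking the condition case by case against the classification of CCPSR surfaces from \cite{CDL}, including a nontrivial computation for the Weierstra{\ss} family f). Your proposal does not address how condition (ii) would be obtained, so the characterization of $\partial\mathcal{C}_n$ as the locus of non-regular boundary behaviour is incomplete as written.
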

As we will describe in more detail in Remark \ref{rem_Cn_comparison}, it is in general not difficult to construct a bounded generating set for the moduli set of closed connected PSR manifolds. However, it was up until now not clear that we can find a bounded generating set of dimension less than the dimension of the vector space of cubic homogeneous polynomials in $n+1$ variables (corresponding to $n$-dimensional PSR manifolds), and the existence of such a generating set that is additionally closed and convex is also far from obvious. Furthermore, points in $\partial\mathcal{C}_n$ correspond precisely to closed connected PSR manifolds that do not have regular boundary behaviour. A PSR manifold $\mathcal{H}\subset\{h=1\}\subset\mathbb{R}^{n+1}$ having regular boundary behaviour means that
the negative Hessian of $h$, $-\partial^2h$, viewed as a bilinear form on $\mathbb{R}^{n+1}$ has only $1$-dimensional kernel along $\partial U \setminus\{0\}$, and $dh_p\ne 0$ for all $p\in \partial U\setminus\{0\}$, where $U:=\mathbb{R}_{>0}\cdot\mathcal{H}\subset\mathbb{R}^{n+1}$ denotes the cone spanned by $\mathcal{H}$. 
The main ingredient in order to show that points in $\partial\mathcal{C}_n$ correspond precisely to closed connected PSR manifolds with non-regular boundary behaviour is Theorem \ref{thm_irregularity_implications_CCPSR}, where we prove that for a closed connected PSR manifold $\mathcal{H}\subset\{h=1\}$ it suffices to show that $dh_p\ne 0$ for all $p\in \partial U\setminus\{0\}$ in order to show that $\mathcal{H}$ has regular boundary behaviour.
Note at this point that we consider the moduli set of PSR manifolds as a set, not as a topological space and, hence, we do not use the term ``moduli space''. Choosing and describing a suitable topology on the moduli set of PSR manifolds, which would then justify the term moduli space, is an interesting task for future studies.

While the generating set $\mathcal{C}_n$ is not a 1:1 description of the moduli set, its compactness and convexity properties imply many new properties of the moduli set. Compactness shows that for fixed dimension, the sectional curvatures and scalar curvature of complete PSR manifolds are bounded from above and below with bounds depending only on the dimension, see Corollary \ref{cor_general_CCPSR_S_bounds}. To prove said result we will develop curvature formulas for PSR manifolds (cf. Proposition \ref{prop_scal_GPSR} for a larger class of manifolds outlined below) and use a technical result for a standard form of PSR manifolds, see Proposition \ref{prop_std_form_h}. Convexity enables us to explicitly describe a curve in the class of closed connected PSR manifolds of dimension $n$ that connects any two given closed connected PSR manifolds of dimension $n$, see Corollary \ref{cor_curve_between_CCPSR} and Example \ref{ex_curves_CCPSRs}. The compactness property of $\mathcal{C}_n$ can also be used to find an alternative proof that closed PSR manifolds are complete, see Proposition \ref{prop_alternative_CCPSR_completeness_proof_numero2}. Furthermore, these properties also carry over to the supergravity r- and q-map and thereby in particular yield an explicit way to deform two quaternionic K\"ahler manifolds in the image of the supergravity q-map (restricted to closed connected PSR manifolds) into each other, which is by Theorem \ref{thm_Cn} always possible independent of their initial choice. This in particular means that we have developed a way to deform two distinct 5d supergravity theories, which are complete in the sense that their scalar manifolds (which are PSR manifolds) are geodesically complete, into each other via a curve of such theories which corresponds point-wise to other (but possibly equivalent) theories that are also complete in the stated sense. This property also carries over to 4d and 3d supergravity theories obtainable via the supergravity r- and q-map, respectively.

In this work we also study what we call generalized PSR manifolds (GPSR manifolds for short). A GPSR manifold (of degree $\tau$) is a centro-affine hypersurface contained in the level set of a hyperbolic homogeneous polynomial of degree $\tau\geq 3$, so in comparison with PSR manifolds (which we view as a subclass of GPSR manifolds) we also allow polynomial of degree higher than $3$. GPSR manifolds with corresponding polynomial of degree $\tau\geq 4$ do not have a similar motivation from supergravity, but as for PSR manifolds they appear as level sets in the K\"ahler cones (or, more generally, index/positive cones) of compact K\"ahler $\tau$-folds. Many of our technical results like Proposition \ref{prop_std_form_h} and Proposition \ref{prop_hom_spaces_deltaPk_condition} hold for GPSR manifolds of arbitrary degree $\tau\geq 3$. However, it turned out to be very hard to prove a statement similar to Theorem \ref{thm_Cn} for GPSR manifolds of degree $\tau\geq 4$. In \cite[Thm.\,7.2]{Li} closed connected GPSR curves of degree $4$ have been classified up to isomorphisms, and it turned out that not only was this far more complicated than an analogous classification of closed connected PSR curves (cf. \cite[Rem.\,7.4]{Li}, which proceeds differently than the complete classification of PSR curves in \cite{CHM}), but also that even in this most simple case of GPSR manifolds of degree $\geq 4$ we cannot find a compact convex generating set in the way we did for closed connected PSR manifolds. In future studies, we will expand the work on GPSR manifolds of degree $4$ that was done in \cite[Sect.\,7]{Li}. In particular, it is an open problem whether or not such manifolds are geodesically complete if they are closed in the ambient space. The latter property has first been proven for closed PSR manifolds in \cite[Thm.\,2.5]{CNS} and we will present an alternative proof in Proposition \ref{prop_alternative_CCPSR_completeness_proof_numero2} which uses our main Theorem \ref{thm_Cn}. Another interesting open question is the generalization of the supergravity q-map to connected GPSR manifolds of degree $\tau\geq 4$.

Aside from the study of supergravity, PSR manifolds also appear in the study of compact K\"ahler threefolds. For a compact K\"ahler threefold $X$ consider the cubic homogeneous polynomial
	\begin{equation*}
		h:H^{1,1}(X,\mathbb{R})\to\mathbb{R},\quad [\omega]\mapsto\int_X \omega^3.
	\end{equation*}
By the Hodge-Riemann bilinear relations, we find that each point in the cone of K\"ahler classes $\mathcal{K}\subset H^{1,1}(X,\mathbb{R})$ is a hyperbolic point of $h$, and so the set $\mathcal{H}:=\{h=1\}\cap\mathcal{K}$ is a PSR manifold. This point of view has been studied in particular for Calabi-Yau threefolds. In \cite{W,TW}, the sectional curvatures of, albeit not under this name, PSR manifolds have been studied from this point of view and in \cite{KW} the defining trilinear forms and their relation to the second and third Chern classes have been studied.
Analogously, GPSR manifolds of degree $\tau\geq 4$ appear as level sets in cones of K\"ahler classes of compact K\"ahler manifolds $X$ of complex dimension $\tau$. The corresponding hyperbolic homogeneous polynomial is then given by $h:[\omega]\mapsto \int_X\omega^\tau$. For $\tau=4$, the curvature of a GPSR manifold (again, not under this name) has been studied in \cite[Sect.\,4]{T}. An expanded study of the different curvatures of GPSR manifolds, with applications to both the geometry of K\"ahler cones and supergravity, will be the subject of an upcoming paper.

\paragraph*{Acknowledgements}
This work was partly supported by the German Science Foundation (DFG) under the Research Training Group 1670 and the Collaborative Research Center (SFB) 676.
It is based on the main results of Section 5 and technical results of Sections 3 and 4 of my doctoral thesis. Proposition \ref{prop_homimpliessingatinf} is new and not part of my doctoral thesis. I would like to thank my supervisor Vicente Cort\'es for his continuous support during the writing of my thesis, and I would also like to thank my second examiners Andriy Haydys and Antonio Mart\'inez.

\section{Preliminaries}
We will quickly review the basics of centro-affine differential geometry that are needed in this work.
\begin{Def}
\label{centroaffinehypersurfacedef}
Let $f=(f_1,\ldots,f_{n+1})^T :M\to \mathbb{R}^{n+1}$ be a hypersurface immersion. It is called a \textsf{centro-affine hypersurface immersion} if the position vector field $\xi\in\Gamma(T\mathbb{R}^{n+1})$, $\xi_p=p$ for all $p\in \mathbb{R}^{n+1}$ under the canonical identification, is transversal along $f$, that is
\begin{equation*}
df(T_pM)\oplus \mathbb{R}\xi_{f(p)}=T_{f(p)}\mathbb{R}^{n+1}
\end{equation*} 
for all $p\in M$, where $\mathbb{R}\xi_{f(p)}$ denotes the $1$-dimensional vector subspace spanned by $\xi_{f(p)}$ of $T_{f(p)}\mathbb{R}^{n+1}$. We will omit writing down the map $f$ if it is clear from the context and simply call $M$ a \textsf{centro-affine hypersurface}.
\end{Def}
If $f$ is additionally an embedding, it will be called a centro-affine hypersurface embedding.
The Gau{\ss} equation for centro-affine hypersurface immersions $f:M\to\mathbb{R}^{n+1}$ is of the form
\begin{equation}
\overline{\nabla}_X(df(Y))=df(\nabla_X Y) + g(X,Y)\xi_{f},\label{eqn_centro_affine_gauss}
\end{equation}
where $\xi_f$ denotes the position vector field along $f$. This leads to the following definition.
\begin{Def}
\label{def_cafform}
Let $f:M\to\mathbb{R}^{n+1}$ be a centro-affine hypersuface immersion. The induced connection $\nabla$ in $TM$ \eqref{eqn_centro_affine_gauss} is called the \textsf{centro-affine connection}, the symmetric $(0,2)$-tensor $g\in\Gamma\left(\mathrm{Sym}^2T^\ast M\right)$ is called the \textsf{centro-affine fundamental form}. The centro-affine hypersurface immersion $f$ is called \textsf{non-degenerate} if $g$ is non-degenerate, \textsf{definite} if $g$ is definite, i.e. either positive or negative definite, \textsf{elliptic} if $g<0$, i.e. negative definite, and \textsf{hyperbolic} if $g>0$, i.e. positive definite.
\end{Def}
A tool to obtain explicit examples of centro-affine hypersurfaces together with their centro-affine fundamental form is explained in the following proposition.
\begin{Prop}\label{propCAHSE}
Let $U\subset\mathbb{R}^{n+1}$, $n\in\mathbb{N}\cup\{0\}$, be open and invariant under positive rescaling, i.e. $rp\in U$ for all $r>0$ and $p\in U$. Let $h:U\to\mathbb{R}$ be a homogeneous function of degree $k>1$. Assume that the level set $\{p\in U\mid h(p)=1\}$ is not empty and let $\mathcal{H}\subset \{p\in U\mid h(p)=1\}$ be open subset. Then the inclusion map $\iota:\mathcal{H}\to \mathbb{R}^{n+1}$ is a centro-affine hypersurface embedding with centro-affine fundamental form $g=-\frac{1}{k}\iota^\ast(\overline{\nabla}^2h)$, where $\overline{\nabla}$ denotes the canonical flat connection in $T\mathbb{R}^{n+1}$.
\begin{proof}
For a proof of this statement in a slightly more general setting see \cite[Prop.\,1.3]{CNS}.
\end{proof}
\end{Prop}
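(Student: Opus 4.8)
The plan is to establish the two assertions in sequence: first that $\iota$ is a centro-affine hypersurface embedding, which is what makes the transversal decomposition in the Gau{\ss} equation \eqref{eqn_centro_affine_gauss} well defined, and then that the resulting fundamental form equals $-\frac1k\iota^\ast(\overline{\nabla}^2h)$. The single elementary fact driving everything is Euler's relation for the degree-$k$ homogeneous function $h$, namely $dh_p(\xi_p)=dh_p(p)=k\,h(p)$, which on the level set $\{p\in U\mid h(p)=1\}$ specializes to $dh_p(\xi_p)=k\neq0$.

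First I would check that $\mathcal H$ is a genuine embedded hypersurface and that $\xi$ is transversal to it. Since $dh_p(\xi_p)=k\neq0$ at every point of the level set, in particular $dh_p\neq0$ there, so $1$ is a regular value of $h$ and $\{p\in U\mid h(p)=1\}$ is a smooth embedded hypersurface with $T_p\mathcal H=\ker dh_p$; as an open subset, $\mathcal H$ inherits this structure. The same inequality yields transversality at once: because $dh_p(\xi_p)=k\neq0$, the vector $\xi_p$ avoids $\ker dh_p=d\iota(T_p\mathcal H)$, and as the latter has codimension one in $T_{\iota(p)}\mathbb R^{n+1}$ we obtain the direct sum $d\iota(T_p\mathcal H)\oplus\mathbb R\xi_{\iota(p)}=T_{\iota(p)}\mathbb R^{n+1}$. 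Thus $\iota$ is a centro-affine hypersurface embedding and \eqref{eqn_centro_affine_gauss} determines $\nabla$ and $g$ as the tangential and $\xi$-components of $\overline{\nabla}_X(d\iota(Y))$.

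Next I would isolate $g$ by applying $dh$ to \eqref{eqn_centro_affine_gauss}. The tangential term drops out since $\nabla_XY$ is tangent to $\mathcal H$ and so lies in $\ker dh$, while the transversal term gives $g(X,Y)\,dh_p(\xi_p)=k\,g(X,Y)$; hence $g(X,Y)=\tfrac1k\,dh_p\big(\overline{\nabla}_X(d\iota(Y))\big)$. To rewrite the right-hand side through the Hessian I would apply the Leibniz rule for $\overline{\nabla}$ to the contraction $dh(d\iota(Y))$, which vanishes identically because $d\iota(Y)$ is tangent to the level set: $0=X\big(dh(d\iota(Y))\big)=(\overline{\nabla}_Xdh)(d\iota(Y))+dh\big(\overline{\nabla}_X(d\iota(Y))\big)$. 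Since $\overline{\nabla}_Xdh=(\overline{\nabla}^2h)(d\iota(X),\cdot\,)$ by definition of the Hessian as the second covariant derivative of $h$ with respect to the flat connection, this becomes $(\overline{\nabla}^2h)(d\iota(X),d\iota(Y))+dh\big(\overline{\nabla}_X(d\iota(Y))\big)=0$. Substituting into the previous identity gives $g(X,Y)=-\frac1k(\overline{\nabla}^2h)(d\iota(X),d\iota(Y))=-\frac1k\iota^\ast(\overline{\nabla}^2h)(X,Y)$, the asserted formula; its symmetry is then inherited from that of the Hessian.

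The computation is routine, so the one step deserving genuine care is the Leibniz argument: one must track that differentiating the pairing of $dh$ with the tangent field $d\iota(Y)$ produces the Hessian term with the correct sign, since this is exactly what supplies the minus sign in the final formula. A coordinate-free way to make this transparent, and to sidestep any issue of extending $X$ and $Y$ off $\mathcal H$, is to fix a curve $\gamma$ in $\mathcal H$ with $\gamma(0)=p$ and $\dot\gamma(0)=X_p$, observe that $\overline{\nabla}_X(d\iota(Y))$ is the ordinary $t$-derivative of $t\mapsto Y(\gamma(t))$ regarded as an $\mathbb R^{n+1}$-valued map, and differentiate the scalar identity $dh_{\gamma(t)}(Y(\gamma(t)))\equiv0$ at $t=0$; both the transversality conclusion and the Hessian identity then read off directly.
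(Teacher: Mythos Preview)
Your proof is correct and complete. The paper itself does not give a proof of this proposition but merely refers the reader to \cite[Prop.\,1.3]{CNS}; your argument is the standard one --- Euler's identity forces $dh_p(\xi_p)=k\ne 0$ on the level set, which simultaneously yields regularity and transversality, and then pairing the Gau{\ss} equation with $dh$ and using the Leibniz rule for the flat Hessian extracts the formula for $g$. This is exactly what one finds in the cited reference, so in effect you have supplied what the paper omits.
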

If $\mathbb{R}^{n+1}$ is equipped with linear coordinates, we will write $\partial^2$ instead of $\overline{\nabla}^2$. We will also omit writing down the map $\iota$ for an embedding $\iota:M\to \mathbb{R}^{n+1}$, that is we will write $M\subset\mathbb{R}^{n+1}$ instead of $\iota(M)\subset\mathbb{R}^{n+1}$. In this work we will study centro-affine hypersurface embeddings where $h$ as in Proposition \ref{propCAHSE} is a hyperbolic homogeneous polynomial of degree $\tau\geq 3$ with Riemannian centro-affine fundamental form.
\begin{Def}
\label{hyperbolicpointdef}
Let $U\subset \mathbb{R}^{n+1}$ be an open subset that is invariant under multiplication with positive real numbers, and let $h:U\to\mathbb{R}$ be a homogeneous function of degree $\tau>1$. Then a point $p\in \{h>0\}$ is called a \textsf{hyperbolic point (of $h$)} if $-\partial^2 h_p$ has signature $(n,1)$, i.e. it is of Lorentz type. A function $h$ that has at least one hyperbolic point is called a \textsf{hyperbolic homogeneous function}.
\end{Def}
Observe that this implies that for a hyperbolic point $p$ of $h$ we have $-\partial^2h_p|_{\ker(dh_p)\times \ker(dh_p)}>0$, which follows from $-\partial^2h_p(p,p)=-\tau(\tau-1)h(p)<0$ and $-\partial h_p(p,\cdot)=-(\tau-1)dh_p$.
\begin{Def}
\label{hyperboliccentroaffinehsDEF}
Let $\mathcal{H}\subset\{h=1\}$ be a centro-affine hypersurface as in Proposition \ref{propCAHSE}. Then $\mathcal{H}$ is called a \textsf{hyperbolic centro-affine hypersurface} if it consists only of hyperbolic points.
\end{Def}
Note that the above definition of hyperbolic centro-affine hypersurface coincides with Definition \ref{def_cafform} for $f$ the inclusion map $\iota:\mathcal{H}\to\mathbb{R}^{n+1}$. Hyperbolic centro-affine hypersurfaces equipped with their respective centro-affine fundamental form $(\mathcal{H},g)$ are Riemannian manifolds. Continuity of the determinant implies that a connected non-degenerate centro-affine hypersurface $\mathcal{H}$ is hyperbolic if and only if it contains one hyperbolic point. Note that hyperbolicity at a point is an open condition in the sense every homogeneous function $h:U\to\mathbb{R}$ as in Definition \ref{hyperbolicpointdef} with a hyperbolic point $p$ is hyperbolic on some open neighbourhood $V\subset U$ of $p$, which also follows from the continuity of the determinant of $-\partial^2 h$. Hence, for every hyperbolic homogeneous function $h$ of degree $\tau>1$ we can choose an open subset $\mathcal{H}\subset\{h=1\}$ that is a hyperbolic centro-affine hypersurface.
\begin{Def}
\label{hyppolyDEF}
A homogeneous polynomial $h:\mathbb{R}^{n+1}\to \mathbb{R}$ of degree $\tau\geq 2$ is called a \textsf{hyperbolic homogeneous polynomial} if there exists $p\in\{h>0\}$, such that $p$ is a hyperbolic point of $h$.
\end{Def}

Next, we will introduce a notion of equivalence for homogeneous polynomials and hypersurfaces in their respective level sets. 

\begin{Def}
\label{equivalenceDEF}
Two hyperbolic homogeneous polynomials $h,\overline{h}:\mathbb{R}^{n+1}\to\mathbb{R}$ of degree $\tau\geq 2$ are called \textsf{equivalent} if there exists a linear transformation $A\in\mathrm{GL}(n+1)$, such that $h\circ A=\overline{h}$. Two connected hyperbolic centro-affine hypersurfaces $\mathcal{H}$ and $\overline{\mathcal{H}}$ contained in a level set of $h$ and $\overline{h}$, respectively, are called equivalent if $h$ and $\overline{h}$ are equivalent and $A(\overline{\mathcal{H}})= \mathcal{H}$ for $h\circ A=\overline{h}$.
\end{Def}

Two centro-affine hyperbolic hypersurfaces being equivalent in particular implies the following statement.

\begin{Lem}
Any two equivalent connected centro-affine hyperbolic hypersurfaces $\mathcal{H}$ and $\overline{\mathcal{H}}$ defined by hyperbolic homogeneous polynomials of degree $\tau\geq 2$ $h,\overline{h}:\mathbb{R}^{n+1}\to\mathbb{R}$ as in Definition \ref{equivalenceDEF}, respectively, are isometric.
\begin{proof}
Let $A\in\mathrm{GL}(n+1)$, such that $h\circ A=\overline{h}$. Since $A:\mathbb{R}^{n+1}\to\mathbb{R}^{n+1}$ is linear, we get
\begin{equation*}
-\partial^2\overline{h}_p(\cdot,\cdot)=-\partial^2 h_{Ap}(A\cdot,A\cdot)=A^\ast(-\partial^2 h)_p.
\end{equation*}
By restricting the above equation to $T\overline{\mathcal{H}}$, respectively $T\mathcal{H}$, we find with Proposition \ref{propCAHSE} that $\mathcal{H}$ and $\overline{\mathcal{H}}$ are indeed isometric and one isometry is given by the linear transformation $A$ relating the two polynomials $h$ and $\overline{h}$.
\end{proof}
\end{Lem}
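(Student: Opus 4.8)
The statement to prove is the Lemma asserting that any two equivalent connected centro-affine hyperbolic hypersurfaces $\mathcal{H}$ and $\overline{\mathcal{H}}$, defined by hyperbolic homogeneous polynomials $h,\overline{h}:\mathbb{R}^{n+1}\to\mathbb{R}$ of degree $\tau\geq 2$ with $h\circ A=\overline{h}$ for some $A\in\mathrm{GL}(n+1)$ and $A(\overline{\mathcal{H}})=\mathcal{H}$, are isometric. The plan is to exhibit the linear map $A$ itself as the isometry, using that $A$ intertwines the two polynomials and therefore intertwines their Hessians, which by Proposition \ref{propCAHSE} are (up to the constant factor $-\tfrac{1}{\tau}$) the centro-affine fundamental forms.

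First I would record the two structural facts that make this work. The map $A:\mathbb{R}^{n+1}\to\mathbb{R}^{n+1}$ is linear, so its differential at every point is $A$ itself, and by hypothesis $A(\overline{\mathcal{H}})=\mathcal{H}$, so $A$ restricts to a diffeomorphism $\overline{\mathcal{H}}\to\mathcal{H}$. Next I would compute how the Hessian transforms under $A$. Since $\overline{h}=h\circ A$ and $A$ is linear, the chain rule gives for all $p\in\mathbb{R}^{n+1}$ the identity
\begin{equation*}
-\partial^2\overline{h}_p(\cdot,\cdot)=-\partial^2 h_{Ap}(A\cdot,A\cdot)=\big(A^\ast(-\partial^2 h)\big)_p,
\end{equation*}
where the middle equality uses that the second derivative of $h\circ A$ in directions $X,Y$ is the second derivative of $h$ at $Ap$ in directions $AX,AY$ (the first-derivative-of-$A$ terms vanish because $A$ is linear). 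This is precisely the displayed identity in the statement, and it is the whole analytic content of the argument.

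Having this global identity on $\mathbb{R}^{n+1}$, I would restrict it to the tangent spaces of the hypersurfaces. Since $dA_p=A$ maps $T_p\overline{\mathcal{H}}$ isomorphically onto $T_{Ap}\mathcal{H}$, restricting the displayed equation to $T\overline{\mathcal{H}}\times T\overline{\mathcal{H}}$ shows that $A$ pulls back the bilinear form $-\partial^2 h|_{T\mathcal{H}}$ to the bilinear form $-\partial^2\overline{h}|_{T\overline{\mathcal{H}}}$. By Proposition \ref{propCAHSE} the centro-affine fundamental forms are $g=-\tfrac{1}{\tau}\iota^\ast(\partial^2 h)$ and $\overline{g}=-\tfrac{1}{\tau}\overline{\iota}^\ast(\partial^2\overline{h})$; since the constant factor $-\tfrac{1}{\tau}$ is the same for both (the degree $\tau$ agrees by assumption), the identity descends to $A^\ast g=\overline{g}$, i.e. $A|_{\overline{\mathcal{H}}}:(\overline{\mathcal{H}},\overline{g})\to(\mathcal{H},g)$ is an isometry of the Riemannian metrics.

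There is essentially no hard obstacle here; this is a short verification rather than a deep result. The only point requiring minor care is bookkeeping: one must confirm that $A$ genuinely restricts to a map between the hypersurfaces (supplied by the hypothesis $A(\overline{\mathcal{H}})=\mathcal{H}$ in the definition of equivalence) so that the pullback of tensors makes sense, and that both fundamental forms are indeed Riemannian (positive definite), which holds because both hypersurfaces are hyperbolic centro-affine hypersurfaces and the centro-affine fundamental form of such is a Riemannian metric. I would then conclude that the linear map $A$ relating $h$ and $\overline{h}$ is itself an isometry between $\overline{\mathcal{H}}$ and $\mathcal{H}$, which is exactly the assertion.
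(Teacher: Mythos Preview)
Your proof is correct and follows essentially the same approach as the paper: compute $-\partial^2\overline{h}_p=A^\ast(-\partial^2 h)_p$ from linearity of $A$, restrict to the tangent spaces, and invoke Proposition~\ref{propCAHSE} to identify the centro-affine fundamental forms. Your write-up is in fact more explicit about the bookkeeping (that $A$ restricts to a diffeomorphism $\overline{\mathcal{H}}\to\mathcal{H}$ and that the common factor $-\tfrac{1}{\tau}$ carries through), but the argument is the same.
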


An additional topological property of centro-affine hypersurfaces that we will often impose is the following.
\begin{Def}
\label{def_maximality}
	A connected hyperbolic centro-affine hypersurface $\mathcal{H}\subset\{h=1\}$ is called \textsf{maximal} if it is a connected component of the set $\{h=1\}\cap\{\text{hyperbolic points of }h\}$.
\end{Def}

Now we will introduce the centro-affine hypersurfaces that are our main focus of study in this work.

\begin{Def}
\label{def_GPSR_mfs}
Let $n\in\mathbb{N}\cup\{0\}$ and $\mathcal{H}\subset\{h=1\}\subset\mathbb{R}^{n+1}$ be an $n$-dimensional hyperbolic centro-affine hypersurface as in Definition \ref{hyperboliccentroaffinehsDEF} contained in the level set of a hyperbolic homogeneous polynomial of degree $\tau\geq 3$. Then $\mathcal{H}$ will be called a \textsf{GPSR manifold} (for \textsf{G}eneralised \textsf{P}rojective \textsf{S}pecial \textsf{R}eal \textsf{manifold}) \textsf{of degree $\tau$}. If we further assume that $\mathcal{H}$ is closed and connected as a subset of $\mathbb{R}^{n+1}$, we will call $\mathcal{H}$ a \textsf{CCGPSR manifold} (for \textsf{C}losed \textsf{C}onnected \textsf{GPSR} \textsf{manifold}) \textsf{of degree $\tau$}.
For $\tau=3$, we will call $\mathcal{H}$ a \textsf{PSR manifold}, or a \textsf{CCPSR manifold} if it is closed in its ambient space.
As a convention we regard the set of CCPSR manifolds as a subset of the set of CCGPSR manifolds.
\end{Def}

If the degree $\tau\geq 3$ of a GPSR manifold is not of particular importance, we will omit the phrase ``of degree $\tau$''. Recall that according to Definition \ref{equivalenceDEF}, two CCGPSR manifolds of the same degree are called equivalent if they are related by a linear change of coordinates of the ambient space. Also note that CCGPSR manifolds are automatically maximal in the sense of Definition \ref{def_maximality}.

\begin{Lem}
\label{caff_lemma}
Let $\mathcal{H}\subset\{h=1\}\subset\mathbb{R}^{n+1}$ be an $n$-dimensional GPSR manifold of degree $\tau\geq 3$. Then its centro-affine fundamental form $g_{\mathcal{H}}$ is given by
\begin{equation*}
g_{\mathcal{H}}=-\frac{1}{\tau}\partial^2 h|_{T\mathcal{H}\times T\mathcal{H}},
\end{equation*}
where $\partial^2$ is determined by the chosen linear coordinates on the ambient space $\mathbb{R}^{n+1}$.
\begin{proof}
This follows immediately from Proposition \ref{propCAHSE}.
\end{proof}
\end{Lem}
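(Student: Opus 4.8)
The plan is to obtain Lemma \ref{caff_lemma} as an immediate specialisation of Proposition \ref{propCAHSE}. By Definition \ref{def_GPSR_mfs} together with Definition \ref{hyperboliccentroaffinehsDEF}, an $n$-dimensional GPSR manifold of degree $\tau\geq 3$ is a hyperbolic centro-affine hypersurface $\mathcal{H}\subset\{h=1\}$ for a hyperbolic homogeneous polynomial $h:\mathbb{R}^{n+1}\to\mathbb{R}$ of degree $\tau$. First I would check that this data meets the hypotheses of Proposition \ref{propCAHSE}. Taking $U\subset\mathbb{R}^{n+1}$ to be the set of hyperbolic points of $h$, this $U$ is open (hyperbolicity is an open condition, as noted after Definition \ref{hyperboliccentroaffinehsDEF}) and invariant under positive rescaling (since $h$ is homogeneous and the signature of $-\partial^2 h$ is constant along rays), while $h|_U$ is homogeneous of degree $k=\tau>1$. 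The level set $\{p\in U\mid h(p)=1\}$ contains $\mathcal{H}$ and is therefore nonempty, and $\mathcal{H}$ is an open subset of it because it is an $n$-dimensional hyperbolic centro-affine hypersurface inside the smooth hypersurface $\{h=1\}\cap U$.

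With these identifications in place, Proposition \ref{propCAHSE} applies verbatim and yields
\begin{equation*}
g_{\mathcal{H}}=-\frac{1}{\tau}\,\iota^\ast\!\left(\overline{\nabla}^2 h\right),
\end{equation*}
where $\iota:\mathcal{H}\to\mathbb{R}^{n+1}$ is the inclusion and $\overline{\nabla}$ the canonical flat connection on $T\mathbb{R}^{n+1}$. The final step is purely a matter of notation fixed in the text: because $\mathbb{R}^{n+1}$ is equipped with the chosen linear coordinates, one has $\overline{\nabla}^2 h=\partial^2 h$, and the pullback $\iota^\ast\!\left(\partial^2 h\right)$ is by definition the restriction $\partial^2 h|_{T\mathcal{H}\times T\mathcal{H}}$. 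Substituting $k=\tau$ and these two identities into the displayed formula produces exactly the asserted expression for $g_{\mathcal{H}}$.

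I do not expect any real obstacle, since the statement is nothing more than a transcription of Proposition \ref{propCAHSE} into the polynomial language of GPSR manifolds. The only points that warrant even minor attention are the verification that $\mathcal{H}$ is open in its level set and the identification of the flat Hessian $\overline{\nabla}^2 h$ with the coordinate Hessian $\partial^2 h$; both are immediate from the definitions and from the conventions established immediately after Proposition \ref{propCAHSE}.
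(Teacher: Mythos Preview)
Your proposal is correct and takes essentially the same approach as the paper, which simply cites Proposition~\ref{propCAHSE} without further comment. You have merely spelled out the (straightforward) verification of hypotheses and the notational identifications $\overline{\nabla}^2 h=\partial^2 h$ and $k=\tau$ that the paper leaves implicit.
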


When studying CCGPSR manifolds of degree $\tau\geq 3$ with the aim of some kind of classification, it is useful to introduce the notion of their respective moduli sets.
\begin{Def}
\label{def_moduli_space_CCGPSR_mfs}
Let $n\in\mathbb{N}\cup\{0\}$. We define the \textsf{moduli set of $n$-dimensional CCGPSR manifolds of degree $\tau$} to be the set of equivalence classes
\begin{equation*}
\left\{[\mathcal{H}]\ |\ \mathcal{H}\text{ is a }CCGPSR\text{ manifold of degree }\tau,\ \dim(\mathcal{H})=n\right\},
\end{equation*} 
where $[\widetilde{\mathcal{H}}]=[\mathcal{H}]$ if and only if $\widetilde{\mathcal{H}}$ and $\mathcal{H}$ are equivalent. For $\tau=3$, we will call the above set the \textsf{moduli set of $n$-dimensional CCPSR manifolds}.
\end{Def}

An important topological property of the cone spanned by CCGPSR manifolds which we will need in our studies is its convexity:

\begin{Prop}
\label{prop_convexcone}
Let $\mathcal{H}\subset\{h=1\}\subset\mathbb{R}^{n+1}$ be an $n$-dimensional CCGPSR manifold. Then
\begin{equation*}
U=\mathbb{R}_{>0}\cdot\mathcal{H}=\left.\left\{rp\in\mathbb{R}^{n+1}\ \right|\ r>0,\ p\in\mathcal{H}\right\}\subset\mathbb{R}^{n+1}
\end{equation*}
is a convex cone
and the map $\mathbb{R}_{>0}\times\mathcal{H}\ni (r,p)\mapsto r\cdot p\in U$
is a diffeomorphism.
\begin{proof}
\cite[Prop.\,1.10]{CNS} for the special case of CCGPSR manifolds.
\end{proof}
\end{Prop}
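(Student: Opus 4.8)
The plan is to treat the two assertions separately: the diffeomorphism statement is elementary, while the convexity of $U$ is the substantial part. First I would establish that the map $\Phi\colon\mathbb{R}_{>0}\times\mathcal{H}\to U$, $\Phi(r,p)=r\cdot p$, is a diffeomorphism. It is smooth and, by the definition of $U$, surjective. For injectivity I would use homogeneity of $h$: for $p\in\mathcal{H}$ we have $h(rp)=r^\tau h(p)=r^\tau$, so $r_1p_1=r_2p_2$ forces $r_1^\tau=r_2^\tau$, hence $r_1=r_2$ (both positive) and then $p_1=p_2$. To see that $\Phi$ is a local diffeomorphism I would compute its differential at $(r,p)$, which sends $(s,w)\in\mathbb{R}\oplus T_p\mathcal{H}$ to $s\,p+r\,w$; its image is therefore the linear subspace $\mathbb{R}p+T_p\mathcal{H}\subset\mathbb{R}^{n+1}$. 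Transversality of the position vector field along $\mathcal{H}$ — which is exactly the centro-affine condition in Definition \ref{centroaffinehypersurfacedef} — gives $\mathbb{R}p\oplus T_p\mathcal{H}=\mathbb{R}^{n+1}$, so $d\Phi_{(r,p)}$ is an isomorphism. Thus $\Phi$ is a bijective local diffeomorphism; in particular $U=\Phi(\mathbb{R}_{>0}\times\mathcal{H})$ is open and $\Phi$ is a diffeomorphism onto $U$.

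For convexity, the key local input I would extract is that $F:=h^{1/\tau}$ is concave on $U$ in the infinitesimal sense $\partial^2 F\leq 0$. A direct computation gives
\[
\partial^2 F_p \;=\; \frac{1}{\tau}\,h(p)^{\frac{1}{\tau}-2}\left( h(p)\,\partial^2 h_p - \frac{\tau-1}{\tau}\,dh_p\otimes dh_p \right).
\]
Evaluating the bracket at $p\in\mathcal{H}$ (so $h(p)=1$) on $v=a\,p+w$ with $w\in\ker dh_p=T_p\mathcal{H}$, and using Euler's relations $dh_p(p)=\tau$ and $\partial^2 h_p(p,\cdot)=(\tau-1)\,dh_p$, the $a$-dependent terms cancel and one is left with $\partial^2 h_p(w,w)=-\tau\,g(w,w)\leq 0$ by Lemma \ref{caff_lemma}, vanishing exactly when $w=0$. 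By homogeneity the sign is scale-invariant along rays, so $\partial^2 F\leq 0$ on all of $U$ with kernel the radial line. Geometrically this says that $\mathcal{H}=\{F=1\}$ is strictly locally convex and that its affine tangent hyperplane at each point is a local supporting hyperplane.

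The remaining and principal step is to upgrade this local convexity to genuine convexity of the cone $U$, and this is precisely where the closedness of $\mathcal{H}$ in $\mathbb{R}^{n+1}$ — the defining property of a CCGPSR manifold — is indispensable, since local convexity alone permits immersed hypersurfaces spanning non-convex cones. I would argue along line segments: given $x_0,x_1\in U$, the function $t\mapsto F\big((1-t)x_0+tx_1\big)$ is concave on every parameter interval where the segment stays in $U$, because its second derivative is $\partial^2 F(\dot\gamma,\dot\gamma)\leq 0$. If the segment were to exit $U$ at some interior parameter $t_0$, then along the approach $F$ tends to a limit in $[0,\infty)$; if that limit is positive, rescaling the segment points by $F$ produces a sequence in $\mathcal{H}$ converging to a point which closedness of $\mathcal{H}$ forces back into $\mathcal{H}\subset U$, contradicting that $\gamma(t_0)$ is a boundary point. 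The delicate residual case is the limit $0$, i.e. the segment approaching $\partial U$ along the asymptotic cone; excluding this, and ruling out that the segment re-enters $U$ through a different connected sheet, is where connectedness of $\mathcal{H}$ and the global $(n,1)$-signature structure must be combined. I expect this globalization — equivalently, invoking a convexity-in-the-large theorem of van Heijenoort--Sacksteder type for the properly embedded, connected, strictly locally convex hypersurface $\mathcal{H}$, with the position vector certifying that the origin lies on the concave side so that the cone over $\mathcal{H}$ is the interior of a convex cone — to be the main obstacle of the proof.
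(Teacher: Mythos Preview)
The paper's own ``proof'' is a bare citation to \cite[Prop.\,1.10]{CNS}, so there is no in-paper argument to compare against; your proposal supplies strictly more than the paper does. Your treatment of the diffeomorphism claim is correct and standard, and your computation showing $\partial^2(h^{1/\tau})\leq 0$ on $U$ with radial kernel is right and is indeed the local ingredient behind \cite{CNS}.

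On the globalization step you are honest about the situation: your direct line-segment argument has the acknowledged gap at the ``limit $0$'' case (the segment approaching $\partial U$ asymptotically) and at re-entry into a different sheet, and you do not close it. The Sacksteder--van~Heijenoort route you mention at the end is the correct way to finish, and it is precisely the tool the paper itself invokes later (see the discussion following Definition~\ref{def_dom(H)}, where \cite{Wu} is cited): $\mathcal{H}$ is a connected, closed, strictly locally convex hypersurface in $\mathbb{R}^{n+1}$, hence globally bounds a convex region, and the centro-affine transversality places the origin on the concave side so that $U=\mathbb{R}_{>0}\cdot\mathcal{H}$ is the interior of a convex cone. So your proposal is sound provided you commit to that last step rather than the incomplete segment argument; what you have written is essentially a reconstruction of the content of \cite[Prop.\,1.10]{CNS}.
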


We will later parametrise CCGPSR manifolds over sections of their respective spanned cone with an affinely embedded tangent space, cf. equation \eqref{dom(H)_H_iso}. A key fact that we will need in order to prove the compactness of the set $\mathcal{C}_n$ in Theorem \ref{thm_Cn} is the precompactness of these sections, which is part of the following lemma:

\begin{Lem}\label{precomp_cor}

Let $\mathcal{H}$ be a CCGPSR manifold and let $U=\mathbb{R}_{>0}\cdot \mathcal{H}$. Then for every $p\in\mathcal{H}$, the intersection
\begin{equation*}
\left(p+T_p\mathcal{H}\right)\cap U\subset p+ T_p\mathcal{H}
\end{equation*}
is open, precompact, and convex. Here $\left(p+ T_p\mathcal{H}\right)\subset\mathbb{R}^{n+1}$ denotes the affinely embedded tangent space $T_p\mathcal{H}$ in the ambient vector space $\mathbb{R}^{n+1}$ equipped with the induced subspace topology.
\begin{proof}
\cite[Lem.\,1.14]{CNS} applied to homogeneous polynomials.
\end{proof}
\end{Lem}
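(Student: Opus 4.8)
The plan is to verify the three asserted properties separately, dispatching convexity and openness quickly and concentrating on precompactness, where all the content lies. Throughout I write $A:=p+T_p\mathcal{H}$ and introduce the linear functional $\ell:=\tfrac{1}{\tau}dh_p\in\left(\mathbb{R}^{n+1}\right)^*$. By Euler's relation $dh_p(p)=\tau h(p)=\tau$, and $dh_p|_{T_p\mathcal{H}}=0$ since $T_p\mathcal{H}=\ker dh_p$, so the affine subspace $A$ is exactly the hyperplane $\{\ell=1\}$, with linear part $\ker\ell=T_p\mathcal{H}$, and $p\in A\cap U$, so the slice is nonempty. Convexity is then immediate: $U$ is convex by Proposition \ref{prop_convexcone} and $A$ is convex as an affine subspace, hence $A\cap U$ is convex. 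For openness, note first that $U$ is open in $\mathbb{R}^{n+1}$: by Proposition \ref{prop_convexcone} the map $\mathbb{R}_{>0}\times\mathcal{H}\to U$, $(r,q)\mapsto rq$, is a diffeomorphism, hence a smooth immersion between manifolds of the same dimension $n+1$, i.e.\ a local diffeomorphism onto $\mathbb{R}^{n+1}$, whose image $U$ is therefore open. Consequently $A\cap U$ is the intersection of the open set $U$ with $A$, hence open in the subspace topology of $A$.

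The heart of the matter is precompactness. Since $A\cong\mathbb{R}^{n}$ is finite-dimensional, precompactness of $A\cap U$ is equivalent to boundedness, and I would reduce this to the statement
\[
T_p\mathcal{H}\cap\overline{U}=\{0\}.
\]
Indeed, $A\cap U\subset A\cap\overline{U}=F$, where $F:=\{\ell=1\}\cap\overline{U}$ is closed, so $\overline{A\cap U}\subset F$. Now $F$ is a nonempty (it contains $p$) closed convex set whose recession cone is the intersection of the recession cones of $\{\ell=1\}$ and of the closed convex cone $\overline{U}$, namely $\ker\ell\cap\overline{U}=T_p\mathcal{H}\cap\overline{U}$. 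A closed convex set is compact if and only if its recession cone is trivial, so $T_p\mathcal{H}\cap\overline{U}=\{0\}$ forces $F$ compact and hence $A\cap U$ precompact.

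It remains to prove $T_p\mathcal{H}\cap\overline{U}=\{0\}$, which is the one genuinely nontrivial step. Suppose for contradiction that $w\in T_p\mathcal{H}\cap\overline{U}$ with $w\neq 0$. Since $U$ is an open convex cone containing $p$ and $w\in\overline{U}$, the standard fact that the segment from an interior point to a closure point stays in the interior gives $\tfrac{1}{1+s}p+\tfrac{s}{1+s}w\in U$ for all $s\ge 0$, and rescaling by $(1+s)>0$ yields $p+sw\in U$ for all $s\ge 0$. Set $\psi(s):=h(p+sw)$; then $\psi>0$ on $[0,\infty)$ because $U\subset\{h>0\}$, while $\psi(0)=1$, $\psi'(0)=dh_p(w)=0$, and $\psi''(0)=\partial^2 h_p(w,w)=-\tau g_p(w,w)<0$ by Lemma \ref{caff_lemma} and positive-definiteness of $g_p$. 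I would then show that $G:=\psi^{1/\tau}$ is concave on $[0,\infty)$ directly from pointwise hyperbolicity. Writing $q=p+sw$ (note $w\notin\mathbb{R}p$ since $\ell(w)=0\neq\ell(p)$, so $\mathrm{span}\{q,w\}=\mathrm{span}\{p,w\}$ is genuinely $2$-dimensional), Euler's relations give $\partial^2 h_q(q,q)=\tau(\tau-1)\psi$ and $\partial^2 h_q(q,w)=(\tau-1)\psi'$, so the Gram matrix of $\partial^2 h_q$ on $\mathrm{span}\{q,w\}$ has determinant $\tau(\tau-1)\big(\psi\psi''-\tfrac{\tau-1}{\tau}(\psi')^2\big)$. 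Since $q$ is a hyperbolic point, $\partial^2 h_q$ has exactly one positive eigenvalue, which is accounted for by the direction $q$ because $\partial^2 h_q(q,q)>0$; hence the restriction to $\mathrm{span}\{q,w\}$ has at most one positive eigenvalue and therefore nonpositive determinant, giving $\psi\psi''-\tfrac{\tau-1}{\tau}(\psi')^2\le 0$, which is exactly $G''\le 0$. With $G$ concave, $G'(0)=0$, and $G''(0)<0$ (from $\psi''(0)<0$), there is $s_1>0$ with $G'(s_1)<0$, whence $G'(s)\le G'(s_1)<0$ for $s\ge s_1$ and $G(s)\to-\infty$, contradicting $G=\psi^{1/\tau}>0$. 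Thus no such $w$ exists.

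I expect the main obstacle to be precisely this last step: converting the pointwise Lorentzian signature of $-\partial^2 h$ along the ray $p+sw$ into global concavity of $h^{1/\tau}$ and extracting the contradiction. The bookkeeping with Euler's relations and the $2\times 2$ Gram determinant is the technical core, and the only delicate point is the degenerate locus where that determinant vanishes, which is handled by the strict inequality $\psi''(0)<0$ at $s=0$. By comparison, the reduction to $T_p\mathcal{H}\cap\overline{U}=\{0\}$ via recession cones and the verifications of convexity and openness are routine.
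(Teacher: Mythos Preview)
Your argument is correct. The paper does not actually give a proof of this lemma; it merely cites \cite[Lem.\,1.14]{CNS} and notes that CCGPSR manifolds are a special case. You have supplied a complete self-contained argument in its place.

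Your route is the natural one. The reductions for openness and convexity are routine, and your recession-cone reduction of precompactness to $T_p\mathcal{H}\cap\overline{U}=\{0\}$ is clean; note that for your purposes the easy containment $\mathrm{rec}(F)\subset\ker\ell\cap\overline{U}$ already suffices. The substantive step is the concavity of $h^{1/\tau}$ along rays in $U$, which you extract correctly from the Lorentzian signature of $\partial^2 h_q$ via the $2\times2$ Gram determinant on $\mathrm{span}\{q,w\}$; the Euler identities give exactly the entries you need, and the strict inequality $\psi''(0)<0$ breaks the degenerate case. The only points worth being explicit about are that every $q\in U$ is hyperbolic (since $q=r p'$ with $p'\in\mathcal{H}$ and $-\partial^2 h$ is homogeneous of degree $\tau-2>0$) and that $U\subset\{h>0\}$, both of which you use implicitly and which follow immediately from the definitions.
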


In order to further specify types of CCGPSR manifolds we define a certain type of boundary behaviour:

\begin{Def}
\label{def_singular_GPSR}
Let $\mathcal{H}\subset\{h=1\}\subset\mathbb{R}^{n+1}$ be a CCGPSR manifold and let $U=\mathbb{R}_{>0}\cdot\mathcal{H}$ be the corresponding convex cone. We will call $\mathcal{H}$ \textsf{singular at infinity} if there exists a point $p\in\partial U\setminus\{0\}$, such that $dh_p=0$.
\end{Def}

Definition \ref{def_singular_GPSR} is not empty for CCPSR manifolds in the sense that for each $n\geq 1$, there exists an $n$-dimensional CCPSR manifold that is singular at infinity. This is a consequence of Proposition \ref{prop_homimpliessingatinf} and the existence of homogeneous CCPSR manifolds in all dimensions, cf. \cite{CHM} for dimension 1, \cite{CDL} for dimension 2, and \cite{DV} for dimension $n\geq 3$. It will turn out that being singular at infinity or not already determines the regularity of the boundary behaviour of CCPSR manifolds in the sense of Definition \ref{def_regular_bdr_GPSR}, for the result see Theorem \ref{thm_irregularity_implications_CCPSR}.

\begin{rem}
\label{rem_difficulties_PSRclass}
A natural question that arises when studying PSR manifolds is whether it is possible to classify all closed connected PSR manifolds up to equivalence. In general, this turns out to be a very difficult question. This problem is equivalent to classifying all cubic hyperbolic homogeneous polynomials up to equivalence. One of the encountered difficulties is that being hyperbolic for a cubic homogeneous polynomial is an open condition in the sense that if $h\in\mathrm{Sym}^3(\mathbb{R}^{n+1})^\ast$ is hyperbolic and $H\in\mathrm{Sym}^3(\mathbb{R}^{n+1})^\ast$ is any cubic polynomial, then there exists an $\varepsilon>0$, such that for all $0\leq k\leq \varepsilon$ the polynomial $h+kH$ is hyperbolic. This follows easily from Sylvester's law of inertia. Furthermore, the dimension of $\mathrm{Sym}^3(\mathbb{R}^{n+1})^\ast$ grows cubically in $n$ while the dimension of $\mathrm{GL}(n+1)$ grows quadratically in $n$, so we can not expect to have only finitely many examples as $n$ grows large. In dimensions $n=1$ and $n=2$ however, cubic hyperbolic homogeneous polynomials in $2$ and $3$ variables, respectively, and the corresponding closed connected PSR manifolds have been classified up to equivalence, see \cite{CHM} for $1$-dimensional PSR manifolds and \cite{CDL} for $2$-dimensional PSR manifolds. Aside from the low-dimensional restriction, another restriction to PSR manifolds is to consider only those that are contained in the level set of a reducible cubic hyperbolic homogeneous polynomial. In this case, CCPSR manifolds are classified in any dimension \cite[Thm.\,2]{CDJL}. Lastly, there is a classification of PSR manifolds that are homogeneous spaces under the action of their respective automorphism groups, cf. \cite{DV}.
\end{rem}

In this work we will use the classification result of CCPSR surfaces:
\begin{Th}[{\cite[Thm.\,1]{CDL}}]
\label{lowdimpsrclassTHM}
Let $\mathcal{H}\subset\mathbb{R}^3$ be a CCPSR surface. Then $\mathcal{H}$ is equivalent to exactly one of the following:
\begin{enumerate}[a)]
\item $\left\{xyz=1,\ x>0,\ y>0\right\}$,
\item $\left\{x(xy-z^2)=1,\ x>0\right\}$,
\item $\left\{x(yz+x^2)=1,\ x<0,\ y>0\right\}$,
\item $\left\{z(x^2+y^2-z^2)=1,\ z<0\right\}$,
\item $\left\{x(y^2-z^2)+y^3=1,\ y<0,\ x>0\right\}$,
\item $\left\{y^2z-4x^3+3xz^2+bz^3=1,\ z<0,\ 2x>z\right\}$ for precisely one $b\in(-1,1)$.
\end{enumerate}
\end{Th}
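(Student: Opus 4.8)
The plan is to reduce the classification to one of binary cubic forms and then to carry out a completeness analysis. By the standard-form result Proposition~\ref{prop_std_form_h} (the same normalisation underlying Theorem~\ref{thm_Cn}), every CCPSR surface is equivalent to one defined by a cubic of the shape
\[
h=x^3-x(y_1^2+y_2^2)+P_3(y_1,y_2),
\]
with the distinguished point $(1,0,0)$ on $\mathcal{H}$ and $P_3\in\mathrm{Sym}^3(\mathbb{R}^2)^\ast$ a binary cubic subject to the bound $\max_{\|y\|=1}P_3(y)\le\frac{2}{3\sqrt{3}}$. Hyperbolicity at $(1,0,0)$ is automatic, since there $-\partial^2 h=\mathrm{diag}(-6,2,2)$ has signature $(2,1)$; hence all the content lies in the \emph{global} requirements, namely which $P_3$ yield a connected component through $(1,0,0)$ that is everywhere hyperbolic and closed in $\mathbb{R}^3$. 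First I would record the residual symmetries fixing this normal form. The evident ones are $x\mapsto x$, $y\mapsto Dy$ with $D\in O(2)$, acting by $P_3\mapsto P_3\circ D$; beyond these there are ``boost'' transformations coming from re-choosing the base point inside $\mathcal{H}$ and re-standardising, which do not preserve the normal form for all $P_3$ but nonetheless identify whole families of defining polynomials. Tracking both is essential, because the naive $O(2)$-moduli of a binary cubic is strictly larger than the final list.

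Next I would stratify the binary cubics $P_3$ by their real factorisation type: $P_3\equiv 0$; a triple real root; a double plus a simple real root; one real root with a complex-conjugate pair; and three distinct real roots. Each stratum should, after applying the full residual equivalence (including the scaling freedom implicit in the boosts), collapse to a normal form carrying at most one surviving invariant. The expectation is that the degenerate strata, which are exactly the reducible cubics treated in \cite{CDJL} and which contain the homogeneous examples, produce cases a)--e), while the generic stratum of three distinct real roots retains a single genuine modulus. This modulus is a cross-ratio-type invariant of the three real roots relative to the fixed Euclidean structure, and I would realise it via a Chebyshev normalisation: writing the relevant binary cubic as (a multiple of) $4t^3-3t-b$ and using $4\cos^3\varphi-3\cos\varphi=\cos 3\varphi$, one sees that three distinct real roots occur precisely for $|b|<1$. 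The explicit reduction of this stratum to the Weierstrass-type representative $y^2 z-4x^3+3xz^2+bz^3$ of case f) is then a linear change of coordinates diagonalising the associated pencil.

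The decisive analytic step is imposing closedness. Using that a PSR manifold is closed if and only if it is complete (\cite[Thm.\,2.5]{CNS}) together with the threshold identity $\frac{2}{3\sqrt{3}}=\max_{x\in\mathbb{R}}(x^3-x)$ at the local maximum of $x^3-x$, I would show that the component through $(1,0,0)$ escapes to infinity exactly when the normalisation bound is violated, and is closed and connected within the admissible range. Applied to the generic stratum this is what pins down $b\in(-1,1)$: at $b=\pm1$ the cubic $4t^3-3t-b$ acquires the double factor $(2t\mp1)^2$, so the surface degenerates into a lower stratum (case e) or a reducible case), while values outside the interval fail hyperbolicity or closedness. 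Finally I would prove the ``exactly one'' uniqueness by exhibiting discriminating invariants: (ir)reducibility of $h$ and the number of real linear factors separate a)--e); the dimension of the isometry/automorphism group isolates the homogeneous examples; whether $\mathcal{H}$ is singular at infinity in the sense of Definition~\ref{def_singular_GPSR} (equivalently whether $h\in\partial\mathcal{C}_2$, by Theorem~\ref{thm_Cn}) refines this further; and within case f) the parameter $b$ is itself invariant, so distinct $b\in(-1,1)$ give inequivalent surfaces.

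The hardest part will be the interplay of the two middle steps: correctly identifying the full residual equivalence, in particular the boost transformations, and proving that they collapse the several-parameter naive moduli of binary cubics down to the finite list a)--e) plus the single parameter $b$ of f), \emph{simultaneously} with the completeness analysis that excludes the escaping branches and forces the sharp range $b\in(-1,1)$. Each of these alone is delicate, and they are coupled, since the boosts change where the normalisation bound is attained and hence which representative one may legitimately reduce to.
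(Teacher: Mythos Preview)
The paper does not prove this theorem at all: it is quoted verbatim from \cite[Thm.\,1]{CDL} and used as an input. So there is no ``paper's own proof'' to compare your outline against.

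That said, there is a genuine circularity hazard in your plan that you should be aware of. You invoke Theorem~\ref{thm_Cn} twice, once for the normalisation bound and once for the equivalence ``singular at infinity $\Leftrightarrow$ $h\in\partial\mathcal{C}_2$''. The second of these is safe---it is exactly Lemma~\ref{lemma_singular_PSR_max_P_3}, proved independently of the classification. But the \emph{full} statement of Theorem~\ref{thm_Cn}, in particular the characterisation of $\partial\mathcal{C}_n$ via regular boundary behaviour, relies on Theorem~\ref{thm_irregularity_implications_CCPSR}, whose proof is literally a case-by-case check of the list a)--f) that you are trying to establish. So if at any point you appeal to Theorem~\ref{thm_Cn} beyond what Lemma~\ref{lemma_P_3(y)_bounds} and Lemma~\ref{lemma_singular_PSR_max_P_3} already give you, the argument becomes circular. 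The same caution applies to closedness-implies-hyperbolicity: the relevant self-contained statement is Theorem~\ref{thm_convex_compact_PSR_generating_set}, not Theorem~\ref{thm_Cn}.

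Your outline (standard form, stratify binary cubics by real-root type, collapse moduli via $O(2)$ and the ``boosts'' of Proposition~\ref{prop_std_form_h}, then impose the bound $\max_{\|y\|=1}P_3\le\tfrac{2}{3\sqrt{3}}$ and separate cases by invariants) is a reasonable sketch of how such a classification goes, and is in spirit close to what \cite{CDL} does. The step you correctly flag as hardest---showing that the boost action together with $O(2)$ genuinely collapses each degenerate stratum to a single point and the generic stratum to a one-parameter family, while simultaneously controlling which branches are closed---is indeed where essentially all the work lies, and your proposal does not yet contain the mechanism for it beyond the Chebyshev observation for case~f). In particular, the claim that within f) ``the parameter $b$ is itself invariant'' needs an actual argument (in \cite{CDL} this is the $j$-invariant of the Weierstrass cubic).
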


\section{Standard form for GPSR manifolds and their curvature tensors}
In this section we are going to develop the technical tools necessary to prove our results. The first and maybe central one is the existence of a certain standard form of GPSR manifolds in dependence of a chosen reference point and which, at least locally, varies smoothly along said reference point.

\begin{Prop}
\label{prop_std_form_h}
Let $\mathcal{H}\subset\{h=1\}\subset\mathbb{R}^{n+1}$ be an $n\geq1$-dimensional connected GPSR manifold of degree $\tau\geq 3$. Then for each $p\in\mathcal{H}$ there exists a linear change of coordinates on $\mathbb{R}^{n+1}$ described by $A(p)\in\mathrm{GL}(n+1)$, such that
\begin{enumerate}[(i)]
\item $(h\circ A(p))\left(\left(\begin{smallmatrix} x\\ y\end{smallmatrix}\right)\right)=x^\tau-x^{\tau-2}\langle y,y\rangle+\sum\limits_{k=3}^{\tau}x^{\tau-k}P_k(y)$,\label{std_form_i}
\item $A(p)\cdot\left(\begin{smallmatrix}
1\\
0
\end{smallmatrix}\right)=p$,\label{std_form_ii}
\end{enumerate}
where $y=(y_1,\ldots,y_n)^T$ denote the standard linear coordinates of $\mathbb{R}^n$, $\left(\begin{smallmatrix}
x\\
y
\end{smallmatrix}\right)$ denotes the corresponding coordinates of $\mathbb{R}^{n+1}\cong \mathbb{R}\times \mathbb{R}^n$, $\left(\begin{smallmatrix}
1\\
0
\end{smallmatrix}\right)$ denotes the point $\left(\begin{smallmatrix}
x\\
y
\end{smallmatrix}\right)=\left(\begin{smallmatrix}
1\\
0
\end{smallmatrix}\right)\in\mathbb{R}^{n+1}$, and $\langle\cdot,\cdot\rangle$ denotes the standard Euclidean scalar product on $\mathbb{R}^{n}$ induced by the $y$-coordinates. Furthermore, if $\mathcal{H}$ is a CCGPSR manifold then the transformations $A(p)$ can be chosen in such a way that $A:\mathcal{H}\to\mathrm{GL}(n+1)$ is smooth. If $\mathcal{H}$ is not closed as a subset of $\mathbb{R}^{n+1}$, we can still find for each $p\in\mathcal{H}$ a subset $V\subset\mathcal{H}$ that contains $p$ and is open in the subspace-topology of $\mathcal{H}\subset\mathbb{R}^{n+1}$, such that $A:V\to\mathrm{GL}(n+1)$ can be chosen so that it is a smooth map. 
\begin{proof} First we will show that \eqref{std_form_i} and \eqref{std_form_ii} hold for all connected GPSR manifolds. Then we will prove that in the case of CCGPSR manifolds, $A:\mathcal{H}\to\mathrm{GL}(n+1)$ can be chosen to be smooth. In the case of connected GPSR manifolds which are not necessarily closed we will show that for all $p\in\mathcal{H}$ there always exists an open neighbourhood $V\subset\mathcal{H}$ of $p$, and that $A:V\to\mathrm{GL}(n+1)$ can be chosen so that it is a smooth map.

Let $\mathcal{H}\subset\mathbb{R}^{n+1}$ be a connected GPSR manifold and denote by $\langle \cdot,\cdot\rangle$ the standard Euclidean scalar product on $\mathbb{R}^{n+1}$ induced by the choice of the linear coordinates on $\mathbb{R}^{n+1}$. Let $p\in\mathcal{H}$ be arbitrary. We will differentiate between two cases.
\paragraph*{Case 1: $dh_p=r\langle p,\cdot\rangle$ for some $r\ne 0$.}\hspace*{1mm}\\
Note that the property $dh_p\in\left(\mathbb{R}\setminus\{0\}\right)\cdot\langle r,\cdot\rangle$ is preserved by changing the linear coordinates of the ambient space $\mathbb{R}^{n+1}$ by rotations in $\mathrm{SO}(n+1)$ and by positive rescaling of the linear coordinates. We can thus without loss of generality assume that $p=(1,0,\ldots,0)^T$,
and denote the linear coordinates on $\mathbb{R}^{n+1}$ by $\left(x, y_1,\ldots, y_n\right)^T$. Since $h(p)=1$ is a necessary condition for $p\in\mathcal{H}$, we find that $h$ must be of the form
\begin{equation*}
h=x^\tau + x^{\tau-1}L(y)+x^{\tau-2} Q(y,y)+(\text{terms of lower order in }x),
\end{equation*}
where $L\in\mathrm{Lin}\left(\mathbb{R}^{n},\mathbb{R}\right)$ is linear in $y$ and $Q\in\mathrm{Sym}^2\left(\mathbb{R}^n\right)^*$ is a symmetric bilinear form. We can now check that $dh_p\in\left(\mathbb{R}\setminus\{0\}\right)\cdot\langle r,\cdot\rangle$ implies $L\equiv 0$. By assumption, $p$ is a hyperbolic point of $\mathcal{H}$. We calculate
\begin{equation*}
-\partial^2h_p=\left(
\begin{tabular}{c|c}
$-\tau(\tau-1)$ &  \\ \hline
   & $\overset{}{-2Q(\cdot,\cdot)}$
\end{tabular}
\right).
\end{equation*}
The hyperbolicity of the point $p$ thus shows that $Q$ must be negative definite. Hence, after a suitable transformation of the $y$-coordinates, we find that $h$ can be transformed into the desired form
\begin{equation*}
h=x^\tau-x^{\tau-2}\langle y,y\rangle+\sum\limits_{k=3}^{\tau}x^{\tau-k}P_k(y).
\end{equation*}

\paragraph*{Case 2: $dh_p\ne r\langle p,\cdot\rangle$ for all $r\ne 0$.}\hspace*{1mm}\\
Note that in this case, $r=0$ is automatically excluded by $dh_p(p)=\tau\ne 0$. We will find a linear coordinate transformation $B\in\mathrm{GL}(n+1)$ of the ambient space $\mathbb{R}^{n+1}$ of $\mathcal{H}$, such that $Bq=p$ and 
\begin{equation}\label{eqn_Btrafo_algpmoving}
dh_{Bq}(B\cdot)=r\langle q,\cdot\rangle,
\end{equation}
which will take us to the setting of the first case since $d(h\circ B)_q=dh_{Bq}(B\cdot)$. Note that in the above equation \eqref{eqn_Btrafo_algpmoving}, $\langle\cdot,\cdot\rangle$ denotes the Euclidean scalar product induced by the new coordinates, that is the standard linear coordinates in the domain of $B:\mathbb{R}^{n+1}\to\mathbb{R}^{n+1}$. In order to prove the existence of such a transformation $B$, let
\begin{equation*}
\langle\langle\cdot,\cdot\rangle\rangle:=\langle p,p\rangle \langle \cdot,\cdot\rangle-\langle p,\cdot\rangle^2+dh_p^2.
\end{equation*}
We claim that $\langle\langle\cdot,\cdot\rangle\rangle>0$. To show this, write $v\in\mathbb{R}^{n+1}\setminus \{0\}$ as $v=ap+w$, $w\in p^{\bot_{\langle\cdot,\cdot\rangle}}$.
Note that $a$ and $w$ are uniquely determined since $\mathbb{R}^{n+1}=\mathbb{R}p\oplus p^{\bot_{\langle\cdot,\cdot\rangle}}$. We obtain
\begin{equation*}
\langle\langle v,v\rangle\rangle=\langle p,p\rangle \langle w,w\rangle + \left(dh_p(ap+w)\right)^2.
\end{equation*}
For $w\ne 0$ we immediately see that $\langle\langle v,v\rangle\rangle>0$. For $w=0$, $v\ne 0$ implies $a\ne 0$. In that case $\langle\langle v,v\rangle\rangle=a^2\tau^2>0$. This shows that $\langle\langle\cdot,\cdot\rangle\rangle$ is indeed positive definite. Now let $B\in\mathrm{GL}(n+1)$ be an orthonormal basis\footnote{We interpret the columns of $B$ as the basis vectors.} of $\langle\langle\cdot,\cdot\rangle\rangle$, that is $B^*\langle\langle\cdot,\cdot\rangle\rangle=\langle \cdot,\cdot\rangle.$
Denote by $\widetilde{h}=h\circ B$ the transformed polynomial $h$ and let $q=B^{-1}p$. Then $d\widetilde{h}_q=dh_{Bq}(B\cdot)=dh_p(B\cdot)$
and 
\begin{align*}
\langle q,\cdot\rangle= \langle \langle Bq,B\cdot\rangle\rangle=\langle \langle p,B\cdot\rangle\rangle =\langle p,p\rangle \langle p,B\cdot\rangle-\langle p,p\rangle\langle p,B\cdot\rangle + dh_p(p)dh_p(B\cdot) =\tau dh_p(B\cdot) =\tau d\widetilde{h}_q.
\end{align*}
Hence, $B$ fulfils \eqref{eqn_Btrafo_algpmoving} with $r=\frac{1}{\tau}$ and we have $d\widetilde{h}_q=\frac{1}{\tau}\langle q,\cdot\rangle$ with $q\in B^{-1}\mathcal{H}$. We are now in the setting of the first case and can proceed as described therein.

Summarising up to this point, we have shown that for any $n\geq1$-dimensional connected GPSR manifold $\mathcal{H}\subset\{h=1\}$ and all $p\in\mathcal{H}$ we can find $A\in\mathrm{GL}(n+1)$, such that the conditions \eqref{std_form_i} and \eqref{std_form_ii} are fulfilled. Now we will describe how to construct $A$ explicitly.

We will start with the case where $\mathcal{H}$ is a CCGPSR manifold, and first construct the transformation $A(p)$ explicitly for one arbitrarily chosen point $p\in \mathcal{H}$, so that $A(p)$ fulfils \eqref{std_form_i} and \eqref{std_form_ii}. We start by choosing initial linear coordinates $(x,y_1,\ldots,y_n)^T$ of $\mathbb{R}^{n+1}$ and a point $p=\left(\begin{smallmatrix}
p_x\\
p_y
\end{smallmatrix}\right)\in\mathcal{H}$. After a possible reordering of the coordinates we can assume that $\frac{\partial h}{\partial x}(p)\ne 0$. This follows from $dh_p\ne 0$, since otherwise $\tau h(p)=dh_p(p)=0$. Let
\begin{equation*}
\widetilde{A}=\left(
\begin{tabular}{c|c}
$p_x$ &  $\left.-\frac{\partial_yh}{\partial_xh}\right|_p$ \\ \hline
$p_y$  & $\overset{}{\mathbbm{1}}$
\end{tabular}
\right)
\end{equation*}
where $\partial_xh:=\frac{\partial h}{\partial x}$ and $\partial_yh:=\sum\limits_{i=1}^n dh(\partial_{y_i})dy_i$. $\widetilde{A}\in \mathrm{GL}(n+1)$ follows from
\begin{align}\label{eqn_det_Atilda}
\det (\widetilde{A}) =\det\left(
\begin{tabular}{c|c}
$p_x+\left.\frac{\partial_yh}{\partial_xh}\right|_p(p_y)$ &  $\left.-\frac{\partial_yh}{\partial_xh}\right|_p$ \\ \hline
$0$  & $\overset{}{\mathbbm{1}}$
\end{tabular}
\right) =\frac{1}{\partial_xh}\left(\partial_x h_p\cdot p_x+\partial_yh_p(p_y)\right) =\frac{\tau}{\partial_x h_p} \ne 0.
\end{align}
In the above formula we have used the Euler identity for homogeneous functions. We find $\widetilde{A}\cdot\left(\begin{smallmatrix}
1\\
0
\end{smallmatrix}\right)=p$ and
\begin{align*}
h\left(\widetilde{A}\cdot\left(
\begin{matrix}
x\\
y
\end{matrix}
\right)
\right)&= x^\tau h(p) +x^{\tau-1} dh_p\left(
\begin{matrix}
-\left.\frac{\partial_yh}{\partial_xh}\right|_p(y)\\
 y
\end{matrix}
\right) +x^{\tau-2}\frac{1}{2}\partial^2 h_p\left(\left(
\begin{matrix}
-\left.\frac{\partial_yh}{\partial_xh}\right|_p(y)\\
 y
\end{matrix}
\right),\left(
\begin{matrix}
-\left.\frac{\partial_yh}{\partial_xh}\right|_p(y)\\
 y
\end{matrix}
\right)\right)\\ 
&\quad+(\text{terms of lower order in }x)\\
&= x^\tau +x^{\tau-2}\frac{1}{2}\partial^2 h_p\left(\left(
\begin{matrix}
-\left.\frac{\partial_yh}{\partial_xh}\right|_p(y)\\
 y
\end{matrix}
\right),\left(
\begin{matrix}
-\left.\frac{\partial_yh}{\partial_xh}\right|_p(y)\\
 y
\end{matrix}
\right)\right) +(\text{terms of lower order in }x).
\end{align*}
The vanishing of the $x^{\tau-1}$-term follows from $dh_p\left(
\begin{matrix}
-\left.\frac{\partial_yh}{\partial_xh}\right|_p(y)\\
y
\end{matrix}
\right)=0$ for all $y\in\mathbb{R}^n$. This is equivalent to $\left(
\begin{matrix}
-\left.\frac{\partial_yh}{\partial_xh}\right|_p(y)\\
 y
\end{matrix}
\right)\in T_p\mathcal{H}$ for all $y\in\mathbb{R}^n$. Hence, 
\begin{equation}\label{p_moving_pos_def_bilform_alternative}
\mathbb{R}^{n}\times\mathbb{R}^{n}\ni(v,w)\mapsto -\frac{1}{2}\partial^2 h_p\left(\left(
\begin{matrix}
-\left.\frac{\partial_yh}{\partial_xh}\right|_p(v)\\
v
\end{matrix}
\right),\left(
\begin{matrix}
-\left.\frac{\partial_yh}{\partial_xh}\right|_p(w)\\
w
\end{matrix}
\right)\right)
\end{equation}
is a positive definite bilinear form since $p$ is, by assumption, a hyperbolic point of $h$. This implies that there exists a linear transformation $\widetilde{E}\in\mathrm{GL}(n)$, such that
\begin{align*}
h\left(\widetilde{A}\cdot
\left(
\begin{tabular}{c|c}
$1$ &    \\ \hline
   & $\overset{}{\widetilde{E}}$
\end{tabular}
\right)\cdot\left(
\begin{matrix}
x\\
y
\end{matrix}
\right)\right)=x^\tau-x^{\tau-2}\langle y,y\rangle+\sum\limits_{k=3}^{\tau}x^{\tau-k}P_k(y).
\end{align*}
Since $\widetilde{A}\cdot
\left(
\begin{tabular}{c|c}
$1$ &    \\ \hline
   & $\overset{}{\widetilde{E}}$
\end{tabular}
\right)\cdot\left(
\begin{matrix}
1\\
0
\end{matrix}
\right)=p$, we have shown that for one choice of $p\in\mathcal{H}$ we can find a linear transformation fulfilling both \eqref{std_form_i} and \eqref{std_form_ii}.

In order to prove the statement of this proposition for all $p\in\mathcal{H}$, we have shown that we can assume without loss of generality that $h$ is of the form \eqref{std_form_i} and that $\left(\begin{smallmatrix}
1\\
0
\end{smallmatrix}
\right)\in\mathcal{H}\subset\{h=1\}$. For $p=\left(\begin{smallmatrix}
p_x\\
p_y
\end{smallmatrix}\right)\in\mathcal{H}$ and $E(p)\in\mathrm{GL}(n)$ consider the matrix
\begin{equation}\label{p_moving_A_matrix}
A(p):=\left(
\begin{tabular}{c|c}
$p_x$ &  $-\left.\frac{\partial_yh}{\partial_xh}\right|_p\circ E(p)$ \\ \hline
$p_y$  & $\overset{}{E(p)}$
\end{tabular}
\right).
\end{equation}
Firstly we need to ensure that $A(p)$ is well-defined for all $p\in \mathcal{H}$ and all choices for $E(p)\in\mathrm{GL}(n)$. This follows from
\begin{equation}
\partial_xh|_{\mathcal{H}}>0,\label{eqn_positivity_delxh}
\end{equation}
which we will prove next. In order to show that \eqref{eqn_positivity_delxh} holds for all $n\geq 1$-dimensional CCGPSR manifolds, it in facts suffices to prove it for all $1$-dimensional CCGPSR manifolds. To see this, suppose that $\mathrm{dim}(\mathcal{H})>1$ and that there exists a point $\overline{p}=\left(\begin{smallmatrix}
\overline{p}_x\\
\overline{p}_y
\end{smallmatrix}\right)\in\mathcal{H}$, such that $\partial_xh|_{\overline{p}}=0$. Then the set
\begin{equation*}
\widetilde{\mathcal{H}}:=\mathcal{H}\cap\mathrm{span}\left\{\left(\begin{smallmatrix}
1\\ 0
\end{smallmatrix}\right),\overline{p}\right\}
\end{equation*}
is a 1-dimensional CCGPSR manifold which coincides with the connected component of the level set
\begin{equation}\label{eqn_2dimdelxhreduction}
\left.\left\{\left(\begin{smallmatrix}
x\\ y
\end{smallmatrix}
\right)\in\mathbb{R}^2\ \right|\ h\left(x\left(\begin{smallmatrix}
1\\ 0
\end{smallmatrix}
\right)+y\left(\begin{smallmatrix}
0\\ v
\end{smallmatrix}
\right)\right)=1\right\}
\end{equation}
that contains the point $\left(\begin{smallmatrix}
1\\ 0
\end{smallmatrix}
\right)\in\mathbb{R}^2$. In \eqref{eqn_2dimdelxhreduction}, $v\in\mathbb{R}^n$ is chosen to fulfil $\mathrm{span}\left\{\left(\begin{smallmatrix}
1\\ 0
\end{smallmatrix}\right),\overline{p}\right\}=\mathrm{span}\left\{\left(\begin{smallmatrix}
1\\ 0
\end{smallmatrix}\right),\left(\begin{smallmatrix}
0\\ v
\end{smallmatrix}\right)\right\}$ and $\langle v,v\rangle=1$. Note that $\widetilde{h}:=h\left(x\left(\begin{smallmatrix}
1\\ 0
\end{smallmatrix}
\right)+y\left(\begin{smallmatrix}
0\\ v
\end{smallmatrix}
\right)\right)$ is then automatically of the form \eqref{std_form_i}. Denote by $\widetilde{p}=\left(\begin{smallmatrix}
\widetilde{p}_x\\ \widetilde{p}_y
\end{smallmatrix}\right)\in\mathbb{R}^2$ the point fulfilling $\widetilde{p}_x\left(\begin{smallmatrix}
1\\ 0
\end{smallmatrix}\right)+\widetilde{p}_y\left(\begin{smallmatrix}
0\\ v
\end{smallmatrix}\right)=\overline{p}$ and note that $\widetilde{p}_y\ne 0$. Then $\widetilde{p}\in\widetilde{\mathcal{H}}$ by construction and $\partial_x\widetilde{h}|_{\widetilde{p}}=0$. It now follows from Lemma \ref{precomp_cor} that there exists $R>0$, such that $\widetilde{h}\left(\widetilde{p}+R\left(\begin{smallmatrix}
1\\ 0
\end{smallmatrix}
\right)\right)=0$, since $\left(\begin{smallmatrix}
1\\ 0
\end{smallmatrix}
\right)\in T_{\widetilde{p}}\widetilde{\mathcal{H}}$ by assumption. The convexity of the cone $\widetilde{U}:=\mathbb{R}_{>0}\cdot\widetilde{\mathcal{H}}\subset\mathbb{R}^2$ (cf. Proposition \ref{prop_convexcone}) implies that
\begin{equation*}
\widetilde{U}\subset\left(\mathbb{R}_{>0}\cdot\left(\begin{smallmatrix}
1\\ 0
\end{smallmatrix}
\right) + \mathbb{R}\cdot \left(\widetilde{p}+R\left(\begin{smallmatrix}
1\\ 0
\end{smallmatrix}
\right)\right)\right)=:\widetilde{V}.
\end{equation*}
But $\widetilde{p}\not\in \widetilde{V}$, and we conclude with $\widetilde{\mathcal{H}}\subset\widetilde{U}$ that $\widetilde{p}\not\in\widetilde{\mathcal{H}}$, which is a contradiction. We have thus shown that \eqref{eqn_positivity_delxh} holds for every $n\geq 1$-dimensional CCGPSR manifold $\mathcal{H}$.

We now show that for all $p\in\mathcal{H}$ and all choices for $E(p)\in\mathrm{GL}(n)$, $A(p)\in \mathrm{GL}(n+1)$. The calculation is similar to calculating $\det(\widetilde{A})$ \eqref{eqn_det_Atilda} and yields
\begin{align*}
\det A(p)&=\frac{\tau}{\partial_x h_p}\det E(p)\ne 0.
\end{align*}
In order to obtain the conditions for $E(p)$ so that $A(p)$ fulfils condition \eqref{std_form_i}, we calculate
\begin{align*}
h\left(A(p)\cdot\left(
\begin{matrix}
x\\
y
\end{matrix}
\right)
\right)&= x^\tau h(p) +x^{\tau-1} dh_p\left(
\begin{matrix}
-\left.\frac{\partial_yh}{\partial_xh}\right|_p(E(p)y)\\
E(p)y
\end{matrix}
\right)\\
&\quad+x^{\tau-2}\frac{1}{2}\partial^2 h_p\left(\left(
\begin{matrix}
-\left.\frac{\partial_yh}{\partial_xh}\right|_p(E(p)y)\\
E(p)y
\end{matrix}
\right),\left(
\begin{matrix}
-\left.\frac{\partial_yh}{\partial_xh}\right|_p(E(p)y)\\
E(p)y
\end{matrix}
\right)\right)\\
&\quad+(\text{terms of lower order in }x).
\end{align*}
By definition, $dh_p\left(
\begin{matrix}
-\left.\frac{\partial_yh}{\partial_xh}\right|_p(E(p)y)\\
E(p)y
\end{matrix}
\right)= 0$ for all $y\in\mathbb{R}^n$ and all $E(p)\in\mathrm{GL}(n)$, which is equivalent to $\left(
\begin{matrix}
-\left.\frac{\partial_yh}{\partial_xh}\right|_p(E(p)y)\\
E(p)y
\end{matrix}
\right)\in T_p\mathcal{H}$ for all $y\in\mathbb{R}^n$ and all choices $E(p)\in\mathrm{GL}(n)$. Thus,
the bilinear form in equation \eqref{p_moving_pos_def_bilform_alternative} is a positive definite bilinear form
since $\mathcal{H}\subset\{h=1\}$ consists only of hyperbolic points of the defining polynomial $h$. We conclude that for all $p\in\mathcal{H}$, $E(p)\in\mathrm{GL}(n)$ can be chosen in such a way that 
\begin{equation}\label{E_condition}
-\frac{1}{2}\partial^2 h_p\left(\left(
\begin{matrix}
-\frac{\partial_y h(E(p)y)}{\partial_x h}\\
E(p)y
\end{matrix}
\right),\left(
\begin{matrix}
-\frac{\partial_y h(E(p)y)}{\partial_x h}\\
E(p)y
\end{matrix}
\right)\right)=\langle y,y\rangle
\end{equation}
for all $y\in\mathbb{R}^n$.

Summarising, we have demonstrated for each $p\in\mathcal{H}$ how to explicitly construct a linear change of coordinates $A(p)\in\mathrm{GL}(n+1)$ which fulfils \eqref{std_form_i} and \eqref{std_form_ii}. It remains to show that the assignment $A:\mathcal{H}\to\mathrm{GL}(n+1)$ can be chosen so that it is a smooth map. To see this observe that
\begin{equation*}
A(p)=\left(
\begin{tabular}{c|c}
$p_x$ &  $-\left.\frac{\partial_yh}{\partial_xh}\right|_p$ \\ \hline
$p_y$  & $\overset{}{\mathbbm{1}}$
\end{tabular}
\right)
\cdot
\left(
\begin{tabular}{c|c}
$1$ &    \\ \hline
   & $\overset{}{E(p)}$
\end{tabular}
\right).
\end{equation*}
The matrix $\left(
\begin{tabular}{c|c}
$p_x$ &  $-\left.\frac{\partial_yh}{\partial_xh}\right|_p$ \\ \hline
$p_y$  & $\overset{}{\mathbbm{1}}$
\end{tabular}
\right)$ in the above equation depends smoothly on $p\in\mathcal{H}$. Hence, it suffices to show that $E:\mathcal{H}\to\mathrm{GL}(n)$ can be chosen so that it is a smooth map and fulfils equation \eqref{E_condition}. This follows from the fact that, as we have seen above, 
\begin{equation*}
-\frac{1}{2}\partial^2 h_p\left(\left(
\begin{matrix}
-\left.\frac{\partial_yh}{\partial_xh}\right|_p(\cdot)\\
\cdot
\end{matrix}
\right),\left(
\begin{matrix}
-\left.\frac{\partial_yh}{\partial_xh}\right|_p(\cdot)\\
\cdot
\end{matrix}
\right)\right):\mathbb{R}^n\times\mathbb{R}^n\to\mathbb{R}
\end{equation*}
understood as in \eqref{p_moving_pos_def_bilform_alternative} is positive definite for all $p\in \mathcal{H}$, cf. \cite[Lem.\,8.13]{Le}.

It remains to deal with the cases where $\mathcal{H}\subset\{h=1\}\subset\mathbb{R}^{n+1}$ is a connected GPSR manifold, but is \underline{not} closed in $\mathbb{R}^{n+1}$. For $p\in\mathcal{H}$ arbitrary and fixed, we want to show that there exists a neighbourhood $V\subset\mathcal{H}$ of $p$ in $\mathcal{H}$, such that $A:V\to\mathrm{GL}(n+1)$ can be chosen to fulfil \eqref{std_form_i} and \eqref{std_form_ii} and to be a smooth map. We have already seen in the beginning of the proof that we can, after a possible linear transformation of the coordinates of $\mathbb{R}^{n+1}$, assume without loss of generality that $p=\left(\begin{smallmatrix}
1\\ 0
\end{smallmatrix}
\right)$, that $h$ is of the form $h=x^\tau-x^{\tau-2}\langle y,y\rangle + \sum\limits_{k=3}^\tau x^{\tau-k}P_k(y)$, and that $\mathcal{H}$ is the contained in the connected component of $\{h=1\}$ that contains the point $\left(\begin{smallmatrix}
1\\ 0
\end{smallmatrix}\right)\in\mathcal{H}$. Since $\partial_x h|_{\left(\begin{smallmatrix}
1\\ 0
\end{smallmatrix}
\right)}=\tau>0$, it immediately follows that we can find a neighbourhood $V$ of $\left(\begin{smallmatrix}
1\\ 0
\end{smallmatrix}
\right)$ in $\mathcal{H}$, such that $\partial_xh|_q>0$ for all $q\in V$. We can now define $A$ as in equation \eqref{p_moving_A_matrix} and proceed as for the case when $\mathcal{H}$ was assumed to be closed.
\end{proof}
\end{Prop}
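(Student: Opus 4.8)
The plan is to build the normalizing transformation $A(p)$ in two linear steps — a shear that places $p$ at $\left(\begin{smallmatrix}1\\0\end{smallmatrix}\right)$ and kills the $x^{\tau-1}$-coefficient, followed by a transformation of the $y$-coordinates normalizing the $x^{\tau-2}$-coefficient to $-\langle y,y\rangle$ — and then to check that both steps can be made to depend smoothly on $p$. For the first step, note $dh_p\neq 0$ (otherwise $\tau h(p)=dh_p(p)=0$, contradicting $h(p)=1$), so after permuting the coordinates of $\mathbb{R}^{n+1}$ we may assume $\partial_x h(p)\neq 0$; writing $p=(p_x,p_y)^T$, I would use the shear
\[
\widetilde A(p)=\begin{pmatrix} p_x & -\tfrac{\partial_y h}{\partial_x h}\big|_p\\[2pt] p_y & \mathbbm 1\end{pmatrix},
\]
whose determinant is $\tau/\partial_x h(p)\neq 0$ by the Euler identity and which satisfies $\widetilde A(p)\left(\begin{smallmatrix}1\\0\end{smallmatrix}\right)=p$. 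Its last $n$ columns lie in $\ker dh_p=T_p\mathcal{H}$, so $h\circ\widetilde A(p)$ has no $x^{\tau-1}$-term and equals $x^\tau+x^{\tau-2}Q(y,y)+(\text{lower order in }x)$, where $Q=\tfrac12\partial^2h_p$ transported to $\mathbb{R}^n$ via the parametrization $y\mapsto(-\tfrac{\partial_yh}{\partial_xh}|_p(y),y)^T$ of $T_p\mathcal{H}$.

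For the second step, the observation following Definition \ref{hyperbolicpointdef} — namely that $\mathbb{R}p$ and $\ker dh_p$ are $-\partial^2h_p$-orthogonal and the restriction to the former is negative definite — shows that $-\partial^2h_p$ is positive definite on $\ker dh_p$, hence $-2Q$ is a positive-definite symmetric bilinear form on $\mathbb{R}^n$. Choosing $E\in\mathrm{GL}(n)$ with $E^*(-2Q)=\langle\cdot,\cdot\rangle$ and setting $A(p):=\widetilde A(p)\cdot\mathrm{diag}(1,E)$ then yields a transformation satisfying both \eqref{std_form_i} and \eqref{std_form_ii}, which settles the purely algebraic part of the statement for all connected GPSR manifolds.

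For the smoothness claim in the CCGPSR case I would first normalize once so that $h$ itself already has the form in \eqref{std_form_i} with $\left(\begin{smallmatrix}1\\0\end{smallmatrix}\right)\in\mathcal{H}$. Then $\widetilde A(p)$ depends smoothly — indeed rationally — on $p$ as soon as $\partial_x h$ has no zero on $\mathcal{H}$, and this is the one genuinely global input: I claim $\partial_x h|_{\mathcal{H}}>0$. To prove it, suppose some $\bar p\in\mathcal{H}$ had $\partial_x h(\bar p)=0$; slicing $\mathcal{H}$ with $\mathrm{span}\{\left(\begin{smallmatrix}1\\0\end{smallmatrix}\right),\bar p\}$ gives a CCGPSR curve $\widetilde{\mathcal{H}}$ through $\left(\begin{smallmatrix}1\\0\end{smallmatrix}\right)$ and $\bar p$ on which $\left(\begin{smallmatrix}1\\0\end{smallmatrix}\right)\in T_{\bar p}\widetilde{\mathcal{H}}$. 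By precompactness of the affine tangent slice (Lemma \ref{precomp_cor}) the ray $\bar p+\mathbb{R}_{\geq0}\left(\begin{smallmatrix}1\\0\end{smallmatrix}\right)$ leaves the cone $\widetilde U=\mathbb{R}_{>0}\cdot\widetilde{\mathcal{H}}$ at some finite $R>0$ where $\widetilde h$ vanishes, and convexity of $\widetilde U$ (Proposition \ref{prop_convexcone}) then confines $\widetilde U$ to the open half-plane bounded by the line through the origin and $\bar p+R\left(\begin{smallmatrix}1\\0\end{smallmatrix}\right)$, which does not contain $\bar p$ — contradicting $\bar p\in\widetilde{\mathcal{H}}\subset\widetilde U$. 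Granting $\partial_x h>0$, the factor $E(p)$ can be chosen smooth, e.g. as the inverse symmetric square root of the Gram matrix of the smoothly varying positive-definite form $-2Q_p$, cf. \cite[Lem.\,8.13]{Le}; composing the two smooth factors produces a smooth $A:\mathcal{H}\to\mathrm{GL}(n+1)$.

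Finally, if $\mathcal{H}$ is connected but not closed, the positivity $\partial_x h>0$ cannot be expected globally, but the construction is purely local around the fixed $p$: after the initial normalization $\partial_x h\big(\left(\begin{smallmatrix}1\\0\end{smallmatrix}\right)\big)=\tau>0$, so $\partial_x h>0$ on an open neighbourhood $V\subset\mathcal{H}$ of $p$, and the same two-step recipe gives a smooth $A:V\to\mathrm{GL}(n+1)$. I expect the positivity $\partial_x h|_{\mathcal{H}}>0$ to be the main obstacle: it genuinely relies on the convexity and precompactness of the spanned cone and is false for general hyperbolic level sets, whereas the remaining ingredients are linear algebra plus the standard smooth dependence of the symmetric square root of a positive-definite matrix.
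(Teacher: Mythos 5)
Your proposal is correct and follows essentially the same route as the paper's explicit construction: the shear $\widetilde A(p)$ whose last $n$ columns span $T_p\mathcal H$, the normalization $E(p)$ of the positive-definite form $-\partial^2h_p|_{T_p\mathcal H\times T_p\mathcal H}$, the global positivity $\partial_xh|_{\mathcal H}>0$ obtained from the planar slice together with precompactness and convexity of the cone, and the smooth symmetric square root for $E$; you merely omit the paper's (logically redundant) preliminary two-case existence argument built on the auxiliary inner product $\langle\langle\cdot,\cdot\rangle\rangle$. The only slip is the normalization $E^*(-2Q)=\langle\cdot,\cdot\rangle$, which with your convention $Q=\tfrac12\partial^2h_p$ produces the coefficient $-\tfrac12\langle y,y\rangle$ rather than $-\langle y,y\rangle$; it should read $E^*(-Q)=\langle\cdot,\cdot\rangle$.
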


Proposition \ref{prop_std_form_h} shows in particular that for any CCGPSR manifold $\mathcal{H}\subset\{h=1\}$ we can assume without loss of generality that $h$ is of the form 
\begin{equation}
h=x^\tau-x^{\tau-2}\langle y,y\rangle+\sum\limits_{k=3}^{\tau}x^{\tau-k}P_k(y)\label{standard_form_h}
\end{equation}
and that $\mathcal{H}$ is the precisely the connected component of $\{h=1\}$ which contains the point $\left(\begin{smallmatrix}
x\\
y
\end{smallmatrix}
\right)=\left(\begin{smallmatrix}
1\\
0
\end{smallmatrix}
\right)\in\mathbb{R}^{n+1}$. If $\mathcal{H}$ is just assumed to be a connected an not necessarily closed GPSR manifold, we can still assume without loss of generality that $\mathcal{H}$ is a connected open subset of $\{h=1\}$ with $h$ of the form \eqref{standard_form_h}, and that $\mathcal{H}$ contains the point $\left(\begin{smallmatrix}
1\\
0
\end{smallmatrix}
\right)\in\mathbb{R}^{n+1}$. Also note that whenever $\mathcal{H}$ is a CCGPSR manifold, the point $\left(\begin{smallmatrix}
1\\
0
\end{smallmatrix}
\right)\in\mathcal{H}$ is the unique point in $\mathcal{H}$ that minimises the Euclidean distance of $\mathcal{H}\subset\mathbb{R}^{n+1}$ and the origin $0\in\mathbb{R}^{n+1}$ (in the chosen linear coordinates $\left(\begin{smallmatrix}
x\\
y
\end{smallmatrix}
\right)$ of $\mathbb{R}^{n+1}$).

Further observe that for connected PSR manifolds the term $P_3$ is never uniquely determined. To see this consider $\mathcal{H}\subset\{h=1\}$ with $h$ of the form \eqref{standard_form_h}. If $P_3\ne 0$, the transformation $y\mapsto -y$ will preserve the form \eqref{standard_form_h} and send $P_3\mapsto -P_3$. For $P_3=0$, one can verify that for any point $p\in\mathcal{H}$, $p\ne \left(\begin{smallmatrix}1\\ 0\end{smallmatrix}\right)$, the corresponding coordinate transformation $A(p)$ of the form \eqref{p_moving_A_matrix} will induce a non-zero $P_3$-part in the transformed polynomial $h$.

Proposition \ref{prop_std_form_h} is also useful to unclutter the rest of our studies by introducing the term \textit{standard form} of GPSR manifolds as follows:
\begin{notation}
The statement that a GPSR manifold $\mathcal{H}$ is in \textsf{standard form} will in the following mean that
	\begin{itemize}
		\item $\mathcal{H}\subset\{h=1\}\subset\mathbb{R}^{n+1}$ is an $n\geq 1$-dimensional GPSR manifold of degree $\tau\geq 3$,
		\item we have chosen linear coordinates $\left(\begin{smallmatrix}x\\ y\end{smallmatrix}\right)=(x,y_1,\ldots,y_n)^T$ on the ambient space $\mathbb{R}^{n+1}$ of $\mathcal{H}$, such that $h$ is of the form \eqref{standard_form_h} and $\left(\begin{smallmatrix}x\\ y\end{smallmatrix}\right)=\left(\begin{smallmatrix}1\\ 0\end{smallmatrix}\right)\in\mathcal{H}$.
	\end{itemize}
By Proposition \ref{prop_std_form_h} we know that assuming that a GPSR manifold is in standard form is not a restriction of generality. We might further specify the degree $\tau$ of $\mathcal{H}$ or impose topological properties such as maximality or the dimension, which we will then denote by e.g. ``let $\mathcal{H}$ be a maximal PSR manifold of dimension $n\geq 3$ in standard form''. Using the abbreviation ``standard form'' will thus allow us to omit stating every time that the defining polynomial of a GPSR manifold $\mathcal{H}$ is assumed to be of the form \eqref{standard_form_h}, the dimension of $\mathcal{H}$ has to be $\geq 1$, and that for the linear coordinates $\left(\begin{smallmatrix}x\\ y\end{smallmatrix}\right)$ of the ambient space the point $\left(\begin{smallmatrix}1\\ 0\end{smallmatrix}\right)$ is assumed to be an element of $\mathcal{H}$. This will make the following statements considerably easier to read. Occasional, however, we might write out additional information that is already implied by the term ``standard form'' in order to make specific statements easier to understand, see e.g. Corollary \ref{cor_regularity_boundary_generating_set_CCPSR}.
\end{notation}

Next, we will calculate standard forms of CCPSR surfaces, cf. Theorem \ref{lowdimpsrclassTHM}. Aside from serving as examples for the techniques developed in Proposition \ref{prop_std_form_h}, the following calculations will be important in proving the later Theorem \ref{thm_convex_compact_PSR_generating_set} which is one of the main ingredients we need to prove our main Theorem \ref{thm_Cn}.
\begin{Ex}
\label{example_PSRsurfaces_standard_form}
Let $(x,y,z)^T$ denote the standard linear coordinates on $\mathbb{R}^3$. Recall that CCPSR surfaces $\mathcal{H}\subset\{h=1\}\subset\mathbb{R}^3$ have been classified up to equivalence in \cite[Thm.\,1]{CDL}, cf. Theorem \ref{lowdimpsrclassTHM} a)--f). In the following we will for each $h$ corresponding to the cases a)--f) give a choice of $A=A(p)\in\mathrm{GL}(3)$ corresponding to a given point $p\in\mathcal{H}$, such that $A\cdot\left(\begin{smallmatrix}
1\\ 0 \\ 0
\end{smallmatrix}\right)=p$, $h\left(A\cdot\left(\begin{smallmatrix}
x\\ y\\ z
\end{smallmatrix}
\right)\right)$ is of the form \eqref{standard_form_h}, and $A^{-1}(\mathcal{H})\subset\left\{h\circ A=1\right\}$ is precisely the connected component of $\left\{h\circ A=1\right\}\subset\mathbb{R}^3$ that contains the point $\left(\begin{smallmatrix}
x\\ y\\ z
\end{smallmatrix}
\right)=\left(\begin{smallmatrix}
1\\ 0\\ 0
\end{smallmatrix}
\right)$.
\paragraph*{a) $\mathcal{H}=\{h=xyz=1,\ x>0,\ y>0\}$.}\hspace*{0mm}\\
It is clear that $p=(1,1,1)^T\in\mathcal{H}$. One choice for the corresponding linear transformation of the form \eqref{p_moving_A_matrix} is
\begin{equation*}
A(p)=\left(
\begin{smallmatrix}
1 & -\frac{2}{\sqrt{3}} & 0\\
1 & \frac{1}{\sqrt{3}} & -1\\
1 & \frac{1}{\sqrt{3}} & 1
\end{smallmatrix}
\right),
\end{equation*}
which brings $h$ to the form
\begin{equation}\label{eqn_CDL_a_equiv_h_form}
h\left(A(p)\cdot\left(\begin{smallmatrix}
x\\
y\\
z
\end{smallmatrix}
\right)\right)=x^3-x(y^2+z^2)-\frac{2}{3\sqrt{3}}y^3+\frac{2}{\sqrt{3}}yz^2.
\end{equation}

\paragraph*{b) $\mathcal{H}=\{h=x(xy-z^2)=1,\ x>0\}$.}\hspace*{0mm}\\
Similar to the surface in \textbf{a)}, consider the point $p=(1,1,0)^T\in\mathcal{H}$ and
\begin{equation*}
A(p)=\left(\begin{smallmatrix}
1 & -\frac{1}{\sqrt{3}} & 0\\
1 & \frac{2}{\sqrt{3}} & 0\\
0 & 0 & 1
\end{smallmatrix}
\right).
\end{equation*}
Then 
\begin{equation}\label{eqn_CDL_b_equiv_h_form}
h\left(A(p)\cdot\left(\begin{smallmatrix}
x\\
y\\
z
\end{smallmatrix}
\right)\right)=x^3-x(y^2+z^2)+\frac{2}{3\sqrt{3}}y^3+\frac{1}{\sqrt{3}}yz^2.
\end{equation}

\paragraph*{c) $\mathcal{H}=\{h=x(yz+x^2)=1,\ x<0,\ y>0\}$.}\hspace*{0mm}\\
With $p=(-1,2,-2)^T\in\mathcal{H}$ and 
\begin{equation*}
A(p)=\left(
\begin{smallmatrix}
-1 & 0 & \frac{2\sqrt{2}}{\sqrt{15}}\\
1 & \frac{1}{\sqrt{2}} & -\frac{1}{\sqrt{30}}\\
-2 & \sqrt{2} & \frac{\sqrt{2}}{\sqrt{15}}
\end{smallmatrix}
\right)
\end{equation*}
we obtain
\begin{equation}\label{eqn_CDL_c_equiv_h_form}
h\left(A(p)\cdot\left(\begin{smallmatrix}
x\\
y\\
z
\end{smallmatrix}
\right)\right)=x^3-x(y^2+z^2)+\frac{2\sqrt{2}}{\sqrt{15}}y^2z+\frac{14\sqrt{2}}{15\sqrt{15}}z^3.
\end{equation}

\paragraph*{d) $\mathcal{H}=\{h=z(x^2+y^2-z^2)=1,\ z<0\}$.}\hspace*{0mm}\\
By re-ordering of the coordinates and switching one sign one quickly finds that $\mathcal{H}$ is equivalent to $\widetilde{\mathcal{H}}=\{\widetilde{h}=x^3-x(y^2+z^2)=1,\ x>0\}$, which is precisely the connected component of $\{\widetilde{h}=1\}$ that contains the point $(x,y,z)^T=(1,0,0)^T$. The corresponding point in $\mathcal{H}$ and transformation $A\in\mathrm{GL}(3)$ are given by $p=(0,0,1)^T\in\mathcal{H}$ and
\begin{equation*}
A=\left(\begin{smallmatrix}
0 & 0 & -1 \\ 0 & 1 & 0 \\ 1 & 0 & 0
\end{smallmatrix}
\right),
\end{equation*}
so that indeed
\begin{equation}\label{eqn_CDL_d_equiv_h_form}
h\left(A\cdot\left(\begin{smallmatrix}
x\\
y\\
z
\end{smallmatrix}
\right)\right)=x^3-x(y^2+z^2).
\end{equation}
The transformation $A$ is not of the form \eqref{p_moving_A_matrix} since we needed to switch the $x$- and $z$-coordinate so that $\partial_x(h\circ A)|_{p}\ne 0$.

\paragraph*{e) $\mathcal{H}=\{h=x(y^2-z^2)+y^3=1,\ y<0,\ x>0\}$.}\hspace*{0mm}\\
Consider the point $p=(2,-1,0)^T\in\mathcal{H}$ and the corresponding linear transformation as in \eqref{p_moving_A_matrix}
\begin{equation*}
A(p)=\left(
\begin{smallmatrix}
2 & \frac{1}{\sqrt{3}} & 0\\
-1 & \frac{1}{\sqrt{3}} & -\frac{1}{\sqrt{30}}\\
0 & 0 & \frac{1}{\sqrt{2}}
\end{smallmatrix}
\right).
\end{equation*}
We find
\begin{equation}\label{eqn_CDL_e_equiv_h_form}
h\left(A(p)\cdot\left(\begin{smallmatrix}
x\\
y\\
z
\end{smallmatrix}
\right)\right)=x^3-x(y^2+z^2)+\frac{2}{3\sqrt{3}}y^3-\frac{1}{2\sqrt{3}}yz^2.
\end{equation}

\paragraph*{f) $\mathcal{H}_b=\{h=y^2z-4x^3+3xz^2+bz^3=1,\ z<0,\ 2x>z\}$, $b\in(-1,1)$.}\hspace*{0mm}\\
Observe that the point $p_b=\frac{1}{\sqrt[3]{1-b}}(1/2,0,-1)^T$ is contained in $\mathcal{H}_b$ for all $b\in(-1,1)$. After switching the $x$- and $z$-coordinate via the transformation $\left(\begin{smallmatrix}
 & & 1\\ & 1 & \\ 1 & &
\end{smallmatrix}\right)$, we can apply the construction in equation \eqref{p_moving_A_matrix} in order to find $\widetilde{A}_b\in\mathrm{GL}(3)$, such that $h\circ\left(\left(\begin{smallmatrix}
 & & 1\\ & 1 & \\ 1 & &
\end{smallmatrix}\right)\cdot \widetilde{A}_b\right)$ is of the form \eqref{standard_form_h}. We find
\begin{equation*}
\widetilde{A}_b=\left(\begin{matrix}
-\frac{1}{\sqrt[3]{1-b}} & 0 & 0\\ 0 & \sqrt[6]{1-b} & 0\\ \frac{1}{2\sqrt[3]{1-b}} & 0 & -\frac{\sqrt[6]{1-b}}{\sqrt{6}}
\end{matrix}\right).
\end{equation*}
With $A_b:=\left(\begin{smallmatrix}
 & & 1\\ & 1 & \\ 1 & &
\end{smallmatrix}\right)\cdot \widetilde{A}_b$
we obtain
\begin{equation}\label{eqn_CDL_f_equiv_h_form}
h\left(A_b\cdot \left(\begin{smallmatrix}
x\\ y\\ z
\end{smallmatrix}
\right)\right)=x^3-x(y^2+z^2)+\frac{\sqrt{2}\sqrt{1-b}}{3\sqrt{3}}z^3
\end{equation}
and have thus shown that $h\circ A_b$ is of the form \eqref{standard_form_h} and that $A_b\cdot(1,0,0)^T=p_b$ for all $b\in(-1,1)$ as required. Note that equation \eqref{eqn_CDL_f_equiv_h_form} allows us to interpret the one-parameter family of CCPSR surfaces $\mathcal{H}_b$ as an interpolation between the CCPSR surfaces Theorem \ref{lowdimpsrclassTHM} d) (for $b\to 1$, see equation \eqref{eqn_CDL_d_equiv_h_form}) and Theorem \ref{lowdimpsrclassTHM} e) (for $b\to -1$). To see the latter, observe that with
\begin{equation*}
\widetilde{A}=\left(
\begin{matrix}
2^{-\frac{1}{3}}\frac{4}{3} & 2^{-\frac{1}{3}} \frac{2}{3\sqrt{3}} & 0\\
2^{-\frac{1}{3}}\frac{1}{\sqrt{3}} & 2^{-\frac{1}{3}}\frac{5}{3} & 0\\
0 & 0 & 2^{-\frac{5}{6}}\sqrt{3}
\end{matrix}
\right),
\end{equation*}
the polynomial
	\begin{equation}\label{eqn_b_alt_limit}
		\widetilde{h}:=x^3-x(y^2+z^2)+\frac{2}{3\sqrt{3}}y^3,
	\end{equation}
which is precisely the limit polynomial of \eqref{eqn_CDL_f_equiv_h_form} for $b\to-1$ after to swapping $y$ and $z$, transforms to
\begin{equation*}
\widetilde{h}\left(\widetilde{A}\cdot\left(\begin{smallmatrix}
x\\ y\\ z
\end{smallmatrix}\right)\right)=x^3-x(y^2+z^2)+\frac{2}{3\sqrt{3}}y^3-\frac{1}{2\sqrt{3}}yz^2
\end{equation*}
which coincides with equation \eqref{eqn_CDL_e_equiv_h_form}. Furthermore one can check that the point $\widetilde{A}\cdot(1,0,0)^T$ is contained in the connected component of $\left\{x^3-x(y^2+z^2)+\frac{2}{3\sqrt{3}}y^3-\frac{1}{2\sqrt{3}}yz^2=1\right\}$ that contains the point $(x,y,z)^T=(1,0,0)^T$, for which we have shown that this is equivalent to the CCPSR surface e) in Theorem \ref{lowdimpsrclassTHM}. Hence, the connected component of $\left\{x^3-x(y^2+z^2)+\frac{2}{3\sqrt{3}}y^3=1\right\}$
that contains the point $(x,y,z)^T=(1,0,0)^T$ is in particular also a CCPSR surface which is equivalent to the surface e).\footnote{In order to find the transformation $\widetilde{A}$, we have used a technique developed later in Theorem \ref{thm_convex_compact_PSR_generating_set}. Specifically we used equations \eqref{eqn_A(T)} and \eqref{eqn_h_2dim_E_id_trafo}.}
\end{Ex}

For the following considerations it is helpful to consider a certain parametrisation of connected GPSR manifolds which we will introduce next.
\begin{Def}
\label{def_dom(H)}
Let $\mathcal{H}$ be a connected GPSR manifold in standard form.
We define
\begin{equation*}
\mathrm{dom}(\mathcal{H}):=\mathrm{pr}_{\mathbb{R}^n}\left((\mathbb{R}_{>0}\cdot\mathcal{H})\cap\left.\left\{
\left(
\begin{smallmatrix}
1\\
y
\end{smallmatrix}
\right)\in\mathbb{R}^{n+1}
\
\right|\ y\in\mathbb{R}^n
\right\}\right)\subset\mathbb{R}^n
\end{equation*}
where $\mathrm{pr}_{\mathbb{R}^n}:\mathbb{R}^{n+1}\to\mathbb{R}^n$, $\left(\begin{smallmatrix}
x\\
y
\end{smallmatrix}
\right)\mapsto y$.
\end{Def}
The set $\mathrm{dom}(\mathcal{H})$ is precisely the intersection of the cone spanned by $\mathcal{H}$, that is $\mathbb{R}_{>0}\cdot\mathcal{H}\subset\mathbb{R}^{n+1}$, and $T_{\left(\begin{smallmatrix}
1\\
0
\end{smallmatrix}\right)}\mathcal{H}$ affinely embedded in $\mathbb{R}^{n+1}$ via $v\mapsto \left(\begin{smallmatrix}
1\\ 0
\end{smallmatrix}  
\right) + \left(\begin{smallmatrix}
0\\ v
\end{smallmatrix}
\right)$.
Independent of whether the connected GPSR manifold $\mathcal{H}\subset\{h=1\}\subset\mathbb{R}^{n+1}$ is closed or not, $\mathrm{dom}(\mathcal{H})\subset\mathbb{R}^n$ is well-defined, open in $\mathbb{R}^n$, and always contains an open ball $B_\varepsilon(0)\subset\mathbb{R}^n$ with respect to the standard scalar product $\langle \cdot,\cdot\rangle$ on $\mathbb{R}^n$ for $\varepsilon>0$ small enough. In order to check that these claims are true, one uses the following facts. Firstly, every ray $\mathbb{R}_{>0}\cdot p$ for $p\in\mathcal{H}$ meets $\mathcal{H}$ precisely once. This follows from the homogeneity of degree $\tau\geq3$ of the corresponding polynomial $h:\mathbb{R}^{n+1}\to\mathbb{R}$. Secondly, $\mathcal{H}\subset\{x\geq 1\}\subset\mathbb{R}^{n+1}$ and $\mathcal{H}\cap\{x=1\}=\left(\begin{smallmatrix}
1\\ 0
\end{smallmatrix}\right)$. This follows from the fact that $\mathcal{H}$ is locally around each point in $\mathcal{H}$ contained in the boundary of a strictly convex domain of in $\mathbb{R}^{n+1}$, which in turn follows from the Sacksteder-van Heijenoort Theorem\footnote{To apply said theorem, one first needs to extend the considered local neighbourhood of $\mathcal{H}$ to a Euclidean complete convex hypersurface.
} \cite{Wu}. Note that if $\mathcal{H}$ is a CCGPSR manifold, then $\mathcal{H}$ is (globally) the boundary of the strictly convex domain $\mathbb{R}_{>1}\cdot\mathcal{H}\subset\mathbb{R}^{n+1}$.
Thus, every ray $\mathbb{R}_{>0}\cdot p$ for $p\in\mathcal{H}$ has a unique intersection-point with the set $\mathrm{dom}(\mathcal{H})$. We see that $\mathrm{dom}(\mathcal{H})$ is
bijective to $\mathcal{H}$ via 
\begin{equation}\label{dom(H)_H_iso}
\Phi:\mathrm{dom}(\mathcal{H})\to \mathcal{H},\quad \Phi(z)= \frac{1}{\sqrt[\tau]{h\left(\left(\begin{smallmatrix}
1\\ z
\end{smallmatrix}
\right)\right)}}\left(\begin{matrix}
1\\ z
\end{matrix}\right).
\end{equation}
One can check that $\Phi$ is everywhere a local diffeomorphism. This and $\mathcal{H}$ being a hypersurface of $\mathbb{R}^{n+1}$ also show that $\mathrm{dom}(\mathcal{H})\subset\mathbb{R}^n$ is open and, hence, that $\Phi$ is a diffeomorphism\footnote{This is the reason behind our choice of the term ``$\mathrm{dom}$'', which is to be thought of as an abbreviation of ``domain''.}.
Note, however, that the set $\mathrm{dom}(\mathcal{H})$ does depend on the chosen linear coordinates of the ambient space $\mathbb{R}^{n+1}$.

Lemma \ref{precomp_cor} implies the following property of $\mathrm{dom}(\mathcal{H})$ if $\mathcal{H}$ is a CCGPSR manifold.
\begin{Cor}
\label{cor_propdomH_CCGPSR}
Let $\mathcal{H}$ be a CCGPSR manifold in standard form. Then $\mathrm{dom}(\mathcal{H})\subset\mathbb{R}^n$ is open, precompact, and convex.
\end{Cor}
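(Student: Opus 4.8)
The plan is to reduce the statement directly to Lemma \ref{precomp_cor}, applied at the distinguished point $p:=\left(\begin{smallmatrix}1\\ 0\end{smallmatrix}\right)\in\mathcal{H}$. First I would identify the affinely embedded tangent space of $\mathcal{H}$ at $p$. Since $\mathcal{H}$ is in standard form, $h=x^\tau-x^{\tau-2}\langle y,y\rangle+\sum_{k=3}^{\tau}x^{\tau-k}P_k(y)$, and differentiating gives $\partial_xh|_p=\tau$ and $\partial_{y_i}h|_p=0$ for all $i$, so $dh_p=\tau\,dx$. Hence $T_p\mathcal{H}=\ker(dh_p)=\left\{\left(\begin{smallmatrix}0\\ v\end{smallmatrix}\right)\mid v\in\mathbb{R}^n\right\}$, and therefore the affinely embedded tangent space equals $p+T_p\mathcal{H}=\left\{\left(\begin{smallmatrix}1\\ v\end{smallmatrix}\right)\mid v\in\mathbb{R}^n\right\}=\{x=1\}\subset\mathbb{R}^{n+1}$.

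Next I would observe that, by Definition \ref{def_dom(H)}, the set $\mathrm{dom}(\mathcal{H})$ is exactly the image under $\mathrm{pr}_{\mathbb{R}^n}$ of $(\mathbb{R}_{>0}\cdot\mathcal{H})\cap\{x=1\}=(p+T_p\mathcal{H})\cap U$, where $U=\mathbb{R}_{>0}\cdot\mathcal{H}$. The restriction of $\mathrm{pr}_{\mathbb{R}^n}$ to the hyperplane $\{x=1\}$, namely $\left(\begin{smallmatrix}1\\ v\end{smallmatrix}\right)\mapsto v$, is an affine diffeomorphism onto $\mathbb{R}^n$ with affine inverse $v\mapsto\left(\begin{smallmatrix}1\\ v\end{smallmatrix}\right)$, where $\{x=1\}$ is equipped with the subspace topology from $\mathbb{R}^{n+1}$ — precisely the topology used in the statement of Lemma \ref{precomp_cor}.

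Then I would invoke Lemma \ref{precomp_cor} for this particular $p$: it yields that $(p+T_p\mathcal{H})\cap U$ is open, precompact, and convex as a subset of $p+T_p\mathcal{H}$. Transporting along the affine diffeomorphism above, openness is preserved (it is a homeomorphism), convexity is preserved (affine maps preserve convex sets), and precompactness is preserved because a homeomorphism carries the closure of a set onto the closure of its image and continuous images of compact sets are compact; applying this to the map and to its inverse gives that the closure of $\mathrm{dom}(\mathcal{H})$ in $\mathbb{R}^n$ is compact. This establishes the claim.

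I do not expect a genuine obstacle here; the two points that require a little care are (i) confirming that the affinely embedded tangent hyperplane at the base point of the standard form is exactly $\{x=1\}$, which is what ties the abstract slice in Lemma \ref{precomp_cor} to the concrete set $\mathrm{dom}(\mathcal{H})$ of Definition \ref{def_dom(H)}, and (ii) making sure that precompactness is transported along the affine coordinate identification between $\{x=1\}$ and $\mathbb{R}^n$ rather than along the ambient inclusion $\{x=1\}\hookrightarrow\mathbb{R}^{n+1}$, although for an affine hyperplane these coincide as topological notions.
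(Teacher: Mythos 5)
Your proposal is correct and is essentially the paper's intended argument: the text following Definition \ref{def_dom(H)} already identifies $\mathrm{dom}(\mathcal{H})$ with the intersection of the cone $\mathbb{R}_{>0}\cdot\mathcal{H}$ and the affinely embedded tangent space at $\left(\begin{smallmatrix}1\\ 0\end{smallmatrix}\right)$, and the corollary is then a direct application of Lemma \ref{precomp_cor} at that point. Your verification that $dh_p=\tau\,dx$ in standard form (so that $p+T_p\mathcal{H}=\{x=1\}$) and your remark about transporting openness, convexity, and precompactness along the affine identification are exactly the details the paper leaves implicit.
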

The statement of Corollary \ref{cor_propdomH_CCGPSR} is in particular independent of the linear coordinates of the ambient space $\mathbb{R}^{n+1}$ of $\mathcal{H}$.

We will use the parametrisation \eqref{dom(H)_H_iso} of $\mathcal{H}\subset\{h=1\}$ to study infinitesimal changes of the $P_k$'s in the standard form \eqref{standard_form_h} of $h$ when we vary the point $p\in\mathcal{H}$ in Prop. \ref{prop_std_form_h} \eqref{std_form_i} near $\left(\begin{smallmatrix}
x\\ y
\end{smallmatrix}\right)=\left(\begin{smallmatrix}
1\\
0
\end{smallmatrix}\right)\in\mathcal{H}$. The results are important technical tools for our following studies.
Whenever we use $z$-variables from here on, we will be working with $\mathrm{dom}(\mathcal{H})$. The $y$-variables will be used in when working with the ambient space $\mathbb{R}^{n+1}$ of $\mathcal{H}$.

For the following calculations we will define the (globally smooth) functions
\begin{align}
& \alpha:\mathbb{R}^n\to \mathbb{R},\quad \alpha(z)=\left.\partial_xh\right|_{\left(\begin{smallmatrix}
1\\ z
\end{smallmatrix}\right)},\label{alpha_def}\\
& \beta:\mathbb{R}^n\to \mathbb{R},\quad \beta(z)=h\left(\left(\begin{smallmatrix}
1\\ z
\end{smallmatrix}\right)\right).\label{beta_def}
\end{align}
Note that whenever $\mathcal{H}$ is closed and connected, $\mathrm{dom}(\mathcal{H})$ coincides with the connected component of $\{\beta(z)>0\}$ that contains the point $z=0\in\mathbb{R}^n$, and $\beta|_{\partial\mathrm{dom}(\mathcal{H})}\equiv 0$. Also, as shown in the proof of Proposition \ref{prop_std_form_h}, $\alpha|_{\mathrm{dom}(\mathcal{H})}>0$ if $\mathcal{H}$ is a CCGPSR manifold. If $\mathcal{H}$ is not closed, we can at least find a neighbourhood $V$ of $z=0\in\mathbb{R}^n$, such that $\alpha|_V>0$, which also follows from the proof of Proposition \ref{prop_std_form_h}. Furthermore, it immediately follows from \eqref{dom(H)_H_iso} that $\Phi(z)=\frac{1}{\sqrt[\tau]{\beta(z)}}\left(\begin{smallmatrix}
1\\
z
\end{smallmatrix}
\right)$ for $z\in\mathrm{dom}(\mathcal{H})$. While $dh$ does not vanish on $\mathcal{H}$, it might vanish at a point $\left(\begin{smallmatrix}
x\\ y
\end{smallmatrix}\right)=\left(\begin{smallmatrix}
1\\ \overline{z}
\end{smallmatrix}\right)$ for $\overline{z}\in\partial\mathrm{dom}(\mathcal{H})$ or, equivalently, on the ray $\mathbb{R}_{>0}\cdot\left(\begin{smallmatrix}
1\\
\overline{z}
\end{smallmatrix}\right)\subset \partial(\mathbb{R}_{>0}\cdot\mathcal{H})$. If $\mathcal{H}$ is furthermore closed, we are in this case precisely in the setting of CCGPSR manifolds that are singular at infinity, cf. Definition \ref{def_singular_GPSR}. The following lemma characterises these cases for CCGPSR manifolds in terms of the functions $\alpha$ and $\beta$.
\begin{Lem}\label{lemma_boundary_conditions_alpha_beta}
Let $\mathcal{H}$ be a CCGPSR manifold in standard form and
let $\alpha$, $\beta$ be defined as in \eqref{alpha_def}, respectively \eqref{beta_def}. Then for all $\overline{z}\in\partial\mathrm{dom}(\mathcal{H})$ the following are equivalent:
\begin{enumerate}[(i)]
\item $dh_{\left(\begin{smallmatrix}
1\\
\overline{z}
\end{smallmatrix}\right)}=0$,
\item $\alpha(\overline{z})=0$,
\item $d\beta_{\overline{z}}=0$.
\end{enumerate}
\begin{proof}
Assume that $dh_{\left(\begin{smallmatrix}
1\\
\overline{z}
\end{smallmatrix}
\right)}=0$ for a $\overline{z}\in\partial\mathrm{dom}(\mathcal{H})$. By affinely embedding $\mathrm{dom}(\mathcal{H})$ into $\mathbb{R}^{n+1}$ via $z\mapsto \left(\begin{smallmatrix}
1\\
z
\end{smallmatrix}
\right)$ and identifying $y$ and $z$ we obtain
\begin{equation*}
dh_{\left(\begin{smallmatrix}
1\\
\overline{z}
\end{smallmatrix}
\right)}=\alpha(\overline{z})dx+d\beta_{\overline{z}}.
\end{equation*}
Since $\alpha(\overline{z})dx$ and $d\beta_{\overline{z}}$ are linearly independent we conclude that $\alpha(\overline{z})=0$ and $d\beta_{\overline{z}}=0$.

Now assume that $d\beta_{\overline{z}}=0$. Then, using the Euler-identity for homogeneous functions, $0=\tau\beta(\overline{z})=dh_{\left(\begin{smallmatrix}
1\\
\overline{z}
\end{smallmatrix}
\right)}\left(\begin{smallmatrix}
1\\
\overline{z}
\end{smallmatrix}
\right)=\alpha(\overline{z})$
showing that $\alpha(\overline{z})=0$. Hence, $dh_{\left(\begin{smallmatrix}
1\\
\overline{z}
\end{smallmatrix}
\right)}=0$. 

Lastly, assume that $\alpha(\overline{z})=0$. Similar to above, $0=\tau\beta(\overline{z})=dh_{\left(\begin{smallmatrix}
1\\
\overline{z}
\end{smallmatrix}
\right)}\left(\begin{smallmatrix}
1\\
\overline{z}
\end{smallmatrix}
\right)=d\beta_{\overline{z}}(\overline{z})$. We need to show that this implies $d\beta_{\overline{z}}=0$. Assume the latter does not hold. Then $dh_{\left(\begin{smallmatrix}
1\\
\overline{z}
\end{smallmatrix}
\right)}\ne 0$ and, hence, we can use the implicit function theorem and conclude that $\mathrm{dom}(\mathcal{H})$ has smooth boundary near $\overline{z}$, and $d\beta_{\overline{z}}(\overline{z})=0$ is equivalent to the statement that $\overline{z}\in T_{\overline{z}}\partial\mathrm{dom}(\mathcal{H})$. This, however, contradicts the assumption that $\mathcal{H}$ is a CCGPSR manifold which implies that $\mathrm{dom}(\mathcal{H})$ is a convex set containing the point $0\in\mathbb{R}^n$ (cf. Lemma \ref{precomp_cor}). To see the contradiction, observe that for each non-singular point $\overline{z}\in\partial\mathrm{dom}(\mathcal{H})$, i.e. a point satisfying $d\beta_{\overline{z}}\ne 0$, the affinely embedded tangent space $\overline{z}+T_{\overline{z}}\partial\mathrm{dom}(\mathcal{H})$ in $\mathbb{R}^n$ intersects the convex compact set $\overline{\mathrm{dom}(\mathcal{H})}$ (cf. Corollary \ref{cor_propdomH_CCGPSR}) only at its boundary, that is $\partial\mathrm{dom}(\mathcal{H})$. But if $\overline{z}\in T_{\overline{z}}\partial\mathrm{dom}(\mathcal{H})$, the intersection of $\overline{z}+T_{\overline{z}}\partial\mathrm{dom}(\mathcal{H})$ and $\overline{\mathrm{dom}(\mathcal{H})}$ will always contain $0\in\mathbb{R}^n$ which is, independently of any coordinate choice of the ambient space $\mathbb{R}^{n+1}$ of $\mathcal{H}$, always contained in $\mathrm{dom}(\mathcal{H})$ and in particular never contained in $\partial\mathrm{dom}(\mathcal{H})$. This follows directly from the definition of $\mathrm{dom}(\mathcal{H})$, see Definition \ref{def_dom(H)}. This is a contradiction to the convexity of $\mathrm{dom}(\mathcal{H})$, see Corollary \ref{cor_propdomH_CCGPSR}.
\end{proof}
\end{Lem}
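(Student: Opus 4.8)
The plan is to pull apart the differential $dh$ at points of the affine slice $\{x=1\}$ into its $dx$-component and its $dy$-component, read off that these are exactly $\alpha$ and $d\beta$, and then pit Euler's identity against the convexity of $\mathrm{dom}(\mathcal{H})$. First I would record the elementary identity
$dh_{\left(\begin{smallmatrix}1\\\overline{z}\end{smallmatrix}\right)} = \alpha(\overline{z})\,dx + d\beta_{\overline{z}}$,
which holds under the affine embedding $z\mapsto\left(\begin{smallmatrix}1\\ z\end{smallmatrix}\right)$ after identifying the $y$- and $z$-variables: this is immediate from the definitions \eqref{alpha_def} and \eqref{beta_def} together with $\partial_{z_i}\beta(z)=\partial_{y_i}h|_{\left(\begin{smallmatrix}1\\ z\end{smallmatrix}\right)}$. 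Since $\alpha(\overline{z})\,dx$ has only a $dx$-component while $d\beta_{\overline{z}}$ lies in the span of $dy_1,\dots,dy_n$, the sum vanishes if and only if both summands do. This already gives that \eqref{lemma_boundary_conditions_alpha_beta}.(i) is equivalent to the conjunction of (ii) and (iii); in particular (i)$\Rightarrow$(ii) and (i)$\Rightarrow$(iii) are free, and it remains only to prove (ii)$\Leftrightarrow$(iii).

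\textbf{Euler's identity.} For both remaining implications I would invoke Euler's identity for the degree-$\tau$ homogeneous polynomial $h$: evaluating the above decomposition on the vector $\left(\begin{smallmatrix}1\\ \overline{z}\end{smallmatrix}\right)$ gives $\alpha(\overline{z}) + d\beta_{\overline{z}}(\overline{z}) = dh_{\left(\begin{smallmatrix}1\\ \overline{z}\end{smallmatrix}\right)}\!\left(\begin{smallmatrix}1\\ \overline{z}\end{smallmatrix}\right) = \tau\,h\!\left(\begin{smallmatrix}1\\ \overline{z}\end{smallmatrix}\right) = \tau\beta(\overline{z})$, and $\beta(\overline{z})=0$ because $\overline{z}\in\partial\mathrm{dom}(\mathcal{H})$ and $\beta|_{\partial\mathrm{dom}(\mathcal{H})}\equiv 0$ (as noted after \eqref{beta_def}). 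Hence $\alpha(\overline{z}) + d\beta_{\overline{z}}(\overline{z}) = 0$. If $d\beta_{\overline{z}}=0$ this forces $\alpha(\overline{z})=0$, so (iii)$\Rightarrow$(ii). Conversely, if $\alpha(\overline{z})=0$ then $d\beta_{\overline{z}}(\overline{z})=0$, and the whole problem reduces to the implication $d\beta_{\overline{z}}(\overline{z})=0 \Rightarrow d\beta_{\overline{z}}=0$.

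\textbf{The main obstacle.} This last implication is the heart of the matter, and the only place where closedness of $\mathcal{H}$ genuinely enters, via the convexity and precompactness of $\mathrm{dom}(\mathcal{H})$ furnished by Corollary \ref{cor_propdomH_CCGPSR}. I would argue by contradiction: assume $d\beta_{\overline{z}}\ne 0$. Then $dh_{\left(\begin{smallmatrix}1\\ \overline{z}\end{smallmatrix}\right)} = d\beta_{\overline{z}}\ne 0$, so by the implicit function theorem $\partial\mathrm{dom}(\mathcal{H})$ is a smooth hypersurface near $\overline{z}$ with $T_{\overline{z}}\partial\mathrm{dom}(\mathcal{H}) = \ker d\beta_{\overline{z}}$, and the relation $d\beta_{\overline{z}}(\overline{z})=0$ says exactly that $\overline{z}\in T_{\overline{z}}\partial\mathrm{dom}(\mathcal{H})$. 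Now $\overline{\mathrm{dom}(\mathcal{H})}$ is a compact convex body with $0$ in its interior, so at the smooth boundary point $\overline{z}$ the affine tangent plane $\overline{z}+T_{\overline{z}}\partial\mathrm{dom}(\mathcal{H})$ is a supporting hyperplane and meets $\overline{\mathrm{dom}(\mathcal{H})}$ only inside $\partial\mathrm{dom}(\mathcal{H})$. But $\overline{z}\in T_{\overline{z}}\partial\mathrm{dom}(\mathcal{H})$ means $0 = \overline{z}-\overline{z}$ lies on that hyperplane, contradicting the fact that $0$ is an interior point of $\mathrm{dom}(\mathcal{H})$ and hence not in $\partial\mathrm{dom}(\mathcal{H})$. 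Therefore $d\beta_{\overline{z}}=0$, which closes (ii)$\Rightarrow$(iii) and finishes the proof. The hard part is precisely this supporting-hyperplane argument; everything else is bookkeeping with the decomposition of $dh$ and Euler's identity.
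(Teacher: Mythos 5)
Your proposal is correct and follows essentially the same route as the paper's proof: the decomposition $dh_{\left(\begin{smallmatrix}1\\\overline{z}\end{smallmatrix}\right)}=\alpha(\overline{z})\,dx+d\beta_{\overline{z}}$, the Euler identity to link $\alpha(\overline{z})$ with $d\beta_{\overline{z}}(\overline{z})$, and the supporting-hyperplane/convexity contradiction for the implication $d\beta_{\overline{z}}(\overline{z})=0\Rightarrow d\beta_{\overline{z}}=0$. The only (harmless) difference is that you package the two Euler-identity computations into the single relation $\alpha(\overline{z})+d\beta_{\overline{z}}(\overline{z})=0$, which the paper derives separately in each direction.
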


Using Proposition \ref{prop_std_form_h}, we will now study the infinitesimal changes in the transformations $A(p)$ for $p\in\mathcal{H}$ near $\left(\begin{smallmatrix}
x\\ y
\end{smallmatrix}\right)=\left(\begin{smallmatrix}
1\\ 0
\end{smallmatrix}\right)\in\mathcal{H}$, and in the corresponding polynomials $P_i$ in the considered polynomial $h$ as in equation \eqref{standard_form_h}. To do so we use the parametrisation $\Phi:\mathrm{dom}(\mathcal{H})\to\mathcal{H}$ given in equation \eqref{dom(H)_H_iso}.
With this in mind the next result is an infinitesimal analogue of Proposition \ref{prop_std_form_h} and has applications in e.g. significantly simplifying the calculation of the first derivative of the scalar curvature of CCPSR manifolds, cf. Proposition \ref{prop_hom_spaces_deltaPk_condition}.

\begin{Prop}
\label{inf_p-moving_prop}
Let $\mathcal{H}$ be a connected GPSR manifold in standard form and
let $V\subset\mathcal{H}$ be an open neighbourhood of $\left(\begin{smallmatrix}
x\\ y
\end{smallmatrix}\right)=\left(\begin{smallmatrix}
1\\ 0
\end{smallmatrix}\right)$ and $A:V\to\mathrm{GL}(n+1)$,
\begin{equation*}
A(p)=\left(
\begin{tabular}{c|c}
$p_x$ &  $-\left.\frac{\partial_yh}{\partial_xh}\right|_p\circ E(p)$ \\ \hline
$p_y$  & $\overset{}{E(p)}$
\end{tabular}
\right),
\end{equation*}
as in equation \eqref{p_moving_A_matrix} so that $A(p)$ fulfils \eqref{std_form_i} and \eqref{std_form_ii} in Proposition \ref{prop_std_form_h} with $A\left(\left(\begin{smallmatrix}1\\ 0\end{smallmatrix}\right)\right)=\mathbbm{1}$. Let $\Phi:\mathrm{dom}(\mathcal{H})\to\mathcal{H}$ be the diffeomorphism given in equation \eqref{dom(H)_H_iso} and define
\begin{equation}\label{A_matrix_on_dom(H)}
\mathcal{A}:\Phi^{-1}(V)\to\mathrm{GL}(n+1),\quad \mathcal{A}(z):=A(\Phi(z)).
\end{equation}
Then there exists an $\mathfrak{so}(n)$-valued linear map $dB_0\in\mathrm{Lin}(\mathbb{R}^n,\mathfrak{so}(n))$ of the form
\begin{equation*}
dB_0=\sum\limits_{k=1}^{n(n-1)/2} a_k\otimes\langle \ell_k,dz\rangle,
\end{equation*}
where $\{a_k\ |\ 1\leq k\leq n(n-1)/2\}$ is a basis of $\mathfrak{so}(n)$ and $\ell_k\in\mathbb{R}^n$ for all $1\leq k\leq n(n-1)/2$,
such that for $\tau\geq 4$
\begin{align}
&\left.\partial_z \left(h\left(\mathcal{A}(z)\cdot\left(\begin{smallmatrix}
x\\
y
\end{smallmatrix}
\right)\right)\right)\right|_{z=0}=dh_{\left(\begin{smallmatrix}
x\\
y
\end{smallmatrix}
\right)}\left(d\mathcal{A}_0\cdot\left(\begin{smallmatrix}
x\\
y
\end{smallmatrix}
\right)\right)\notag\\
&=x^{\tau-3}\left(\frac{-2(\tau-2)}{\tau}\langle y,y\rangle \langle y,dz\rangle + 3P_3\left(y,y,dB_0 y+\frac{3}{2}P_3(y,\cdot,dz)^T\right)+4P_4(y,y,y,dz)
\right)\notag\\
&\quad+\left(
\sum\limits_{i=4}^{\tau-1} x^{\tau-i}
\left(
\frac{2(\tau-i+1)}{\tau}P_{i-1}(y)\langle y,dz\rangle\right.\right.+iP_i\left(y,\ldots,y,dB_0y+\frac{3}{2}P_3(y,\cdot,dz)^T\right)\notag\\
&\quad\quad\left.\left.+\vphantom{\sum\limits_{i=4}^{\tau-1} x^{\tau-i}}(i+1)P_{i+1}(y,\ldots,y,dz)
\right)
\right)+\frac{2}{\tau}P_{\tau-1}(y)\langle y,dz\rangle+\tau P_\tau\left(y,\ldots,y,dB_0y+\frac{3}{2}P_3(y,\cdot,dz)^T\right)\label{eqn_deltaPkdeterminer}
\end{align}
and for $\tau=3$, that is for connected PSR manifolds,
\begin{align*}
\left.\partial_z \left(h\left(\mathcal{A}(z)\cdot\left(\begin{smallmatrix}
x\\
y
\end{smallmatrix}
\right)\right)\right)\right|_{z=0}&=dh_{\left(\begin{smallmatrix}
x\\
y
\end{smallmatrix}
\right)}\left(d\mathcal{A}_0\cdot\left(\begin{smallmatrix}
x\\
y
\end{smallmatrix}
\right)\right)=-\frac{2}{3}\langle y,y\rangle\langle y,dz\rangle + 3P_3\left(y,y,dB_0y+\frac{3}{2}P_3(y,\cdot,dz)^T\right).
\end{align*}
In the above equations, $P_3(y,\cdot,dz)^T$ is to be understood as the column-vector $\frac{1}{6}\partial^2P_3|_y\left(\left(\begin{smallmatrix}
dz_1\\
\vdots\\
dz_n
\end{smallmatrix}
\right),\cdot\right)^T$
and $dB_0$ is to be understood as
\begin{equation}\label{dB0_eqn}
dB_0y=\sum\limits_{k=1}^{n(n-1)/2} (a_k\cdot y)\otimes\langle \ell_k,dz\rangle.
\end{equation}
\begin{proof}
Note that $z=0\in\Phi^{-1}(V)$ for all possible choices for $V$ since $\Phi^{-1}\left(\left(\begin{smallmatrix}
1\\ 0
\end{smallmatrix}\right)\right)=0$. Observe that for all $v\in\mathbb{R}^n$, the function $-\frac{\partial_yh(v)}{\partial_x h}$ defined on $\mathbb{R}_{>0}\cdot\mathcal{H}$ is constant along rays of the form $\mathbb{R}_{>0}\cdot p$, $p\in\mathcal{H}$. With the notation $\mathcal{E}(z)=E(\Phi(z))$ and $\alpha$, $\beta$ defined in \eqref{alpha_def}, respectively \eqref{beta_def},
\begin{equation*}
\mathcal{A}(z)=\left(
\begin{tabular}{c|c}
$\frac{1}{\sqrt[\tau]{\beta(z)}}$ &  $-\frac{d\beta_z(\mathcal{E}(z)\cdot)}{\alpha(z)}$ \\ \hline
$\frac{z}{\sqrt[\tau]{\beta(z)}}$  & $\overset{}{\mathcal{E}(z)}$
\end{tabular}
\right)
\end{equation*}
and $\mathcal{A}(0)=\mathbbm{1}$, $\mathcal{E}(0)=\mathbbm{1}$, $\alpha(0)=\tau$, $\partial^2\beta_0=-2\langle dz,dz\rangle$.
We obtain
\begin{equation}\label{eqn_dA0_on_domH}
d\mathcal{A}_0=\left(
\begin{tabular}{c|c}
$\underset{}{0}$ &  $\frac{2}{\tau}dz^T$ \\ \hline
$dz$  & $\overset{}{d\mathcal{E}_0}$
\end{tabular}
\right),
\end{equation}
where we understand $dz$ as the identity-map on $\mathbb{R}^n$ and $d\mathcal{E}_0$ as a $\mathfrak{gl}(n)$-valued 1-form, $d\mathcal{E}_0\in\Omega^1(\mathbb{R}^n,\mathfrak{gl}(n))$, both using the identification $T_0\mathrm{dom}(\mathcal{H})\cong\mathbb{R}^n$ obtained with the affine embedding $d\Phi_0$ as in equation \eqref{dom(H)_H_iso}. With
\begin{align*}
dh_{\left(\begin{smallmatrix}
x\\
y
\end{smallmatrix}\right)}&=\left(\tau x^{\tau-1}-(\tau-2)x^{\tau-3}\langle y,y\rangle + \sum\limits_{i=3}^{\tau-1}(\tau-i)x^{\tau-i-1}P_i(y)\right)dx\\
&\quad-2x^{\tau-2}\langle y,dy\rangle + \sum\limits_{i=3}^\tau x^{\tau-i} i P_i(y,\ldots,y,dy)
\end{align*}
we get
\begin{align}
dh_{\left(\begin{smallmatrix}
x\\
y
\end{smallmatrix}\right)}\left(d\mathcal{A}_0\cdot\left(\begin{smallmatrix}
x\\
y
\end{smallmatrix}\right)\right)&=x^{\tau-2}\left(-2\langle y,d\mathcal{E}_0y\rangle + 3P_3(y,y,dz)\right) +(\text{terms of lower order in }x).\label{eqn_xtauminus2stuff}
\end{align}
The assumption that $A$ fulfils \eqref{std_form_i} and \eqref{std_form_ii} in Proposition \eqref{prop_std_form_h} and $A(0)=\mathbbm{1}$ implies that the $x^{\tau-2}$-term in the above equation \eqref{eqn_xtauminus2stuff} must vanish, i.e. $-2\langle y,d\mathcal{E}_0y\rangle + 3P_3(y,y,dz)=0$.
This is true if and only if 
\begin{equation}\label{dE0_eqn}
d\mathcal{E}_0(y)=\frac{3}{2}P_3(y,\cdot,dz)^T+dB_0y
\end{equation}
for all $y\in\mathbb{R}^n$, with $dB_0\in\mathrm{Lin}(\mathbb{R}^n,\mathfrak{so}(n))$ a linear map from $\mathbb{R}^n$ to $\mathfrak{so}(n)$. Here we have identified $\mathbb{R}^n$ with $T_0\mathrm{dom}(\mathcal{H})$.
We will now justify our notation of the endomorphism $dB_0$. Consider for any smooth map $B:\mathbb{R}^n\to \mathrm{O}(n)$ with $B(0)=\mathbbm{1}$ the map
\begin{equation*}
B\cdot\mathcal{E}:\Phi^{-1}(V)\to\mathrm{GL}(n+1),\quad z\mapsto B(z)\cdot\mathcal{E}(z).
\end{equation*}
It is clear that if we replace $E$ with $(B\circ \Phi^{-1})\cdot E$ in the map $A$ (and correspondingly $B\cdot\mathcal{E}$ in $\mathcal{A}$), it will still fulfil \eqref{std_form_i} and \eqref{std_form_ii} in Proposition \ref{prop_std_form_h} and $A(0)=\mathbbm{1}$. We can thus choose for any $dB_0\in\Omega^1(\mathbb{R}^n,\mathfrak{so}(n))$ a fitting map $B:\mathbb{R}^n\to \mathrm{O}(n)$ and a smooth map $\widetilde{\mathcal{E}}:\Phi^{-1}(V)\to\mathrm{GL}(n)$ with $d\widetilde{\mathcal{E}}_0=\frac{3}{2}P_3(y,\cdot,dz)^T$, $\widetilde{\mathcal{E}}(0)=\mathbbm{1}$, so that $\mathcal{E}:=B\cdot\widetilde{\mathcal{E}}$ will fulfil equation \eqref{dE0_eqn}. Also note that the requirement $B(0)=\mathbbm{1}$ implies that the image of $B$ lies in $\mathrm{SO}(n)$.

To complete the proof, we only need to replace $d\mathcal{E}_0$ in $dh_{\left(\begin{smallmatrix}
x\\
y
\end{smallmatrix}
\right)}\left(d\mathcal{A}_0\left(\begin{smallmatrix}
x\\
y
\end{smallmatrix}
\right)\right)$ as in equation \eqref{dE0_eqn} and obtain the claimed result.
\end{proof}
\end{Prop}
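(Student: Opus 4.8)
The plan is to compute the $\mathbb{R}$-valued one-form $dh_{\left(\begin{smallmatrix}x\\ y\end{smallmatrix}\right)}\!\left(d\mathcal{A}_0\cdot\left(\begin{smallmatrix}x\\ y\end{smallmatrix}\right)\right)$ directly --- it equals the left-hand side of \eqref{eqn_deltaPkdeterminer} by the chain rule together with $\mathcal{A}(0)=\mathbbm{1}$ --- and to pin down the only undetermined ingredient, namely the $\mathfrak{gl}(n)$-valued block $d\mathcal{E}_0$ of $d\mathcal{A}_0$, from the requirement that $\mathcal{A}(z)$ map $h$ into the standard form \eqref{standard_form_h} for every $z\in\Phi^{-1}(V)$.

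First I would rewrite the matrix $\mathcal{A}$ of \eqref{A_matrix_on_dom(H)} in terms of the globally smooth functions $\alpha,\beta$ from \eqref{alpha_def}, \eqref{beta_def} and $\mathcal{E}(z):=E(\Phi(z))$, using $\Phi(z)=\beta(z)^{-1/\tau}\left(\begin{smallmatrix}1\\ z\end{smallmatrix}\right)$ and the fact that $v\mapsto-\partial_yh(v)/\partial_xh$ is constant along each ray $\mathbb{R}_{>0}\cdot p$. Evaluating and differentiating at $z=0$ with $\mathcal{A}(0)=\mathbbm{1}$, $\mathcal{E}(0)=\mathbbm{1}$, $\alpha(0)=\tau$, $\beta(0)=1$, $d\beta_0=0$ and $\partial^2\beta_0=-2\langle dz,dz\rangle$ yields the block form \eqref{eqn_dA0_on_domH}, whose only unknown block is $d\mathcal{E}_0\in\Omega^1(\mathbb{R}^n,\mathfrak{gl}(n))$; in particular $d\mathcal{A}_0\cdot\left(\begin{smallmatrix}x\\ y\end{smallmatrix}\right)=\left(\begin{smallmatrix}\tfrac{2}{\tau}\langle y,dz\rangle\\ x\,dz+d\mathcal{E}_0y\end{smallmatrix}\right)$.

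Next I would differentiate the standard form \eqref{standard_form_h} to write out $dh_{\left(\begin{smallmatrix}x\\ y\end{smallmatrix}\right)}$ in full, substitute the vector above, and collect by powers of $x$. The one genuinely conceptual step is then: since $h\circ\mathcal{A}(z)$ is again of the form \eqref{standard_form_h} for every $z$, the coefficients of $x^{\tau}$, $x^{\tau-1}$ and $x^{\tau-2}$ in $h\circ\mathcal{A}(z)$ are the $z$-independent expressions $1$, $0$ and $-\langle y,y\rangle$; hence the $x^{\tau-2}$-coefficient of the collected expression must vanish, which reads $3P_3(y,y,dz)-2\langle y,d\mathcal{E}_0y\rangle=0$. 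Thus the symmetric part of $d\mathcal{E}_0$ equals $\tfrac{3}{2}P_3(y,\cdot,dz)^T$, i.e. $d\mathcal{E}_0y=dB_0y+\tfrac{3}{2}P_3(y,\cdot,dz)^T$ with $dB_0\in\mathrm{Lin}(\mathbb{R}^n,\mathfrak{so}(n))$ the a priori unconstrained antisymmetric part; a short remark --- showing that left-multiplying $E$ by any smooth $B:\mathbb{R}^n\to\mathrm{O}(n)$ with $B(0)=\mathbbm{1}$ (hence with image in $\mathrm{SO}(n)$) preserves \eqref{std_form_i}, \eqref{std_form_ii} and $A(0)=\mathbbm{1}$ --- shows that every such $dB_0$ genuinely occurs, and so justifies the notation.

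Finally, substituting $d\mathcal{E}_0y=dB_0y+\tfrac{3}{2}P_3(y,\cdot,dz)^T$ back into the terms of $x$-order at most $\tau-3$ and regrouping gives \eqref{eqn_deltaPkdeterminer} for $\tau\ge4$ and the displayed identity for $\tau=3$: each $x^{\tau-i}$-coefficient collects exactly $\tfrac{2(\tau-i+1)}{\tau}P_{i-1}(y)\langle y,dz\rangle$ from the $dx$-part of $dh$ (with the convention $P_2:=-\langle\cdot,\cdot\rangle$ accounting for the special $x^{\tau-3}$-term), $iP_i(y,\ldots,y,d\mathcal{E}_0y)$ from the $d\mathcal{E}_0y$ contribution, and $(i+1)P_{i+1}(y,\ldots,y,dz)$ from the $x\,dz$ contribution. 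I expect the main obstacle to be purely organisational: aligning the two index-shifted polynomial sums in \eqref{standard_form_h} so that the three boundary situations --- the vanishing $x^{\tau-2}$-term (with the cancellation of $3P_3(y,y,dz)$ against $-2\langle y,d\mathcal{E}_0y\rangle$), the appearance of the quadratic term in the $x^{\tau-3}$-coefficient, and the absence of a $P_{\tau+1}$-term at $x^{0}$ --- are accounted for without any double-counting.
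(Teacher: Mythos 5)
Your proposal is correct and follows essentially the same route as the paper's proof: expressing $\mathcal{A}$ via $\alpha,\beta,\mathcal{E}$ to get the block form \eqref{eqn_dA0_on_domH}, forcing the $x^{\tau-2}$-coefficient to vanish so that $d\mathcal{E}_0y=\tfrac{3}{2}P_3(y,\cdot,dz)^T+dB_0y$ with $dB_0$ the free $\mathfrak{so}(n)$-valued part realised by left-multiplication with $B:\mathbb{R}^n\to\mathrm{O}(n)$, $B(0)=\mathbbm{1}$, and then substituting back and collecting powers of $x$. The bookkeeping you describe (including the index alignment at $x^{\tau-3}$ and $x^0$) matches the paper's computation.
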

Equation \eqref{eqn_deltaPkdeterminer} in Proposition \ref{inf_p-moving_prop} determines precisely the infinitesimal changes of the $P_k$'s in the polynomial $h$ as in equation \eqref{standard_form_h} when changing coordinates for $p\in\mathcal{H}\subset\{h=1\}$ parametrised by $\Phi:\mathrm{dom}(\mathcal{H})\to\mathcal{H}$ \eqref{dom(H)_H_iso} in the way described by Proposition \ref{prop_std_form_h}. Rotations in $y\in\mathbb{R}^n\subset\mathbb{R}^{n+1}$ always preserve \eqref{standard_form_h}, which is seen in the freedom of choosing $dB_0\in\mathrm{Lin}(\mathbb{R}^n,\mathfrak{so}(n))$. We will now assign symbols to the respective infinitesimal changes of the $P_k$'s in order to simplify the considerations to follow.

\begin{Def}
\label{inf_Pk_def}
With the assumptions of Proposition \ref{inf_p-moving_prop} and the definition of $\mathcal{A}$ as in \eqref{A_matrix_on_dom(H)}, we define for $\tau\geq 3$ and $3\leq k\leq \tau$
\begin{align}
\delta P_k(y)&:=\left.\frac{1}{(\tau-k)!}\partial_x^{\tau-k} \frac{\partial}{\partial z} h\left(\mathcal{A}(z)\cdot\left(\begin{smallmatrix}
x\\ y
\end{smallmatrix}
\right)\right)\right|_{\left(\begin{smallmatrix}
x\\
y
\end{smallmatrix}
\right)=\left(\begin{smallmatrix}
0\\
y
\end{smallmatrix}
\right),\ z=0}\notag
\\
&=\left.\left(\frac{1}{(\tau-k)!}\partial_x^{\tau-k}dh_{\left(\begin{smallmatrix}
x\\
y
\end{smallmatrix}
\right)}\left(d\mathcal{A}_0\cdot\left(\begin{smallmatrix}
x\\
y
\end{smallmatrix}
\right)\right)\right)\right|_{\left(\begin{smallmatrix}
x\\
y
\end{smallmatrix}
\right)=\left(\begin{smallmatrix}
0\\
y
\end{smallmatrix}
\right)},\label{delta_P_k_general_eqn}
\end{align}
where we denote by $\frac{\partial}{\partial z}=\sum\limits_{i=1}^n dz_i\otimes\partial_{z_i}$ the de-Rham differential with respect to the $z=(z_1,\ldots,z_n)^T$-coordinates. In particular, we have for $\tau=3$, that is cubic polynomials $h$,
\begin{equation}
\delta P_3(y)=-\frac{2}{3}\langle y,y\rangle\langle y,dz\rangle + 3P_3\left(y,y,dB_0y+\frac{3}{2}P_3(y,\cdot,dz)^T\right), \label{delta_P_3_cubics}
\end{equation}
and for $\tau=4$, that is quartic polynomials $h$,
\begin{align*}
\delta P_3(y)&=-\langle y,y\rangle\langle y,dz\rangle + 3P_3\left(y,y,dB_0y+\frac{3}{2}P_3(y,\cdot,dz)^T\right)+4P_4(y,y,y,dz),\\
\delta P_4(y)&=\frac{1}{2}P_3(y)\langle y,dz\rangle + 4P_4\left(y,y,y,dB_0y+\frac{3}{2}P_3(y,\cdot,dz)^T\right).\label{delta_P_4_quartics}
\end{align*}
This means that the $\delta P_k(y)$'s are precisely the factors depending on $y$ in the summands of
\begin{equation*}
dh_{\left(\begin{smallmatrix}
x\\
y
\end{smallmatrix}
\right)}\left(d\mathcal{A}_0\cdot\left(\begin{smallmatrix}
x\\
y
\end{smallmatrix}
\right)\right)=\sum\limits_{i=3}^{\tau}x^{\tau-i}\delta P_i(y)
\end{equation*}
that are of order $x^{\tau-k}$, respectively. For each $3\leq k\leq\tau$ we call $\delta P_k$ \textsf{the first variation of} $P_k$ \textsf{along} $\mathcal{H}$ with respect to the chosen $dB_0$ \eqref{dE0_eqn}, respectively $d\mathcal{A}_0$ \eqref{eqn_dA0_on_domH}, and understand $\delta P_k(y)$ as a linear map $\delta P_k(y):\mathbb{R}^n\to\mathrm{Sym}^k\left(\mathbb{R}^n\right)^*$, so that we insert vectors $v\in\mathbb{R}^n$ into the $dz$ in each $\delta P_k(y)$ and obtain a homogeneous polynomial in $(y_1,\ldots,y_n)$ of degree $k$.
\end{Def}

The first application for Proposition \ref{inf_p-moving_prop} that we will consider is calculating the first derivative of the scalar curvature of a connected GPSR manifold $\mathcal{H}$ equipped with its centro-affine fundamental form at one certain point. To do so we need a closed form of the scalar curvature (at at least one point). Its calculation uses the following result.

\begin{Lem}
\label{lemma_pullbackmetric_domH}
Let $\mathcal{H}$ be a connected GPSR manifold of degree $\tau\geq 3$ in standard form with centro-affine fundamental form $g_{\mathcal{H}}=-\frac{1}{\tau}\partial^2 h|_{T\mathcal{H}\times T\mathcal{H}}$ (cf. Lemma \ref{caff_lemma}) and
let $\Phi:\mathrm{dom}(\mathcal{H})\to\mathcal{H}$ be the diffeomorphism given in equation \eqref{dom(H)_H_iso} and $\beta$ as in equation \eqref{beta_def}. Then
\begin{equation}\label{Phi_pullback_g_eqn}
\left(\Phi^*g_{\mathcal{H}}\right)_z=-\frac{\partial^2\beta_z}{\tau\beta(z)}+\frac{(\tau-1)d\beta_z^2}{\tau^2\beta^2(z)}.
\end{equation}
\begin{proof}
This is a special case of \cite[Cor.\,1.13]{CNS}. To check the claim, one uses the homogeneity of degree $\tau-2\geq 1$ of $\partial^2h_p$ in $p$ and the first derivative of the diffeomorphism $\Phi:\mathrm{dom}(\mathcal{H})\to\mathcal{H}$ \eqref{dom(H)_H_iso}, that is
\begin{equation*}
d\Phi_z=\frac{1}{\beta^{1/\tau}(z)}\left(\begin{matrix}
0\\
dz
\end{matrix}
\right)-\frac{1}{\tau\beta^{(\tau+1)/\tau}(z)}
\left(\begin{matrix}
1\\
z
\end{matrix}
\right)\otimes  d\beta_z.
\end{equation*}
\end{proof}
\end{Lem}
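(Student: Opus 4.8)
The plan is to verify the identity by a direct computation, using Lemma~\ref{caff_lemma}, the explicit form of $\Phi$ recorded in \eqref{dom(H)_H_iso}, the homogeneity of $\partial^2h$, and Euler's identity; the reference to \cite[Cor.\,1.13]{CNS} then merely records where a more general version appears.

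First I would note that since $\Phi$ is a diffeomorphism onto $\mathcal{H}\subset\{h=1\}$ we have $h\circ\Phi\equiv 1$, so differentiating gives $dh_{\Phi(z)}\circ d\Phi_z=0$; as $dh_p(p)=\tau h(p)=\tau\ne 0$ on $\mathcal{H}$ this forces $\mathrm{im}(d\Phi_z)=\ker dh_{\Phi(z)}=T_{\Phi(z)}\mathcal{H}$. Hence Lemma~\ref{caff_lemma} yields $(\Phi^*g_{\mathcal{H}})_z(v,w)=-\frac1\tau\,\partial^2h_{\Phi(z)}\big(d\Phi_z v,\,d\Phi_z w\big)$ for all $v,w\in T_z\mathrm{dom}(\mathcal{H})\cong\mathbb{R}^n$; that is, the restriction to the tangent space may be dropped and I may treat $\partial^2h$ as a bilinear form on all of $\mathbb{R}^{n+1}$.

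Next I would differentiate $\Phi(z)=\beta(z)^{-1/\tau}\left(\begin{smallmatrix}1\\ z\end{smallmatrix}\right)$ to obtain the stated formula for $d\Phi_z$, and use that $\partial^2h_p$ is homogeneous of degree $\tau-2$ in $p$ to write $\partial^2h_{\Phi(z)}=\beta(z)^{-(\tau-2)/\tau}\,\partial^2h_{(1,z)}$. Expanding the bilinear form in the two summands of $d\Phi_z$ produces four terms, whose powers of $\beta(z)$ collect to $\beta^{-1}$, $\beta^{-2}$, $\beta^{-2}$, $\beta^{-3}$ respectively, reducing everything to evaluating $\partial^2h_{(1,z)}$ on pairs of the vectors $\left(\begin{smallmatrix}0\\ v\end{smallmatrix}\right),\left(\begin{smallmatrix}0\\ w\end{smallmatrix}\right),\left(\begin{smallmatrix}1\\ z\end{smallmatrix}\right)$. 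For this I would use: $\partial^2h_{(1,z)}\big(\left(\begin{smallmatrix}0\\ v\end{smallmatrix}\right),\left(\begin{smallmatrix}0\\ w\end{smallmatrix}\right)\big)=\partial^2\beta_z(v,w)$, since $\beta(z)=h(1,z)$ makes the $y$-block of the Hessian of $h$ at $(1,z)$ equal to the Hessian of $\beta$; Euler's identity $\partial^2h_p(p,\cdot)=(\tau-1)\,dh_p$ together with $dh_{(1,z)}\big(\left(\begin{smallmatrix}0\\ v\end{smallmatrix}\right)\big)=d\beta_z(v)$, giving the mixed terms $\partial^2h_{(1,z)}\big(\left(\begin{smallmatrix}1\\ z\end{smallmatrix}\right),\left(\begin{smallmatrix}0\\ v\end{smallmatrix}\right)\big)=(\tau-1)\,d\beta_z(v)$; and $\partial^2h_p(p,p)=\tau(\tau-1)h(p)=\tau(\tau-1)\beta(z)$ for the pure term. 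Summing, the coefficient of $d\beta_z(v)\,d\beta_z(w)$ is (up to the overall $\beta^{-1}$) $-\frac{2(\tau-1)}{\tau\beta(z)}+\frac{\tau(\tau-1)}{\tau^2\beta(z)}=-\frac{\tau-1}{\tau\beta(z)}$, and multiplying the whole expression by $-\frac1\tau$ gives exactly $-\frac{\partial^2\beta_z}{\tau\beta(z)}+\frac{(\tau-1)\,d\beta_z^2}{\tau^2\beta^2(z)}$.

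The computation is entirely routine; the only points that need care are keeping the $x$- and $y$-components of $\mathbb{R}^{n+1}=\mathbb{R}\times\mathbb{R}^n$ separated throughout, so that $\left(\begin{smallmatrix}0\\ v\end{smallmatrix}\right)$ really does pair with $\partial^2h_{(1,z)}$ to produce $\partial^2\beta_z$ and with $dh_{(1,z)}$ to produce $d\beta_z$, and the bookkeeping of the powers of $\beta(z)$. I do not anticipate a genuine obstacle; alternatively one may invoke \cite[Cor.\,1.13]{CNS} verbatim once it is observed that the graph parametrisation used there coincides with $\Phi$.
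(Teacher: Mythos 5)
Your proposal is correct and follows essentially the same route as the paper: the paper's proof merely cites \cite[Cor.\,1.13]{CNS} and indicates that the claim is checked using the homogeneity of degree $\tau-2$ of $\partial^2h_p$ in $p$ together with the stated formula for $d\Phi_z$, which is exactly the computation you carry out in full (your bookkeeping of the powers of $\beta$ and the use of Euler's identity $\partial^2h_p(p,\cdot)=(\tau-1)\,dh_p$ are all accurate).
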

We will use equation \eqref{Phi_pullback_g_eqn} to calculate the scalar curvature of $(\mathrm{dom}(\mathcal{H}),\Phi^*g_\mathcal{H})$ at $z=0\in\mathrm{dom}(\mathcal{H})$.
\begin{Prop}
\label{prop_scal_GPSR}
Let $\mathcal{H}$ be an $n\geq 2$-dimensional connected GPSR manifold of degree $\tau\geq 3$ in standard form.
Then the scalar curvature $S_\mathcal{H}:\mathcal{H}\to\mathbb{R}$ of $(\mathcal{H},g_{\mathcal{H}})$ at the point $\left(\begin{smallmatrix}
1\\
0
\end{smallmatrix}\right)\in\mathcal{H}$ is given by
\begin{equation}
S_{\mathcal{H}}\left(\left(\begin{smallmatrix}
1\\
0
\end{smallmatrix}\right)\right)=n(1-n)+\frac{9\tau}{8}\sum\limits_{a,i,\ell}\left(-P_3(\partial_a,\partial_a,\partial_\ell)P_3(\partial_i,\partial_i,\partial_\ell)+P_3(\partial_a,\partial_i,\partial_\ell)^2\right),\label{eqn_GPSR_scal_formula}
\end{equation}
where $\partial_k=\partial_{y_k}$ for $1\leq k\leq n$.
\begin{proof}
We identify $\partial_{z_i}=\partial_{y_i}=\partial_i$ when inserting vectors in the polynomials $P_k$. This is justified by the fact that $d\Phi_0$ bijectively maps $T_0\mathrm{dom}(\mathcal{H})$ to $T_{\left(\begin{smallmatrix}
1\\ 0
\end{smallmatrix}\right)}\mathcal{H}$ via $d\Phi_0:\partial_{z_i}\mapsto\partial_{y_i}$ for all $1\leq i\leq n$.
For the following calculations we will first calculate the scalar curvature $S:\mathrm{dom}(\mathcal{H})\to\mathbb{R}$ of $\left(\mathrm{dom}(\mathcal{H}),g:=\tau\Phi^*g_{\mathcal{H}}\right)$ at $z=0$. We work with $g$ instead of $\Phi^*g_\mathcal{H}$ because the necessary calculations will then require less symbols. Furthermore, we will for the general calculations assume that $\tau\geq 4$. The calculations for $\tau=3$ are analogous.
From equation \eqref{Phi_pullback_g_eqn} it follows that $g$ is given by
\begin{equation*}
g=-\frac{\partial^2\beta_z}{\beta(z)}+\frac{\tau-1}{\tau}\frac{d\beta_z^2}{\beta^2(z)}.
\end{equation*}
We abbreviate $\partial_{z_\mu}=\partial_\mu$ and obtain for the first entry-wise derivative of $g$ in $z_\mu$-direction
\begin{align*}
\partial_\mu g&=\beta^{-3}\left(\frac{-2\tau+2}{\tau}d\beta(\partial_\mu)d\beta^2\right) +\beta^{-2}\left(\frac{2\tau-2}{\tau}d\beta \partial^2\beta(\partial_\mu,\cdot)+d\beta(\partial_\mu)\partial^2\beta\right)+ \beta^{-1}\left(-\partial^3\beta(\partial_\mu,\cdot,\cdot)\right).
\end{align*}
The second partial derivatives of $g$ read
\begin{align*}
\partial_\nu\partial_\mu g&=\beta^{-4}\left(\frac{6\tau-6}{\tau}d\beta(\partial_\nu)d\beta(\partial_\mu)d\beta^2\right)\\
&+\beta^{-3}\left(\frac{-4\tau+4}{\tau}\left(d\beta(\partial_\nu)d\beta \partial^2\beta(\partial_\mu,\cdot)+d\beta(\partial_\mu)d\beta\partial^2\beta(\partial_\nu,\cdot)\right)\right.\\
&\quad\left.\vphantom{\frac{-4\tau+4}{\tau}}-2d\beta(\partial_\nu)d\beta(\partial_\mu)\partial^2\beta+\frac{-2\tau+2}{\tau}\partial^2(\partial_\nu,\partial_\mu)d\beta^2\right)\\
&+\beta^{-2}\left( d\beta(\partial_\nu)\partial^3\beta(\partial_\mu,\cdot,\cdot)+d\beta(\partial_\mu)\partial^3\beta(\partial_\nu,\cdot,\cdot) + \frac{2\tau-2}{\tau}d\beta\partial^3\beta(\partial_\nu,\partial_\mu,\cdot)\right.\\
&\quad\left.\vphantom{}+\partial^2\beta(\partial_\nu,\partial_\mu)\partial^2\beta+\frac{2\tau-2}{\tau}\partial^2\beta(\partial_\nu,\cdot)\partial^2\beta(\partial_\mu,\cdot)\right)\\
&+\beta^{-1}\left(-\partial^4\beta(\partial_\nu,\partial_\mu,\cdot,\cdot)\right).
\end{align*}
Applying the above formulas at $z=0$, we obtain
\begin{align}
g|_0&=2\langle dz,dz\rangle,\label{eqn_g_0}\\
g^{-1}|_0&=\frac{1}{2}\langle \partial_z,\partial_z\rangle,\label{eqn_g^-1_0}\\
\partial_\mu g|_0&=-\partial^3\beta_0(\partial_\mu,\cdot,\cdot)=-6P_3(\partial_\mu,\cdot,\cdot),\notag\\
\partial_\mu (g^{-1})|_0&=-g^{-1}|_0\partial_\mu g|_0 g^{-1}|_0=\frac{3}{2}P_3(\partial_\mu,\cdot,\cdot),\notag\\
\partial_\nu\partial_\mu g|_0&=\partial^2\beta_0(\partial_\mu,\partial_\nu)\partial^2\beta_0+\frac{2\tau-2}{\tau}\partial^2\beta_0(\partial_\nu,\cdot)\partial^2\beta_0(\partial_\mu,\cdot)-\partial^4\beta_0(\partial_\nu,\partial_\mu,\cdot,\cdot)\notag\\
&=4\delta_\mu^\nu\langle\cdot,\cdot\rangle+\frac{8\tau-8}{\tau}\langle\partial_\nu,\cdot\rangle\langle\partial_\mu,\cdot\rangle -24P_4(\partial_\nu,\partial_\mu,\cdot,\cdot),\notag\\
\partial_\nu\partial_\mu g_{ij}|_0&=4\delta_\mu^\nu\delta_i^j+\frac{4\tau-4}{\tau}\left(\delta_\nu^i\delta_\mu^j+\delta_\nu^j\delta_\mu^i\right)-24P_4(\partial_\nu,\partial_\mu,\partial_i,\partial_j).\notag
\end{align}
In order to calculate the scalar curvature of $(\mathrm{dom}(\mathcal{H}),g)\cong(\mathcal{H},\tau g_{\mathcal{H}})$, 
\begin{equation*}
S=\sum\limits_{a,i,j}\left(\partial_a \Gamma_{ji}^a-\partial_j\Gamma_{ia}^a+\sum\limits_{k}\left(\Gamma_{ij}^k \Gamma_{ak}^a-\Gamma_{ia}^k\Gamma_{jk}^a\right)\right)g^{ij},
\end{equation*}
at $z=0$, we need to calculate the Christoffel symbols and their first derivatives at that point. We have
\begin{align*}
\Gamma_{ij}^k&=\frac{1}{2}\sum\limits_\ell \left( \partial_i g_{j\ell} + \partial_j g_{i\ell} - \partial_\ell g_{ij}\right) g^{\ell k},\\
\partial_a \Gamma_{ij}^k&=\frac{1}{2}\sum\limits_\ell \left(\left( \partial_a\partial_i g_{j\ell} + \partial_a\partial_j g_{i\ell} - \partial_a\partial_\ell g_{ij}\right) g^{\ell k}+\left( \partial_i g_{j\ell} + \partial_j g_{i\ell} - \partial_\ell g_{ij}\right)\partial_a g^{\ell k}\right),
\end{align*}
and
\begin{align}
\left.\Gamma_{ij}^k\right|_0&=-\frac{3}{2}P_3(\partial_i,\partial_j,\partial_k),\label{eqn_christoffel_GPSR}\\
\left.\partial_a\Gamma_{ij}^k\right|_0&=\delta_a^i\delta_j^k + \delta_a^j\delta_i^k+\frac{\tau - 2}{\tau}\delta_a^k\delta_i^j - 6P_4(\partial_a,\partial_i,\partial_j,\partial_k)-\frac{9}{2}\sum\limits_\ell P_3(\partial_a,\partial_k,\partial_\ell)P_3(\partial_i,\partial_j,\partial_\ell).\label{eqn_1st_derivative_christoffel_GPSR}
\end{align}
Since $\left.g^{ij}\right|_0=\frac{1}{2}\delta_i^j$,
\begin{align*}
S(0)&=\left.\frac{1}{2}\sum\limits_{a,i}\left(\partial_a\Gamma_{ii}^a-\partial_i\Gamma_{ia}^a+\sum\limits_k\left(\Gamma_{ii}^k\Gamma_{ak}^a - \Gamma_{ia}^k\Gamma_{ik}^a\right)\right)\right|_0.
\end{align*}
We obtain
\begin{align*}
\left.\partial_a\Gamma_{ii}^a\right|_0&=\frac{\tau-2}{\tau}+2\delta_a^i-6P_4(\partial_a,\partial_a,\partial_i,\partial_i)-\frac{9}{2}\sum\limits_\ell P_3(\partial_a,\partial_a,\partial_\ell)P_3(\partial_i,\partial_i,\partial_\ell),\\
\left.\partial_i\Gamma_{ia}^a\right|_0&=1+\frac{2\tau-2}{\tau}\delta_i^a - 6P_4(\partial_a,\partial_a,\partial_i,\partial_i)-\frac{9}{2}\sum\limits_\ell P_3(\partial_a,\partial_i,\partial_\ell),\\
\left.\Gamma_{ii}^k\Gamma_{ak}^a\right|_0&=\frac{9}{4}P_3(\partial_a,\partial_a,\partial_k)P_3(\partial_i,\partial_i,\partial_k),\quad
\left.\Gamma_{ia}^k\Gamma_{ik}^a\right|_0=\frac{9}{4}\left(P_3(\partial_a,\partial_i,\partial_k)\right)^2.
\end{align*}
Hence,
\begin{align*}
S(0)&=\frac{n(1-n)}{\tau}+\frac{9}{8}\sum\limits_{a,i,\ell}\left(-P_3(\partial_a,\partial_a,\partial_\ell)P_3(\partial_i,\partial_i,\partial_\ell)+\left(P_3(\partial_a,\partial_i,\partial_\ell)\right)^2\right).
\end{align*}
Recall that $d\Phi_0\left(\partial_{z_i}\right)=\partial_{y_i}$ for all $1\leq i\leq n$, which one can easily verify. Thus $S_{\mathcal{H}}\left(\left(\begin{smallmatrix}
1\\
0
\end{smallmatrix}\right)\right)=\tau S(0)$ together with the above equation prove our claim. Observe that $S_{\mathcal{H}}\left(\left(\begin{smallmatrix}
1\\
0
\end{smallmatrix}\right)\right)$ only depends on the dimension $n$ of $\mathcal{H}$, the degree of homogeneity $\tau$, and the cubic polynomial $P_3$. Also note that $S_\mathcal{H}\equiv 0$ for $\dim(\mathcal{H})=n=1$ is consistent with formula \eqref{eqn_GPSR_scal_formula}.
\end{proof}
\end{Prop}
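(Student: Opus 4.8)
The plan is to transport everything to $\mathrm{dom}(\mathcal{H})$ via the diffeomorphism $\Phi$ of \eqref{dom(H)_H_iso} and to work with the rescaled metric $g:=\tau\,\Phi^*g_{\mathcal{H}}$, so that $S_{\mathcal{H}}$ at $\left(\begin{smallmatrix}1\\ 0\end{smallmatrix}\right)$ equals $\tau\,S(0)$, where $S$ denotes the scalar curvature of $(\mathrm{dom}(\mathcal{H}),g)$; rescaling by $\tau$ merely clears denominators and costs nothing at the end. By Lemma \ref{lemma_pullbackmetric_domH} we have $g=-\partial^2\beta/\beta+\frac{\tau-1}{\tau}\,d\beta^2/\beta^2$, so the entire computation reduces to the Taylor jet of $\beta(z)=h\left(\left(\begin{smallmatrix}1\\ z\end{smallmatrix}\right)\right)$ at $z=0$. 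Since $\mathcal{H}$ is in standard form, $\beta(z)=1-\langle z,z\rangle+\sum_{k=3}^{\tau}P_k(z)$, hence $\beta(0)=1$, $d\beta_0=0$, $\partial^2\beta_0=-2\langle dz,dz\rangle$, $\partial^3\beta_0=6P_3$, $\partial^4\beta_0=24P_4$; since the scalar curvature at a point depends only on the $2$-jet of the metric, which here involves only the $4$-jet of $\beta$, all $P_k$ with $k\geq 5$ are irrelevant.

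First I would substitute this jet into the formula for $g$ and differentiate entrywise to obtain $g|_0=2\langle dz,dz\rangle$, $g^{-1}|_0=\frac12\langle\partial_z,\partial_z\rangle$, $\partial_\mu g|_0=-6P_3(\partial_\mu,\cdot,\cdot)$, $\partial_\mu(g^{-1})|_0=\frac32P_3(\partial_\mu,\cdot,\cdot)$, and the second derivatives $\partial_\nu\partial_\mu g|_0$ as an explicit combination of a $P_4$-term and Kronecker-delta terms — the latter being precisely what will assemble into the $n(1-n)$ contribution. Next I would feed these into $\Gamma^k_{ij}=\frac12\sum_\ell(\partial_i g_{j\ell}+\partial_j g_{i\ell}-\partial_\ell g_{ij})g^{\ell k}$ to get $\Gamma^k_{ij}|_0=-\frac32P_3(\partial_i,\partial_j,\partial_k)$ and, after one further differentiation, $\partial_a\Gamma^k_{ij}|_0$ as an explicit sum of Kronecker terms, a single $P_4$-term, and a single quadratic-in-$P_3$ term. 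Finally I would insert $\Gamma$ and $\partial\Gamma$ at $z=0$ into $S=\sum_{a,i,j}\big(\partial_a\Gamma^a_{ji}-\partial_j\Gamma^a_{ia}+\sum_k(\Gamma^k_{ij}\Gamma^a_{ak}-\Gamma^k_{ia}\Gamma^a_{jk})\big)g^{ij}$, using $g^{ij}|_0=\frac12\delta^j_i$, and collect terms.

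The computation then rests on two cancellations, which is where care is needed. The $P_4$-contributions coming from $\partial_a\Gamma^a_{ii}$ and from $\partial_i\Gamma^a_{ia}$ occur with equal coefficient and cancel, so no quartic term survives; the purely Kronecker part sums to $\frac{n(1-n)}{\tau}$; and the two quadratic-in-$P_3$ pieces — one produced by the $\partial\Gamma$-terms, one by the $\Gamma\Gamma$-terms — combine into $\frac98\sum_{a,i,\ell}\big(-P_3(\partial_a,\partial_a,\partial_\ell)P_3(\partial_i,\partial_i,\partial_\ell)+P_3(\partial_a,\partial_i,\partial_\ell)^2\big)$. Multiplying by $\tau$ and identifying $T_0\mathrm{dom}(\mathcal{H})$ with $T_{\left(\begin{smallmatrix}1\\ 0\end{smallmatrix}\right)}\mathcal{H}$ via $d\Phi_0(\partial_{z_i})=\partial_{y_i}$ then gives \eqref{eqn_GPSR_scal_formula}. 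The case $\tau=3$ is the identical computation with the $P_4$-term and all higher jets of $\beta$ absent, and $n=1$ is consistent since the bracket then vanishes identically.

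The main obstacle is purely bookkeeping: keeping signs and numerical coefficients straight through the $\partial\Gamma$ and $\Gamma\Gamma$ index contractions, and in particular confirming that all $P_4$ (and a fortiori higher $P_k$) dependence drops out, so that the scalar curvature at $\left(\begin{smallmatrix}1\\ 0\end{smallmatrix}\right)$ depends only on $n$, $\tau$ and $P_3$. There is no conceptual difficulty beyond the two setup conveniences — passing to $\mathrm{dom}(\mathcal{H})$ via $\Phi$ and rescaling the metric by $\tau$ — with the pulled-back metric already computed in Lemma \ref{lemma_pullbackmetric_domH}.
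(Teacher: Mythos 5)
Your proposal is correct and follows essentially the same route as the paper: pulling back to $\mathrm{dom}(\mathcal{H})$ via $\Phi$, rescaling to $g=\tau\,\Phi^*g_{\mathcal{H}}$, reading off the $4$-jet of $\beta$ from the standard form, and assembling the Christoffel symbols and their derivatives at $z=0$ with the same cancellation of the $P_4$-contributions. The only (harmless) addition is your explicit remark that the scalar curvature depends only on the $2$-jet of the metric, hence only on $P_3$ and $P_4$, which the paper leaves implicit.
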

Proposition \ref{prop_scal_GPSR} gives us, at least in theory, a simple way of calculating the scalar curvature of a connected GPSR manifold $\mathcal{H}$ equipped with its centro-affine fundamental form (and thus of all GPSR manifolds by considering each connected component) at every point $p\in\mathcal{H}$. This, however, requires calculating $A(p)$ as in Proposition \ref{prop_std_form_h} for each $p\in\mathcal{H}$. This amounts basically to determining an orthonormal basis for a positive definite bilinear form depending on $p\in\mathcal{H}$. This is certainly easier than calculating Christoffel-symbols and their derivatives at each point, but nevertheless complicated enough to require a (both $p$- and $\mathcal{H}$-dependent) case-by-case study and not giving us a closed form of $S_\mathcal{H}(p)$ for all $p\in\mathcal{H}$.
Calculating the first derivative of the scalar curvature $S_\mathcal{H}$ at the point $\left(\begin{smallmatrix}
x\\
y
\end{smallmatrix}\right)=\left(\begin{smallmatrix}
1\\
0
\end{smallmatrix}\right)\in\mathcal{H}$ can of course also be done in a direct way, but the calculations require the (local) calculation of the third partial derivatives of the metric $g_\mathcal{H}$ and, hence, are very long and have a huge potential for human error. One can however also make use of Proposition \ref{inf_p-moving_prop} to obtain a formula for $dS_\mathcal{H}|_{\left(\begin{smallmatrix}
1\\
0
\end{smallmatrix}\right)}$.

\begin{Prop}
\label{scalar_curvature_first_derivative_prop}
With the assumptions and notations of Proposition \ref{prop_scal_GPSR} and Definition \ref{inf_Pk_def} and identifying $T_{\left(\begin{smallmatrix}
1\\
0
\end{smallmatrix}\right)}\mathcal{H}$ with the affinely embedded hyperplane $\left.\left\{{\left(\begin{smallmatrix}
0\\
y
\end{smallmatrix}\right)}\ \right|\ y\in\mathbb{R}^n\right\}\subset\mathbb{R}^{n+1}$, we have for $\tau\geq 4$
\begin{align*}
\left.dS_\mathcal{H}\right|_{\left(\begin{smallmatrix}
1\\
0
\end{smallmatrix}\right)}&=
\left(
\sum\limits_{a}\frac{3(n-1)(\tau-2)}{2}P_3(\partial_a,\partial_a,dy)
\right)\notag\\
&+
\left(
\sum\limits_{a,i,j}9\tau (-P_3(\partial_a,\partial_a,\partial_j)P_4(\partial_i,\partial_i,\partial_j,dy)+P_3(\partial_a,\partial_i,\partial_j)P_4(\partial_a,\partial_i,\partial_j,dy))
\right)\notag\\
&+
\sum\limits_{a,i,j,\ell} \frac{27\tau}{8}P_3(\partial_j,\partial_\ell,dy)\big(-P_3(\partial_a,\partial_a,\partial_j)P_3(\partial_i,\partial_i,\partial_\ell)\notag\\
&\quad -2P_3(\partial_a,\partial_a,\partial_i)P_3(\partial_i,\partial_j,\partial_\ell) + 3P_3(\partial_a,\partial_i,\partial_j)P_3(\partial_a,\partial_i,\partial_\ell)\big)
\end{align*}
and for $\tau=3$
\begin{align*}
\left.dS_\mathcal{H}\right|_{\left(\begin{smallmatrix}
1\\
0
\end{smallmatrix}\right)}&=
\left(
\sum\limits_{a}\frac{3(n-1)}{2}P_3(\partial_a,\partial_a,dy) 
\right) +
\sum\limits_{a,i,j,\ell} \frac{81}{8}P_3(\partial_j,\partial_\ell,dy)\big(-P_3(\partial_a,\partial_a,\partial_j)P_3(\partial_i,\partial_i,\partial_\ell) \notag\\
&\quad-2P_3(\partial_a,\partial_a,\partial_i)P_3(\partial_i,\partial_j,\partial_\ell) + 3P_3(\partial_a,\partial_i,\partial_j)P_3(\partial_a,\partial_i,\partial_\ell)\big).
\end{align*}

\begin{proof}
In the following calculations we will identify $dz$ and $dy$, respectively each $\partial_{z_i}$ and $\partial_{y_i}$ (and write $\partial_i$ instead) via $d\Phi_0$, cf. equation \eqref{dom(H)_H_iso}, which has the property that $d\Phi_0\left(\partial_{z_i}\right)=\partial_{y_i}$ for all $1\leq i\leq n$. We start with the case $\tau\geq4$. With the notations of Definition \ref{inf_Pk_def}, Propositions \ref{prop_scal_GPSR} and Proposition \ref{inf_p-moving_prop} equation \eqref{eqn_deltaPkdeterminer} imply
\begin{equation}\label{dS_pre-eqn}
\left.dS_\mathcal{H}\right|_{\left(\begin{smallmatrix}
1\\
0
\end{smallmatrix}\right)}=\frac{9\tau}{4}\sum\limits_{a,i,\ell}\left(- P_3(\partial_a,\partial_a,\partial_\ell) \delta P_3(\partial_i,\partial_i,\partial_\ell)+  P_3(\partial_a,\partial_i,\partial_\ell) \delta P_3(\partial_a,\partial_i,\partial_\ell) \right),
\end{equation}
where
\begin{equation}\label{delta_P3_tau_geq4}
\delta P_3(y)=\frac{-2(\tau-2)}{\tau}\langle y,y\rangle \langle y,dz\rangle + 3P_3\left(y,y,dB_0 y+\frac{3}{2}P_3(y,\cdot,dz)^T\right)+4P_4(y,y,y,dz).
\end{equation}
Recall that $dB_0\in\mathrm{Lin}(\mathbb{R}^n,\mathfrak{so}(n))$. We thus need to determine a formula for $\delta P_3(\partial_i,\partial_j,\partial_k)$ for all $1\leq i,j,k\leq n$. The safest way in the sense that possible errors in the pre-factors do not occur is to determine $\partial^2(\delta P_3(y))$, where we regard $dz$ in equation \eqref{delta_P3_tau_geq4} as a constant vector. We obtain
\begin{align*}
d(\delta P_3)_y(v)&=\frac{-4(\tau-2)}{\tau}\langle y,v\rangle\langle y,dz\rangle+\frac{-2(\tau-2)}{\tau}\langle y,y\rangle\langle v,dz\rangle\notag\\
&+6P_3\left(y,v,dB_0y+\frac{3}{2}P_3(y,\cdot,dz)^T\right) + 3P_3\left(y,y,dB_0v+\frac{3}{2}P_3(v,\cdot,dz)^T\right)\notag\\
&+12P_4(y,y,v,dz)
\end{align*}
for all $y,v\in\mathbb{R}^n$ and, hence,
\begin{align*}
\partial^2(\delta P_3)_y(v,w)&=\frac{-4(\tau-2)}{\tau}(\langle v,w\rangle\langle y,dz\rangle + \langle y,v\rangle\langle w,dz\rangle + \langle y,w\rangle\langle v,dz\rangle)\\
& + 6 P_3\left(v,w,dB_0y + \frac{3}{2} P_3(y,\cdot,dz)^T\right)\\
& + 6 P_3\left(y,v,dB_0w + \frac{3}{2} P_3(w,\cdot,dz)^T\right)\\
& + 6 P_3\left(y,w,dB_0v + \frac{3}{2} P_3(v,\cdot,dz)^T\right)\\
& + 24 P_4(y,v,w,dz)
\end{align*}
for all $y,v,w\in\mathbb{R}^n$. Since $\delta P_3(y)$ is homogeneous of degree $3$ in $y$, we have the identities
\begin{equation*}
d(\delta P_3)_y(v) =3 \delta P_3(y,y,v),\quad
\partial^2(\delta P_3)_y(v,w) =6\delta P_3(y,v,w),
\end{equation*}
when we regard $dz$ as a constant vector and interpret $\delta P_3$ as a cubic tensor. We use the above identities and obtain
\begin{align*}
&\sum\limits_{a,i,\ell} P_3(\partial_a,\partial_a,\partial_\ell) \delta P_3(\partial_i,\partial_i,\partial_\ell)\\
&=\left(\sum\limits_{a,\ell}P_3(\partial_a,\partial_a,\partial_\ell)\frac{(\tau-2)(-2n-4)}{3\tau}\langle\partial_\ell, dz\rangle\right)\\
&+\left(\sum\limits_{a,i,\ell}P_3(\partial_a,\partial_a,\partial_\ell)\left(2P_3(\partial_i,\partial_\ell,dB_0\partial_i)+P_3(\partial_i,\partial_i,dB_0\partial_\ell)+4P_4(\partial_i,\partial_i,\partial_\ell,dz)\right)\right)\\
&+\left(\sum\limits_{a,i,\ell,j}P_3(\partial_a,\partial_a,\partial_\ell)\left(3P_3(\partial_i,\partial_\ell,\partial_j)P_3(\partial_i,\partial_j,dz)+\frac{3}{2}P_3(\partial_i,\partial_i,\partial_j)P_3(\partial_\ell,\partial_j,dz)\right)\right)
\end{align*}
and
\begin{align*}
&\sum\limits_{a,i,\ell} P_3(\partial_a,\partial_i,\partial_\ell) \delta P_3(\partial_a,\partial_i,\partial_\ell)\notag\\
&=\left(\sum\limits_{a,\ell}P_3(\partial_a,\partial_\ell,\partial_\ell)\frac{-2(\tau-2)}{\tau}\langle\partial_a,dz\rangle \right)\notag\\
&+\left(\sum\limits_{a,i,\ell}P_3(\partial_a,\partial_i,\partial_\ell)\left( 3P_3(\partial_a,\partial_i,dB_0\partial_\ell) + 4P_4(\partial_a,\partial_i,\partial_\ell,dz) \right)\right)\notag\\
&+\left(\sum\limits_{a,i,\ell,j}P_3(\partial_a,\partial_i,\partial_\ell)\left( \frac{9}{2}P_3(\partial_a,\partial_i,\partial_j)P_3(\partial_\ell,\partial_j,dz) \right)\right).
\end{align*}
To see that all terms containing $dB_0:\mathbb{R}^n\to\mathfrak{so}(n)$ (understood as in equation \eqref{dB0_eqn}) vanish, observe that for all $1\leq a,i,\ell\leq n$ the tensors
\begin{align*}
P_3(\partial_a,\partial_a,\partial_\ell)P_3(\cdot,\partial_\ell,\cdot),\quad P_3(\partial_a,\partial_a,\cdot)P_3(\partial_i,\partial_i,\cdot),\quad P_3(\partial_a,\partial_i,\cdot)P_3(\partial_a,\partial_i,\cdot)
\end{align*}
are symmetric in their two arguments. Their trace with respect to the standard Euclidean scalar product $\langle \cdot,\cdot\rangle$ on $\mathbb{R}^n$ when inserting any matrix $M\in\mathfrak{so}(n)$ in one of the arguments thus vanishes.
We can now use the above formulas for $\sum\limits_{a,i,\ell} P_3(\partial_a,\partial_a,\partial_\ell) \delta P_3(\partial_i,\partial_i,\partial_\ell)$ and $\sum\limits_{a,i,\ell} P_3(\partial_a,\partial_i,\partial_\ell) \delta P_3(\partial_a,\partial_i,\partial_\ell)$ in equation \eqref{dS_pre-eqn} and, with the identification of $dz$ and $dy$ via $d\Phi_0$ \eqref{dom(H)_H_iso}, obtain our claimed result for $\tau\geq 4$. For $\tau=3$, observe that the formulas for $\delta P_3$ in equations \eqref{delta_P_3_cubics} and \eqref{delta_P3_tau_geq4} coincide when setting $P_4\equiv 0$. The calculations for the case $\tau=3$ thus coincide with the cases $\tau\geq 4$ and we obtain the claimed result.
\end{proof}
\end{Prop}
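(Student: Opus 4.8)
The plan is to avoid the direct route — locally computing the third partial derivatives of the pullback metric $g=\tau\Phi^*g_{\mathcal{H}}$ at $z=0$, which is extremely long and error-prone — and instead to exploit the structure uncovered in Proposition \ref{prop_scal_GPSR}: the value $S_{\mathcal{H}}\left(\left(\begin{smallmatrix}1\\ 0\end{smallmatrix}\right)\right)$ is a \emph{universal} polynomial expression $F_{n,\tau}(P_3)$ in the cubic coefficient $P_3$ alone, and it is quadratic in $P_3$. If we move the base point to $\Phi(z)\in\mathcal{H}$ and re-adapt coordinates by $\mathcal{A}(z)=A(\Phi(z))$ as in Proposition \ref{prop_std_form_h}, then, because equivalent connected hyperbolic centro-affine hypersurfaces are isometric, $S_{\mathcal{H}}(\Phi(z))$ equals $F_{n,\tau}$ evaluated at the cubic coefficient of $h\circ\mathcal{A}(z)$, whose first $z$-variation at $0$ is precisely $\delta P_3$ from Definition \ref{inf_Pk_def}. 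Hence by the chain rule $\left.dS_{\mathcal{H}}\right|_{\left(\begin{smallmatrix}1\\ 0\end{smallmatrix}\right)}\circ d\Phi_0$ equals the variation of $F_{n,\tau}$ at $P_3$ applied to $\delta P_3$, and identifying $T_0\mathrm{dom}(\mathcal{H})$ with $T_{\left(\begin{smallmatrix}1\\ 0\end{smallmatrix}\right)}\mathcal{H}$ via $d\Phi_0$ gives the stated object.

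First I would differentiate \eqref{eqn_GPSR_scal_formula}: being quadratic in $P_3$, its variation merges, using the symmetry of the summand, into $\frac{9\tau}{4}\sum_{a,i,\ell}\big(-P_3(\partial_a,\partial_a,\partial_\ell)\,\delta P_3(\partial_i,\partial_i,\partial_\ell)+P_3(\partial_a,\partial_i,\partial_\ell)\,\delta P_3(\partial_a,\partial_i,\partial_\ell)\big)$, which is \eqref{dS_pre-eqn}. Then I would substitute the explicit $\delta P_3$ supplied by Proposition \ref{inf_p-moving_prop} (equation \eqref{delta_P3_tau_geq4} for $\tau\geq 4$; for $\tau=3$ this is the same formula with $P_4\equiv 0$, cf. \eqref{delta_P_3_cubics}). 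To extract the fully polarised components $\delta P_3(\partial_i,\partial_j,\partial_k)$ safely — i.e. without risking mistakes in combinatorial prefactors — I would compute $\partial^2(\delta P_3)_y$ symbolically, treating $dz$ as a constant vector, and then use the homogeneity identity $\partial^2(\delta P_3)_y(v,w)=6\,\delta P_3(y,v,w)$. Contracting the result against $P_3(\partial_a,\partial_a,\partial_\ell)$ and against $P_3(\partial_a,\partial_i,\partial_\ell)$ as in \eqref{dS_pre-eqn} and collecting by type then produces three families of terms: the Euclidean-metric terms, which yield the $(n-1)$-weighted $\sum_a P_3(\partial_a,\partial_a,dy)$ contribution; the mixed $P_3P_4$ cross terms; and the cubic-in-$P_3$ terms.

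The one step requiring a genuine argument is that every contribution involving $dB_0\in\mathrm{Lin}(\mathbb{R}^n,\mathfrak{so}(n))$ cancels. This holds because each such contribution is, up to constants, the $\langle\cdot,\cdot\rangle$-trace of one of the tensors $P_3(\partial_a,\partial_a,\partial_\ell)P_3(\cdot,\partial_\ell,\cdot)$, $P_3(\partial_a,\partial_a,\cdot)P_3(\partial_i,\partial_i,\cdot)$, or $P_3(\partial_a,\partial_i,\cdot)P_3(\partial_a,\partial_i,\cdot)$ — all symmetric in their two free slots — with a skew-symmetric matrix inserted, hence zero; this is exactly why the freedom of $dB_0$ in Proposition \ref{inf_p-moving_prop} leaves $dS_{\mathcal{H}}$ unaffected, as it must. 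After this cancellation only the $dz$-dependent terms of $\delta P_3$ survive, and relabelling indices together with identifying $dz$ with $dy$ via $d\Phi_0$ yields the stated formula for $\tau\geq 4$; the case $\tau=3$ then follows by setting $P_4\equiv 0$, since the defining formulas for $\delta P_3$ coincide. I expect the only place really demanding care to be the bookkeeping of prefactors in the cubic-in-$P_3$ family — the factor $\tfrac{3}{2}$ coming from the column vector $P_3(y,\cdot,dz)^T$ together with the polarisation multiplicities — while everything else is mechanical substitution.
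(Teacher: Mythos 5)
Your proposal follows essentially the same route as the paper's proof: it derives \eqref{dS_pre-eqn} by differentiating the quadratic-in-$P_3$ formula \eqref{eqn_GPSR_scal_formula} along the family of adapted coordinates, extracts the polarised components of $\delta P_3$ via $\partial^2(\delta P_3)_y(v,w)=6\,\delta P_3(y,v,w)$ with $dz$ held constant, kills the $dB_0$-contributions by the symmetric-tensor-traced-against-$\mathfrak{so}(n)$ argument, and recovers $\tau=3$ by setting $P_4\equiv 0$. The only addition is your explicit justification (via isometry of equivalent hypersurfaces) that $S_{\mathcal{H}}(\Phi(z))$ is the universal expression evaluated at the transformed cubic coefficient, which the paper leaves implicit; the argument is correct.
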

The calculations used in Proposition \ref{prop_scal_GPSR} can also be used to calculate the Riemannian curvature tensor, the Ricci curvature, and the sectional curvatures of a connected GPSR manifold $(\mathcal{H},g_\mathcal{H})$.
\begin{Lem}
\label{lemma_R_Ric_K_GPSR}
With the assumptions of Proposition \ref{prop_scal_GPSR}, let $R$ denote the Riemannian curvature tensor, $\mathrm{Ric}$ denote the Ricci curvature, and $K$ denote the sectional curvature of an $n$-dimensional connected GPSR manifold $(\mathcal{H},g_{\mathcal{H}})$, respectively. We again identify $dz$ and $dy$ at $\left(\begin{smallmatrix}
1\\
0
\end{smallmatrix}\right)\in\mathcal{H}$ via $d\Phi_0$ \eqref{dom(H)_H_iso}. Then
\begin{align}
R_{\left(\begin{smallmatrix}
1\\
0
\end{smallmatrix}\right)}(\partial_i,\partial_j)\partial_k&=\frac{2}{\tau}\left(\delta_i^k\partial_j-\delta_j^k\partial_i\right)\notag\\
&+\frac{9}{4}\sum\limits_{a,\ell}\left(-P_3(\partial_i,\partial_\ell,\partial_a)P_3(\partial_j,\partial_k,\partial_a)+P_3(\partial_i,\partial_k,\partial_a)P_3(\partial_j,\partial_\ell,\partial_a)\right)\partial_\ell,\label{eqn_R_GPSR}
\\
\mathrm{Ric}_{\left(\begin{smallmatrix}
1\\
0
\end{smallmatrix}\right)}(\partial_j,\partial_k)&=\frac{2(1-n)}{\tau}\delta_j^k\notag\\
&+\frac{9}{4}\sum\limits_{a,i}\left(-P_3(\partial_i,\partial_i,\partial_a)P_3(\partial_j,\partial_k,\partial_a)+P_3(\partial_i,\partial_j,\partial_a)P_3(\partial_i,\partial_k,\partial_a)\right),\label{eqn_Ric_GPSR}
\end{align}
and for $\dim\left(\mathrm{span}\{v,w\}\right)=2$
\begin{equation}
K_{\left(\begin{smallmatrix}
1\\
0
\end{smallmatrix}\right)}(v,w)=-1+\frac{9\tau}{8}\sum\limits_{\ell}\left(-P_3(F\partial_i,F\partial_i,F\partial_\ell)P_3(F\partial_j,F\partial_j,F\partial_\ell)+P_3(F\partial_i,F\partial_j,F\partial_\ell)^2\right),\label{eqn_sectional_curvature_GPSR}
\end{equation}
where $F\in\mathrm{O}(n)$ is any orthogonal transformation with the property that $\mathrm{span}\{v,w\}=\mathrm{span}\{F\partial_i,F\partial_j\}$. Note that such a transformation $F$ always exists for any choices of $i\ne j$, and that $K(v,w)$ does in particular not depend on that choice of $i$, $j$, and the corresponding $F$.
\begin{proof}
The formulas \eqref{eqn_R_GPSR} and \eqref{eqn_Ric_GPSR} for the Riemannian curvature tensor $R$ and the Ricci tensor $\mathrm{Ric}$, respectively, follow directly from the formulas for the Christoffel symbols \eqref{eqn_christoffel_GPSR}, their first derivatives \eqref{eqn_1st_derivative_christoffel_GPSR}, and the inverse of $g_\mathcal{H}$ at the point $\left(\begin{smallmatrix}
x\\
y
\end{smallmatrix}\right)=\left(\begin{smallmatrix}
1\\
0
\end{smallmatrix}\right)$ \eqref{eqn_g^-1_0} (up to the factor $\tau$) given in the proof of Proposition \ref{prop_scal_GPSR}. Recall that in said proof we work with $g=\tau \Phi^*g_\mathcal{H}$, $\Phi$ as in \eqref{dom(H)_H_iso}, hence we also need to rescale the formula for $g$ at $0$ \eqref{eqn_g^-1_0} at the point where we take the trace with respect $g_\mathcal{H}$. For the sectional curvature $K$, the formula for $K_{\left(\begin{smallmatrix}
1\\
0
\end{smallmatrix}\right)}(\partial_i,\partial_j)$ for $i\ne j$ follows easily from \eqref{eqn_R_GPSR} and \eqref{eqn_g_0} (and by rescaling with the overall factor $\tau$). To find the general formula $K_{\left(\begin{smallmatrix}
1\\
0
\end{smallmatrix}\right)}(v,w)$ \eqref{eqn_sectional_curvature_GPSR} for any two linearly independent vectors $v,w\in T_{\left(\begin{smallmatrix}
1\\
0
\end{smallmatrix}\right)}\mathcal{H}\cong\mathbb{R}^n$, choose $i\ne j$ and $F\in\mathrm{O}(n)$ as described such that $\mathrm{span}\{v,w\}=\mathrm{span}\{F\partial_i,F\partial_j\}$. Changing the coordinates of the ambient $\mathbb{R}^{n+1}$ via
\begin{equation*}
\left(\begin{matrix}
\widetilde{x}\\
\widetilde{y}
\end{matrix}
\right)=
\left(\begin{matrix}
x\\
F^{-1}y
\end{matrix}
\right)
\end{equation*}
corresponds to rotating $\mathcal{H}$ in the $y$-coordinates and correspondingly changing the defining cubic polynomial $h$ to
\begin{equation*}
\widetilde{h}=\widetilde{x}^3-\widetilde{x}\langle \widetilde{y},\widetilde{y}\rangle + \widetilde{P}_3(\widetilde{y}),
\end{equation*}
with $\widetilde{P}_3(\widetilde{y})=P_3(F\widetilde{y})$. In the $\left(\begin{smallmatrix}
\widetilde{x}\\
\widetilde{y}
\end{smallmatrix}
\right)$-coordinates, let $\widetilde{K}$ denote the sectional curvature. By identifying $\partial_{\widetilde{y}_k}=\partial_{y_k}=\partial_k$ for all $1\leq k\leq n$ (as the $k$th unit vector in $\mathbb{R}^n$, \underline{not} via the map $F$) we have
\begin{align*}
K_{\left(\begin{smallmatrix}
1\\
0
\end{smallmatrix}\right)}(v,w)&=\widetilde{K}_{\left(\begin{smallmatrix}
1\\
0
\end{smallmatrix}\right)}(\partial_i,\partial_j)
=-1+\frac{9\tau}{8}\sum\limits_{\ell}\left(-\widetilde{P}_3(\partial_i, \partial_i, \partial_\ell)\widetilde{P}_3( \partial_j, \partial_j, \partial_\ell)+\widetilde{P}_3( \partial_i, \partial_j, \partial_\ell)^2\right)\\
&=-1+\frac{9\tau}{8}\sum\limits_{\ell}\left(-P_3(F\partial_i,F\partial_i,F\partial_\ell)P_3(F\partial_j,F\partial_j,F\partial_\ell)+P_3(F\partial_i,F\partial_j,F\partial_\ell)^2\right).
\end{align*}
\end{proof}
\end{Lem}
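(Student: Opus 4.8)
The plan is to read off all three formulas from the data already assembled in the proof of Proposition~\ref{prop_scal_GPSR}: the value $g|_0=2\langle dz,dz\rangle$ of the metric $g=\tau\,\Phi^*g_{\mathcal{H}}$ at $z=0$ and its inverse \eqref{eqn_g_0}, \eqref{eqn_g^-1_0}, the Christoffel symbols \eqref{eqn_christoffel_GPSR}, and their first derivatives \eqref{eqn_1st_derivative_christoffel_GPSR}. The point to keep straight throughout is the scaling behaviour: $g$, $\Phi^*g_{\mathcal{H}}$ and $g_{\mathcal{H}}$ differ only by constant positive factors, so they have the same Levi-Civita connection, whence the Riemann tensor as a $(1,3)$-tensor and the Ricci tensor as a $(0,2)$-tensor are literally the same for all three (no rescaling), whereas the sectional curvature scales inversely and so picks up a factor $\tau$ when one passes from $g$ to $g_{\mathcal{H}}=\tfrac1\tau g$ --- exactly as the scalar curvature did in the proof of Proposition~\ref{prop_scal_GPSR}.

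First I would plug \eqref{eqn_christoffel_GPSR} and \eqref{eqn_1st_derivative_christoffel_GPSR} into the coordinate formula $R(\partial_i,\partial_j)\partial_k=\sum_\ell\bigl(\partial_i\Gamma_{jk}^\ell-\partial_j\Gamma_{ik}^\ell+\sum_m(\Gamma_{im}^\ell\Gamma_{jk}^m-\Gamma_{jm}^\ell\Gamma_{ik}^m)\bigr)\partial_\ell$ and evaluate at $z=0$. In the antisymmetrised derivative term the two copies of $P_4$ cancel by the symmetry of $P_4$; the Kronecker-delta part collapses, using $1-\tfrac{\tau-2}{\tau}=\tfrac2\tau$, to $\tfrac2\tau(\delta_i^k\partial_j-\delta_j^k\partial_i)$; and the $P_3$-quadratic part of $\partial\Gamma$, carrying the coefficient $-\tfrac92$, combines with the $\Gamma\Gamma$-terms, which by \eqref{eqn_christoffel_GPSR} carry the coefficient $+\tfrac94$, to leave the coefficient $\tfrac94$ of \eqref{eqn_R_GPSR}. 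This gives \eqref{eqn_R_GPSR}. Contracting the first and fourth slot of \eqref{eqn_R_GPSR}, i.e.\ putting $\ell=i$ and summing over $i$ --- which is the coordinate form of $\mathrm{Ric}(\partial_j,\partial_k)=\mathrm{tr}\bigl(v\mapsto R(v,\partial_j)\partial_k\bigr)$ and uses no metric --- the delta-part yields $\tfrac{2(1-n)}{\tau}\delta_j^k$ and the quadratic part, after applying the symmetry of $P_3$, yields \eqref{eqn_Ric_GPSR}.

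For the sectional curvature I would first take $v=\partial_i$, $w=\partial_j$ with $i\ne j$ and compute $K_{\left(\begin{smallmatrix}1\\0\end{smallmatrix}\right)}(\partial_i,\partial_j)=\dfrac{g_{\mathcal{H}}\bigl(R(\partial_i,\partial_j)\partial_j,\partial_i\bigr)}{g_{\mathcal{H}}(\partial_i,\partial_i)\,g_{\mathcal{H}}(\partial_j,\partial_j)-g_{\mathcal{H}}(\partial_i,\partial_j)^2}$, using $\Phi^*g_{\mathcal{H}}|_0=\tfrac2\tau\langle\cdot,\cdot\rangle$ together with \eqref{eqn_R_GPSR} (whose delta-part contributes $-\tfrac2\tau\delta_i^\ell$ to the $\partial_\ell$-component of $R(\partial_i,\partial_j)\partial_j$ for $i\ne j$); the powers of $\tau$ cancel and one is left with $-1+\tfrac{9\tau}{8}\sum_\ell\bigl(-P_3(\partial_i,\partial_i,\partial_\ell)P_3(\partial_j,\partial_j,\partial_\ell)+P_3(\partial_i,\partial_j,\partial_\ell)^2\bigr)$. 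For arbitrary linearly independent $v,w$ I would pick $i\ne j$ and $F\in\mathrm{O}(n)$ with $\mathrm{span}\{v,w\}=\mathrm{span}\{F\partial_i,F\partial_j\}$; the coordinate change $\left(\begin{smallmatrix}\widetilde x\\ \widetilde y\end{smallmatrix}\right)=\left(\begin{smallmatrix}x\\ F^{-1}y\end{smallmatrix}\right)$ keeps $h$ in the standard form \eqref{standard_form_h} with $P_3$ replaced by $\widetilde P_3=P_3(F\,\cdot\,)$, fixes $\left(\begin{smallmatrix}1\\0\end{smallmatrix}\right)$, and is an isometry onto its image (equivalent connected hyperbolic centro-affine hypersurfaces being isometric, cf.\ the lemma following Definition~\ref{equivalenceDEF}); applying the $i\ne j$ formula to $\widetilde P_3$ and rewriting in terms of $P_3$ and $F\partial_i,F\partial_j,F\partial_\ell$ gives \eqref{eqn_sectional_curvature_GPSR}, the independence of the choices of $i,j,F$ being automatic because the left-hand side $K_{\left(\begin{smallmatrix}1\\0\end{smallmatrix}\right)}(v,w)$ depends only on the $2$-plane $\mathrm{span}\{v,w\}$.

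The only genuine effort is the bookkeeping in the derivation of \eqref{eqn_R_GPSR}: shepherding the delta-, $P_4$- and $P_3^2$-contributions through the antisymmetrisation and through the $\Gamma\Gamma$-combination so that the factors $\tfrac2\tau$ and $\tfrac94$ emerge correctly, while staying consistent about which objects inherit the factor $\tau$ from $g=\tau\,\Phi^*g_{\mathcal{H}}$ (none for the $(1,3)$-Riemann and $(0,2)$-Ricci tensors, one factor $\tau$ for the sectional curvature) so that the leading constants $\tfrac2\tau$, $\tfrac{2(1-n)}{\tau}$ and $-1$ come out right.
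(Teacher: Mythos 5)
Your proposal is correct and follows essentially the same route as the paper: it reads off $R$ and $\mathrm{Ric}$ from the Christoffel symbols \eqref{eqn_christoffel_GPSR}, their derivatives \eqref{eqn_1st_derivative_christoffel_GPSR} and the metric data \eqref{eqn_g_0}, \eqref{eqn_g^-1_0} from the proof of Proposition \ref{prop_scal_GPSR}, with the correct bookkeeping of the constant factor $\tau$ (which affects only the sectional curvature), and derives \eqref{eqn_sectional_curvature_GPSR} by the same orthogonal change of coordinates $F$ reducing general $v,w$ to the case $\partial_i,\partial_j$. The coefficients you track ($\tfrac{2}{\tau}$ from $1-\tfrac{\tau-2}{\tau}$, the cancellation of the $P_4$-terms, and $-\tfrac{9}{2}+\tfrac{9}{4}=-\tfrac{9}{4}$ for the $P_3$-quadratic part) all check out against the stated formulas.
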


Another application of the first variation of the $P_k$'s as defined in Definition \ref{inf_Pk_def} is the study of homogeneous spaces that are
CCGPSR manifolds.
We will derive a sufficient condition for a connected GPSR manifold $\mathcal{H}\subset\{h=1\}$ to be a homogeneous space with respect to the action of $G_0^h$, that is the identity-component of the automorphism group $G^h$ of $h$.
\begin{Prop}
\label{prop_hom_spaces_deltaPk_condition}
Let $\mathcal{H}$ be a maximal connected GPSR manifold in standard form.
Let $\delta P_k(y):\mathbb{R}^n\to\mathrm{Sym}^k \left(\mathbb{R}^n\right)^*$ be as in equation \eqref{delta_P_k_general_eqn} depending on $dB_0\in\mathrm{Lin}\left(\mathbb{R}^n,\mathfrak{so}(n)\right)$ \eqref{dE0_eqn}, cf. Proposition \ref{inf_p-moving_prop}. Then the connected component containing the neutral element of the automorphism group of $h$, that is $G^h_0$, acts transitively on $\mathcal{H}$ if and only if there exists a choice for $dB_0\in\mathrm{Lin}\left(\mathbb{R}^n,\mathfrak{so}(n)\right)$, such that $\delta P_k(y)\equiv 0$ for all $3\leq k\leq \tau$. Furthermore, each of the latter two equivalent statements imply that $\mathcal{H}$ is a CCGPSR manifold.
\begin{proof}
It is clear that the action $G^h_0\times\mathcal{H}\to\mathcal{H}$ is well defined. Assume that $G^h_0$ acts transitively on the maximal GPSR manifold $\mathcal{H}\subset\{h=1\}$. Then $\mathcal{H}$ is, in particular, a CCGPSR manifold. For $p=\left(\begin{smallmatrix}
p_x\\ p_y
\end{smallmatrix}
\right)\in\mathcal{H}$ let $M(p)\in G^h_0$, such that $M(p)\cdot\left(\begin{smallmatrix}
1\\ 0
\end{smallmatrix}
\right)=p$. We will show that $M(p)$ is necessarily of the form \eqref{p_moving_A_matrix}. We immediately see that $M(p)$ is of the form
\begin{equation*}
M(p)=\left(\begin{tabular}{c|c}
$p_x$ & $v_p^T$ \\ \hline
$p_y$ & $\overset{}{W}(p)$
\end{tabular}
\right)
\end{equation*}
for some $v_p\in\mathbb{R}^n$ and $W(p)\in\mathrm{Mat}(n\times n,\mathbb{R})$. We calculate
\begin{align*}
h\left(M(p)\cdot \left(\begin{smallmatrix}
x\\ y
\end{smallmatrix}
\right)\right)&=x^\tau + x^{\tau-1} dh_p\left(\left(\begin{smallmatrix}
\langle v_p,y\rangle \\ W(p)y
\end{smallmatrix}
\right)\right) + 2x^{\tau-2}\partial^2 h_p\left(\left(\begin{smallmatrix}
\langle v_p,y\rangle \\ W(p)y
\end{smallmatrix}
\right),\left(\begin{smallmatrix}
\langle v_p,y\rangle \\ W(p)y
\end{smallmatrix}
\right)\right) \\
&\quad+ (\text{terms of lower order in }x).
\end{align*}
Since by assumption $h\equiv h\circ M(p)$ it follows that
\begin{equation}\label{eqn_autgrp_Mp_d}
\partial_x h_p \langle v_p,y\rangle + \partial_y h_p (W(p)y)=0 
\end{equation}
and
\begin{equation}\label{eqn_autgrp_Mp_dd}
2\partial^2 h_p\left(\left(\begin{smallmatrix}
\langle v_p,y\rangle \\ W(p)y
\end{smallmatrix}
\right),\left(\begin{smallmatrix}
\langle v_p,y\rangle \\ W(p)y
\end{smallmatrix}
\right)\right)=-\langle y,y\rangle
\end{equation}
for all $y\in\mathbb{R}^n$.
Suppose that $W(p)\not\in\mathrm{GL}(n)$. Then there exists $\overline{y}\in\mathbb{R}^n\setminus\{0\}$, such that $W(p)\overline{y}=0$. Then by \eqref{eqn_autgrp_Mp_dd}
\begin{equation*}
2\partial^2_x h_p\left\langle v_p,\overline{y}\right\rangle ^2=-\left\langle \overline{y},\overline{y}\right\rangle<0.
\end{equation*}
This in particular shows that $\left\langle v_p,\overline{y}\right\rangle\ne 0$. But then equation \eqref{eqn_autgrp_Mp_d} cannot be fulfilled since $\partial_y h_p(W(p)\overline{y})=\partial_yh_p(0)=0$ and $\partial_xh|_\mathcal{H}>0$ is true since $\mathcal{H}$ is a CCGPSR manifold, cf. proof of Proposition \ref{prop_std_form_h} equation \eqref{eqn_positivity_delxh}. We deduce that $W(p)\in\mathrm{GL}(n)$. Hence, setting $\widetilde{v}_p=W(p)^Tv_p$ in equation \eqref{eqn_autgrp_Mp_d} implies that $\widetilde{v}_p=-\left.\frac{\partial_y h}{\partial_xh}\right|_p$. This shows that 
\begin{equation*}
M(p)=\left(\begin{tabular}{c|c}
$p_x$ & $-\left.\frac{\partial_y h}{\partial_xh}\right|_p\circ W(p)$ \\ \hline
$p_y$ & $\overset{}{W}(p)$
\end{tabular}
\right)
\end{equation*}
is of the form \eqref{p_moving_A_matrix} as claimed. The action $G^h_0\times \mathcal{H}\to\mathcal{H}$ might not be simply transitive, but near $p=\left(\begin{smallmatrix}
1\\ 0
\end{smallmatrix}
\right)\in\mathcal{H}$, that is on some open neighbourhood $U\subset \mathcal{H}$ of $\left(\begin{smallmatrix}
1\\ 0
\end{smallmatrix}
\right)$, we can choose a smooth branch of the possible maps $W:U\to\mathrm{GL}(n)$ by the implicit function theorem. Then, using the diffeomorphism $\Phi:\mathrm{dom}(\mathcal{H})\to\mathcal{H}$ \eqref{dom(H)_H_iso}, $M\circ\Phi$ is locally on $\Phi^{-1}(U)$ a valid choice for $\mathcal{A}$ as in equation \eqref{A_matrix_on_dom(H)} and $d(W\circ\Phi)_0$ must fulfil the same equation as $\mathcal{E}$ in \eqref{dE0_eqn} in the proof of Proposition \ref{inf_p-moving_prop}. We now use the equality
\begin{equation*}
h\left(W(\Phi(z))\cdot\left(\begin{smallmatrix}
x\\ y
\end{smallmatrix}
\right)\right)=h\left(\left(\begin{smallmatrix}
x\\ y
\end{smallmatrix}
\right)\right)
\end{equation*}
for all $z\in\Phi^{-1}(U)$ to conclude with the definition of the $\delta P_k$'s \eqref{delta_P_k_general_eqn} that there exists a linear map $dB_0\in\mathrm{Lin}(\mathbb{R}^n,\mathfrak{so}(n))$, such that the corresponding functions $\delta P_k(y):\mathbb{R}^n\to\mathrm{Sym}^k\left((\mathbb{R}^n)^*\right)$ identically vanish for all $y\in\mathbb{R}^n$ and all $3\leq k\leq \tau$.

Now assume that there exists $dB_0\in\mathrm{Lin}(\mathbb{R}^n,\mathfrak{so}(n))$, such that $\delta P_k(y)\equiv 0$ for all $3\leq k\leq \tau$. Consider the corresponding map $\mathcal{A}:\Phi^{-1}(V)\to\mathrm{GL}(n+1)$ \eqref{A_matrix_on_dom(H)} for any open neighbourhood $V\subset\mathcal{H}$ of the point $\left(\begin{smallmatrix}
1\\ 0
\end{smallmatrix}\right)\in\mathcal{H}$ so that $\mathcal{A}$ is defined, with
\begin{equation*}
d\mathcal{A}_0=\left(
\begin{tabular}{c|c}
$\underset{}{0}$ &  $\frac{2}{\tau}dz^T$ \\ \hline
$dz$  & $\overset{}{\frac{3}{2}P_3(\cdot,\cdot,dz)^T+dB_0}$
\end{tabular}
\right),
\end{equation*}
cf. equations \eqref{dE0_eqn} and \eqref{eqn_dA0_on_domH}. Then $\delta P_k(y)\equiv 0$ for all $3\leq k\leq \tau$ implies that for all $v\in T_0\mathrm{dom}(\mathcal{H})\cong\mathbb{R}^n$
\begin{equation}\label{eqn_lie_alg_h_symgrp}
dh_{\left(\begin{smallmatrix}
x\\ y
\end{smallmatrix}
\right)}\left(d\mathcal{A}_0(v)\cdot\left(\begin{smallmatrix}
x\\ y
\end{smallmatrix}
\right)\right)\equiv 0,
\end{equation}
where $d\mathcal{A}_0(v)$ denotes the $\mathfrak{gl}(n+1)$-valued 1-form $d\mathcal{A}$ at $z=0$ applied to $v\in T_0\mathrm{dom}(\mathcal{H})$. With
\begin{equation*}
a_i:=d\mathcal{A}_0\left(\partial_{z_i}\right)
\end{equation*}
for $1\leq i\leq n$, the set of matrices $\{a_1,\ldots,a_n\}$ is linearly independent. Furthermore $\{a_1,\ldots,a_n\}\subset T_\mathbbm{1}G^h=T_\mathbbm{1}G^h_0$ which follows from \eqref{eqn_lie_alg_h_symgrp}. Let $\mu:G^h_0\to\mathcal{H}$, $\mu(a)=a\cdot \left(\begin{smallmatrix}
1\\ 0
\end{smallmatrix}\right)$, denote the action of $G^h_0$ on the point $\left(\begin{smallmatrix}
1\\ 0
\end{smallmatrix}\right)\in\mathcal{H}$. Then $d\mu_\mathbbm{1}(a_i)=\partial_{y_i}$
for all $1\leq i\leq n$. Hence, $d\mu_\mathbbm{1}:T_\mathbbm{1}G^h_0\to T_{\left(\begin{smallmatrix}
1\\ 0
\end{smallmatrix}\right)}\mathcal{H}$ is surjective (recall that with $h$ of the form \eqref{standard_form_h}, we view $T_{\left(\begin{smallmatrix}
1\\ 0
\end{smallmatrix}\right)}\mathcal{H}$ as the vector subspace $\left.\left\{\left(\begin{smallmatrix}
0\\ v
\end{smallmatrix}\right)\ \right|\ v\in\mathbb{R}^n\right\}\subset\mathbb{R}^{n+1}$). This shows that there exists an open subset $U\subset\mathcal{H}$, such that $\left(\begin{smallmatrix}
1\\ 0
\end{smallmatrix}\right)\in U$ and $U\subset G^h_0\cdot\left(\begin{smallmatrix}
1\\ 0
\end{smallmatrix}\right)$.
Suppose that the orbit $G^h_0\cdot\left(\begin{smallmatrix}
1\\ 0
\end{smallmatrix}\right)\subset\mathcal{H}$ is not open in $\mathcal{H}$. Then the set $\mathcal{H}\cap \partial\left(G^h_0\cdot\left(\begin{smallmatrix}
1\\ 0
\end{smallmatrix}\right)\right)\cap \left(G^h_0\cdot\left(\begin{smallmatrix}
1\\ 0
\end{smallmatrix}\right)\right)$ is non-empty. Let $q\in \mathcal{H}\cap \partial\left(G^h_0\cdot\left(\begin{smallmatrix}
1\\ 0
\end{smallmatrix}\right)\right)\cap \left(G^h_0\cdot\left(\begin{smallmatrix}
1\\ 0
\end{smallmatrix}\right)\right)$ and let $a(q)\in G^h_0$, such that $q=a(q)\cdot\left(\begin{smallmatrix}
1\\ 0
\end{smallmatrix}
\right)$. Since $q$ is by assumption an element of $\partial\left(G^h_0\cdot\left(\begin{smallmatrix}
1\\ 0
\end{smallmatrix}\right)\right)$ and $a(q)$ acts via linear transformations on $\mathbb{R}^{n+1}$ restricted to $\mathcal{H}$, there must exist $p\in\partial U$, such that $a(q)p=q$, because otherwise $q\not\in a(q)\cdot\partial U$ and $q\in a(q)\cdot U$ would imply that $q\not\in \partial\left(G^h_0\cdot\left(\begin{smallmatrix}
1\\ 0
\end{smallmatrix}\right)\right)$. But we have by definition of $G^h_0$ that $G^h_0\subset\mathrm{GL}(n+1)$ and, hence, $\left(\begin{smallmatrix}
1\\ 0
\end{smallmatrix}\right)=a(q)^{-1}q=p$, this is a contradiction to $p\in \partial U$. We conclude that the orbit $G^h_0\cdot\left(\begin{smallmatrix}
1\\ 0
\end{smallmatrix}\right)\subset\mathcal{H}$ is open in $\mathcal{H}$. Since $\mathcal{H}\subset\mathbb{R}^{n+1}$ is maximal and being a hyperbolic point of $h$ is an open condition in $\mathbb{R}^{n+1}$ it follows that $\mathcal{H}\cap\partial\mathcal{H}=\emptyset$. This shows that the same also holds for the relative to $\mathcal{H}$ open orbit $G^h_0\cdot\left(\begin{smallmatrix}
1\\ 0
\end{smallmatrix}\right)$, i.e. that $\left(G^h_0\cdot\left(\begin{smallmatrix}
1\\ 0
\end{smallmatrix}\right)\right)\cap\partial\left(G^h_0\cdot\left(\begin{smallmatrix}
1\\ 0
\end{smallmatrix}\right)\right)=\emptyset$ where the boundary of $G^h_0\cdot\left(\begin{smallmatrix}
1\\ 0
\end{smallmatrix}\right)$ is relative to $\mathbb{R}^{n+1}$. This implies that $G^h_0\cdot\left(\begin{smallmatrix}
1\\ 0
\end{smallmatrix}\right)$ is an $n$-dimensional submanifold of $\mathbb{R}^{n+1}$. Furthermore, $\left(G^h_0\cdot\left(\begin{smallmatrix}
1\\ 0
\end{smallmatrix}\right),\left.g_\mathcal{H}\right|_{G^h_0\cdot\left(\begin{smallmatrix}
1\\ 0
\end{smallmatrix}\right)}\right)$ is also by construction a homogeneous Riemannian manifold and, hence, in particular geodesically complete.
This implies that $G^h_0\cdot\left(\begin{smallmatrix}
1\\ 0
\end{smallmatrix}\right)\subset\mathbb{R}^{n+1}$ is closed, which can be seen the following way. Suppose that $G^h_0\cdot\left(\begin{smallmatrix}
1\\ 0
\end{smallmatrix}\right)$ is not closed in $\mathbb{R}^{n+1}$ but geodesically complete with respect to the restriction of $g_\mathcal{H}$ and let $p_0$ be a point in the boundary $\partial \left(G^h_0\cdot\left(\begin{smallmatrix}
1\\ 0
\end{smallmatrix}\right)\right)$. For any other point $p\in G^h_0\cdot\left(\begin{smallmatrix}
1\\ 0
\end{smallmatrix}\right)$ consider a curve $\gamma:[0,1)\to G^h_0\cdot\left(\begin{smallmatrix}
1\\ 0
\end{smallmatrix}\right)$ with $\gamma(0)=p$ and $\lim\limits_{t\to 1,\, t<1}\gamma(t)=p_0$. Since $G^h_0\cdot\left(\begin{smallmatrix}
1\\ 0
\end{smallmatrix}\right)\subset\mathcal{H}\subset\{h=1\}$ and $h:\mathbb{R}^{n+1}\to\mathbb{R}$, we conclude that $1=\lim\limits_{t\to 1,\, t<1}h(\gamma(t))=h\left(\lim\limits_{t\to 1,\, t<1} \gamma(t)\right)=h(p_0)$. Since $g_\mathcal{H}=-\frac{1}{\tau}\partial^2 h|_{T\mathcal{H}\times T\mathcal{H}}$ it in particular follows from the fact that $h(p_0)=1$ and that $h$ is a homogeneous polynomial of homogeneity-degree $\tau$ that $g_\mathcal{H}$ can be smoothly extended to $p_0\in G^h_0\cdot\left(\begin{smallmatrix}
1\\ 0
\end{smallmatrix}
\right)$.
This implies
\begin{equation*}
\lim\limits_{t\to 1,\, t<1}\int\limits_0^t\sqrt{g_\mathcal{H}(\dot\gamma(t),\dot\gamma(t))}dt<\infty.
\end{equation*}
This is a contradiction to the geodesic completeness of $\left(G^h_0\cdot\left(\begin{smallmatrix}
1\\ 0
\end{smallmatrix}
\right),\left. g_\mathcal{H}\right|_{G^h_0\cdot\left(\begin{smallmatrix}
1\\ 0
\end{smallmatrix}
\right)}\right)$ (recall that by the Hopf-Rinow theorem a Riemannian manifold is geodesically complete if and only if all unbounded curves have infinite length).
By assumption, $\mathcal{H}\subset\mathbb{R}^{n+1}$ is maximal, and we have shown that $G^h_0\cdot\left(\begin{smallmatrix}
1\\ 0
\end{smallmatrix}
\right)\subset\mathbb{R}^{n+1}$ is closed. We deduce that $\mathcal{H}=G^h_0\cdot\left(\begin{smallmatrix}
1\\ 0
\end{smallmatrix}
\right)$ and that the action of $G^h_0$ on $\mathcal{H}$ is, in fact, transitive. In particular, $\mathcal{H}$ is a CCGPSR manifold.
\end{proof}
\end{Prop}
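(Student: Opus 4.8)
The plan is to establish the equivalence by two separate implications, extracting the ``$\mathcal{H}$ is CCGPSR'' assertion in the course of each. For the forward direction I assume $G^h_0$ acts transitively on $\mathcal{H}$ and first note that $(\mathcal{H},g_\mathcal{H})$ is then a homogeneous Riemannian manifold, hence geodesically complete; combined with maximality this forces $\mathcal{H}$ to be closed in $\mathbb{R}^{n+1}$, for an accumulation point $p_0\notin\mathcal{H}$ would satisfy $h(p_0)=1$ by continuity, so $g_\mathcal{H}=-\tfrac1\tau\partial^2h|_{T\mathcal{H}\times T\mathcal{H}}$ would extend smoothly across $p_0$ and a curve in $\mathcal{H}$ running to $p_0$ would have finite length, contradicting Hopf--Rinow. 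With $\mathcal{H}$ now CCGPSR I may use $\partial_xh|_\mathcal{H}>0$ from the proof of Proposition~\ref{prop_std_form_h}, equation~\eqref{eqn_positivity_delxh}. Then for $p\in\mathcal{H}$ I take $M(p)\in G^h_0$ with $M(p)\cdot\left(\begin{smallmatrix}1\\ 0\end{smallmatrix}\right)=p$, write it in block form $\left(\begin{smallmatrix}p_x & v_p^T\\ p_y & W(p)\end{smallmatrix}\right)$, and expand $h\circ M(p)=h$ in powers of $x$: the $x^{\tau-1}$-coefficient gives $\partial_xh_p\langle v_p,\cdot\rangle+\partial_yh_p(W(p)\cdot)=0$ and the $x^{\tau-2}$-coefficient a definite quadratic identity in $v_p,W(p)$; a kernel vector of a singular $W(p)$ would have to pair nontrivially with $v_p$ by the quadratic identity and trivially with $v_p$ by the linear one (this is where $\partial_xh_p>0$ enters), a contradiction, so $W(p)\in\mathrm{GL}(n)$ and $v_p=-\left.\tfrac{\partial_yh}{\partial_xh}\right|_p\circ W(p)$, i.e.\ $M(p)$ has the shape of \eqref{p_moving_A_matrix}.

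Next I would pick, by the implicit function theorem, a smooth local branch $W$ of these matrices on an open neighbourhood $U\subset\mathcal{H}$ of $\left(\begin{smallmatrix}1\\ 0\end{smallmatrix}\right)$, set $\mathcal{A}:=M\circ\Phi$ on $\Phi^{-1}(U)$ as in \eqref{A_matrix_on_dom(H)}, and verify that $\mathcal{A}$ satisfies the hypotheses of Proposition~\ref{inf_p-moving_prop}; then $d(W\circ\Phi)_0$ obeys \eqref{dE0_eqn} for some $dB_0\in\mathrm{Lin}(\mathbb{R}^n,\mathfrak{so}(n))$, and differentiating the identity $h(W(\Phi(z))\cdot(x,y))\equiv h(x,y)$ at $z=0$ and reading off coefficients of powers of $x$ via Definition~\ref{inf_Pk_def} gives $\delta P_k\equiv 0$ for all $3\le k\le\tau$ with this $dB_0$.

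For the converse I would start from such a $dB_0$ and form the associated $\mathcal{A}$ with $d\mathcal{A}_0=\left(\begin{smallmatrix}0 & \tfrac2\tau dz^T\\ dz & \tfrac32P_3(\cdot,\cdot,dz)^T+dB_0\end{smallmatrix}\right)$; vanishing of all $\delta P_k$ is equivalent to $dh_{\left(\begin{smallmatrix}x\\ y\end{smallmatrix}\right)}\left(d\mathcal{A}_0(v)\cdot\left(\begin{smallmatrix}x\\ y\end{smallmatrix}\right)\right)\equiv 0$ for every $v$, so the linearly independent matrices $a_i:=d\mathcal{A}_0(\partial_{z_i})$ lie in the Lie algebra $T_\mathbbm{1}G^h=T_\mathbbm{1}G^h_0$, and since $d\mu_\mathbbm{1}(a_i)=\partial_{y_i}$ for the orbit map $\mu(a)=a\cdot\left(\begin{smallmatrix}1\\ 0\end{smallmatrix}\right)$ the differential $d\mu_\mathbbm{1}$ is onto, so the orbit $G^h_0\cdot\left(\begin{smallmatrix}1\\ 0\end{smallmatrix}\right)$ contains an open neighbourhood $U$ of $\left(\begin{smallmatrix}1\\ 0\end{smallmatrix}\right)$ in $\mathcal{H}$. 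I would then run the usual open--closed argument: the orbit is open in $\mathcal{H}$ (a relative-boundary point in the orbit, moved by a group element taking $U$ to a neighbourhood of it, would be interior, a contradiction); $\mathcal{H}\cap\partial\mathcal{H}=\emptyset$ by maximality and the openness of the hyperbolicity condition, so the orbit is boundaryless and hence an $n$-dimensional embedded submanifold of $\mathbb{R}^{n+1}$; it is geodesically complete by homogeneity, hence closed in $\mathbb{R}^{n+1}$ by the same Hopf--Rinow argument as before; and a nonempty set that is both open and closed in the connected maximal set $\mathcal{H}$ must equal $\mathcal{H}$. Hence $G^h_0$ acts transitively on $\mathcal{H}$ and $\mathcal{H}$ is CCGPSR, which also settles the ``furthermore''.

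The hard part will be the topological step upgrading a \emph{relatively open} orbit to all of $\mathcal{H}$: one has to feed the local Lie-theoretic and implicit-function-theorem data into the global shape of $\mathcal{H}$ and, crucially, rule out new accumulation points using that $h\equiv 1$ together with the polynomial (hence smooth) nature of $\partial^2h$ makes $g_\mathcal{H}$ extend across any putative boundary point, which Hopf--Rinow then forbids. A secondary delicate point is the sequencing in the forward direction, since the normal form \eqref{p_moving_A_matrix} for $M(p)$ rests on $\partial_xh|_\mathcal{H}>0$, which in turn needs $\mathcal{H}$ closed; so the ``homogeneous $\Rightarrow$ complete $\Rightarrow$ closed'' step must precede the block-matrix computation.
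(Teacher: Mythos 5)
Your proposal is correct and follows essentially the same route as the paper's proof: the block-matrix normal form for $M(p)$ extracted from the $x^{\tau-1}$- and $x^{\tau-2}$-coefficients, the implicit-function-theorem branch feeding into Proposition \ref{inf_p-moving_prop} to produce the required $dB_0$, and the converse via the Lie-algebra elements $a_i$, the open orbit, and the open--closed argument using maximality and geodesic completeness all coincide with the published argument. The one refinement is that you explicitly justify at the start of the forward direction why transitivity forces $\mathcal{H}$ to be closed (which is needed before invoking $\partial_xh|_{\mathcal{H}}>0$), a point the paper merely asserts there and only argues in detail in the converse direction.
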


\section{Construction of a compact convex generating set of the mo\-du\-li set of CCPSR manifolds}
In order to prove Theorem \ref{thm_Cn}, broadly speaking we need to construct estimates for $P_3$ and eigenvalues of its second derivative and study properties of $\mathrm{dom}(\mathcal{H})$, cf. Definition \ref{def_dom(H)}.
\begin{Lem}\label{lemma_P_3(y)_bounds}
Let $\mathcal{H}$ be a CCPSR manifold in standard form.
Then
\begin{align}\label{eqn_P3_sphere_bdrs}
\forall\, \widehat{z}\in\{z\in\mathbb{R}^n\mid\langle z,z\rangle=1\}:\quad |P_3(\widehat{z})|\leq \frac{2}{3\sqrt{3}}.
\end{align}
\begin{proof}
Consider $f(t):=\beta(t\widehat{z})=1-t^2+t^3P_3(\widehat{z})$, where $\beta:\mathbb{R}^n\to\mathbb{R}$ as in equation \eqref{beta_def}. Since $\mathrm{dom}(\mathcal{H})$ is precompact (Lemma \ref{precomp_cor}), $f$ must have at least one positive and one negative real root. We will determine the range for $P_3(\widehat{z})$ such that this holds. The first and second derivative of $f$ are
\begin{align*}
\dot{f}(t)=-2t+3t^2P_3(\widehat{z}),\quad\ddot{f}(t)=-2+6tP_3(\widehat{z}).
\end{align*}
Hence, $\dot{f}(t)=0$ if and only if $t=0$ or $t=\frac{2}{3P_3(\widehat{z})}$. We obtain $\ddot{f}(0)=-2$ and $\ddot{f}\left(\frac{2}{3P_3(\widehat{z})}\right)=2$. This implies that $f(t)$ has a local maximum at $t=0$ and a local minimum at $t=\frac{2}{3P_3(\widehat{z})}$. If $P_3(\widehat{z})=0$, $f(t)=0$ if and only if $t=\pm 1$, so in this case $f(t)$ has exactly one positive and one negative real root. Now assume $P_3(\widehat{z})>0$. In that case, $\frac{2}{3P_3(\widehat{z})}>0$ and $\lim\limits_{t\to -\infty}f(t)=-\infty$. Since $f(0)=1$, this implies that $f(t)$ has at least one negative real root (one can show that it is the only negative real root by showing that $\dot{f}(t)>0$ for all $t<0$ if $P_3(\widehat{z})>0$). We have seen that $f(t)$ attains its unique local minimum at $t=\frac{2}{3P_3(\widehat{z})}$. Furthermore $f(0)=0$, and $\lim\limits_{t\to\infty}f(t)=\infty$. Hence, $f(t)$ has a positive real root if and only if
\begin{equation*}
f\left(\frac{2}{3P_3(\widehat{z})}\right)\leq 0\quad \Leftrightarrow\quad 1-\frac{4}{27P_3(\widehat{z})^2}\leq 0\quad \Leftrightarrow\quad P_3(\widehat{z})\leq \frac{2}{3\sqrt{3}}.
\end{equation*}
For $P_3(\widehat{z})<0$ we define $\widetilde{f}(t):=1-t^2+t^3(-P_3(\widehat{z}))$. Similarly as for $P_3(\widehat{z})$ we then obtain $-P_3(\widehat{z})\leq\frac{2}{3\sqrt{3}}$.
Summarising, we have shown that $|P_3(\widehat{z})|\leq \frac{2}{3\sqrt{3}}$.
\end{proof}
\end{Lem}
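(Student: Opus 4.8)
The plan is to restrict the function $\beta$ of \eqref{beta_def} to rays through the origin and to read off the bound from an elementary discussion of the resulting one-variable cubic. Since $\mathcal{H}$ is in standard form with $\tau = 3$, i.e. $h = x^3 - x\langle y,y\rangle + P_3(y)$, we have $\beta(z) = 1 - \langle z,z\rangle + P_3(z)$, and for a fixed unit vector $\widehat{z}$ the homogeneity of degree $3$ of $P_3$ gives the real cubic $f(t) := \beta(t\widehat{z}) = 1 - t^2 + t^3 P_3(\widehat{z})$.

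The geometric input I would use is that, for a CCPSR manifold in standard form, $\mathrm{dom}(\mathcal{H})$ is precompact (Lemma \ref{precomp_cor}, Corollary \ref{cor_propdomH_CCGPSR}) and coincides with the connected component of $\{\beta > 0\}$ containing $0 \in \mathbb{R}^n$. Hence the segment of the ray $\{t\widehat{z} \mid t \geq 0\}$ on which $\beta$ is positive is connected and contains $0$, so it lies entirely in $\mathrm{dom}(\mathcal{H})$; boundedness of $\mathrm{dom}(\mathcal{H})$ then forces $\beta$, and therefore $f$, to vanish at some $t > 0$. Running the same argument along $-\widehat{z}$ produces a root $t < 0$. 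Thus $f$ has at least one strictly positive and one strictly negative real root.

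It then remains to convert this into a constraint on $P_3(\widehat{z})$. I would compute $\dot f(t) = -2t + 3P_3(\widehat{z})t^2$ and $\ddot f(t) = -2 + 6P_3(\widehat{z})t$, so $t=0$ is always a local maximum ($\ddot f(0) = -2$, $f(0)=1$) and, when $P_3(\widehat{z}) \neq 0$, $t = \tfrac{2}{3P_3(\widehat{z})}$ is a local minimum (there $\ddot f = 2$). If $P_3(\widehat{z}) = 0$ then $f = 1 - t^2$ has roots $\pm 1$ and the bound is trivial. If $P_3(\widehat{z}) > 0$, then $f \to -\infty$ as $t \to -\infty$, so a negative root is automatic, whereas a positive root exists iff the value at the local minimum is non-positive, i.e. $f\!\left(\tfrac{2}{3P_3(\widehat{z})}\right) = 1 - \tfrac{4}{27P_3(\widehat{z})^2} \leq 0$, which rearranges to $P_3(\widehat{z}) \leq \tfrac{2}{3\sqrt{3}}$. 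The case $P_3(\widehat{z}) < 0$ is handled symmetrically, applying the same computation to $-P_3(\widehat{z})$ (equivalently, to the opposite ray), yielding $-P_3(\widehat{z}) \leq \tfrac{2}{3\sqrt{3}}$. Combining the two cases gives $|P_3(\widehat{z})| \leq \tfrac{2}{3\sqrt{3}}$.

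I expect the only genuinely delicate point to be the justification that precompactness of $\mathrm{dom}(\mathcal{H})$ forces a sign change of $f$ of the correct kind along \emph{every} ray: this uses the connected-component description of $\mathrm{dom}(\mathcal{H})$, not merely that $\beta$ is eventually negative. The rest is routine single-variable calculus, the one place deserving care being the sign bookkeeping in the equivalence $1 - \tfrac{4}{27c^2} \leq 0 \Leftrightarrow c^2 \leq \tfrac{4}{27}$.
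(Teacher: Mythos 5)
Your proposal is correct and follows essentially the same route as the paper: restrict $\beta$ to the ray through $\widehat{z}$, use precompactness of $\mathrm{dom}(\mathcal{H})$ to force a positive and a negative root of the cubic $f(t)=1-t^2+t^3P_3(\widehat{z})$, and then read off the bound from the value of $f$ at its local minimum. The extra care you take in justifying the existence of the roots via the connected-component description of $\mathrm{dom}(\mathcal{H})$ only makes explicit what the paper leaves implicit.
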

Note that the bounds \eqref{eqn_P3_sphere_bdrs} for $P_3(\widehat{z})$, $\widehat{z}\in\{z\in\mathbb{R}^n\mid\langle z,z\rangle=1\}$, are independent of the CCPSR manifold and of its dimension. We will later show that these bounds are in fact sharp and optimal in all dimensions, see Theorem \ref{thm_convex_compact_PSR_generating_set}. An immediate consequence of the calculations in Lemma \ref{lemma_P_3(y)_bounds} is the following.
\begin{Cor}\label{cor_PSR_closedness_condition_P3}
Let $h:\mathbb{R}^{n+1}\to\mathbb{R}$, $h=x^3-x\langle y,y\rangle + P_3(y)$, be a cubic homogeneous polynomial and let $\mathcal{H}$ denote the connected component of $\{h=1\}\subset\mathbb{R}^{n+1}$ that contains the point $\left(\begin{smallmatrix}
x\\ y
\end{smallmatrix}\right)=\left(\begin{smallmatrix}
1\\ 0
\end{smallmatrix}\right)$. Then the connected component of the set
\begin{equation*}
\{h>0\}\cap \left.\left\{\left(\begin{smallmatrix}
1\\
z
\end{smallmatrix}
\right)\in\mathbb{R}^{n+1}\ \right|\ z\in\mathbb{R}^n\right\}\subset\mathbb{R}^{n+1}
\end{equation*}
which contains the point $\left(\begin{smallmatrix}
x \\ y
\end{smallmatrix}
\right)=\left(\begin{smallmatrix}
1 \\ 0
\end{smallmatrix}
\right)$
is precompact if and only if $\max\limits_{\|z\|=1}|P_3(z)|\leq \frac{2}{3\sqrt{3}}$.
\end{Cor}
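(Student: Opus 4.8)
The plan is to work directly with the ``slice set'', which under the identification of $\{x=1\}\subset\mathbb{R}^{n+1}$ with $\mathbb{R}^n$ via $\left(\begin{smallmatrix}1\\ z\end{smallmatrix}\right)\leftrightarrow z$ is exactly the connected component $D$ of $\{z\in\mathbb{R}^n\mid\beta(z)>0\}$ containing $z=0$, where $\beta(z)=h\left(\left(\begin{smallmatrix}1\\ z\end{smallmatrix}\right)\right)=1-\langle z,z\rangle+P_3(z)$ as in \eqref{beta_def}. Since $D\subset\mathbb{R}^n$, precompactness of $D$ is equivalent to boundedness, so both implications reduce to analysing, for each unit vector $\widehat z$, the cubic $f_{\widehat z}(t):=\beta(t\widehat z)=1-t^2+t^3P_3(\widehat z)$, exactly as in the proof of Lemma \ref{lemma_P_3(y)_bounds}.

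For the implication ``$\max_{\|z\|=1}|P_3(z)|>\frac{2}{3\sqrt3}$ $\Rightarrow$ $D$ not precompact'' I would argue by contraposition in the spirit of Lemma \ref{lemma_P_3(y)_bounds}: choose a unit vector $\widehat z_0$ with $|P_3(\widehat z_0)|>\frac{2}{3\sqrt3}$ and, replacing $\widehat z_0$ by $-\widehat z_0$ if necessary (which flips the sign of the degree-$3$ homogeneous polynomial $P_3$), assume $P_3(\widehat z_0)>\frac{2}{3\sqrt3}$. The computation of Lemma \ref{lemma_P_3(y)_bounds} then shows that $f_{\widehat z_0}$ attains its minimum on $[0,\infty)$ at $t^\ast=\frac{2}{3P_3(\widehat z_0)}$ with value $1-\frac{4}{27P_3(\widehat z_0)^2}>0$, hence $f_{\widehat z_0}>0$ on all of $[0,\infty)$. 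Thus the ray $\mathbb{R}_{\geq0}\cdot\widehat z_0$ is a connected subset of $\{\beta>0\}$ through the origin, so it lies in $D$, and $D$ is unbounded.

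For the converse, assume $\max_{\|z\|=1}|P_3(z)|\leq\frac{2}{3\sqrt3}$. I would first upgrade the calculation of Lemma \ref{lemma_P_3(y)_bounds} to a uniform bound: for every unit $\widehat z$ the cubic $f_{\widehat z}$ has a smallest positive root $t_0(\widehat z)$, and $t_0(\widehat z)\leq\sqrt3$. Indeed, if $P_3(\widehat z)\leq 0$ then $f_{\widehat z}(1)=P_3(\widehat z)\leq 0$, so $t_0(\widehat z)\leq 1$; and if $0<P_3(\widehat z)\leq\frac{2}{3\sqrt3}$, then (as in Lemma \ref{lemma_P_3(y)_bounds}) $f_{\widehat z}$ does have a positive root and, at the first one, $f_{\widehat z}'(t_0)\leq 0$ forces $P_3(\widehat z)t_0\leq\frac23$, so from $P_3(\widehat z)t_0^3=t_0^2-1$ we get $t_0^2-1\leq\frac23 t_0^2$, i.e.\ $t_0^2\leq 3$. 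Since $f_{\widehat z}$ depends on $\widehat z$ only through $P_3(\widehat z)$ and can only have a non-simple first positive root when $P_3(\widehat z)=\frac{2}{3\sqrt3}$ (in which case that root equals $\sqrt3$), continuity of the roots of a polynomial in its coefficients shows $\widehat z\mapsto t_0(\widehat z)$ is continuous on $S^{n-1}$ with values in $(0,\sqrt3]$. Then $\Sigma:=\{t_0(\widehat z)\,\widehat z\mid\widehat z\in S^{n-1}\}$ is a continuous radial graph over $S^{n-1}$ bounding a bounded star-shaped open set $\Omega=\{r\widehat z\mid 0\leq r<t_0(\widehat z)\}\ni 0$. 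By construction $\beta>0$ on $\Omega$, so $\Omega\subseteq D$ by connectedness; conversely $\Sigma\subseteq\{\beta=0\}$ is disjoint from $D$, and since $D$ is connected, contains $0$, and $\mathbb{R}^n\setminus\Sigma$ has $\Omega$ as its unique bounded component, $D$ cannot leave $\Omega$. Hence $D=\Omega\subseteq B_{\sqrt3}(0)$, so $D$ is bounded, i.e.\ precompact.

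The one-variable calculus with $f_{\widehat z}$ and the bound $t_0(\widehat z)\leq\sqrt3$ are routine (essentially already contained in Lemma \ref{lemma_P_3(y)_bounds}); the genuinely delicate point is the final step, namely ruling out that the component $D$ of $\{\beta>0\}$ through the origin ``wraps around'' $\Sigma$ and meets the unbounded pieces of $\{\beta>0\}$ that occur in directions with $P_3(\widehat z)>0$. I expect this to rest precisely on the continuity of $t_0$ — which uses the hypothesis $P_3\leq\frac{2}{3\sqrt3}$ to prevent a first positive root from appearing or disappearing in the interior — together with the elementary fact that the complement of a continuous radial hypersurface in $\mathbb{R}^n$ has exactly two components.
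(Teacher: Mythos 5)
Your proof is correct and follows the same route the paper intends: the corollary is drawn directly from the one-variable analysis of $f_{\widehat z}(t)=1-t^2+t^3P_3(\widehat z)$ carried out in Lemma \ref{lemma_P_3(y)_bounds} (together with the root estimates of Lemma \ref{lemma_roots_estimates_PSR}), applied direction by direction. The only step the paper leaves implicit --- that every ray exiting $\{\beta>0\}$ by time $\sqrt{3}$ forces the whole connected component, and not merely its radial slices, to be bounded --- is precisely the separation argument you supply via the continuous radial graph $\Sigma$, and your treatment of that point (continuity of $t_0$ away from double roots, and the two-component decomposition of $\mathbb{R}^n\setminus\Sigma$) is sound.
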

Recall that we know from Lemma \ref{precomp_cor} that the connected component of the set $\{h>0\}\cap \left.\left\{\left(\begin{smallmatrix}
1\\
z
\end{smallmatrix}
\right)\in\mathbb{R}^{n+1}\ \right|\ z\in\mathbb{R}^n\right\}$ that contains the point $\left(\begin{smallmatrix}
x \\ y
\end{smallmatrix}
\right)=\left(\begin{smallmatrix}
1 \\ 0
\end{smallmatrix}
\right)$ being pre-compact is a necessary condition for the connected component of $\{h=1\}$ that contains $\left(\begin{smallmatrix}
x \\ y
\end{smallmatrix}
\right)=\left(\begin{smallmatrix}
1 \\ 0
\end{smallmatrix}
\right)$ to be a CCPSR manifold. Also note that if the connected component $\mathcal{H}\subset\{h=1\}$ that contains the point $\left(\begin{smallmatrix}
x \\ y
\end{smallmatrix}
\right)=\left(\begin{smallmatrix}
1 \\ 0
\end{smallmatrix}
\right)$ is a CCPSR manifold, then the connected component of the set $\{h>0\}\cap \left.\left\{\left(\begin{smallmatrix}
1\\
z
\end{smallmatrix}
\right)\in\mathbb{R}^{n+1}\ \right|\ z\in\mathbb{R}^n\right\}$ that contains the point $\left(\begin{smallmatrix}
x \\ y
\end{smallmatrix}
\right)=\left(\begin{smallmatrix}
1 \\ 0
\end{smallmatrix}
\right)$, the set $\left(\mathbb{R}_{>0}\cdot \mathcal{H}\right)\cap \left.\left\{\left(\begin{smallmatrix}
1\\
z
\end{smallmatrix}
\right)\in\mathbb{R}^{n+1}\ \right|\ z\in\mathbb{R}^n\right\}$, and $\{1\}\times\mathrm{dom}(\mathcal{H})$ coincide. One could ask if we can find similar bounds for CCGPSR manifolds of homogeneity-degree $\tau\geq 4$. This is in general not true, see \cite[Lem.\,7.9]{Li} for quartic CCGPSR manifolds, that is hyperbolic centro-affine hypersurfaces defined by quartic homogeneous polynomials.
Lemma \ref{lemma_P_3(y)_bounds} also means that we have determined positive and negative bounds for $P_3(\widehat{z})$, $\widehat{z}\in\{z\in\mathbb{R}^n\mid\langle z,z\rangle=1\}$, that guaranty that the corresponding hypersurface which is the connected component of $\{h=1\}$ containing the point $\left(\begin{smallmatrix}
1\\
0
\end{smallmatrix}
\right)\in\mathbb{R}^{n+1}$ is closed. However, it does at this point not give us information about hyperbolicity when we are studying some specific connected PSR manifold and want to know whether it is a CCPSR manifold or not. It will later turn out that this condition also shows hyperbolicity of all points contained in the connected component of $\{h=1\}$ that contains the point $\left(\begin{smallmatrix}
1\\ 0
\end{smallmatrix}\right)\in\mathbb{R}^{n+1}$, see Theorem \ref{thm_convex_compact_PSR_generating_set}.

Next, we will use Lemma \ref{lemma_P_3(y)_bounds} to determine upper and lower positive bounds for the norm of points in the boundary of $\mathrm{dom}(\mathcal{H})\subset\mathbb{R}^n$, that is $\partial\mathrm{dom}(\mathcal{H})$, corresponding to a CCPSR manifold $\mathcal{H}$.
\begin{Lem}\label{lemma_roots_estimates_PSR}
In the setting of Lemma \ref{lemma_P_3(y)_bounds}, assume without loss of generality that $P_3(\widehat{z})\geq 0$. Let $\mathcal{N}_{P_3(\widehat{z})}$ be the biggest negative real root of $f(t)$ and $\mathcal{P}_{P_3(\widehat{z})}$ be the smallest positive real root of $f(t)$, where $f(t)$ is associated to a CCPSR manifold $\mathcal{H}$ as in the previous lemma and $|P_3(\widehat{z})|\leq \frac{2}{3\sqrt{3}}$. Then 
\begin{align*}
-1\leq\ \mathcal{N}_{P_3(\widehat{z})}\leq -\frac{\sqrt{3}}{2},\quad
1\leq\ \mathcal{P}_{P_3(\widehat{z})}\leq \sqrt{3}.
\end{align*}
\begin{proof}
Let $0\leq A<B\leq \frac{2}{3\sqrt{3}}$, and define
\begin{align*}
f_A(t) :=1-t^2+t^3A,\quad
f_B(t) :=1-t^2+t^3B.
\end{align*}
$f_A(t)$ and $f_B(t)$ have a unique negative real root $\mathcal{N}_A$ and $\mathcal{N}_B$, respectively. Furthermore, $\mathcal{N}_A<\mathcal{N}_B$. To see this we calculate $\dot{f}_A(t) =-2t+3t^2A$ and $\dot{f}_B(t) =-2t+3t^2B$, from which it is immediate that $\dot{f}_A(t)>0,\ \dot{f}_B(t)>0$ for all $t<0$.
Since $\lim\limits_{t\to-\infty}f_A(t)=-\infty$, $\lim\limits_{t\to-\infty}f_B(t)=-\infty$, and $f_A(0)=f_B(0)=1$ this implies that $\mathcal{N}_A$ and $\mathcal{N}_B$ exist and are the unique negative real roots of $f_A(t)$, respectively $f_B(t)$. We further obtain
\begin{align*}
f_B(\mathcal{N}_A) =1-\mathcal{N}_A^2+\mathcal{N}_A^3B =f_A(\mathcal{N}_A)+(B-A)\mathcal{N}_A^3 =(B-A)\mathcal{N}_A^3<0.
\end{align*}
Using $\dot{f}_B|_{t<0}>0$ this shows that
\begin{equation}\label{eqn_negative_root_PSR_estimate}
\mathcal{N}_B>\mathcal{N}_A.
\end{equation}
We apply this result to $\mathcal{N}_{P_3(\widehat{z})}$ and obtain $-1=\mathcal{N}_0\leq \mathcal{N}_{P_3(\widehat{z})}\leq \mathcal{N}_{\frac{2}{3\sqrt{3}}}=-\frac{\sqrt{3}}{2}$.
The value of $\mathcal{N}_{\frac{2}{3\sqrt{3}}}$ can easily be found by checking that $f_{\frac{2}{3\sqrt{3}}}(t)=\frac{2}{3\sqrt{3}}\left(t+\frac{\sqrt{3}}{2}\right)\left(t-\sqrt{3}\right)^2$.

Now let $\mathcal{P}_A$ and $\mathcal{P}_B$ be the smallest positive root of $f_A(t)$ and $f_B(t)$, respectively. Then $\mathcal{P}_A<\mathcal{P}_B$. To see this, first note that the existence of $\mathcal{P}_A$ and $\mathcal{P}_B$ is ensured by the estimate \eqref{eqn_P3_sphere_bdrs} in Lemma \ref{lemma_P_3(y)_bounds}. We obtain
\begin{align*}
f_A(\mathcal{P}_B) =1-\mathcal{P}_B^2+\mathcal{P}_B^3A =f_B(\mathcal{P}_B)+(A-B)\mathcal{P}_B^3 =(A-B)\mathcal{P}_B^3<0.
\end{align*}
Since $f_A(0)=1$ this shows that $f_A(t)$ has a positive real root that is smaller than $\mathcal{P}_B$, and in particular that
\begin{equation}\label{eqn_positive_root_PSR_estimate}
\mathcal{P}_A<\mathcal{P}_B.
\end{equation}
Again, we apply this result to $\mathcal{P}_{P_3(\widehat{z})}$ and obtain $1=\mathcal{P}_0\leq \mathcal{P}_{P_3(\widehat{z})}\leq \mathcal{P}_{\frac{2}{3\sqrt{3}}}=\sqrt{3}$.
\end{proof}
\end{Lem}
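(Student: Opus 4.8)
The plan is to reduce everything to the one–parameter family of cubics $f_a(t) := 1 - t^2 + a t^3$ with parameter $a = P_3(\widehat z)\in\bigl[0,\tfrac{2}{3\sqrt 3}\bigr]$ (the normalization $P_3(\widehat z)\ge 0$ being made without loss of generality), and to exploit monotonicity of the relevant roots in $a$. First I would record that for $a\ge 0$ one has $\dot f_a(t) = t(3at-2) > 0$ for all $t<0$, while $f_a(0)=1$ and $f_a(t)\to-\infty$ as $t\to-\infty$; hence $f_a$ has a \emph{unique} negative root $\mathcal N_a$, which moreover depends continuously on $a$. Likewise, by Lemma \ref{lemma_P_3(y)_bounds} (i.e.\ precompactness of $\mathrm{dom}(\mathcal H)$), $f_a$ has at least one positive root, so the smallest positive root $\mathcal P_a$ is well defined.

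Next I would prove the two monotonicity statements by the same trick. For $0\le A<B\le\tfrac{2}{3\sqrt 3}$ one computes $f_B(\mathcal N_A) = f_A(\mathcal N_A) + (B-A)\mathcal N_A^3 = (B-A)\mathcal N_A^3 < 0$ since $\mathcal N_A<0$; combined with $\dot f_B>0$ on $(-\infty,0)$ and $f_B(\mathcal N_B)=0$ this forces $\mathcal N_A<\mathcal N_B$. Symmetrically, $f_A(\mathcal P_B) = f_B(\mathcal P_B) + (A-B)\mathcal P_B^3 = (A-B)\mathcal P_B^3 < 0$, and since $f_A(0)=1>0$ the intermediate value theorem produces a positive root of $f_A$ strictly below $\mathcal P_B$, whence $\mathcal P_A<\mathcal P_B$. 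Thus $a\mapsto\mathcal N_a$ and $a\mapsto\mathcal P_a$ are both (strictly) increasing on $\bigl[0,\tfrac{2}{3\sqrt 3}\bigr]$, and the desired bounds follow once I evaluate at the two endpoints $a=0$ and $a=\tfrac{2}{3\sqrt 3}$.

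At $a=0$ we have $f_0(t)=1-t^2$, so $\mathcal N_0=-1$ and $\mathcal P_0=1$. At $a=\tfrac{2}{3\sqrt 3}$ I would exhibit the explicit factorization $f_{2/(3\sqrt 3)}(t) = \tfrac{2}{3\sqrt 3}\bigl(t+\tfrac{\sqrt 3}{2}\bigr)\bigl(t-\sqrt 3\bigr)^2$, verified by expansion; reading off the roots gives $\mathcal N_{2/(3\sqrt 3)}=-\tfrac{\sqrt 3}{2}$ and, since $\sqrt 3$ is the only positive root, $\mathcal P_{2/(3\sqrt 3)}=\sqrt 3$. Combining with the monotonicity yields $-1\le\mathcal N_{P_3(\widehat z)}\le-\tfrac{\sqrt 3}{2}$ and $1\le\mathcal P_{P_3(\widehat z)}\le\sqrt 3$.

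There is no serious obstacle here; the whole argument is elementary single–variable calculus. The one point requiring a little care is the positive root: unlike the negative one it need not be unique, so monotonicity must be phrased in terms of the \emph{smallest} positive root and uses existence from Lemma \ref{lemma_P_3(y)_bounds} rather than a monotonicity-of-$f_a$ argument. Guessing — or deriving, e.g.\ by noting that at the extremal value of $a$ the positive root must become a double root — the factorization at $a=\tfrac{2}{3\sqrt 3}$ is the only step that is not purely mechanical.
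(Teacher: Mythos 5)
Your proposal is correct and follows essentially the same route as the paper's proof: the same one-parameter family $f_a(t)=1-t^2+at^3$, the same monotonicity argument via $f_B(\mathcal{N}_A)=(B-A)\mathcal{N}_A^3<0$ and $f_A(\mathcal{P}_B)=(A-B)\mathcal{P}_B^3<0$, and the same endpoint factorization $f_{2/(3\sqrt{3})}(t)=\tfrac{2}{3\sqrt{3}}\bigl(t+\tfrac{\sqrt{3}}{2}\bigr)\bigl(t-\sqrt{3}\bigr)^2$. Your remark that the positive root need not be unique (so the argument must track the smallest positive root and invoke existence from the precompactness lemma) is exactly the care the paper also takes.
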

Lemma \ref{lemma_roots_estimates_PSR} implies the following result for the Euclidean norm of points in $\partial\mathrm{dom}(\mathcal{H})$.
\begin{Cor}\label{corollary_PSR_boundary_domain_estimates}
For a CCPSR manifold $\mathcal{H}$ in standard form
and corresponding $\mathrm{dom}(\mathcal{H})$ as in Definition \ref{def_dom(H)}, the following holds true:
\begin{equation}\label{dom(H)_PSR_boundaries_eqn}
 \forall\,\overline{z}\in\partial\mathrm{dom}(\mathcal{H}):\quad \frac{\sqrt{3}}{2}\leq\|\overline{z}\|\leq \sqrt{3},
\end{equation}
where $\|\cdot\|$ denotes the norm with respect to the standard Euclidean scalar product $\langle \cdot,\cdot\rangle$ on $\mathbb{R}^n$ in the $y$-coordinates from equation \eqref{standard_form_h}.
\end{Cor}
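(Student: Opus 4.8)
The plan is to reduce the claim to the one-variable root estimates of Lemma~\ref{lemma_roots_estimates_PSR} by restricting $\beta$ to rays through the origin in $\mathbb{R}^n$. I would first record the facts about $\mathrm{dom}(\mathcal{H})$ that I need: since $\mathcal{H}$ is a CCPSR manifold in standard form, $\mathrm{dom}(\mathcal{H})$ is open, convex and precompact (Corollary~\ref{cor_propdomH_CCGPSR}), it always contains $0$, it coincides with the connected component of $\{\beta>0\}$ containing $0$, and $\beta|_{\partial\mathrm{dom}(\mathcal{H})}\equiv 0$, where $\beta$ is as in \eqref{beta_def}, i.e.\ $\beta(z)=1-\langle z,z\rangle+P_3(z)$.

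Now fix $\overline{z}\in\partial\mathrm{dom}(\mathcal{H})$. Since $0\in\mathrm{dom}(\mathcal{H})$ and $\mathrm{dom}(\mathcal{H})$ is open, $\overline{z}\ne 0$, so we may write $\overline{z}=T\widehat{z}$ with $T:=\|\overline{z}\|>0$ and $\|\widehat{z}\|=1$; set $c:=P_3(\widehat{z})$, so that $|c|\le\frac{2}{3\sqrt{3}}$ by Lemma~\ref{lemma_P_3(y)_bounds}. Writing $f_c(t):=1-t^2+t^3c$, the restriction of $\beta$ to the ray $t\mapsto t\widehat{z}$ produces precisely $f_c$, the cubic studied in Lemmas~\ref{lemma_P_3(y)_bounds} and~\ref{lemma_roots_estimates_PSR}. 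Because $\mathrm{dom}(\mathcal{H})$ is convex and contains $0$, the half-open segment $[0,\overline{z})$ lies in $\mathrm{dom}(\mathcal{H})\subset\{\beta>0\}$, hence $f_c>0$ on $[0,T)$ while $f_c(T)=\beta(\overline{z})=0$; therefore $T$ is the \emph{smallest positive root} of $f_c$. Pinning down $T$ as the smallest positive root --- rather than merely as some root --- is the one place where a little care is required, and it is exactly what convexity of $\mathrm{dom}(\mathcal{H})$ together with $0\in\mathrm{dom}(\mathcal{H})$ provides.

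It then remains to bound the smallest positive root of $f_c$. If $c\ge 0$, Lemma~\ref{lemma_roots_estimates_PSR} applies verbatim: $T=\mathcal{P}_{c}\in[1,\sqrt{3}]\subset[\frac{\sqrt{3}}{2},\sqrt{3}]$. If $c<0$, I would use the identity $f_c(t)=f_{-c}(-t)$, under which the positive roots of $f_c$ correspond bijectively to the negative roots of $f_{-c}$; since $-c\in(0,\frac{2}{3\sqrt{3}}]$, the cubic $f_{-c}$ has a unique negative root (by the argument in the proof of Lemma~\ref{lemma_P_3(y)_bounds}, where $f_{-c}'>0$ for $t<0$), so $f_c$ has a unique positive root, which is then necessarily $T$, and it equals $-\mathcal{N}_{-c}$; Lemma~\ref{lemma_roots_estimates_PSR} gives $\mathcal{N}_{-c}\in[-1,-\frac{\sqrt{3}}{2}]$, whence $T\in[\frac{\sqrt{3}}{2},1]\subset[\frac{\sqrt{3}}{2},\sqrt{3}]$. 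In both cases $\frac{\sqrt{3}}{2}\le\|\overline{z}\|=T\le\sqrt{3}$, which is the assertion. The main obstacle is mostly bookkeeping: correctly folding the sign case $P_3(\widehat{z})<0$, which is not directly covered by Lemma~\ref{lemma_roots_estimates_PSR}, back into the case $P_3(\widehat{z})\ge 0$ via $t\mapsto -t$, together with the convexity argument that identifies $T$ with the first positive zero of $f_c$ along the ray.
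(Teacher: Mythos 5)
Your argument is correct and is exactly the route the paper takes: the paper derives this corollary directly from Lemma~\ref{lemma_roots_estimates_PSR} by restricting $\beta$ to rays through the origin, with the $P_3(\widehat{z})<0$ directions handled by the negative-root estimate for the antipodal direction. Your write-up merely makes explicit the two details the paper leaves implicit (that convexity of $\mathrm{dom}(\mathcal{H})$ identifies $\|\overline{z}\|$ with the \emph{smallest} positive root of $f_c$, and the reflection $f_c(t)=f_{-c}(-t)$), both of which are handled correctly.
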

Hence, with the notation $B_r(0)=\left\{z\in\mathbb{R}^n\mid \langle z,z\rangle < r^2\right\}$
for $r>0$, we have the inclusions $B_{\frac{\sqrt{3}}{2}}(0)\subset\mathrm{dom}(\mathcal{H})\subset B_{\sqrt{3}}(0)$ for all CCPSR manifolds $\mathcal{H}$. In particular this is also independent of the point chosen in the process (see Proposition \ref{prop_std_form_h}) of obtaining $h$ in the form \eqref{standard_form_h} for any given CCPSR manifold $\mathcal{H}\subset\{h=1\}$. Note however that the inclusion $B_{\frac{\sqrt{3}}{2}}(0)\subset\mathrm{dom}(\mathcal{H})$ might not be compact in the sense that $\partial B_{\frac{\sqrt{3}}{2}}(0)\cap \partial\mathrm{dom}(\mathcal{H})$ might not be empty. If we choose any $0<R<\frac{\sqrt{3}}{2}$, $B_R(0)$ will always be compactly embedded via the inclusion in $\mathrm{dom}(\mathcal{H})$ since the inclusion $B_R(0)\subset B_{\frac{\sqrt{3}}{2}}(0)$ is a compact embedding.

Another consequence of Lemma \ref{lemma_roots_estimates_PSR} is the following characterisation of CCPSR manifolds that are singular at infinity, cf. Definition \ref{def_singular_GPSR}.

\begin{Lem}\label{lemma_singular_PSR_max_P_3}
Let $\mathcal{H}$ be CCPSR manifold in standard form.
Then $\mathcal{H}$ is singular at infinity in the sense of Definition \ref{def_singular_GPSR} if and only if $\max\limits_{\|z\|=1}|P_3(z)|=\frac{2}{3\sqrt{3}}$.
\begin{proof}
First note that with our assumptions for $\mathcal{H}$ and $h$, $\partial(\mathbb{R}_{>0}\cdot\mathcal{H})\setminus\{0\}=\mathbb{R}_{>0}\cdot\left(\{1\}\times\partial\mathrm{dom}(\mathcal{H})\right)$. Since $dh_p$ is homogeneous of degree $2$ in $p$, it thus suffices to show that there exists a $\overline{z}\in\partial\mathrm{dom}(\mathcal{H})$, such that $dh_{\left(\begin{smallmatrix}
1\\
\overline{z}
\end{smallmatrix}
\right)}=0$ if and only if $\max\limits_{\|z\|=1}|P_3(z)|=\frac{2}{3\sqrt{3}}$. In Lemma \ref{lemma_boundary_conditions_alpha_beta} we have shown that for $\overline{z}\in\partial\mathrm{dom}(\mathcal{H})$, $dh_{\left(\begin{smallmatrix}
1\\
\overline{z}
\end{smallmatrix}
\right)}=0$ is equivalent to $\frac{\partial h}{\partial x}\left(\left(\begin{smallmatrix}
1\\
\overline{z}
\end{smallmatrix}
\right)\right)=\alpha(\overline{z})=0$, which is by the Euler identity for homogeneous functions equivalent to $d\beta_{\overline{z}}(\overline{z})=0$. Hence, $\mathcal{H}$ is singular at infinity if and only if there exists a point $\widehat{z}\in \{\|z\|=1\}$, such that the $1$-dimensional CCPSR manifold $\mathcal{H}^{\widehat{z}}$ defined by restricting $h$ to the $2$-dimensional linear subspace
\begin{equation*}
E=\mathrm{span}\left\{\left(\begin{matrix}
1\\
0
\end{matrix}\right),\left(\begin{matrix}
0\\
\widehat{z}
\end{matrix}\right)\right\}\subset\mathbb{R}^{n+1}
\end{equation*}
is singular at infinity.
More precisely, $\mathcal{H}^{\widehat{z}}$ is the connected component of $\left\{h^{\widehat{z}}:=x^3-xt^2+t^3P_3(\widehat{z})=1\right\}$ that contains the point $\left(\begin{smallmatrix}
x\\
t
\end{smallmatrix}\right)=\left(\begin{smallmatrix}
1\\
0
\end{smallmatrix}\right)$, and we have $\{1\}\times\mathrm{dom}\left(\mathcal{H}^{\widehat{z}}\right)=E\cap\left(\{1\}\times\mathrm{dom}(\mathcal{H})\right)$. The corresponding function $\beta^{\widehat{z}}$ as in \eqref{beta_def} for $h^{\widehat{z}}$ is given by
\begin{equation*}
\beta^{\widehat{z}}(t)=\beta(t\widehat{z})=1-t^2+t^3P_3(\widehat{z}).
\end{equation*}
Let $t_+$ and $t_-$ denote the smallest positive root and the biggest negative root of $\beta^{\widehat{z}}(t)$, respectively. Then $\partial\mathrm{dom}\left(\mathcal{H}^{\widehat{z}}\right)=\{t_+\widehat{z},t_-\widehat{z}\}$. We have shown in Lemma \ref{lemma_roots_estimates_PSR} (with the notation $\beta^{\widehat{z}}(t)=f_{|P_3(\widehat{z})|}(t)$) that $\partial_t \beta^{\widehat{z}}(t_+)=0$ or $\partial_t \beta^{\widehat{z}}(t_-)=0$ if $|P_3(\widehat{z})|=\frac{2}{3\sqrt{3}}$. It remains to show that $|P_3(\widehat{z})|<\frac{2}{3\sqrt{3}}$ implies that $\partial_t\beta^{\widehat{z}}(t)$ does not vanish at neither $t_+$ nor $t_-$. To do that, assume without loss of generality $P_3(\widehat{z})\geq 0$. For $P_3(\widehat{z})<0$ we can simply use the reflection $t\to -t$ and consider $\beta^{\widehat{z}}(-t)$. For $P_3(\widehat{z})=0$ it is easy to check that $\partial_t\beta^{\widehat{z}}(t_\pm)=\mp 2$. Now assume $P_3(\widehat{z})>0$. We have
\begin{equation*}
\partial_t\beta^{\widehat{z}}(t)=t(-2+3t P_3(\widehat{z}) ),
\end{equation*}
hence $\partial_t\beta^{\widehat{z}}(t_-)>0$ is always true and $\partial_t\beta^{\widehat{z}}(t_+)=0$ if and only if $t_+=\frac{2}{3 P_3\left(\widehat{z}\right) }$. One quickly finds that $\beta^{\widehat{z}}(t_+)=0$ and $P_3(\widehat{z})>0$ if and only if $P_3(\widehat{z})=\frac{2}{3\sqrt{3}}$. This shows that $\partial_t\beta^{\widehat{z}}$ vanishes at a point $\overline{z}\in\mathrm{dom}\left(\mathcal{H}^{\widehat{z}}\right)=\{t_+\widehat{z},t_-\widehat{z}\}$ (which is equivalent to $\mathcal{H}^{\widehat{z}}$ being singular at infinity) if and only if $|P_3(\widehat{z})|=\frac{2}{3\sqrt{3}}$. Summarising, we have shown that there exists a point $\overline{z}\in\partial\mathrm{dom}(\mathcal{H})$, such that $d\beta_{\overline{z}}(\overline{z})=0$ if and only if there exists a point $\overline{z}\in\partial\mathrm{dom}(\mathcal{H})$, such that $\left|P_3\left(\frac{\overline{z}}{\|\overline{z}\|}\right)\right|=\frac{2}{3\sqrt{3}}$. In Lemma \ref{lemma_roots_estimates_PSR} we have shown that this is precisely the maximal possible value for $|P_3(z)|$ on $\{\|z\|=1\}$ that does not exclude the property of $\mathcal{H}$ being closed in $\mathbb{R}^{n+1}$. We conclude that $\max\limits_{\|z\|=1}|P_3(z)|=\frac{2}{3\sqrt{3}}$ if and only if $\mathcal{H}$ is singular at infinity.
\end{proof}
\end{Lem}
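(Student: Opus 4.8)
The plan is to reduce the statement to a one–dimensional computation about the cubic $\beta^{\widehat z}(t)=1-t^2+t^3P_3(\widehat z)$, $\|\widehat z\|=1$, which is precisely the function already analysed in Lemma~\ref{lemma_P_3(y)_bounds} and Lemma~\ref{lemma_roots_estimates_PSR}. First I would reduce to the boundary of $\mathrm{dom}(\mathcal H)$: since $h$ is in the standard form \eqref{standard_form_h}, one has $\partial(\mathbb R_{>0}\cdot\mathcal H)\setminus\{0\}=\mathbb R_{>0}\cdot(\{1\}\times\partial\mathrm{dom}(\mathcal H))$, and $dh_p$ is homogeneous of degree $2$ in $p$, so $\mathcal H$ is singular at infinity iff there exists $\overline z\in\partial\mathrm{dom}(\mathcal H)$ with $dh_{\left(\begin{smallmatrix}1\\ \overline z\end{smallmatrix}\right)}=0$. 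By Lemma~\ref{lemma_boundary_conditions_alpha_beta} this is equivalent to $\alpha(\overline z)=0$, and, via the Euler identity $\alpha(\overline z)+d\beta_{\overline z}(\overline z)=3\beta(\overline z)=0$ at such a boundary point, to $d\beta_{\overline z}(\overline z)=0$.

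Next I would slice by $2$-planes. For a unit vector $\widehat z$, restricting $h$ to $\mathrm{span}\{\left(\begin{smallmatrix}1\\ 0\end{smallmatrix}\right),\left(\begin{smallmatrix}0\\ \widehat z\end{smallmatrix}\right)\}$ gives the $1$-dimensional CCPSR manifold $\mathcal H^{\widehat z}$ with defining polynomial $h^{\widehat z}(x,t)=x^3-xt^2+t^3P_3(\widehat z)$ and $\beta^{\widehat z}(t)=\beta(t\widehat z)=1-t^2+t^3P_3(\widehat z)$. Because $\mathrm{dom}(\mathcal H)$ is open, convex and contains $0$ (Lemma~\ref{precomp_cor}), its intersection with the line $\mathbb R\widehat z$ is an open interval $(t_-,t_+)\widehat z$ on which $\beta^{\widehat z}>0$ and with $\beta^{\widehat z}(t_\pm)=0$; hence $t_+$ is the smallest positive and $t_-$ the biggest negative real root of $\beta^{\widehat z}$, and every $\overline z\in\partial\mathrm{dom}(\mathcal H)$ is of the form $t_+\widehat z$ for $\widehat z=\overline z/\|\overline z\|$ (with $\|\overline z\|>0$ by Corollary~\ref{corollary_PSR_boundary_domain_estimates}). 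Then $d\beta_{\overline z}(\overline z)=\|\overline z\|\,\partial_t\beta^{\widehat z}(t_+)$, so the claim reduces to: some unit $\widehat z$ satisfies $\partial_t\beta^{\widehat z}(t_+)=0$ iff $\max_{\|z\|=1}|P_3(z)|=\tfrac{2}{3\sqrt3}$.

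Finally I would run the one–dimensional analysis. Assuming without loss of generality $P_3(\widehat z)\ge 0$ (otherwise replace $\widehat z$ by $-\widehat z$, which flips the sign of $P_3(\widehat z)$ and interchanges the roles of $t_+$ and $-t_-$), from $\partial_t\beta^{\widehat z}(t)=t(-2+3tP_3(\widehat z))$ one gets $\partial_t\beta^{\widehat z}(t_-)>0$ always, and $\partial_t\beta^{\widehat z}(t_+)=-2t_+\neq 0$ when $P_3(\widehat z)=0$. For $P_3(\widehat z)>0$, $\partial_t\beta^{\widehat z}(t_+)=0$ iff $t_+=\tfrac{2}{3P_3(\widehat z)}$, which is the location of the unique local minimum of $\beta^{\widehat z}$, of value $1-\tfrac{4}{27P_3(\widehat z)^2}$; by the root analysis in Lemma~\ref{lemma_P_3(y)_bounds} this equals $t_+$ exactly when that minimum value is $0$, i.e.\ when $P_3(\widehat z)=\tfrac{2}{3\sqrt3}$, while for $0<P_3(\widehat z)<\tfrac{2}{3\sqrt3}$ the minimum is strictly negative, $t_+$ lies strictly to its left, and $\partial_t\beta^{\widehat z}(t_+)<0$. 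Combined with the upper bound $|P_3(z)|\le\tfrac{2}{3\sqrt3}$ on the unit sphere from Lemma~\ref{lemma_P_3(y)_bounds}, this yields that $\mathcal H$ is singular at infinity iff $\max_{\|z\|=1}|P_3(z)|=\tfrac{2}{3\sqrt3}$.

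The routine cubic-root estimates are essentially already supplied by Lemmas~\ref{lemma_P_3(y)_bounds} and~\ref{lemma_roots_estimates_PSR}, so I expect the only point needing genuine care is the slicing step: using convexity of $\mathrm{dom}(\mathcal H)$ to match a boundary point of $\mathrm{dom}(\mathcal H)$ in a given direction with the first positive root of $\beta^{\widehat z}$, and the Euler identity to pass from the vanishing of the full differential $d\beta_{\overline z}$ to the vanishing of the single derivative $\partial_t\beta^{\widehat z}$ at that root.
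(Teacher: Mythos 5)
Your proposal is correct and follows essentially the same route as the paper's proof: reduce singularity at infinity to the vanishing of $d\beta_{\overline z}(\overline z)$ at a boundary point via Lemma \ref{lemma_boundary_conditions_alpha_beta} and the Euler identity, slice by the $2$-plane through $\left(\begin{smallmatrix}1\\ 0\end{smallmatrix}\right)$ and $\widehat z$, and analyse the roots of the cubic $\beta^{\widehat z}(t)=1-t^2+t^3P_3(\widehat z)$ exactly as in Lemmas \ref{lemma_P_3(y)_bounds} and \ref{lemma_roots_estimates_PSR}. The only (harmless) cosmetic difference is that you make the identification $d\beta_{\overline z}(\overline z)=\|\overline z\|\,\partial_t\beta^{\widehat z}(t_+)$ explicit and handle the sign of $P_3(\widehat z)$ by replacing $\widehat z$ with $-\widehat z$ rather than reflecting $t\mapsto -t$.
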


Note that the set of CCPSR manifolds that are singular at infinity and of dimension $n\geq 1$ is not empty for all $n\geq1$. This is one of the consequences of Theorem \ref{thm_Cn}, but we can also use Proposition \ref{prop_hom_spaces_deltaPk_condition} and the above Lemma \ref{lemma_singular_PSR_max_P_3} to prove both the latter statement and a property of homogeneous CCPSR manifolds:

\begin{Prop}
\label{prop_homimpliessingatinf}
Homogeneous CCPSR manifolds are singular at infinity.
\begin{proof}
Let $\mathcal{H}$ be a homogeneous CCPSR manifold and without loss of generality assume that $\mathcal{H}$ is in standard form. In Proposition \ref{prop_hom_spaces_deltaPk_condition} we have seen that $\mathcal{H}$ being homogeneous is equivalent to the existence of $dB_0\in\mathrm{Lin}(\mathbb{R}^n,\mathfrak{so}(n))$ as in equation \eqref{dB0_eqn}, such that $\delta P_3(y)\equiv 0$, cf. equation \eqref{delta_P_3_cubics}. Applying $\delta P_3(y)$ to the position vector field in $\mathbb{R}^n$, we obtain
	\begin{equation}\label{eqn_hom_singatinf_delP3_posVF}
		-\frac{2}{3}\langle y,y\rangle^2 + 3P_3\left(y,y,dB_0(y)y+\frac{3}{2}P_3(y,\cdot,y)^T\right)=0
	\end{equation}
for all $y\in\mathbb{R}^n$, where $dB_0(y)y=\sum\limits_{k=1}^{n(n-1)/2} a_k y\langle \ell_k,y\rangle$. Let now $y\in\{\|y\|=1\}$ be a local positive maximum of $P_3|_{\{\|y\|=1\}}$. This means that there is $r>0$, such that $dP_3|_y=r\langle y,dy\rangle$. Since $dB_0$ has image in $\mathfrak{so}(n)$, this implies using $\|y\|=1$, \eqref{eqn_hom_singatinf_delP3_posVF}, and $3P_3(y,y,dy)=dP_3|_y$
	\begin{equation*}
		-\frac{2}{3}+\frac{1}{2}r^2=0.
	\end{equation*}
The above equation and $y$ being a local positive maximum of $P_3|_{\{\|y\|=1\}}$ show that $r=\frac{2}{\sqrt{3}}$. Using $dP_3|_y=\frac{2}{\sqrt{3}}\langle y,dy\rangle$ and the Euler identity for homogeneous functions we find $P_3(y)=\frac{2}{3\sqrt{3}}$. This is by Lemma \ref{lemma_singular_PSR_max_P_3} equivalent to $\mathcal{H}$ being singular at infinity.
\end{proof}
\end{Prop}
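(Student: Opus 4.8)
The plan is to combine the characterization of homogeneity from Proposition \ref{prop_hom_spaces_deltaPk_condition} with the characterization of "singular at infinity" in terms of $P_3$ from Lemma \ref{lemma_singular_PSR_max_P_3}. By Proposition \ref{prop_hom_spaces_deltaPk_condition}, after putting $\mathcal{H}$ in standard form, homogeneity is equivalent to the existence of $dB_0\in\mathrm{Lin}(\mathbb{R}^n,\mathfrak{so}(n))$ with $\delta P_3\equiv 0$ (for cubics there is only the one condition $k=3$). By Lemma \ref{lemma_singular_PSR_max_P_3}, it suffices to show $\max_{\|z\|=1}|P_3(z)|=\frac{2}{3\sqrt 3}$; since Lemma \ref{lemma_P_3(y)_bounds} already gives the upper bound $\leq\frac{2}{3\sqrt 3}$, the real task is to produce \emph{one} unit vector $y$ with $|P_3(y)|=\frac{2}{3\sqrt 3}$.

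The key idea is to evaluate the identity $\delta P_3\equiv 0$ at a well-chosen point. First I would feed the position vector field (i.e.\ set the "$dz$"-slot equal to $y$ itself) into the formula \eqref{delta_P_3_cubics}, using $\delta P_3(y)=-\tfrac23\langle y,y\rangle\langle y,dz\rangle + 3P_3(y,y,dB_0y+\tfrac32 P_3(y,\cdot,dz)^T)$; this yields equation \eqref{eqn_hom_singatinf_delP3_posVF}, valid for all $y\in\mathbb{R}^n$. Next, choose $y$ to be a point on the unit sphere where $P_3|_{\{\|y\|=1\}}$ attains its maximum (assume WLOG it is nonnegative; otherwise replace $P_3$ by $-P_3$ via $y\mapsto -y$, which preserves standard form). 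At such a critical point the Lagrange condition reads $dP_3|_y = r\langle y,\cdot\rangle$ for some $r\geq 0$, equivalently $3P_3(y,y,\cdot)=r\langle y,\cdot\rangle$ on the sphere. The crucial simplification is that the term $3P_3(y,y,dB_0y)$ drops out: since $dB_0 y\in\mathfrak{so}(n)$-image is $\langle\cdot,\cdot\rangle$-skew applied to $y$, and $P_3(y,y,\cdot)$ is proportional to $\langle y,\cdot\rangle$ at the critical point, we get $P_3(y,y,dB_0y)\propto\langle y, dB_0 y\rangle=0$. Substituting everything into \eqref{eqn_hom_singatinf_delP3_posVF} with $\|y\|=1$ collapses it to a scalar equation $-\tfrac23+\tfrac12 r^2=0$, whence $r=\tfrac{2}{\sqrt 3}$. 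Finally, the Euler identity for the homogeneous cubic $P_3$ gives $3P_3(y)=dP_3|_y(y)=r\langle y,y\rangle=r$, so $P_3(y)=\tfrac{r}{3}=\tfrac{2}{3\sqrt 3}$, which is exactly the value needed.

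The main obstacle — and the only place requiring care — is the bookkeeping of the trilinear-form conventions: making sure the "$P_3(y,\cdot,dz)^T$" term, the normalization factors $\tfrac32$, $\tfrac13$, $\tfrac16$ implicit in Definition \ref{inf_Pk_def}, and the Euler identity $dP_3|_y(y)=3P_3(y)$ are all applied consistently, and confirming that the skew-symmetric contribution genuinely vanishes at the maximizer rather than merely being "small". Once those conventions are fixed, the argument is a short substitution. There is a minor subtlety that a global maximizer exists because the unit sphere is compact, and that $P_3\not\equiv 0$ for homogeneous $\mathcal{H}$ of dimension $\geq 1$ (otherwise $\mathcal{H}$ would be the standard quadric-type hypersurface, whose homogeneity and singular-at-infinity status can be checked directly, or one notes $P_3=0$ forces $r=0$, contradicting $-\tfrac23+\tfrac12 r^2=0$); in fact the computation itself rules out $P_3\equiv 0$, so no separate case analysis is needed.
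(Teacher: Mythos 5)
Your proposal is correct and follows essentially the same route as the paper's proof: evaluate the identity $\delta P_3\equiv 0$ from Proposition \ref{prop_hom_spaces_deltaPk_condition} on the position vector field, specialize to a positive maximizer of $P_3$ on the unit sphere where the $\mathfrak{so}(n)$-term vanishes, solve $-\tfrac23+\tfrac12 r^2=0$, and conclude $P_3(y)=\tfrac{2}{3\sqrt 3}$ via the Euler identity and Lemma \ref{lemma_singular_PSR_max_P_3}. Your added observation that \eqref{eqn_hom_singatinf_delP3_posVF} itself rules out $P_3\equiv 0$ is a small but valid refinement of a point the paper leaves implicit.
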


Note that, in theory, one could have also used the classification of homogeneous CCPSR manifolds given in \cite{DV} for $\dim(\mathcal{H})\geq 3$ and \cite{CHM,CDL} for $\dim(\mathcal{H})=1$ and $\dim(\mathcal{H})=2$, respectively, to prove Proposition \ref{prop_homimpliessingatinf}. One would then have to determine a standard form for the corresponding polynomials and could then use Lemma \ref{lemma_singular_PSR_max_P_3}. This would, however, be most likely much more time-consuming.

\begin{rem}
Note that the proof of Proposition \ref{prop_homimpliessingatinf} shows that \textit{every} local positive maximum $y\in\{\|y\|=1\}$ of $P_3|_{\{\|y\|=1\}}$ fulfils $P_3(y)=\frac{2}{3\sqrt{3}}$ (and is thereby also a global maximum). It is an interesting open question whether this is enough to completely classify homogeneous CCPSR manifolds in the sense that this statement is equivalent to a CCPSR manifold being homogeneous.
\end{rem}

We will now determine an estimate for the bilinear form $P_3(z,dz,dz)$ for all $z\in\mathrm{dom}(\mathcal{H})$. It will use the hyperbolicity property of the CCPSR manifold $\mathcal{H}$, which we first need to reformulate.
\begin{Lem}\label{lemma_general_PSR_P_3(z,.,.)_estimate}
Let $h:\mathbb{R}^{n+1}\to\mathbb{R}$ be a cubic homogeneous polynomial of the form \eqref{standard_form_h}, that is $h=x^3-x\langle y,y\rangle+P_3(y)$, and let $\mathcal{H}\subset\{h=1\}$ be the connected component of the level set $\{h=1\}\subset\mathbb{R}^{n+1}$ that contains the point $\left(\begin{smallmatrix}
x\\y
\end{smallmatrix}
\right)=\left(\begin{smallmatrix}
1\\ 0
\end{smallmatrix}
\right)$.
Then $\mathcal{H}$ is a CCPSR manifold if and only if 
\begin{equation}\label{hyp_det_condition}
 \forall \left(\begin{smallmatrix}
 1\\ z
 \end{smallmatrix}\right) \in (\mathbb{R}_{>0}\cdot\mathcal{H})\cap \left.\left\{\left(\begin{smallmatrix}
1\\
z
\end{smallmatrix}
\right)\in\mathbb{R}^{n+1}\ \right|\ z\in\mathbb{R}^n\right\}: \quad3\langle dz,dz\rangle -9 P_3(z,dz,dz)+\langle z,dz\rangle^2 >0.
\end{equation}
\begin{proof}
Assumption that $\mathcal{H}$ is a CCPSR manifold. Then $\mathcal{H}$ fulfils the assumptions of this lemma and $\left(\mathbb{R}_{>0}\cdot\mathcal{H}\right)\cap \left.\left\{\left(\begin{smallmatrix}
1\\
z
\end{smallmatrix}
\right)\in\mathbb{R}^{n+1}\ \right|\ z\in\mathbb{R}^n\right\}$ coincides with $\mathrm{dom}(\mathcal{H})$, cf. Definition \ref{def_dom(H)}. We will show that condition \eqref{hyp_det_condition} follows from the hyperbolicity of each point in $\mathcal{H}$. For each $p\in\mathcal{H}\subset\mathbb{R}^{n+1}$, the tangent space $T_p\mathcal{H}$ viewed as a the hyperplane $\ker(dh_p)\subset\mathbb{R}^{n+1}$ and the line $\mathbb{R}p\subset \mathbb{R}^{n+1}$ are orthogonal with respect to the Lorenzian inner product $-\partial^2h_p$. Recall that $-\partial^2h_p$ being Lorenzian precisely means that $p$ is a hyperbolic point, see Definition \ref{hyperbolicpointdef}. Since $-\partial^2 h_p$ is homogeneous of degree $1$ in $p$, it follows that the property that $\mathcal{H}$ consists only of hyperbolic points is equivalent to the statement that $-\partial^2 h_{\left(\begin{smallmatrix}
1\\
z
\end{smallmatrix}
\right)}$ is Lorenzian for all $z\in\mathrm{dom}(\mathcal{H})$.
Since $-\partial^2h_{\left(\begin{smallmatrix}
1\\
0
\end{smallmatrix}
\right)}$ is always Lorenzian if $h$ is of the form \eqref{standard_form_h}, $-\partial^2h_{\left(\begin{smallmatrix}
1\\
z
\end{smallmatrix}
\right)}$ being Lorenzian on $\mathrm{dom}(\mathcal{H})$ is equivalent to $\det\left(-\partial^2h_{\left(\begin{smallmatrix}
1\\
z
\end{smallmatrix}
\right)}\right)<0$ for all $z\in\mathrm{dom}(\mathcal{H})$. Consider
\begin{align*}
\det\left(-\partial^2h_{\left(\begin{smallmatrix}
1\\
z
\end{smallmatrix}
\right)}\right) &=\det\left(
	\begin{tabular}{c|c}
	$-6$ & $2z^T$ \\ \hline
	$\overset{}{2z}$ & $2\mathbbm{1}-6P_3(z,\cdot,\cdot)$
	\end{tabular}
	\right)\notag\\
	&=\det\left(
	\begin{tabular}{c|c}
	$-6$ & $2z^T$ \\ \hline
	$0$ & $\overset{}{2\mathbbm{1}-6P_3(z,\cdot,\cdot)+\frac{2}{3}z\otimes \langle z,\cdot\rangle}$
	\end{tabular}
	\right)\notag\\
	&=-\frac{2^{n+1}}{3^{n-1}}\det\left(3\mathbbm{1}-9P_3(z,\cdot,\cdot)+z\otimes \langle z,\cdot\rangle\right).
\end{align*}
Since $\left.\left(3\mathbbm{1}-9P_3(z,\cdot,\cdot)+z\otimes \langle z,\cdot\rangle\right)\right|_{z=0}=3\mathbbm{1}$, it follows that $\det\left(-\partial^2h_{\left(\begin{smallmatrix}
1\\
z
\end{smallmatrix}
\right)}\right)<0$ for all $z\in\mathrm{dom}(\mathcal{H})$ is equivalent to $3\langle dz,dz\rangle-9P_3(z,dz,dz)+\langle z,dz\rangle^2>0$ for all $z\in\mathrm{dom}(\mathcal{H})$.

For the other direction, the conditions that $\mathcal{H}$ is a connected component of $\{h=1\}$ implies that it is closed as a subset of $\mathbb{R}^{n+1}$. Furthermore, $\mathcal{H}$ is a hypersurface since $dh$ does not vanish along $\mathcal{H}$ by the Euler identity for homogeneous functions. With the same argument as before for the homogeneity of $-\partial^2h_p$ in $p$ and the same calculations as above, it follows that $\mathcal{H}$ consists only of hyperbolic points. $\mathcal{H}$ is thus a connected and also closed PSR manifold, and the set $\mathrm{pr}_{\mathbb{R}^n}\left(\left(\mathbb{R}_{>0}\cdot\mathcal{H}\right)\cap \left.\left\{\left(\begin{smallmatrix}
1\\
z
\end{smallmatrix}
\right)\in\mathbb{R}^{n+1}\ \right|\ z\in\mathbb{R}^n\right\}\right)$ and $\mathrm{dom}(\mathcal{H})$ coincide.
\end{proof}
\end{Lem}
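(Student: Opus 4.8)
First I would note that $\mathcal{H}$ is automatically closed in $\mathbb{R}^{n+1}$ (it is a connected component of the closed set $\{h=1\}$) and is an embedded hypersurface (by the Euler identity $dh$ does not vanish on $\{h=1\}$), and that, since $h$ has the form \eqref{standard_form_h} and $\left(\begin{smallmatrix}1\\0\end{smallmatrix}\right)\in\mathcal{H}$, $\mathcal{H}$ is in standard form. Hence ``$\mathcal{H}$ is a CCPSR manifold'' is equivalent to ``every $p\in\mathcal{H}$ is a hyperbolic point of $h$'', and since $-\partial^2h$ is homogeneous of degree $\tau-2=1$ in its base point, hyperbolicity of $p$ and of $rp$ ($r>0$) are equivalent. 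So it is enough to test hyperbolicity on the slice $(\mathbb{R}_{>0}\cdot\mathcal{H})\cap\{x=1\}$, i.e.\ exactly at the points $\left(\begin{smallmatrix}1\\z\end{smallmatrix}\right)$ appearing in \eqref{hyp_det_condition}.

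The heart of the proof is a single congruence computation. Writing the Hessian of $h=x^3-x\langle y,y\rangle+P_3(y)$ at $\left(\begin{smallmatrix}1\\z\end{smallmatrix}\right)$ in $1+n$ block form,
\begin{equation*}
-\partial^2h_{\left(\begin{smallmatrix}1\\z\end{smallmatrix}\right)}=\left(\begin{tabular}{c|c}$-6$ & $2z^T$\\ \hline $2z$ & $2\mathbbm{1}-6P_3(z,\cdot,\cdot)$\end{tabular}\right),
\end{equation*}
the congruence $M\mapsto SMS^T$ with the unit-determinant block-triangular matrix $S=\left(\begin{smallmatrix}1 & 0\\ \tfrac{1}{3}z & \mathbbm{1}\end{smallmatrix}\right)$ brings this to block-diagonal form $\mathrm{diag}\big(-6,\tfrac{2}{3}Q_z\big)$, where $Q_z(v,w)=3\langle v,w\rangle-9P_3(z,v,w)+\langle z,v\rangle\langle z,w\rangle$ is a symmetric bilinear form on $\mathbb{R}^n$. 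By Sylvester's law of inertia, $-\partial^2h_{\left(\begin{smallmatrix}1\\z\end{smallmatrix}\right)}$ has signature $(n,1)$ --- i.e.\ $\left(\begin{smallmatrix}1\\z\end{smallmatrix}\right)$ is a hyperbolic point of $h$ --- if and only if $Q_z$ is positive definite, which is precisely the inequality of \eqref{hyp_det_condition} at $z$. (Taking determinants gives $\det\big(-\partial^2h_{\left(\begin{smallmatrix}1\\z\end{smallmatrix}\right)}\big)=-\tfrac{2^{n+1}}{3^{n-1}}\det Q_z$, so the sign condition $\det\big(-\partial^2h_{\left(\begin{smallmatrix}1\\z\end{smallmatrix}\right)}\big)<0$ is an equivalent reformulation, using $Q_0=3\mathbbm{1}>0$ together with connectedness of the slice.)

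For the ``only if'' direction, if $\mathcal{H}$ is a CCPSR manifold then, being in standard form, Lemma~\ref{precomp_cor} together with the discussion after Definition~\ref{def_dom(H)} identify the slice with $\{1\}\times\mathrm{dom}(\mathcal{H})$, which is open, connected and contains $0$; every point of $\mathcal{H}$ is hyperbolic, so $Q_z>0$ for all $z\in\mathrm{dom}(\mathcal{H})$, i.e.\ \eqref{hyp_det_condition} holds. For the ``if'' direction, assume \eqref{hyp_det_condition}. Then each $\left(\begin{smallmatrix}1\\z\end{smallmatrix}\right)$ in the slice is hyperbolic, hence by homogeneity so is every point of $\mathbb{R}_{>0}\cdot\mathcal{H}$ with positive $x$-coordinate; in particular $\mathcal{H}\cap\{x>0\}$, an open neighbourhood of $\left(\begin{smallmatrix}1\\0\end{smallmatrix}\right)$ in $\mathcal{H}$, consists only of hyperbolic points. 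What remains is to upgrade this to all of $\mathcal{H}$.

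That last step is where I expect the main difficulty to lie: one must show $\mathcal{H}\subset\{x>0\}$ (equivalently, that $\mathcal{H}\cap\{x>0\}$ is closed in $\mathcal{H}$), after which every point of $\mathcal{H}$ rescales into the slice and is hyperbolic, so $\mathcal{H}$ --- already closed, connected and a hypersurface --- is a CCPSR manifold. The plan is: every point $z$ of the connected component $D$ of $\{\beta>0\}$ containing $0$ (with $\beta$ as in \eqref{beta_def}) satisfies $\left(\begin{smallmatrix}1\\z\end{smallmatrix}\right)\in\mathbb{R}_{>0}\cdot\mathcal{H}$ --- rescale a path from $0$ to $z$ inside $D$ --- so \eqref{hyp_det_condition} holds throughout $D$; evaluating it at $z=dz=t\widehat{z}$ along rays $t\mapsto t\widehat{z}$ contained in $D$ gives $3-9tP_3(\widehat{z})+t^2>0$ there, and a short argument with this quadratic in $t$ forces $\max_{\|z\|=1}|P_3(z)|\le\tfrac{2}{3\sqrt{3}}$; by Corollary~\ref{cor_PSR_closedness_condition_P3}, $D$ is then precompact. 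Finally, the map $D\to\mathcal{H}$, $z\mapsto\left(\begin{smallmatrix}1\\z\end{smallmatrix}\right)/\beta(z)^{1/3}$, is an injective local diffeomorphism, so it has open image in $\mathcal{H}$, and its image is also closed in $\mathcal{H}$ (a limit point with vanishing $x$-coordinate would force $\beta\to\infty$ along a convergent sequence in the \emph{precompact} set $D$, which is impossible; a limit point with positive $x$-coordinate is handled by $D$ being closed in $\{\beta>0\}$), hence equal to $\mathcal{H}$, giving $\mathcal{H}\subset\{x>0\}$. Thus the congruence computation is routine, the ``only if'' direction is essentially immediate, and the real work is in this last paragraph --- above all in extracting the bound $\max_{\|z\|=1}|P_3(z)|\le\tfrac{2}{3\sqrt{3}}$ from \eqref{hyp_det_condition}, which is exactly what makes Corollary~\ref{cor_PSR_closedness_condition_P3} applicable.
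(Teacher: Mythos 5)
Your proof is correct, and its core computation is the same block reduction of $-\partial^2h_{\left(\begin{smallmatrix}1\\z\end{smallmatrix}\right)}$ that the paper performs; the differences are in how the reduction is packaged and in how much work you put into the ``if'' direction. You phrase the reduction as a congruence $S\,M\,S^T=\mathrm{diag}\bigl(-6,\tfrac{2}{3}Q_z\bigr)$ and invoke Sylvester's law, which makes the equivalence ``$\left(\begin{smallmatrix}1\\z\end{smallmatrix}\right)$ hyperbolic $\Leftrightarrow Q_z>0$'' hold \emph{pointwise}; the paper instead takes determinants and must combine $\det<0$ with connectedness of the slice and the known signature at $z=0$ to rule out other odd signatures, an argument that is clean for the forward direction (where the slice is $\mathrm{dom}(\mathcal{H})$, known to be connected) but is simply cited as ``the same calculations'' in the converse. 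More substantially, the paper's converse consists of two sentences and never addresses why every point of $\mathcal{H}$ is actually covered by the hypothesis, i.e.\ why $\mathcal{H}\subset\{x>0\}$ so that each point rescales into the slice $\{x=1\}$; a point of $\mathcal{H}$ with non-positive $x$-coordinate would escape condition \eqref{hyp_det_condition} entirely. You identify this as the real content of the converse and close it correctly: the path argument showing $\{1\}\times D\subset\mathbb{R}_{>0}\cdot\mathcal{H}$ for $D$ the component of $\{\beta>0\}$ containing $0$, the evaluation of $Q_{t\widehat{z}}(\widehat{z},\widehat{z})=3-9tP_3(\widehat{z})+t^2$ along rays to extract $\max_{\|z\|=1}|P_3(z)|\le\tfrac{2}{3\sqrt{3}}$ (if $P_3(\widehat{z})>\tfrac{2}{3\sqrt{3}}$ the whole ray lies in $D$ while the quadratic is negative at its vertex $t=\tfrac{9}{2}P_3(\widehat{z})$), precompactness of $D$ via Corollary \ref{cor_PSR_closedness_condition_P3} (whose proof is independent of this lemma, so there is no circularity), and the open--closed argument for $\Phi(D)=\mathcal{H}$. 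What your version buys is a converse that is actually complete; what the paper's buys is brevity, at the cost of leaving the surjectivity of the rescaling onto $\mathcal{H}$ implicit. The only cosmetic slip is calling $\mathcal{H}$ ``in standard form'' before hyperbolicity is established, which in the paper's terminology presupposes that $\mathcal{H}$ is already a GPSR manifold; this does not affect the argument.
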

We will use the results from Corollary \ref{corollary_PSR_boundary_domain_estimates} and Lemma \ref{lemma_general_PSR_P_3(z,.,.)_estimate} to find upper and lower bounds of the eigenvalues of $P_3(z,dz,dz)$ (when viewed as a symmetric matrix) for $z\in\mathrm{dom}(\mathcal{H})$ that are valid for all CCPSR manifolds $\mathcal{H}$ (and thus also for non-connected closed PSR manifolds).
\begin{Prop}
\label{prop_P_3(z,.,.)_boundaries_on_del_dom(H)}
Let $\mathcal{H}$ be a CCPSR manifold in standard form.
Then
\begin{equation}\label{P_3(z,.,.)_lower_upper_boundary_PSR}
\forall z\in\mathrm{dom}(\mathcal{H}):\quad -\frac{5}{6}\langle dz,dz\rangle<P_3(z,dz,dz)<\frac{2}{3}\langle dz,dz\rangle.
\end{equation}
This is equivalent to the statement that for all $z\in\mathrm{dom}(\mathcal{H})$, the eigenvalues $\lambda\in\mathbb{R}$ of the representation matrix of the symmetric bilinear form $P_3(z,dz,dz)$ induced by the $z$-coordinates fulfil $-\frac{5}{6}<\lambda<\frac{2}{3}$. Furthermore, the upper bound in \eqref{P_3(z,.,.)_lower_upper_boundary_PSR} is sharp in the sense that for all $n\geq 1$ there exists a CCPSR manifold $\mathcal{H}$ and a point $\check{z}\in\partial\mathrm{dom}(\mathcal{H})$, such that the representation matrix of $P_3(\check{z},dz,dz)$ has one eigenvalue $\lambda=\frac{2}{3}$.

\begin{proof}
We start with the upper bound in \eqref{P_3(z,.,.)_lower_upper_boundary_PSR}. Equation \eqref{hyp_det_condition} in Lemma \ref{lemma_general_PSR_P_3(z,.,.)_estimate} and equation \eqref{dom(H)_PSR_boundaries_eqn} in Corollary \ref{corollary_PSR_boundary_domain_estimates} imply for all $z\in\mathrm{dom}(\mathcal{H})$
\begin{align}
P_3(z,dz,dz)<\frac{3\langle dz,dz\rangle +\langle z,dz\rangle^2}{9}
	\leq \frac{3\langle dz,dz\rangle +\langle z,z\rangle\langle dz,dz\rangle}{9}
	< \frac{2}{3}\langle dz,dz\rangle.\label{sharper_P_3(z,.,.)_upper_boundary_PSR}
\end{align}

Obtaining the alleged lower bound in equation \eqref{P_3(z,.,.)_lower_upper_boundary_PSR} for $P_3(z,dz,dz)$ needs more work. An other, but worse, lower bound can be obtained the following way. For all $\check{z}\in\overline{\mathrm{dom}(\mathcal{H})}$ with $\|\check{z}\|=\frac{\sqrt{3}}{2}$ (recall that $\overline{B_{\frac{\sqrt{3}}{2}}(0)}\subset\overline{\mathrm{dom}(\mathcal{H})}$ is always true, see Corollary \ref{corollary_PSR_boundary_domain_estimates}), the biggest positive eigenvalue of the representation matrix of $P_3(\check{z},dz,dz)$ is bound from above by $\frac{2}{3}$. Using that $P_3(z,dz,dz)$ is linear in $z$, we obtain that the smallest eigenvalue of the representation matrix of $P_3(-2\check{z},dz,dz)$ is bounded from below by $-\frac{4}{3}$. Since $\check{z}\in\partial B_{\frac{\sqrt{3}}{2}}(0)$ was arbitrary, we obtain for all $\widetilde{z}\in\partial B_{\sqrt{3}}(0)$ the estimate $P_3(\widetilde{z},dz,dz)\geq-\frac{4}{3}\langle dz,dz\rangle$. Since for all CCPSR manifolds with the assumptions of this lemma $\mathrm{dom}(\mathcal{H})\subset B_{\sqrt{3}}(0)$, we can use the linearity of $P_3(z,dz,dz)$ in $z$ again to conclude that for all $z\in\mathrm{dom}(\mathcal{H})$ we have the estimate $P_3(z,dz,dz)>-\frac{4}{3}\langle dz,dz\rangle$. This bound is worse than $-\frac{5}{6}\langle dz,dz\rangle$, which we will derive now.

The estimate \eqref{sharper_P_3(z,.,.)_upper_boundary_PSR} shows that for all $\check{z}\in\partial\mathrm{dom}(\mathcal{H})$, every positive eigenvalue $\lambda_+$ of the representation matrix of $P_3(\check{z},dz,dz)$ fulfils
\begin{equation}
\lambda_+\leq \frac{3+\|\check{z}\|^2}{9}.\label{upper_bound_PSR_EV_del_dom(H)}
\end{equation}
Fix $\check{z}\in\partial\mathrm{dom}(\mathcal{H})\subset\mathbb{R}^2$ and let $\lambda_-$ be a negative eigenvalue of the representation matrix of $P_3(\check{z},dz,dz)$. The linearity of $P_3(z,dz,dz)$ in $z$ implies that $-\lambda_-$ is a positive eigenvalue of the representation matrix of $P_3(-\check{z},dz,dz)$. However, $-\check{z}$ might not be an element of $\overline{\mathrm{dom}(\mathcal{H})}$. In fact, $-\check{z}\in \overline{\mathrm{dom}(\mathcal{H})}$ if and only if $\|\check{z}\|\leq 1$, which holds if and only if $P_3\left(\frac{\check{z}}{\|\check{z}\|}\right)\in \left[-\frac{2}{3\sqrt{3}},0\right]$ (cf. Lemma \ref{lemma_P_3(y)_bounds} and see Figure \ref{fig_2dim_domH_example}).
\begin{figure}[h]%
\centering%
\includegraphics[scale=0.2]{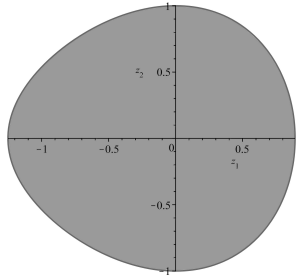}%
\caption{The set $\overline{\mathrm{dom}(\mathcal{H})}\subset\mathbb{R}^2$ corresponding to $P_3\left((z_1,z_2)^T\right)=-\frac{1}{2\sqrt{3}}z_1^3$. Observe for example that for all $\check{z}\in\overline{\mathrm{dom}(\mathcal{H})}$ of the form $\check{z}=(z_1,0)^T$, $z_1>0$, we have $P_3\left(\frac{\check{z}}{\|\check{z}\|}\right)\in \left[-\frac{2}{3\sqrt{3}},0\right]$ and one can check that $-\check{z}\in\overline{\mathrm{dom}(\mathcal{H})}$.}\label{fig_2dim_domH_example}%
\end{figure}

For such a given $\check{z}\in\partial\mathrm{dom}(\mathcal{H})$ we want to find $\check{t}>0$, such that $\check{t}(-\check{z})\in\partial\mathrm{dom}(\mathcal{H})$. When we have determined said $\check{t}$, the linearity of $P_3(z,dz,dz)$ in $z$ implies that $\check{t}(-\lambda_-)$ is a positive eigenvalue of the representation matrix of $P_3(\check{t}(-\check{z}),dz,dz)$. Using the upper bound \eqref{upper_bound_PSR_EV_del_dom(H)}, we can thus estimate
\begin{align}
\check{t}(-\lambda_-)\leq \frac{3+\check{t}^2\|\check{z}\|^2}{9}\quad
\Leftrightarrow\quad -\lambda_- \leq \frac{3+\check{t}^2\|\check{z}\|^2}{9\check{t}}=:F(\|\check{z}\|).\label{eqn_F_for_neg_EVs_PSR}
\end{align}
Our asserted lower bound $-\frac{5}{6}$ for $\lambda_-$ is now obtained via showing that the function $F:\left[\frac{\sqrt{3}}{2},\sqrt{3}\right]\to\mathbb{R}_{>0}$ defined in \eqref{eqn_F_for_neg_EVs_PSR} is continuous and by determining its maximal value, where we recall that the elements in the closed interval $\left[\frac{\sqrt{3}}{2},\sqrt{3}\right]$ are precisely all possible values for $\|\check{z}\|$ when considering all possible $n$-dimensional CCPSR manifolds $\mathcal{H}$ (cf. Corollary \ref{corollary_PSR_boundary_domain_estimates}). To find a closed formula for $\check{t}$ depending on $\|\check{z}\|$, consider the function $f(t)=\beta\left(t\frac{\check{z}}{\|\check{z}\|}\right)=1-t^2+P_3\left(\frac{\check{z}}{\|\check{z}\|}\right)t^3$ (compare with equation \eqref{beta_def}) and assume that $f(\|\check{z}\|)=0$. By assumption, $\mathcal{H}$ is a CCPSR manifold, implying that $\mathrm{dom}(\mathcal{H})\subset\mathbb{R}^n$ is precompact and, hence, $f(t)$ must have at least one more negative real root in addition to its root $t=\|\check{z}\|>0$. Hence, $(t-\|\check{z}\|)\mid f(t)$ and we obtain with $a,b\in\mathbb{R}$
\begin{align*}
f(t)&=(t-\|\check{z}\|)\left(\frac{-1}{\|\check{z}\|}+at+bt^2\right)\\
	&=1+\left(-a\|\check{z}\|-\frac{1}{\|\check{z}\|}\right)t+\left(a-b\|\check{z}\|\right)t^2+bt^3.
\end{align*}
This implies that $a=\frac{-1}{\|\check{z}\|^2}$ and $b=\frac{1}{\|\check{z}\|}-\frac{1}{\|\check{z}\|^3}$. Note that this determines $P_3\left(\frac{\check{z}}{\|\check{z}\|}\right)$ depending on $\|\check{z}\|$, and as we would expect $P_3\left(\frac{\check{z}}{\|\check{z}\|}\right)=\frac{2}{3\sqrt{3}}$ if $\|\check{z}\|=\sqrt{3}$, and $P_3\left(\frac{\check{z}}{\|\check{z}\|}\right)=-\frac{2}{3\sqrt{3}}$ if $\|\check{z}\|=\frac{\sqrt{3}}{2}$ (see Lemma \ref{lemma_P_3(y)_bounds} and Corollary \ref{corollary_PSR_boundary_domain_estimates}). We define
\begin{equation*}
\widetilde{f}(t):=\frac{f(t)}{t-\|\check{z}\|}=-\frac{1}{\|\check{z}\|}-\frac{1}{\|\check{z}\|}t+\left(\frac{1}{\|\check{z}\|}-\frac{1}{\|\check{z}\|^3}\right)t^2.
\end{equation*}
In order to determine $\check{t}$ in dependence of $\|\check{z}\|$ we need to find the roots of $\widetilde{f}(t)$, for (at least) one of the roots coincides with $\check{t}(-\|\check{z}\|)$. We will differentiate between the three cases $\|\check{z}\|=1$, $\|\check{z}\|\in\left(1,\sqrt{3}\right]$, and $\|\check{z}\|\in\left[\frac{\sqrt{3}}{2},1\right)$. We will also use these results to show that $F$ is continuous.
\paragraph*{Case 1: $\|\check{z}\|=1$.}\hspace{1mm}\\
In that case $f(t)=1-t^2$, so the roots of $f(t)$ are $t_\pm=\pm 1$ and the root of $\widetilde{f}(t)=-1-t$ is $t=-1$. Hence, $\check{t}=1$ and \eqref{eqn_F_for_neg_EVs_PSR} thus yields the estimate $-\lambda_-\leq \frac{4}{9}=F(1)$.
\paragraph*{Case 2: $\|\check{z}\|\in \left[\frac{\sqrt{3}}{2},\sqrt{3}\right]\setminus\{1\}$.}\hspace{1mm}\\
In this case,
\begin{align*}
\widetilde{f}(t_\pm)=0\quad
\Leftrightarrow\quad t_\pm =\frac{\|\check{z}\|}{2(\|\check{z}\|^2-1)}\pm\sqrt{\frac{(4\|\check{z}\|^2-3)\|\check{z}\|^2}{4(\|\check{z}\|^2-1)^2}}.
\end{align*}
Note that the sign of $\|\check{z}\|^2-1$ depends on whether $\|\check{z}\|<1$ or $\|\check{z}\|>1$. We will treat these cases separately.
\paragraph*{Case 2.1: $\|\check{z}\|\in \left(1,\sqrt{3}\right]$.}\hspace{1mm}\\
In this case, the plot of $f(t)$ is of the form as in Figure \ref{fig_f_case2_1} (except when $\|\check{z}\|=\sqrt{3}$, in which case $f(t)$ has the unique positive double root $\sqrt{3}$).
\begin{figure}[h]%
\centering%
\includegraphics[scale=0.25]{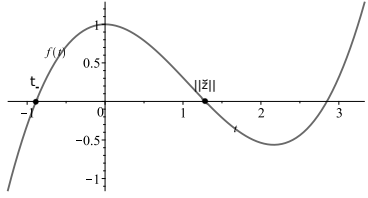}%
\caption{A typical plot of $f(t)$. Here, $P_3\left(\frac{\check{z}}{\|\check{z}\|}\right)=\frac{4}{5}\cdot\frac{2}{3\sqrt{3}}$, so that $\|\check{z}\|\in\left(1,\frac{2}{3\sqrt{3}}\right)$. The unique negative root of $f(t)$, that is $\check{t}(-\|\check{z}\|)$, and $t_-$ coincide in Case 2.1.}\label{fig_f_case2_1}%
\end{figure}
Also $\|\check{z}\|^2-1>0$ and, hence, $t_-=\check{t}(-\|\check{z}\|)$. We obtain
\begin{align*}
t_-&=\frac{\|\check{z}\|}{2(\|\check{z}\|^2-1)}\left(1-\sqrt{4\|\check{z}\|^2-3}\right),\\
\check{t}&=\frac{1}{2(\|\check{z}\|^2-1)}\left(-1+\sqrt{4\|\check{z}\|^2-3}\right),
\end{align*}
and, hence,
\begin{equation*}
F(\|\check{z}\|)=\frac{2\|\check{z}\|^4-5\|\check{z}\|^2+3+\sqrt{4\|\check{z}\|^2-3} (4\|\check{z}\|^4-7\|\check{z}\|^2+3 )}{18(\|\check{z}\|^2-1)^2}.
\end{equation*}
It is clear that $F|_{\left(1,\sqrt{3}\right]}$ is smooth. Using L'H\^ospital's rule for limits twice at $\|\check{z}\|=1$ yields
\begin{equation*}
\lim\limits_{\|\check{z}\|\to1,\; \|\check{z}\|>1}F(\|\check{z}\|)=\frac{4}{9},
\end{equation*}
which coincides with $F(1)$ determined in Case 1. This means that $F$ is continuous from the right at $\|\check{z}\|=1$. Next we will show that $F|_{\left(1,\sqrt{3}\right]}$ attains its maximum, namely at $\|\check{z}\|=\sqrt{3}$. To prove that we show that $\left.\frac{\partial F}{\partial \|\check{z}\|}\right|_{\left(1,\sqrt{3}\right)}>0$. The first derivative of $F$ is given by
\begin{equation*}
\frac{\partial F}{\partial \|\check{z}\|}( \|\check{z}\|)=\frac{ \|\check{z}\|\left( 8 \|\check{z}\|^4-18 \|\check{z}\|^2+9+\sqrt{4 \|\check{z}\|^2-3} \right)}{9\sqrt{4 \|\check{z}\|^2-3}( \|\check{z}\|^2-1)^2},
\end{equation*}
and $ \|\check{z}\|-1>0$ implies that in order to solve $\frac{\partial F}{\partial \|\check{z}\|}( \|\check{z}\|)=0$ with the restriction $\|\check{z}\|\in\left(1,\sqrt{3}\right)$ we only need to solve $8 \|\check{z}\|^4-18 \|\check{z}\|^2+9+\sqrt{4 \|\check{z}\|^2-3} =0$. Using MAPLE or any other computer algebra system one finds that the latter equation has no solutions in $\left(1,\sqrt{3}\right)$. It thus suffices check the sign of $\left.\frac{\partial F}{\partial \|\check{z}\|}\right|_{\left(1,\sqrt{3}\right)}$ at one point in the interval, say $\frac{1+\sqrt{3}}{2}$, to determine its global sign. We calculate
\begin{equation*}
\frac{\partial F}{\partial \|\check{z}\|}\left(\frac{1+\sqrt{3}}{2}\right)=\frac{4\sqrt{1+2\sqrt{3}}+2\sqrt{3}+2}{27}>0.
\end{equation*}
We conclude that
\begin{equation*}
\sup\limits_{\|\check{z}\|\in\left(1,\sqrt{3}\right]}F(\|\check{z}\|)=F\left(\sqrt{3}\right)=\frac{5}{6}.
\end{equation*}
\paragraph*{Case 2.2: $\|\check{z}\|\in \left[\frac{\sqrt{3}}{2},1\right)$.}\hspace{1mm}\\
This case works similarly to Case 2.1. Here, $f(t)$ has the shape as in Figure \ref{fig_f_case2_2} (except for the case $\|\check{z}\|=\frac{\sqrt{3}}{2}$, where $f(t)$ has the unique negative double root $\frac{\sqrt{3}}{2}$). 
\begin{figure}[h]%
\centering%
\includegraphics[scale=0.25]{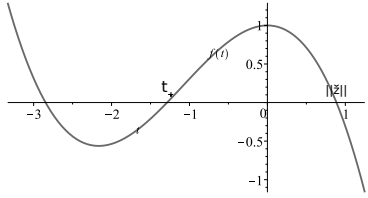}%
\caption{A plot of $f(t)$ with $P_3\left(\frac{\check{z}}{\|\check{z}\|}\right)=-\frac{4}{5}\cdot\frac{2}{3\sqrt{3}}$, so that $\|\check{z}\|\in\left(\frac{\sqrt{3}}{2},1\right)$. In Case 2.2, the biggest negative root of $f(t)$ which is $\check{t}(-\|\check{z}\|)$ by construction, and $t_+$ coincide.}\label{fig_f_case2_2}%
\end{figure}
In this case, $f(t)$ has, except if $\|\check{z}\|=\frac{\sqrt{3}}{2}$, precisely two negative roots, of which we need to consider the bigger one. Since $\|\check{z}\|^2-1<0$, we see that this is
\begin{equation*}
t_+=\frac{\|\check{z}\|}{2(\|\check{z}\|^2-1)}\left(1-\sqrt{4\|\check{z}\|^2-3}\right),
\end{equation*}
so that $t_+=\check{t}(-\|\check{z}\|)$ has the form
\begin{equation*}
\check{t}=\frac{1}{2(\|\check{z}\|^2-1)}\left(-1+\sqrt{4\|\check{z}\|^2-3}\right).
\end{equation*}
We see that formally the function $F$ for this case and $F$ in Case 2.1 coincide, i.e. we have for $F|_{\left[\frac{\sqrt{3}}{2},1\right)}$
\begin{equation*}
F(\|\check{z}\|)=\frac{2\|\check{z}\|^4-5\|\check{z}\|^2+3+\sqrt{4\|\check{z}\|^2-3} (4\|\check{z}\|^4-7\|\check{z}\|^2+3 )}{18(\|\check{z}\|^2-1)^2}
\end{equation*}
and for the derivative $\left.\frac{\partial F}{\partial \|\check{z}\|}\right|_{\left(\frac{\sqrt{3}}{2},1\right)}$
\begin{equation*}
\frac{\partial F}{\partial \|\check{z}\|}( \|\check{z}\|)=\frac{ \|\check{z}\|\left( 8 \|\check{z}\|^4-18 \|\check{z}\|^2+9+\sqrt{4 \|\check{z}\|^2-3} \right)}{9\sqrt{4 \|\check{z}\|^2-3}( \|\check{z}\|^2-1)^2}.
\end{equation*}
Proceeding analogously to Case 2.1 we will show that $\left.\frac{\partial F}{\partial \|\check{z}\|}\right|_{\left(\frac{\sqrt{3}}{2},1\right)}>0$. Note that the denominator of the formula for the first derivative of $F$ has no zeros in $\left(\frac{\sqrt{3}}{2},1\right)$, so we will not run into trouble with possibly singular values. Again, we use MAPLE to show that $8 \|\check{z}\|^4-18 \|\check{z}\|^2+9+\sqrt{4 \|\check{z}\|^2-3} =0$ has no solutions in $\left(\frac{\sqrt{3}}{2},1\right)$. Hence, the global sign of $\left.\frac{\partial F}{\partial \|\check{z}\|}\right|_{\left(\frac{\sqrt{3}}{2},1\right)}$ coincides with the sign of
\begin{equation*}
\frac{\partial F}{\partial \|\check{z}\|}\left(\frac{1}{2}\left(\frac{\sqrt{3}}{2}+1\right)\right)=-\frac{1220}{1089}\sqrt{3}\sqrt{-5+4\sqrt{3}}+\frac{5824}{3267}\sqrt{3}-\frac{6376}{3267}\sqrt{-5+4\sqrt{3}}+\frac{10112}{3267}>0,
\end{equation*}
and thus $F_{\left[\frac{\sqrt{3}}{2},1\right)}$ does not attain its maximum in its domain of definition, but at the limit $\|\check{z}\|\to 1$, assuming that limit exists. For the existence we need to check that $F$ is continuous from the left at $\|\check{z}\|=1$. This is done in the same way we have shown continuity from the right, that is by applying L'H\^ospital's rule twice. As expected, we obtain
\begin{equation*}
\lim\limits_{\|\check{z}\|\to1,\; \|\check{z}\|<1}F(\|\check{z}\|)=\frac{4}{9},
\end{equation*} 

Summarising, we have shown that $F:\left[\frac{\sqrt{3}}{2},\sqrt{3}\right]\to\mathbb{R}_{>0}$ is continuous and attains its maximum at $\|\check{z}\|=\sqrt{3}$, $F\left(\sqrt{3}\right)=\frac{5}{6}$. Since the negative eigenvalue $\lambda_-$ of the representation matrix of $P_3(\check{z},dz,dz)$ was arbitrary, we conclude with \eqref{eqn_F_for_neg_EVs_PSR} that for all such negative eigenvalues $\lambda_-$ we have
\begin{equation*}
\lambda_-\geq-\max\limits_{\|\check{z}\|\in\left[\frac{\sqrt{3}}{2},\sqrt{3}\right]}F(\|\check{z}\|)=-\frac{5}{6}.
\end{equation*}
The point $\check{z}\in\partial\mathrm{dom}(\mathcal{H})$ was also arbitrary and, thus, using the linearity of $P_3(z,dz,dz)$ we obtain
\begin{equation*}
\forall z\in\mathrm{dom}(\mathcal{H}):\quad P_3(z,dz,dz)>-\frac{5}{6}\langle dz,dz\rangle.
\end{equation*}
Note that our calculations also show that $\lambda_-=-\frac{5}{6}$ can only possibly be a negative eigenvalue of the representation matrix of $P_3(\check{z},dz,dz)$ at a point $\check{z}\in\partial\mathrm{dom}(\mathcal{H})$ with norm $\|\check{z}\|=\sqrt{3}$.

We want to stress again at this point that the obtained lower and upper bounds for $P_3(z,dz,dz)$ hold for all CCPSR manifolds $\mathcal{H}\subset\{h=1\}$ of dimension $n\geq 1$ with $h$ of the form \eqref{standard_form_h} and $\left(\begin{smallmatrix}
1\\ 0
\end{smallmatrix}\right)\in\mathcal{H}$.

It remains to show that the upper bound in \eqref{P_3(z,.,.)_lower_upper_boundary_PSR} is sharp in the stated sense.
To do so, we will give an example of a CCPSR manifold of dimension $n$ for each $n\in\mathbb{N}$. For any $n\in\mathbb{N}$, let $\left(\begin{smallmatrix}
x\\y 
\end{smallmatrix}\right)=(x,y_1,\ldots,y_n)^T$ denote linear coordinates of $\mathbb{R}^{n+1}$ as usual and consider the cubic polynomial
\begin{equation*}
h:\mathbb{R}^n\to\mathbb{R},\quad h=x^3-x\langle y,y\rangle + \frac{2}{3\sqrt{3}}y_n^3,
\end{equation*}
and the corresponding centro-affine hypersurface $\mathcal{H}\subset\{h=1\}$, which is the connected component of $\{h=1\}$ that contains the point $\left(\begin{smallmatrix}
x\\y 
\end{smallmatrix}\right)=\left(\begin{smallmatrix}
1\\
0
\end{smallmatrix}\right)$. Then $\mathcal{H}$ is a CCPSR manifold of dimension $n$. We will not prove this here,
since it follows from Theorem \ref{thm_Cn}. Note that while this proposition is used in the later proof of the latter theorem, the sharpness of the upper bound is not required therein.
To show that the upper bound is in fact sharp in the stated sense, consider the point $\check{z}_+=\left(
	0,\ldots,0,\sqrt{3}
	\right)^T\in\partial\mathrm{dom}(\mathcal{H})$.
We obtain $P_3(\check{z}_+,dz,dz)=\frac{2}{3}dz_n^2$ and the corresponding symmetric matrix has precisely the eigenvalues $\lambda_1=0$ with eigenspace-dimension $n-1$, and $\lambda_2=\frac{2}{3}$ with eigenspace-dimension $1$.
This proves our claim.
\end{proof}
\end{Prop}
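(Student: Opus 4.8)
The plan is to reduce everything to the hyperbolicity inequality of Lemma~\ref{lemma_general_PSR_P_3(z,.,.)_estimate}, namely $3\langle dz,dz\rangle - 9P_3(z,dz,dz) + \langle z,dz\rangle^2 > 0$ for all $z\in\mathrm{dom}(\mathcal{H})$, combined with the norm bounds $B_{\sqrt 3/2}(0)\subset\mathrm{dom}(\mathcal{H})\subset B_{\sqrt 3}(0)$ from Corollary~\ref{corollary_PSR_boundary_domain_estimates} and the linearity of the map $z\mapsto P_3(z,dz,dz)$.

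\textbf{Upper bound.} First I would rearrange the hyperbolicity inequality into $P_3(z,dz,dz) < \tfrac19\bigl(3\langle dz,dz\rangle + \langle z,dz\rangle^2\bigr)$ and then apply Cauchy--Schwarz, $\langle z,dz\rangle^2 \le \|z\|^2\langle dz,dz\rangle$, together with $\|z\| < \sqrt 3$ for $z\in\mathrm{dom}(\mathcal{H})$. This immediately yields $P_3(z,dz,dz) < \tfrac{3+3}{9}\langle dz,dz\rangle = \tfrac23\langle dz,dz\rangle$, i.e. every eigenvalue of the representation matrix of $P_3(z,\cdot,\cdot)$ on $\mathrm{dom}(\mathcal{H})$ is $< \tfrac23$.

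\textbf{Lower bound.} This is the part requiring real work. Suppose $\check z\in\partial\mathrm{dom}(\mathcal{H})$ and $\lambda_- < 0$ is an eigenvalue of the representation matrix of $P_3(\check z,\cdot,\cdot)$. By linearity in $z$, $-\lambda_-$ is a positive eigenvalue of $P_3(-\check z,\cdot,\cdot)$; since $-\check z$ may leave $\overline{\mathrm{dom}(\mathcal{H})}$, I would scale it: choose $\check t > 0$ with $\check t(-\check z)\in\partial\mathrm{dom}(\mathcal{H})$ and apply the upper bound $\lambda_+ \le \tfrac19(3+\|\cdot\|^2)$ just established at that boundary point, obtaining $\check t(-\lambda_-) \le \tfrac19\bigl(3 + \check t^2\|\check z\|^2\bigr)$, hence $-\lambda_- \le F(\|\check z\|):=\tfrac{3+\check t^2\|\check z\|^2}{9\check t}$. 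To make $\check t$ explicit I would restrict $\beta$ to the ray through $\check z$, producing a cubic $f(t) = 1 - t^2 + P_3\bigl(\tfrac{\check z}{\|\check z\|}\bigr)t^3$ which has $\|\check z\|$ as a root and, by precompactness of $\mathrm{dom}(\mathcal{H})$ (Lemma~\ref{precomp_cor}), at least one further real root of opposite sign; factoring out $(t-\|\check z\|)$ expresses both $P_3\bigl(\tfrac{\check z}{\|\check z\|}\bigr)$ and $\check t$ as explicit functions of $\|\check z\|$. Then I would verify that $F\colon[\sqrt 3/2,\sqrt 3]\to\mathbb{R}_{>0}$ is continuous (the apparent singularity at $\|\check z\|=1$ is removable, to be handled by L'H\^ospital's rule, with limiting value $\tfrac49$) and strictly increasing (checking the sign of $\partial_{\|\check z\|}F$ separately on $(\sqrt 3/2,1)$ and $(1,\sqrt 3)$, reducing the vanishing of the derivative to an auxiliary equation $8\|\check z\|^4 - 18\|\check z\|^2 + 9 + \sqrt{4\|\check z\|^2-3} = 0$ shown to have no root there), so that $\max F = F(\sqrt 3) = \tfrac56$. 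Therefore $\lambda_- \ge -\tfrac56$, and by linearity in $z$ this bound propagates from $\partial\mathrm{dom}(\mathcal{H})$ to all of $\mathrm{dom}(\mathcal{H})$, giving $P_3(z,dz,dz) > -\tfrac56\langle dz,dz\rangle$.

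\textbf{Sharpness and main obstacle.} For each $n$ I would take $h = x^3 - x\langle y,y\rangle + \tfrac{2}{3\sqrt 3}y_n^3$ and the component $\mathcal{H}$ of $\{h=1\}$ through $\bigl(\begin{smallmatrix}1\\0\end{smallmatrix}\bigr)$, which is a CCPSR manifold by Theorem~\ref{thm_Cn}; at the boundary point $\check z_+ = (0,\dots,0,\sqrt 3)^T\in\partial\mathrm{dom}(\mathcal{H})$ one computes $P_3(\check z_+,dz,dz) = \tfrac23\,dz_n^2$, whose representation matrix has eigenvalue $\tfrac23$, so the upper bound is optimal. The main obstacle is the lower bound: obtaining the closed form of $\check t$ as a function of $\|\check z\|$ and then rigorously establishing monotonicity of $F$ on the relevant interval, including the removable-singularity analysis at $\|\check z\|=1$, where a computer algebra check of the auxiliary quartic-type equation is convenient.
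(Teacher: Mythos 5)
Your proposal follows essentially the same route as the paper's proof: the upper bound via the hyperbolicity inequality of Lemma \ref{lemma_general_PSR_P_3(z,.,.)_estimate} combined with Cauchy--Schwarz and $\|z\|<\sqrt{3}$, the lower bound via rescaling $-\check{z}$ back to $\partial\mathrm{dom}(\mathcal{H})$, extracting $\check{t}$ from the factorisation of $f(t)=1-t^2+P_3(\check{z}/\|\check{z}\|)t^3$, and maximising the same function $F$ on $[\sqrt{3}/2,\sqrt{3}]$ (including the removable singularity at $\|\check{z}\|=1$ with value $4/9$ and the same auxiliary equation $8\|\check{z}\|^4-18\|\check{z}\|^2+9+\sqrt{4\|\check{z}\|^2-3}=0$), together with the identical sharpness example. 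The argument is correct and matches the paper's in all essentials.
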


Next, we will study the boundary behaviour of the centro-affine fundamental form of CCGPSR manifolds. For an explanation why this term is used, see the discussion under \cite[Thm.\,1.18]{CNS} and the related chapter in Melrose's book \cite[Ch.\,8]{Me}.
\begin{Def}
\label{def_regular_bdr_GPSR}
Let $\mathcal{H}\subset\{h=1\}$ be a CCGPSR manifold of dimension $n\geq 1$ and let $U=\mathbb{R}_{>0}\cdot\mathcal{H}$ be the corresponding convex cone (cf. Proposition \ref{prop_convexcone}). Then $\mathcal{H}$ has \textsf{regular boundary behaviour} if
\begin{enumerate}[(i)]
\item $dh_p\ne 0$ for all $p\in\partial U\setminus\{0\}$, i.e. $\mathcal{H}$ is not singular at infinity in the sense of Definition \ref{def_singular_GPSR}, \label{eqn_1.17i}
\item $\left.-\partial^2 h\right|_{T(\partial U\setminus\{0\})\times T(\partial U\setminus\{0\})}\geq 0$ and $\dim\ker\left(\left.-\partial^2 h\right|_{T(\partial U\setminus\{0\})\times T(\partial U\setminus\{0\})}\right)=1$ for all $p\in\partial U\setminus\{0\}$.\label{eqn_1.17ii}
\end{enumerate}
\end{Def}
Note that Definition \ref{def_regular_bdr_GPSR} is equivalent to \cite[Def.\,1.17]{CNS} restricted to CCGPSR manifolds. We also want to stress that Definition \ref{def_regular_bdr_GPSR} is independent of the chosen linear coordinates of the ambient space $\mathbb{R}^{n+1}$.
\begin{rem}\label{rem_1.17_alternate_form}
With the functions $\alpha$ and $\beta$ as in \eqref{alpha_def} and \eqref{beta_def},
Lemma \ref{lemma_boundary_conditions_alpha_beta} shows that the conditions \hyperref[eqn_1.17i]{(i)} and \hyperref[eqn_1.17ii]{(ii)} in Definition \ref{def_regular_bdr_GPSR} are equivalent to
\begin{enumerate}[(i)]
\item $d\beta_{\overline{z}}(\overline{z})\neq 0$ for all $\overline{z}\in\partial\mathrm{dom}(\mathcal{H})$,\label{eqn_1.17i_alternate}
\item $-\partial^2\beta_{\overline{z}}>0$ for all $\overline{z}\in\partial\mathrm{dom}(\mathcal{H})$,\label{eqn_1.17ii_alternate}
\end{enumerate}
respectively.
\end{rem}
It turns out that for CCPSR manifolds the condition Def. \ref{def_regular_bdr_GPSR} \hyperref[eqn_1.17i]{(i)} always implies Def. \ref{def_regular_bdr_GPSR} \hyperref[eqn_1.17ii]{(ii)}:
\begin{Th}
\label{thm_irregularity_implications_CCPSR}
A CCPSR manifolds of dimension $n\geq 1$ is not singular at infinity in the sense of Definition \ref{def_singular_GPSR} if and only if it has regular boundary behaviour as defined in Definition \ref{def_regular_bdr_GPSR}.
\begin{proof}
A CCPSR manifold $\mathcal{H}$ that has regular boundary behaviour is by definition not singular at infinity. For the other direction, consider first $n=1$. Then Def. \ref{def_regular_bdr_GPSR} \hyperref[eqn_1.17ii]{(ii)} is trivially satisfied.

To prove the statement of this theorem for $n\geq 2$, it suffices to prove it for $n=2$. To see this, consider any CCPSR manifold $\mathcal{H}$ of dimension $n>2$ and assume that Def. \ref{def_regular_bdr_GPSR} \hyperref[eqn_1.17i]{(i)} holds for $\mathcal{H}$. Assume without loss of generality that $\mathcal{H}$ is in standard form.
Considering Remark \ref{rem_1.17_alternate_form}, Def. \ref{def_regular_bdr_GPSR} \hyperref[eqn_1.17ii]{(ii)} holds true if and only if Rem. \ref{rem_1.17_alternate_form} \hyperref[eqn_1.17ii_alternate]{(ii)} holds true. To show the latter we need to show that $-\partial^2\beta_{\overline{z}}(v,v)>0$ for all $\overline{z}\in\partial\mathrm{dom}(\mathcal{H})$ and all $0\ne v\in T_{\overline{z}}\partial\mathrm{dom}(\mathcal{H})\subset \mathbb{R}^n$. Observe that for any $2$-dimensional linear subspace $E=\mathrm{span}\{w_1,w_2\} \subset\mathbb{R}^n$, where $w_1$ and $w_2$ are chosen such that they are orthonormal with respect to $\langle\cdot,\cdot\rangle$, the restricted polynomial
\begin{equation*}
h^E\left(\left(\begin{smallmatrix}
x\\
t_1\\
t_2
\end{smallmatrix}
\right)\right):=x^3-x(t_1^2+t_2^2)+P_3(t_1w_1+t_2w_2)
\end{equation*}
defines a $2$-dimensional CCPSR manifold $\mathcal{H}^E\subset\left\{h^E=1\right\}\subset\mathbb{R}^3$ as the connected component containing the point $\left(\begin{smallmatrix}
x\\
t_1\\
t_2
\end{smallmatrix}
\right)=\left(\begin{smallmatrix}
1\\
0\\
0
\end{smallmatrix}
\right)$. Furthermore, 
\begin{equation*}
\mathrm{dom}\left(\mathcal{H}^E\right)\ni\left(\begin{smallmatrix}
t_1\\
t_2
\end{smallmatrix}
\right)\mapsto t_1w_1+t_2w_2\in\mathrm{dom}(\mathcal{H})
\end{equation*}
is an embedding. Note that the explicit formula for $h^E$ in general depends on the choice of basis for $E$. Hence, if we want to show that $-\partial^2\beta_{\overline{z}}(v,v)>0$ for some fixed $\overline{z}\in\partial\mathrm{dom}(\mathcal{H})$ and $0\ne v\in T_{\overline{z}}(\partial\mathrm{dom}(\mathcal{H}))$, it suffices to show Rem. \ref{rem_1.17_alternate_form} \hyperref[eqn_1.17ii_alternate]{(ii)} for $\mathcal{H}^E$ and $h^E$, respectively $\beta^E\left(\left(\begin{smallmatrix}
t_1\\
t_2
\end{smallmatrix}
\right)\right)=h^E\left(\left(\begin{smallmatrix}
1\\
t_1\\
t_2
\end{smallmatrix}
\right)\right)$, with $E=\mathrm{span}\{\overline{z},v\}$\footnote{Strictly speaking, at this point we need to choose two orthonormal vectors $w_1,w_2$, such that $\mathrm{span}\{\overline{z},v\}=\mathrm{span}\{w_1,w_2\}$.} where we view $v$ as an element of $\mathbb{R}^n$. Hence, proving the statement of this theorem for all $2$-dimensional CCPSR manifolds will also prove it for these of higher dimension. Since the conditions in Definition \ref{def_regular_bdr_GPSR} are independent of the linear coordinates chosen for the ambient space $\mathbb{R}^{n+1}$, we can reduce our studies to the classification of $2$-dimensional CCPSR manifolds up to equivalence given in \cite[Thm.\,1]{CDL}\footnote{At the time the article \cite{CDL} was written and published, it was still an open problem to show that a PSR manifold is closed if and only if it is geodesically complete, which has first been proven in \cite[Thm.\,2.5]{CNS}.}, see Theorem \ref{lowdimpsrclassTHM}. We will do a case-by-case check for the surfaces a)--e) and the one-parameter family of surfaces f) in Theorem \ref{lowdimpsrclassTHM}. For the cases a)--e) we will study the $P_3$-part the calculated standard form $\widetilde{h}=x^3-x(y^2+z^2)+P_3\left(\left(\begin{smallmatrix}
y\\ z
\end{smallmatrix}\right)\right)$ \eqref{standard_form_h} of each cubic $h$ corresponding to a CCPSR surface $\mathcal{H}\subset\{h=1\}$ obtained in Example \ref{example_PSRsurfaces_standard_form} with the property that $\mathcal{H}$ is equivalent to the connected component of $\left\{\widetilde{h}=1\right\}$ that contains the point $\left(\begin{smallmatrix}
x\\ y\\ z
\end{smallmatrix}\right)=\left(\begin{smallmatrix}
1\\ 0\\ 0
\end{smallmatrix}\right)$. We can then use Lemma \ref{lemma_singular_PSR_max_P_3}, which says that the value of $\max\limits_{\left\|\left(\begin{smallmatrix}
y\\
z
\end{smallmatrix}
\right)\right\|=1}\left|P_3\left(\left(\begin{smallmatrix}
y\\
z
\end{smallmatrix}
\right)\right)\right|\in\left[0,\frac{2}{3\sqrt{3}}\right]$ determines whether $\mathcal{H}$ is singular at infinity or not. In the cases where $\mathcal{H}$ is not singular at infinity, that is fulfils Def. \ref{def_regular_bdr_GPSR} \hyperref[eqn_1.17i_alternate]{(i)}, we need to show that it also fulfils Def. \ref{def_regular_bdr_GPSR} \hyperref[eqn_1.17ii_alternate]{(ii)}.
For the one-parameter family f) we will use another method and explain why in this case the form \eqref{standard_form_h} is not the best choice to work with in order to prove our claim.

\paragraph*{a) $\mathcal{H}=\{h=xyz=1,\ x>0,\ y>0\}$.}\hspace*{0mm}\\
Equation \eqref{eqn_CDL_a_equiv_h_form} implies that $P_3\left(\left(\begin{smallmatrix}
y\\
z
\end{smallmatrix}
\right)\right)=-\frac{2}{3\sqrt{3}}y^3+\frac{2}{\sqrt{3}}yz^2$. Since $\mathcal{H}$ is a CCPSR surface and $P_3\left(\left(\begin{smallmatrix}
-1\\
0
\end{smallmatrix}
\right)\right)=\frac{2}{3\sqrt{3}}$, Lemma \ref{lemma_singular_PSR_max_P_3} implies that $\mathcal{H}$ is singular at infinity.

\paragraph*{b) $\mathcal{H}=\{h=x(xy-z^2)=1,\ x>0\}$.}\hspace*{0mm}\\
By equation \eqref{eqn_CDL_b_equiv_h_form}, $P_3\left(\left(\begin{smallmatrix}
y\\
z
\end{smallmatrix}
\right)\right)=\frac{2}{3\sqrt{3}}y^3+\frac{1}{\sqrt{3}}yz^2$ with $P_3\left(\left(\begin{smallmatrix}
1\\
0
\end{smallmatrix}
\right)\right)=\frac{2}{3\sqrt{3}}$. Hence, $\mathcal{H}$ is singular at infinity.

\paragraph*{c) $\mathcal{H}=\{h=x(yz+x^2)=1,\ x<0,\ y>0\}$.}\hspace*{0mm}\\
This case is a little more complicated in comparison with \textbf{a)} and \textbf{b)}. Equation \eqref{eqn_CDL_c_equiv_h_form} implies that $P_3\left(\left(\begin{smallmatrix}
y\\
z
\end{smallmatrix}
\right)\right)=\frac{2\sqrt{2}}{\sqrt{15}}y^2z+\frac{14\sqrt{2}}{15\sqrt{15}}z^3$.
We now need to determine $\max\limits_{\left\|\left(\begin{smallmatrix}
y\\
z
\end{smallmatrix}
\right)\right\|=1}\left|P_3\left(\left(\begin{smallmatrix}
y\\
z
\end{smallmatrix}
\right)\right)\right|$. We find for $v=\left(\begin{smallmatrix}
\frac{\sqrt{3}}{2\sqrt{2}}\\
\frac{\sqrt{5}}{2\sqrt{2}}
\end{smallmatrix}
\right)$, $\|v\|=1$, that $P_3(v)=\frac{2}{3\sqrt{3}}$. Hence, $\mathcal{H}$ being closed and connected implies that $\max\limits_{\left\|\left(\begin{smallmatrix}
y\\
z
\end{smallmatrix}
\right)\right\|=1}\left|P_3\left(\left(\begin{smallmatrix}
y\\
z
\end{smallmatrix}
\right)\right)\right|=\frac{2}{3\sqrt{3}}$. This shows that $\mathcal{H}$ is singular at infinity. Note that $v$ can be found without the help of a computer algebra system like MAPLE by considering the equation $dP_3|_{\left(\begin{smallmatrix}
y\\
z
\end{smallmatrix}
\right)}=r\left\langle \left(\begin{smallmatrix}
y\\
z
\end{smallmatrix}
\right),\cdot\right\rangle$, $r>0$, which is not difficult to solve in this case since $P_3\left(\left(\begin{smallmatrix}
y\\
z
\end{smallmatrix}
\right)\right)$ is reducible.

\paragraph*{d) $\mathcal{H}=\{h=z(x^2+y^2-z^2)=1,\ z<0\}$.}\hspace*{0mm}\\
From equation \eqref{eqn_CDL_d_equiv_h_form} we obtain that in this case $P_3\left(\left(\begin{smallmatrix}
y\\ z
\end{smallmatrix}\right)\right)\equiv 0$. Hence, $\max\limits_{\left\|\left(\begin{smallmatrix}
y\\
z
\end{smallmatrix}
\right)\right\|=1}\left|P_3\left(\left(\begin{smallmatrix}
y\\
z
\end{smallmatrix}
\right)\right)\right|=0$ and $\mathcal{H}$ is thus not singular at infinity. It is immediate that $\mathrm{dom}(\widetilde{\mathcal{H}})=\left\{\left\|\left(\begin{smallmatrix}
y\\
z
\end{smallmatrix}
\right)\right\|<1\right\}$ and that for the corresponding function $\beta(y,z)=1-y^2-z^2$ as in \eqref{beta_def} we have $d\beta=-2ydy-2zdz$. Hence, $d\beta$ vanishes at no point in $\partial\mathrm{dom}(\widetilde{\mathcal{H}})$, so Lemma \ref{lemma_boundary_conditions_alpha_beta} implies that $\widetilde{\mathcal{H}}$, and thus also $\mathcal{H}$, fulfils Def. \ref{def_regular_bdr_GPSR} \hyperref[eqn_1.17i_alternate]{(i)}. Furthermore
\begin{equation*}
\partial^2\beta_{\left(\begin{smallmatrix}
y\\
z
\end{smallmatrix}
\right)}=\left(
\begin{matrix}
-2 & 0\\
0 & -2
\end{matrix}
\right)<0\quad \forall \left(\begin{matrix}
y\\
z
\end{matrix}
\right)\in\partial\mathrm{dom}(\widetilde{\mathcal{H}}),
\end{equation*}
so $\widetilde{\mathcal{H}}$, and equivalently $\mathcal{H}$, fulfils Def. \ref{def_regular_bdr_GPSR} \hyperref[eqn_1.17ii_alternate]{(ii)}.

\paragraph*{e) $\mathcal{H}=\{h=x(y^2-z^2)+y^3=1,\ y<0,\ x>0\}$.}\hspace*{0mm}\\
From equation \eqref{eqn_CDL_e_equiv_h_form} we know that $P_3\left(\left(\begin{smallmatrix}
y\\
z
\end{smallmatrix}
\right)\right)=\frac{2}{3\sqrt{3}}y^3-\frac{1}{2\sqrt{3}}yz^2$. Hence, $P_3\left(\left(\begin{smallmatrix}
1\\
0
\end{smallmatrix}
\right)\right)=\frac{2}{3\sqrt{3}}$, which shows that $\mathcal{H}$ is singular at infinity.

\paragraph*{f)\footnote{For this one-parameter family of CCPSR surfaces which are each contained in the level set of the respective Weiersta{\ss} cubic with positive discriminant $h$, the method used for \textbf{a)}--\textbf{e)} has proven itself to be unsuitable. This is because the formulas for the corresponding function $\beta$ as in \eqref{beta_def} and the derivatives corresponding to $h$ when brought to the form \eqref{standard_form_h} might not depend on $b\in (-1,1)$ in a complicated way, but studying the system of equations $v\in\ker d\beta$, $v\in\ker\partial^2\beta$, $\beta=0$, turned out to be quite difficult. We will thus consider Definition \ref{def_regular_bdr_GPSR} and not the equivalent conditions in Remark \ref{rem_1.17_alternate_form} to prove our claim for this one-parameter family.} $\mathcal{H}_b=\{h=y^2z-4x^3+3xz^2+bz^3=1,\ z<0,\ 2x>z\}$, $b\in(-1,1)$.}\hspace*{0mm}\\
For all $b\in(-1,1)$, the projective curve $C:=\{h=y^2z-4x^3+3xz^2+bz^3=0\}\subset\mathbb{R}\mathrm{P}^2$ has no singularities, cf. \cite[Prop.\,3]{CDL}, which means that $dh_p\ne 0$ for all $p\in \{h=0\}\setminus\{0\}\subset\mathbb{R}^3$. Hence, each $\mathcal{H}_b$, $b\in(-1,1)$, is not singular at infinity in the sense of Definition \ref{def_singular_GPSR} and, hence, fulfils condition Def. \ref{def_regular_bdr_GPSR} \hyperref[eqn_1.17i]{(i)}. Note that $\mathcal{H}_b$ not being singular at infinity for all $b\in(-1,1)$ also follows easily from equation \eqref{eqn_CDL_f_equiv_h_form} in Example \ref{example_PSRsurfaces_standard_form}. We need to show that each $\mathcal{H}_b$, $b\in(-1,1)$, also fulfils Def. \ref{def_regular_bdr_GPSR} \hyperref[eqn_1.17ii]{(ii)}. In order to prove this, we need to determine $\partial(\mathbb{R}_{>0}\cdot\mathcal{H}_b)\subset\{h=0,\ z\leq 0,\ 2x\geq z\}\subset\mathbb{R}^3$ for each $b\in(-1,1)$. Observe that $\{h=0,\ z\leq 0,\ 2x\geq z\}\cap \{z=0\}=\{x=0,\ z=0\}$.
Hence, the line $\{x=0,\ z=0\}$ is contained in $\{h=0,\ z\leq 0,\ 2x\geq z\}$, but $\mathbb{R}_{>0}\cdot\mathcal{H}_b$ being a convex cone which has the property described in Lemma \ref{precomp_cor} shows that $\{x=0,\ z=0\}\cap \partial(\mathbb{R}_{>0}\cdot\mathcal{H}_b)=\left\{\left(\begin{smallmatrix}
0 \\ 0 \\ 0
\end{smallmatrix}
\right)
\right\}$. For $z<0$ we will determine the intersection $\{z=-1\}\cap \partial(\mathbb{R}_{>0}\cdot\mathcal{H}_b)$, which can then be used with the homogeneity of $h$ to obtain the whole set $\partial(\mathbb{R}_{>0}\cdot\mathcal{H}_b)$. We find
\begin{equation}
h\left(\left(\begin{smallmatrix}
x\\ y\\ -1
\end{smallmatrix}
\right)\right)=0\quad
\Leftrightarrow\quad \underbrace{-y^2-4x^3+3x-b}_{=:\rho_b}=0,\label{eqn_weierstrass_rho}
\end{equation}
where $\rho_b\left(\left(\begin{smallmatrix}
x\\ y  
\end{smallmatrix}\right)\right)=h\left(\left(\begin{smallmatrix}
x\\ y \\ -1
\end{smallmatrix}\right)\right)$. We consider $\rho_b$ to be defined for all $b\in\mathbb{R}$, not just for $b\in(-1,1)$. Let
\begin{equation*}
V=\mathrm{span}\left\{\left(
\begin{smallmatrix}
1 \\ 0 \\ 0
\end{smallmatrix}
\right),
\left(
\begin{smallmatrix}
0 \\ 1 \\ 0
\end{smallmatrix}
\right)
\right\}
\end{equation*}
and observe that $\mathcal{H}_b$ not being singular at infinity implies that the tangent space $T\partial(\mathbb{R}_{>0}\cdot\mathcal{H}_b)$ fulfils
\begin{equation*}
T_p\partial(\mathbb{R}_{>0}\cdot\mathcal{H}_b)=\mathbb{R}\cdot p \oplus \left(\ker dh_p\cap V\right)\quad \forall p \in \{z=-1\}\cap\partial(\mathbb{R}_{>0}\cdot\mathcal{H}_b).
\end{equation*}
Furthermore, the $1$-dimensional linear subspaces $\mathbb{R}\cdot p$ and $\ker dh_p\cap V$ of $T_p\partial(\mathbb{R}_{>0}\cdot\mathcal{H}_b)$ are orthogonal with respect to the positive-semidefinite bilinear form $-\partial^2 h_p$, which follows from $-\partial^2h_p(p,\cdot)=-2dh_p(\cdot)$. Also note that $\ker dh_p\cap V$ is always $1$-dimensional since the position vector $p\ne 0$ is always an element of $\ker dh_p$ for all $p\in\partial(\mathbb{R}_{>0}\cdot\mathcal{H}_b)$. Thus, in order to prove that Def. \ref{def_regular_bdr_GPSR} \hyperref[eqn_1.17ii]{(ii)} is fulfilled for each $\mathcal{H}_b$, $b\in(-1,1)$, it suffices to show that $-\partial^2h|_{(\ker dh_p\cap V)\times(\ker dh_p\cap V)}>0$. We obtain
\begin{equation*}
dh=(-12x^2+3z^2)dx + 2yz dy + (y^2+6xz+3bz^2)dz
\end{equation*}
and
\begin{equation*}
\partial^2h=\left(\begin{matrix}
-24x & 0 & 6z \\
0 & 2z & 2y \\
6z & 2y & 6x+6bz
\end{matrix}
\right).
\end{equation*}
Since $\mathcal{H}_b$ is not singular at infinity, it follows that at each point $p=\left(\begin{smallmatrix}
x \\ y \\ z
\end{smallmatrix}
\right)\in \{z=-1\}\cap\partial(\mathbb{R}_{>0}\cdot\mathcal{H}_b)$, $\ker dh_p\cap V$ is given by
\begin{equation*}
\ker dh_p\cap V=\mathrm{span}\left\{
\left(\begin{matrix}
-\partial_y h_p \\ \partial_x h_p \\ 0
\end{matrix}
\right)\right\}=\mathrm{span}\left\{\left(\begin{matrix}
2y \\ -12x^2+3 \\ 0
\end{matrix}
\right)\right\}.
\end{equation*}
Hence, $-\partial^2h|_{(\ker dh_p\cap V)\times(\ker dh_p\cap V)}>0$ if and only if 
\begin{equation}
 -\partial^2h_p\left(\left(\begin{smallmatrix}
2y \\ -12x^2+3 \\ 0
\end{smallmatrix}
\right),\left(\begin{smallmatrix}
2y \\ -12x^2+3 \\ 0
\end{smallmatrix}
\right)\right)>0
\quad\Leftrightarrow  \quad 16xy^2+\left(4\sqrt{3}x^2-\sqrt{3}\right)^2>0\label{eqn_weierstrass_ii_estimate}
\end{equation}
for all $p=\left(\begin{smallmatrix}
x \\ y \\ z
\end{smallmatrix}
\right)\in \{h=0,\ z=-1\}\cap\partial(\mathbb{R}_{>0}\cdot\mathcal{H}_b)$. We will first check the above inequality \eqref{eqn_weierstrass_ii_estimate} for $y=0$. In that case, \eqref{eqn_weierstrass_ii_estimate} can only be false if $x=\pm \frac{1}{2}$. Then with $\rho_b$ defined as in \eqref{eqn_weierstrass_rho} we obtain
\begin{equation*}
0=\rho_b\left(\left(\begin{smallmatrix}
\pm\frac{1}{2}\\ 0
\end{smallmatrix}\right)\right)=\mp\frac{1}{2}\pm \frac{3}{2}-b=\pm 1-b.
\end{equation*}
This is however a contradiction to $b\in(-1,1)$ and, hence, \eqref{eqn_weierstrass_ii_estimate} holds at all points in $\{z=-1,\ y=0\}\cap\partial(\mathbb{R}_{>0}\cdot\mathcal{H}_b)$. Now let $y\ne 0$. We see that then \eqref{eqn_weierstrass_ii_estimate} is true for all $x\geq 0$, independent of $b\in(-1,1)$. It thus remains to check the inequality \eqref{eqn_weierstrass_ii_estimate} for points in $\{z=-1,\ x<0\}\cap\partial(\mathbb{R}_{>0}\cdot\mathcal{H}_b)$. Note that the latter set might be empty, in fact one can show that it is empty if and only if $0<b<1$, but we will not need this information for our proof. Observe that for all $b_1,b_2\in\mathbb{R}$ with $b_1<b_2$,
\begin{equation*}
\rho_{b_1}\left(\left(\begin{smallmatrix}
x\\ y
\end{smallmatrix}
\right)\right)
>\rho_{b_2}\left(\left(\begin{smallmatrix}
x\\ y
\end{smallmatrix}
\right)\right)
\end{equation*}
for all $\left(\begin{smallmatrix}
x\\ y
\end{smallmatrix}
\right)\in\mathbb{R}^2$. Hence, $\rho_{b_1}|_{\{\rho_{b_2}=0\}}>0$, which in particular implies that 
\begin{equation}
\rho_{-1}|_{\{\rho_{b}=0\}}>0\label{eqn_rho_b12_estimate}
\end{equation}
for all $b\in(-1,1)$. With the fact that $\left(\begin{smallmatrix}
\frac{1}{2}\\ 0\\ -1
\end{smallmatrix}\right)\in\{z=-1\}\cap\left(\mathbb{R}_{>0}\cdot\mathcal{H}_b\right)$, cf. Example \ref{example_PSRsurfaces_standard_form}, and $\rho_{-1}\left(\left(\begin{smallmatrix}
\frac{1}{2}\\ 0
\end{smallmatrix}\right)\right)=2>0$ it follows that $\{z=-1\}\cap\overline{\left(\mathbb{R}_{>0}\cdot\mathcal{H}_b\right)}$ is a subset of the connected component of $\left\{\rho_{-1}>0\right\}\times\{-1\}\subset\mathbb{R}^{3}$ that contains the point $\left(\begin{smallmatrix}
\frac{1}{2}\\ 0\\ -1
\end{smallmatrix}\right)$, see Figure \ref{fig_rho_b_contained}.
\begin{figure}[h]
\centering
\includegraphics[scale=0.25]{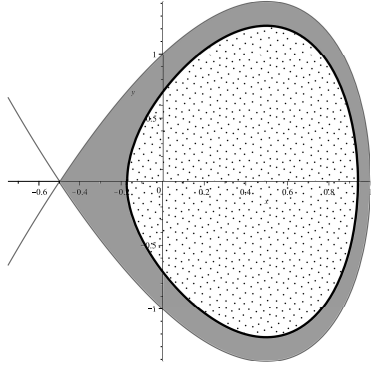}
\caption{The connected component of $\left\{\rho_{-1}>0\right\}$ that contains the point $\left(\frac{1}{2},0,-1\right)^T$ is (partly) marked in grey, its boundary is a part of the set $\left\{\rho_{-1}=0\right\}$ which is also shown in $\{-1<x<1\}\subset\mathbb{R}^2$. The dotted area in the plot is the connected component of $\left\{\rho_{-\frac{1}{2}}>0\right\}$ that contains the point $\left(\frac{1}{2},0,-1\right)^T$.}\label{fig_rho_b_contained}
\end{figure}
Further observe that $\rho_b\left(\left(\begin{smallmatrix}
-\frac{1}{2}\\ y
\end{smallmatrix}
\right)\right)=-y^2-1-b<0$ for all $y\in\mathbb{R}$ and $b\in(-1,1)$, and that $\mathcal{H}_b\subset\{z<0,\ 2x>z\}$ implies that $\{z=-1\}\cap\partial(\mathbb{R}_{>0}\cdot\mathcal{H}_b)$ is contained in $\left\{z=-1,\ x>-\frac{1}{2}\right\}$ for all $b\in(-1,1)$. In particular there exists no $b\in(-1,1)$, such that the $x$-coordinate of an element in $\{z=-1\}\cap\partial(\mathbb{R}_{>0}\cdot\mathcal{H}_b)$ has the value $-\frac{1}{2}$. Hence, \eqref{eqn_weierstrass_ii_estimate} and \eqref{eqn_rho_b12_estimate} imply that in order to prove that $\mathcal{H}_b$ fulfils Def. \ref{def_regular_bdr_GPSR} \hyperref[eqn_1.17ii]{(ii)} it suffices to show
\begin{equation}
\left\{16xy^2+\left(4\sqrt{3}x^2-\sqrt{3}\right)^2=0\right\}\cap\left\{\rho_{-1}=0,\ x\geq -\frac{1}{2}\right\}\cap\{x<0\}=\left\{\left(\begin{matrix}
x \\ y
\end{matrix}
\right)=\left(\begin{matrix}
-\frac{1}{2} \\ 0
\end{matrix}
\right)\right\},\label{eqn_weierstrass_-1limit}
\end{equation}
since $\left.\left(16xy^2+(4\sqrt{3}x^2-\sqrt{3})^2\right)\right|_{\{x>-\frac{1}{2},\ y=0\}}>0$, see also Figure \ref{fig_rho_b_section}.
\begin{figure}[h]
\centering
\includegraphics[scale=0.25]{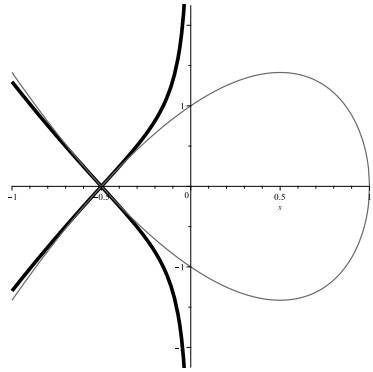}
\caption{The thick black curves represent the set $\left\{16xy^2+\left(4\sqrt{3}x^2-\sqrt{3}\right)^2=0\right\}\cap\{-1<x<0\}$, the thinner grey curve is the set $\left\{\rho_{-1}=0\right\}\cap\{-1<x<1\}$.}\label{fig_rho_b_section}
\end{figure}
We insert $\rho_{-1}=0$, which is equivalent to $y^2=-4x^3+3x+1$, into $16xy^2+\left(4\sqrt{3}x^2-\sqrt{3}\right)^2=0$ and obtain
\begin{equation*}
F(x):=-16x^4+24x^2+16x+3=0.
\end{equation*}
One can now use a computer algebra system like MAPLE and find that $F(x)=0$ and $-\frac{1}{2}\leq x<0$ if and only if $x=-\frac{1}{2}$. This proves \eqref{eqn_weierstrass_-1limit} and, hence, shows that each $\mathcal{H}_b$, $b\in(-1,1)$, fulfils Def. \ref{def_regular_bdr_GPSR} \hyperref[eqn_1.17ii]{(ii)}.

This finishes the proof of Theorem \ref{thm_irregularity_implications_CCPSR}.
\end{proof}
\end{Th}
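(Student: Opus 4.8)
The plan is to prove the non-trivial direction — a CCPSR manifold $\mathcal{H}$ that is not singular at infinity automatically satisfies condition \hyperref[eqn_1.17ii]{(ii)} of Definition \ref{def_regular_bdr_GPSR} — since the converse is immediate: \hyperref[eqn_1.17i]{(i)} is by definition the statement of not being singular at infinity. For $n=1$ the claim is trivial: $T(\partial U\setminus\{0\})$ is one-dimensional and spanned by the position vector $\xi$, which is tangent to the cone $\partial U$, so along $\partial U\setminus\{0\}$ one has $-\partial^2h_p(\xi_p,\xi_p)=-6h(p)=0$ (recall $h$ vanishes on $\partial U\setminus\{0\}$, cf.\ the discussion after Definition \ref{def_dom(H)} and the proof of Lemma \ref{lemma_singular_PSR_max_P_3}); hence $-\partial^2h$ restricted to $T(\partial U\setminus\{0\})$ is the zero form and \hyperref[eqn_1.17ii]{(ii)} holds.

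For $n\geq 3$ I would reduce to the case $n=2$. Assume $\mathcal{H}$ is in standard form, fix $\overline{z}\in\partial\mathrm{dom}(\mathcal{H})$ and $0\neq v\in T_{\overline{z}}\partial\mathrm{dom}(\mathcal{H})\subset\mathbb{R}^n$; by Remark \ref{rem_1.17_alternate_form} it suffices to show $-\partial^2\beta_{\overline{z}}(v,v)>0$. Restricting $h$ to the two-plane $E=\mathrm{span}\{\overline{z},v\}$, written in an orthonormal basis $w_1,w_2$, produces a two-dimensional CCPSR manifold $\mathcal{H}^E\subset\{h^E=1\}$ with $h^E=x^3-x(t_1^2+t_2^2)+P_3(t_1w_1+t_2w_2)$, and $\mathrm{dom}(\mathcal{H}^E)$ embeds into $\mathrm{dom}(\mathcal{H})$ via $(t_1,t_2)\mapsto t_1w_1+t_2w_2$. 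Since $\beta^E(t_1,t_2)=\beta(t_1w_1+t_2w_2)$, the number $-\partial^2\beta_{\overline{z}}(v,v)$ is computed inside this slice; because the conditions of Definition \ref{def_regular_bdr_GPSR} are coordinate-free, proving the theorem for all two-dimensional CCPSR manifolds settles all higher dimensions.

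For $n=2$ I would go through the classification of CCPSR surfaces, Theorem \ref{lowdimpsrclassTHM}, using the standard forms computed in Example \ref{example_PSRsurfaces_standard_form}. In cases a), b), c), e) the $P_3$-part of the standard form, cf.\ \eqref{eqn_CDL_a_equiv_h_form}, \eqref{eqn_CDL_b_equiv_h_form}, \eqref{eqn_CDL_c_equiv_h_form}, \eqref{eqn_CDL_e_equiv_h_form}, attains the value $\pm\frac{2}{3\sqrt{3}}$ on the unit circle, so by Lemma \ref{lemma_singular_PSR_max_P_3} these surfaces \emph{are} singular at infinity and the implication is vacuous. In case d) one has $P_3\equiv 0$, so $\mathrm{dom}(\mathcal{H})$ is the open unit disc, $\beta=1-\|y\|^2$, and $-\partial^2\beta\equiv 2\langle\cdot,\cdot\rangle>0$ along $\partial\mathrm{dom}(\mathcal{H})$, whence \hyperref[eqn_1.17ii_alternate]{(ii)} of Remark \ref{rem_1.17_alternate_form} holds. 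The genuinely delicate case is the Weierstra{\ss} family f), $\mathcal{H}_b=\{y^2z-4x^3+3xz^2+bz^3=1,\ z<0,\ 2x>z\}$, $b\in(-1,1)$, where the standard form depends on $b$ in an unwieldy way; here I would instead argue directly with the homogeneous cubic. Smoothness of the projective curve $\{h=0\}\subset\mathbb{R}\mathrm{P}^2$ (cf.\ \cite[Prop.\,3]{CDL}) gives \hyperref[eqn_1.17i]{(i)}, and for \hyperref[eqn_1.17ii]{(ii)} I would describe $\partial(\mathbb{R}_{>0}\cdot\mathcal{H}_b)$ through its slice $z=-1$: on it, $\ker dh_p\cap V$ (with $V$ the $(x,y)$-plane) is spanned by $(-\partial_yh_p,\partial_xh_p,0)=(2y,-12x^2+3,0)$, and positivity of $-\partial^2h_p$ on this line is equivalent to $16xy^2+(4\sqrt{3}x^2-\sqrt{3})^2>0$ along $\{h=0,\ z=-1\}\cap\partial(\mathbb{R}_{>0}\cdot\mathcal{H}_b)$.

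I expect case f) to be the main obstacle. My plan is to dispatch $y=0$ and $x\geq 0$ by inspection, and for $x<0$ to use the monotonicity $\rho_{b_1}>\rho_{b_2}$ for $b_1<b_2$ of the affine slices $\rho_b(x,y)=h(x,y,-1)$ to confine the relevant region to the $b=-1$ locus $\{\rho_{-1}=0\}$; substituting $y^2=-4x^3+3x+1$ into the quadratic then reduces the problem to showing that $F(x)=-16x^4+24x^2+16x+3$ has no root with $-\frac{1}{2}\leq x<0$ other than $x=-\frac{1}{2}$, which is a finite check (feasible with a computer algebra system) and harmless since $\mathcal{H}_b\subset\{2x>z\}$ precludes $x=-\frac12$ on the slice $z=-1$. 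Assembling the six cases proves the theorem in dimension two and hence, by the reduction above, in all dimensions $n\geq 1$.
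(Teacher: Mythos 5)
Your proposal is correct and follows essentially the same route as the paper: trivial converse, trivial $n=1$ case, reduction of $n\geq 3$ to $n=2$ via two-dimensional slices $E=\mathrm{span}\{\overline{z},v\}$, and a case-by-case check through the classification of Theorem \ref{lowdimpsrclassTHM}, with the Weierstra{\ss} family f) handled directly on the slice $z=-1$ via the monotonicity of $\rho_b$ and the reduction to the quartic $F(x)=-16x^4+24x^2+16x+3$. The only (welcome) addition is your explicit justification of the $n=1$ case via $-\partial^2h_p(p,p)=-6h(p)=0$ on $\partial U\setminus\{0\}$, which the paper dismisses as trivial.
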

Lemma \ref{lemma_singular_PSR_max_P_3} and Theorem \ref{thm_irregularity_implications_CCPSR} imply the following.
\begin{Cor}
\label{cor_regularity_boundary_generating_set_CCPSR}
A CCPSR manifold in standard form $\mathcal{H}\subset\{h=x^3-x\langle y,y\rangle +P_3(y)=1\}$
has regular boundary behaviour if and only if $\max\limits_{\|z\|=1}P_3(z)<\frac{2}{3\sqrt{3}}$.
\end{Cor}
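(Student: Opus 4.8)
The plan is to chain together the two cited results and reconcile the absolute value in Lemma \ref{lemma_singular_PSR_max_P_3} with the plain maximum in the statement. First I would recall that by Theorem \ref{thm_irregularity_implications_CCPSR} a CCPSR manifold of dimension $n\geq 1$ has regular boundary behaviour if and only if it is not singular at infinity in the sense of Definition \ref{def_singular_GPSR}, so it suffices to characterise the property of not being singular at infinity in terms of $P_3$. By Lemma \ref{lemma_singular_PSR_max_P_3}, a CCPSR manifold in standard form is singular at infinity if and only if $\max_{\|z\|=1}|P_3(z)|=\frac{2}{3\sqrt{3}}$, and by Lemma \ref{lemma_P_3(y)_bounds} we always have $\max_{\|z\|=1}|P_3(z)|\leq\frac{2}{3\sqrt{3}}$ for a CCPSR manifold. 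Combining these, $\mathcal{H}$ is not singular at infinity if and only if $\max_{\|z\|=1}|P_3(z)|<\frac{2}{3\sqrt{3}}$.

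It then remains to replace $\max_{\|z\|=1}|P_3(z)|$ by $\max_{\|z\|=1}P_3(z)$. Here the key observation is that $P_3\in\mathrm{Sym}^3(\mathbb{R}^n)^*$ is homogeneous of odd degree, hence $P_3(-z)=-P_3(z)$, and the unit sphere $\{\|z\|=1\}$ is invariant under $z\mapsto-z$. Therefore
\begin{equation*}
\max_{\|z\|=1}P_3(z)=\max_{\|z\|=1}P_3(-z)=\max_{\|z\|=1}\bigl(-P_3(z)\bigr)=-\min_{\|z\|=1}P_3(z),
\end{equation*}
so that
\begin{equation*}
\max_{\|z\|=1}|P_3(z)|=\max\Bigl\{\max_{\|z\|=1}P_3(z),\,-\min_{\|z\|=1}P_3(z)\Bigr\}=\max_{\|z\|=1}P_3(z).
\end{equation*}
Substituting this into the equivalence of the previous paragraph yields that $\mathcal{H}$ has regular boundary behaviour if and only if $\max_{\|z\|=1}P_3(z)<\frac{2}{3\sqrt{3}}$, which is the claim.

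I do not expect any genuine obstacle here: all the substantive content is already contained in Theorem \ref{thm_irregularity_implications_CCPSR} (the implication from condition \hyperref[eqn_1.17i]{(i)} to condition \hyperref[eqn_1.17ii]{(ii)} for CCPSR manifolds, proved by the case-by-case analysis of CCPSR surfaces) and in Lemmas \ref{lemma_P_3(y)_bounds} and \ref{lemma_singular_PSR_max_P_3}. The only point requiring a moment of care is the bookkeeping between "equality is excluded" and "strict inequality holds", which is exactly what the a priori bound $\max_{\|z\|=1}|P_3(z)|\leq\frac{2}{3\sqrt{3}}$ of Lemma \ref{lemma_P_3(y)_bounds} provides, together with the elementary symmetry argument for odd polynomials.
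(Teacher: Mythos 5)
Your proof is correct and follows exactly the route the paper intends: the paper's entire justification is the one-line remark that Lemma \ref{lemma_singular_PSR_max_P_3} and Theorem \ref{thm_irregularity_implications_CCPSR} imply the corollary, and you have simply spelled out that chain, including the a priori bound from Lemma \ref{lemma_P_3(y)_bounds} and the identity $\max_{\|z\|=1}|P_3(z)|=\max_{\|z\|=1}P_3(z)$ for the odd polynomial $P_3$ (which the paper itself records at the start of the proof of Theorem \ref{thm_convex_compact_PSR_generating_set}). No gaps.
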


Next, we will prove a property of CCPSR manifolds in standard form which already implies that the moduli set of CCPSR manifolds is generated by a path-connected subset of $\mathrm{Sym}^3(\mathbb{R}^n)^*$.
\begin{Prop}
\label{prop_PSR_pathconnectedness}
Let $\mathcal{H}\subset\{h=x^3-x\langle y,y\rangle +P_3(y)=1\}$ be a CCPSR manifold in standard form.
Then for all $s\in[0,1]$, the connected component $\mathcal{H}_s\subset\{h_s:=x^3-x\langle y,y\rangle +sP_3(y)=1\}$ that contains the point $\left(\begin{smallmatrix}
x\\
y
\end{smallmatrix}\right)=\left(\begin{smallmatrix}
1\\
0
\end{smallmatrix}
\right)$ is a CCPSR manifold.
\begin{proof}
For all $s\in[0,1]$,
\begin{equation*}
\max\limits_{\|z\|=1}|sP_3(z)|\leq \max\limits_{\|z\|=1}|P_3(z)|\leq\frac{2}{3\sqrt{3}}.
\end{equation*}
Hence Corollary \ref{cor_PSR_closedness_condition_P3} shows that for each corresponding $\mathcal{H}_s$, which is by definition closed as a subset of $\mathbb{R}^{n+1}$, the necessary condition for $\mathcal{H}_s$ to be a CCPSR manifold, namely that the set $\left(\mathbb{R}_{>0}\cdot\mathcal{H}_s\right)\cap \left.\left\{\left(\begin{smallmatrix}
1\\
z
\end{smallmatrix}
\right)\in\mathbb{R}^{n+1}\ \right|\ z\in\mathbb{R}^n\right\}\subset\mathbb{R}^{n+1}$ is precompact, is satisfied. For $s=1$, $\mathcal{H}_1$ and $\mathcal{H}$ coincide. For $s=0$, \eqref{hyp_det_condition} in Lemma \ref{lemma_general_PSR_P_3(z,.,.)_estimate} immediately shows that $\mathcal{H}_0$ is a CCPSR manifold. Now consider $s\in(0,1)$ and let $\left(\begin{smallmatrix}
1\\
z
\end{smallmatrix}\right)\in  \left(\mathbb{R}_{>0}\cdot\mathcal{H}_s\right)\cap \left.\left\{\left(\begin{smallmatrix}
1\\
z
\end{smallmatrix}
\right)\in\mathbb{R}^{n+1}\ \right|\ z\in\mathbb{R}^n\right\}$ be arbitrary. For $z=0$, \eqref{hyp_det_condition} in Lemma \ref{lemma_roots_estimates_PSR} is always true. For $z\ne 0$, we will differentiate between the cases $P_3(z)\geq 0$ and $P_3(z)<0$. In the first case, that is $P_3(z)\geq 0$, the estimate \eqref{eqn_positive_root_PSR_estimate} in Lemma \ref{lemma_P_3(y)_bounds} for $f_{sP_3\left(\frac{z}{\|z\|}\right)}(t)=h_s\left(\left(\begin{smallmatrix}1\\ t\frac{z}{\|z\|}\end{smallmatrix}\right)\right)$ (note: $B=P_3\left(\frac{z}{\|z\|}\right)$ and $A=sP_3\left(\frac{z}{\|z\|}\right)$) show that $z\in\mathrm{dom}(\mathcal{H})$ for all $s\in (0,1)$. Hence, using the hyperbolicity of $\mathcal{H}$ we estimate
\begin{equation}\label{eqn_pathconn_estimate_1}
3\langle dz,dz\rangle -9 sP_3(z,dz,dz)+\langle z,dz\rangle^2  >s\left(3\langle dz,dz\rangle -9 P_3(z,dz,dz)+\langle z,dz\rangle^2\right)>0.
\end{equation}
This shows that all points in $\left(\mathbb{R}_{>0}\cdot\mathcal{H}_s\right)\cap \left.\left\{\left(\begin{smallmatrix}
1\\
z
\end{smallmatrix}
\right)\in\mathbb{R}^{n+1}\ \right|\ z\in\mathbb{R}^n\right\}$ with $P_3(z)\geq 0$ satisfy \eqref{hyp_det_condition} in Lemma \ref{lemma_general_PSR_P_3(z,.,.)_estimate} for all $s\in(0,1)$.

Next, consider the case $P_3(z)<0$. This case is a bit more complicated, since the estimate \eqref{eqn_negative_root_PSR_estimate}\footnote{With corresponding values $B=-P_3\left(\frac{z}{\|z\|}\right)$ and $A=-sP_3\left(\frac{z}{\|z\|}\right)$.} in Lemma \ref{lemma_roots_estimates_PSR} for $f_{-sP_3\left(\frac{z}{\|z\|}\right)}(t)=h_s\left(\left(\begin{smallmatrix}1\\ -t\frac{z}{\|z\|}\end{smallmatrix}\right)\right)$ shows that for all $s\in(0,1)$ there exist points in $\left(\mathbb{R}_{>0}\cdot\mathcal{H}_s\right)\cap \left.\left\{\left(\begin{smallmatrix}
1\\
z
\end{smallmatrix}
\right)\in\mathbb{R}^{n+1}\ \right|\ z\in\mathbb{R}^n\right\}$ that are not contained in the set
\begin{equation*}
\left.\left\{\left(\begin{smallmatrix}
1 \\ z
\end{smallmatrix}\right)\in\mathbb{R}^{n+1}\ \right|\ z\in\mathrm{dom}(\mathcal{H})\right\}=\left(\mathbb{R}_{>0}\cdot\mathcal{H}\right)\cap \left.\left\{\left(\begin{smallmatrix}
1\\
z
\end{smallmatrix}
\right)\in\mathbb{R}^{n+1}\ \right|\ z\in\mathbb{R}^n\right\}
\end{equation*}
(see Figure \ref{fig_starshapeproof_plot} for an example).
\begin{figure}[h]%
\centering%
\includegraphics[scale=0.3]{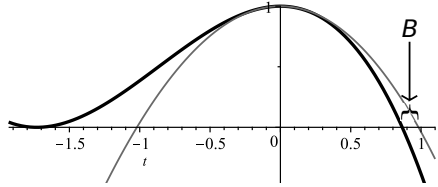}%
\caption{The thick black curve is a plot of $f_{-\frac{2}{3\sqrt{3}}}(t)$ corresponding to $s=1$, that is $\mathcal{H}_1$. The thin grey curve is a plot of $f_{-\frac{1}{10}\cdot\frac{2}{3\sqrt{3}}}(t)$ corresponding to $s=\frac{1}{10}\in(0,1)$, that is $\mathcal{H}_{-\frac{1}{10}}$. The set $B$ is to be understood as points in that are not contained in $\mathrm{dom}\left(\mathcal{H}_1\right)$, but are contained in (a fitting projection of the set) $\left(\mathbb{R}_{>0}\cdot\mathcal{H}_s\right)\cap \left.\left\{\left(1,z
\right)^T\in\mathbb{R}^{n+1}\ \right|\ z\in\mathbb{R}^n\right\}$.}\label{fig_starshapeproof_plot}%
\end{figure}
Consider for $\overline{z}\in \mathbb{R}_{>0}\cdot z$, such that $\overline{z}\in\partial\mathrm{dom}(\mathcal{H})$, and for $t\in[0,1]$ the function $r:[0,1]\to [1,\infty)$ implicitly defined by
\begin{equation*}
F(r,t)=1-r^2\langle\overline{z},\overline{z}\rangle+(1-t)r^3P_3(\overline{z})=0.
\end{equation*}
The condition that $r(t)$ is a positive function and the uniqueness of the positive real root of $r\mapsto F(r,t)$ for all $t\in[0,1]$ show that $F(r,t)=0$ indeed defines $r(t)$ in a unique way, and furthermore that $r(t)$ is smooth for $t\in(0,1)$ and continuous for $t\in[0,1]$ (note: $P_3(\overline{z})<0$).
The map
\begin{align*}
\Psi:\{1\}\times((\mathbb{R}_{>0}\cdot z)\cap\mathrm{dom}(\mathcal{H})) \to&\  (\{1\}\times\mathbb{R}_{>0}\cdot z) \cap\left(\mathbb{R}_{>0}\cdot\mathcal{H}_s\right),\\
\left(\begin{smallmatrix}
1\\ \widetilde{z}
\end{smallmatrix}
\right)\mapsto& \left(\begin{smallmatrix}
1 \\  r(1-s)\widetilde{z}
\end{smallmatrix}
\right),
\end{align*}
is thus a diffeomorphism for all $s\in[0,1]$. Furthermore, $\Psi$ can be continuously extended to be defined on $\{1\}\times\left((\mathbb{R}_{>0}\cdot z)\cap\overline{\mathrm{dom}(\mathcal{H})}\right)$ for all $s\in[0,1]$, with the property that
\begin{equation*}
\Psi(\overline{z})\in\partial \left(\left(\mathbb{R}_{>0}\cdot\mathcal{H}_s\right)\cap \left.\left\{\left(\begin{smallmatrix}
1\\
z
\end{smallmatrix}
\right)\in\mathbb{R}^{n+1}\ \right|\ z\in\mathbb{R}^n\right\}\right).
\end{equation*}
We obtain for the first $t$-derivative of $r=r(t)$ for all $t\in (0,1)$
\begin{align}
&-2r(t)\dot{r}(t)\langle\overline{z},\overline{z}\rangle-r^3(t)P_3(\overline{z})+3(1-t)r^2(t)\dot{r}(t)P_3(\overline{z})=0\notag\\
\Leftrightarrow\quad& \dot{r}(t)=\frac{-r^2(t)P_3(\overline{z})}{2\langle\overline{z},\overline{z}\rangle-3(1-t)r(t)P_3(\overline{z})}.\label{eqn_r_dot_pathconn}
\end{align}
Since $P_3(\overline{z})<0$ and $t\in(0,1)$, this in particular shows that $\dot{r}(t)>0$ for all $t\in(0,1)$. If the considered point $\left(\begin{smallmatrix}
1\\z
\end{smallmatrix}
\right)\in\left(\mathbb{R}_{>0}\cdot\mathcal{H}_s\right)\cap \left.\left\{\left(\begin{smallmatrix}
1\\
z
\end{smallmatrix}
\right)\in\mathbb{R}^{n+1}\ \right|\ z\in\mathbb{R}^n\right\}$ is also an element of $\{1\}\times\mathrm{dom}(\mathcal{H})$, then we can use estimate \eqref{eqn_pathconn_estimate_1} for all $s\in (0,1)$. For $\left(\begin{smallmatrix}
1\\z
\end{smallmatrix}
\right)\in\left(\mathbb{R}_{>0}\cdot\mathcal{H}_s\right)\cap \left.\left\{\left(\begin{smallmatrix}
1\\
z
\end{smallmatrix}
\right)\in\mathbb{R}^{n+1}\ \right|\ z\in\mathbb{R}^n\right\}\setminus(\{1\}\times\mathrm{dom}(\mathcal{H}))$, we want to show that \eqref{hyp_det_condition} holds for all $s\in(0,1)$, i.e. that $3\langle dz,dz\rangle -9 sP_3(z,dz,dz)+\langle z,dz\rangle^2  >0$ for all $s\in(0,1)$. Substituting $s=1-t$ and $z=\mathrm{pr}_{\mathbb{R}^{n}}\Psi(\widetilde{z})=r(t)\widetilde{z}$ with $\widetilde{z}\in\mathrm{dom}(\mathcal{H})$, the latter is equivalent to
\begin{equation}\label{eqn_pathconn_estimate_2}
3\langle dz,dz\rangle -9 (1-t)r(t)P_3(\widetilde{z},dz,dz)+r^2(t)\langle \widetilde{z},dz\rangle^2>0.
\end{equation}
Since $\mathcal{H}$ is a CCPSR manifold by assumption, we already know that
\begin{equation*}
3\langle dz,dz\rangle -9 P_3(\widetilde{z},dz,dz)+\langle \widetilde{z},dz\rangle^2>0
\end{equation*}
for all $\widetilde{z}\in\mathrm{dom}(\mathcal{H})$, cf. Lemma \ref{lemma_general_PSR_P_3(z,.,.)_estimate}. Since $r^2(t)>1$ for all $t\in(0,1)$, proving
\begin{equation*}
3\langle dz,dz\rangle -9 (1-t)r(t)P_3(\widetilde{z},dz,dz)+\langle \widetilde{z},dz\rangle^2>0
\end{equation*}
for all $t\in(0,1)$ and all $\widetilde{z}\in\mathrm{dom}(\mathcal{H})$ will in particular prove \eqref{eqn_pathconn_estimate_2}. Since the estimate $3\langle dz,dz\rangle+\langle \widetilde{z},dz\rangle^2>0$ holds true for all $\widetilde{z}\in\mathbb{R}^{n}$, it suffices to show that $(1-t)r(t)\leq 1$ for all $t\in(0,1)$. The function $(1-t)r(t)$ is non-negative and continuous on $[0,1]$, and positive and smooth on $(0,1)$. For $t=0$, $(1-t)r(t)|_{t=0}=r(0)=1$. Using \eqref{eqn_r_dot_pathconn} yields
\begin{align*}
\frac{\partial}{\partial t}((1-t)r(t)) =-r(t)+(1-t)\dot{r}(t)=\frac{-2r(t)\langle\overline{z},\overline{z}\rangle+2(1-t)r^2(t)P_3(\overline{z})}{2\langle \overline{z},\overline{z}\rangle-3(1-t)r(t)P_3(\overline{z})}<0
\end{align*}
for all $t\in(0,1)$.
Hence, $0\leq(1-t)r(t)\leq 1$ for all $t\in[0,1]$. This thus proves \eqref{eqn_pathconn_estimate_2}.

Summarising, we have shown that $3\langle dz,dz\rangle -9 sP_3(z,dz,dz)+\langle z,dz\rangle^2>0$ for all $\left(\begin{smallmatrix}
1\\ z
\end{smallmatrix}
\right)\in \left(\mathbb{R}_{>0}\cdot\mathcal{H}_s\right)\cap \left.\left\{\left(\begin{smallmatrix}
1\\
z
\end{smallmatrix}
\right)\in\mathbb{R}^{n+1}\ \right|\ z\in\mathbb{R}^n\right\}$ for all $s\in[0,1]$, and thus have proven using Lemma \ref{lemma_general_PSR_P_3(z,.,.)_estimate} that $\mathcal{H}_s$ is a CCPSR manifold for all $s\in[0,1]$. 
\end{proof}
\end{Prop}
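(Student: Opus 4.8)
Looking at the statement, I need to prove that for a CCPSR manifold $\mathcal{H}$ in standard form, the family $\mathcal{H}_s$ defined by $h_s = x^3 - x\langle y,y\rangle + sP_3(y)$ consists of CCPSR manifolds for all $s \in [0,1]$.

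\textbf{Overall strategy.} The plan is to invoke the characterisation of CCPSR manifolds in standard form provided by Lemma~\ref{lemma_general_PSR_P_3(z,.,.)_estimate}: the connected component $\mathcal{H}_s$ of $\{h_s = 1\}$ through $\left(\begin{smallmatrix}1\\0\end{smallmatrix}\right)$ is a CCPSR manifold precisely when the quadratic form $3\langle dz,dz\rangle - 9sP_3(z,dz,dz) + \langle z,dz\rangle^2$ is positive definite at every $z$ with $\left(\begin{smallmatrix}1\\z\end{smallmatrix}\right) \in (\mathbb{R}_{>0}\cdot\mathcal{H}_s) \cap \{x=1\}$. So the proof reduces to verifying this pointwise inequality for each $s \in [0,1]$. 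As a preliminary observation, since $\max_{\|z\|=1}|sP_3(z)| \le \max_{\|z\|=1}|P_3(z)| \le \frac{2}{3\sqrt3}$ by Lemma~\ref{lemma_P_3(y)_bounds}, Corollary~\ref{cor_PSR_closedness_condition_P3} already guarantees the necessary precompactness of the relevant slice. The endpoint cases $s=1$ (where $\mathcal{H}_1 = \mathcal{H}$) and $s=0$ (where the inequality is trivially $3\langle dz,dz\rangle + \langle z,dz\rangle^2 > 0$) are immediate.

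\textbf{The main case $s \in (0,1)$.} Here I would split according to the sign of $P_3(z)$ at the relevant point $\left(\begin{smallmatrix}1\\z\end{smallmatrix}\right)$. When $P_3(z) \ge 0$, the root-comparison estimate~\eqref{eqn_positive_root_PSR_estimate} applied with $B = P_3(z/\|z\|)$ and $A = sP_3(z/\|z\|)$ shows that $z$ already lies in $\mathrm{dom}(\mathcal{H})$, so one simply estimates $3\langle dz,dz\rangle - 9sP_3(z,dz,dz) + \langle z,dz\rangle^2 > s(3\langle dz,dz\rangle - 9P_3(z,dz,dz) + \langle z,dz\rangle^2) > 0$ using that $\mathcal{H}$ itself satisfies \eqref{hyp_det_condition}. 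The harder subcase is $P_3(z) < 0$: now \eqref{eqn_negative_root_PSR_estimate} tells us the slice $(\mathbb{R}_{>0}\cdot\mathcal{H}_s)\cap\{x=1\}$ can genuinely stick out beyond $\mathrm{dom}(\mathcal{H})$. I would parametrise each ray $\mathbb{R}_{>0}\cdot z$ with $\overline{z} \in \partial\mathrm{dom}(\mathcal{H})$ on it, and for $t = 1-s$ define $r(t) \ge 1$ implicitly by $1 - r^2\langle\overline{z},\overline{z}\rangle + (1-t)r^3 P_3(\overline{z}) = 0$; uniqueness of the positive root makes this well-defined and smooth on $(0,1)$. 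Differentiating implicitly gives $\dot r(t) = \frac{-r^2(t)P_3(\overline{z})}{2\langle\overline{z},\overline{z}\rangle - 3(1-t)r(t)P_3(\overline{z})} > 0$ (using $P_3(\overline{z})<0$), and then one computes $\frac{\partial}{\partial t}((1-t)r(t)) = \frac{-2r(t)\langle\overline{z},\overline{z}\rangle + 2(1-t)r^2(t)P_3(\overline{z})}{2\langle\overline{z},\overline{z}\rangle - 3(1-t)r(t)P_3(\overline{z})} < 0$, so since $(1-t)r(t)|_{t=0}=1$ we get $0 \le (1-t)r(t) \le 1$ for all $t\in[0,1]$.

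\textbf{Closing the argument and the main obstacle.} With the bound $(1-t)r(t) \le 1$ in hand, writing $z = r(t)\widetilde{z}$ with $\widetilde z \in \mathrm{dom}(\mathcal{H})$ reduces the target inequality~\eqref{eqn_pathconn_estimate_2} to showing $3\langle dz,dz\rangle - 9(1-t)r(t)P_3(\widetilde z,dz,dz) + \langle\widetilde z,dz\rangle^2 > 0$ (using $r^2(t)>1$ to drop the coefficient on the last term), and since $\mathcal{H}$ is CCPSR we already know $3\langle dz,dz\rangle - 9P_3(\widetilde z,dz,dz) + \langle\widetilde z,dz\rangle^2 > 0$ and $3\langle dz,dz\rangle + \langle\widetilde z,dz\rangle^2 > 0$; convex-combining these two with weights $(1-t)r(t)$ and $1-(1-t)r(t)$ finishes it. I expect the main obstacle to be the bookkeeping in the $P_3(z)<0$ subcase: correctly setting up the reparametrisation $\Psi$ so that it matches boundary points of the $\mathcal{H}_s$-slice with boundary points of $\mathrm{dom}(\mathcal{H})$, and making sure the monotonicity signs (which all hinge on $P_3(\overline z)<0$ and $t\in(0,1)$) are tracked consistently; the underlying estimates are elementary once the geometry of the rescaling is set up correctly.
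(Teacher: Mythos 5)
Your proposal is correct and follows essentially the same route as the paper's own proof: the same reduction to the pointwise positivity criterion of Lemma \ref{lemma_general_PSR_P_3(z,.,.)_estimate}, the same case split on the sign of $P_3(z)$ with the root-comparison estimates, the same implicitly defined rescaling $r(t)$ with the monotonicity computation yielding $(1-t)r(t)\leq 1$, and the same closing step (your explicit convex combination with weights $(1-t)r(t)$ and $1-(1-t)r(t)$ is exactly what the paper leaves implicit in "it suffices to show $(1-t)r(t)\leq 1$"). The only detail you gloss over is the trivial base point $z=0$, which the paper dispatches separately.
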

An immediate consequence of Proposition \ref{prop_PSR_pathconnectedness} is that we can always find a continuous curve connecting two CCPSR manifolds of the same positive dimension that consists pointwise of CCPSR manifolds. However, we will prove a stronger result in the following Theorem \ref{thm_convex_compact_PSR_generating_set}, from which it will in particular follow how such an aforementioned curve can look like (see Corollary \ref{cor_curve_between_CCPSR}).

\begin{Th}
\label{thm_convex_compact_PSR_generating_set}
Let $n\in\mathbb{N}$ and $h:\mathbb{R}^{n+1}\to\mathbb{R}$ be a cubic homogeneous polynomial of the form \eqref{standard_form_h}, that is $h=x^3-x\langle y,y\rangle+P_3(y)$. Then the connected component $\mathcal{H}$ of the level set $\{h=1\}\subset\mathbb{R}^{n+1}$ that contains the point $\left(\begin{smallmatrix}
x \\ y
\end{smallmatrix}
\right)=\left(\begin{smallmatrix}
1\\0
\end{smallmatrix}
\right)$ is a CCPSR manifold if and only if $\max\limits_{\|z\|=1}P_3(z)\leq\frac{2}{3\sqrt{3}}$.
\begin{proof}
Firstly note that $P_3:\mathbb{R}^n\to\mathbb{R}$ being a cubic homogeneous polynomial and, hence, an odd function implies that $\max\limits_{\|z\|=1}P_3(z)=\max\limits_{\|z\|=1}|P_3(z)|$. Assume that $\mathcal{H}$ is a CCPSR manifold. Then Lemma \ref{lemma_P_3(y)_bounds} shows that $\max\limits_{\|z\|=1}P_3(z)\leq\frac{2}{3\sqrt{3}}$.

Now assume that $\max\limits_{\|z\|=1}P_3(z)\leq \frac{2}{3\sqrt{3}}$. Lemma \ref{lemma_P_3(y)_bounds} only shows that this is a necessary requirement for $\mathcal{H}$ to be a CCPSR manifold.
In order to show that it is also a sufficient condition, we have to show that
\begin{equation}\label{eqn_alternative_hyp_condition_z_v}
3\langle v,v\rangle -9 P_3(z,v,v)+\langle z,v\rangle^2 >0
\end{equation}
for all $\left(\begin{smallmatrix}
1\\ z
\end{smallmatrix}\right)\in \left(\mathbb{R}_{>0}\cdot\mathcal{H}\right)\cap \left.\left\{\left(\begin{smallmatrix}
1\\
z
\end{smallmatrix}
\right)\in\mathbb{R}^{n+1}\ \right|\ z\in\mathbb{R}^n\right\}$ and all $v\in\mathbb{R}^n\setminus\{0\}$, cf. Lemma \ref{lemma_general_PSR_P_3(z,.,.)_estimate}. For $z=0$, \eqref{eqn_alternative_hyp_condition_z_v} is always true.
For $z\ne 0$ and $v=rz$, $r\ne 0$, \eqref{eqn_alternative_hyp_condition_z_v} reads $r^2(3\langle z,z\rangle -9P_3(z)+\langle z,z\rangle^2)>0$. Suppose that there exists a point $\left(\begin{smallmatrix}
1\\
z
\end{smallmatrix}
\right)\in \left(\mathbb{R}_{>0}\cdot\mathcal{H}\right)\cap \left.\left\{\left(\begin{smallmatrix}
1\\
z
\end{smallmatrix}
\right)\in\mathbb{R}^{n+1}\ \right|\ z\in\mathbb{R}^n\right\}\setminus\left\{\left(\begin{smallmatrix}
1\\
0
\end{smallmatrix}
\right)\right\}$, such that $3\langle z,z\rangle -9P_3(z)+\langle z,z\rangle^2=0$. Observe that $\max\limits_{\|z\|=1}P_3(z)\leq\frac{2}{3\sqrt{3}}$ implies
\begin{align*}
3\langle z,z\rangle -9P_3(z)+\langle z,z\rangle^2&=\|z\|^2\left(3-9\|z\|P_3\left(\frac{z}{\|z\|}\right)+\|z\|^2\right)\\
&\geq \|z\|^2\left(3-2\sqrt{3}\|z\|+\|z\|^2\right).
\end{align*}
The map $\|z\|\mapsto 3-2\sqrt{3}\|z\|+\|z\|^2$ is non-negative and its only zero is at $\|z\|=\sqrt{3}$. Hence, for $\|z\|> 0$, $\|z\|^2(3-2\sqrt{3}\|z\|+\|z\|^2)=0$ if and only if $\|z\|=\sqrt{3}$. Since by assumption $\left(\begin{smallmatrix}
1\\
z
\end{smallmatrix}
\right)\in \mathbb{R}_{>0}\cdot\mathcal{H}\cap \left.\left\{\left(\begin{smallmatrix}
1\\
z
\end{smallmatrix}
\right)\in\mathbb{R}^{n+1}\ \right|\ z\in\mathbb{R}^n\right\}\setminus\left\{\left(\begin{smallmatrix}
1\\
0
\end{smallmatrix}
\right)\right\}$, we have $h\left(\left(\begin{smallmatrix}
1\\ z
\end{smallmatrix}
\right)\right)=1-\langle z,z\rangle+P_3(z)>0$. But with $\|z\|=\sqrt{3}$,
\begin{equation*}
h\left(\left(\begin{smallmatrix}
1\\ z
\end{smallmatrix}
\right)\right)=1-\langle z,z\rangle+P_3(z)=-2+3\sqrt{3}P_3\left(\frac{z}{\|z\|}\right)\leq -2+3\sqrt{3}\cdot\frac{2}{3\sqrt{3}}=0,
\end{equation*}
which is a contradiction. We conclude that whenever $z\ne 0$ and $v\ne 0$ are linearly dependent, the estimate \eqref{eqn_alternative_hyp_condition_z_v} holds. Note that this already finishes the proof for $n=1$.

Now assume that $\dim(\mathcal{H})\geq 2$ and let $\left(\begin{smallmatrix}
1\\
z
\end{smallmatrix}
\right)\in \left(\mathbb{R}_{>0}\cdot\mathcal{H}\right)\cap \left.\left\{\left(\begin{smallmatrix}
1\\
z
\end{smallmatrix}
\right)\in\mathbb{R}^{n+1}\ \right|\ z\in\mathbb{R}^n\right\}\setminus\left\{\left(\begin{smallmatrix}
1\\
0
\end{smallmatrix}
\right)\right\}$ be arbitrary. Let $v\in\mathbb{R}^n\setminus\{0\}$, such that $z$ and $v$ are linearly independent. In order to show \eqref{eqn_alternative_hyp_condition_z_v}, choose an orthonormal basis $\{e_1,e_2\}$ of $\mathrm{span}\{z,v\}\subset\mathbb{R}^n$ with respect to $\langle \cdot,\cdot \rangle$ and consider the cubic homogeneous polynomial $\check{h}:\mathbb{R}^3\to\mathbb{R}$ given by
\begin{equation}\label{eqn_h_2dim_restriction}
\check{h}_{(z,v)}\left(\left(\begin{smallmatrix}
x\\ a\\ b
\end{smallmatrix}
\right)\right)=h\left(\left(\begin{smallmatrix}
x\\ ae_1 + be_2
\end{smallmatrix}
\right)\right)=x^3-x(a^2+b^2)+\underbrace{P_3(ae_1+be_2)}_{=:\check{P}_3\left(\left(\begin{smallmatrix}
a\\ b
\end{smallmatrix}
\right)\right) }.
\end{equation}
Let $\check{\mathcal{H}}$ be the connected component of the level set $\left\{\check{h}_{(z,v)}=1\right\}\subset\mathbb{R}^3$ that contains the point $\left(\begin{smallmatrix}
1 \\ 0\\ 0
\end{smallmatrix}
\right)\in\mathbb{R}^3$ and observe that
\begin{align}
&\left(\mathbb{R}_{>0}\cdot\check{\mathcal{H}}\right)\cap \left.\left\{\left(\begin{smallmatrix}
1\\
a\\
b
\end{smallmatrix}
\right)\in\mathbb{R}^{3}\ \right|\ \left(\begin{smallmatrix}
a\\
b
\end{smallmatrix}
\right)\in\mathbb{R}^2\right\}\label{eqn_Hhatcapset}\\
&\cong\left.\left\{\left(\begin{smallmatrix}
1\\
w
\end{smallmatrix}
\right)\ \right|\ w\in \mathrm{span}\{z,v\}\right\}\cap\left(\left(\mathbb{R}_{>0}\cdot\mathcal{H}\right)\cap \left.\left\{\left(\begin{smallmatrix}
1\\
z
\end{smallmatrix}
\right)\in\mathbb{R}^{n+1}\ \right|\ z\in\mathbb{R}^n\right\}\right)\notag
\end{align}
via the linear map $\left(\begin{smallmatrix}
x\\ a\\ b
\end{smallmatrix}
\right)\mapsto\left(\begin{smallmatrix}
x\\ ae_1 + be_2
\end{smallmatrix}
\right)$. Hence, if we prove that the inequality \eqref{hyp_det_condition} in Lemma \ref{lemma_general_PSR_P_3(z,.,.)_estimate} holds for all cubic homogeneous polynomials $\check{h}_{(z,v)}$ of the form \eqref{eqn_h_2dim_restriction} with corresponding set \eqref{eqn_Hhatcapset}, we will also have proven \eqref{hyp_det_condition} in Lemma \ref{lemma_general_PSR_P_3(z,.,.)_estimate} for our considered $h$ with corresponding set $\left(\mathbb{R}_{>0}\cdot\mathcal{H}\right)\cap \left.\left\{\left(\begin{smallmatrix}
1\\
z
\end{smallmatrix}
\right)\in\mathbb{R}^{n+1}\ \right|\ z\in\mathbb{R}^n\right\}$ (recall that for $z$ and $v$ linearly dependent, \eqref{eqn_alternative_hyp_condition_z_v} has already been shown to hold true). Furthermore note that
\begin{equation*}
0\leq\max\limits_{\left\|\left(\begin{smallmatrix}
a\\ b
\end{smallmatrix}
\right)\right\|=1}\check{P}_3\left(\left(\begin{smallmatrix}
a\\ b
\end{smallmatrix}
\right)\right)\leq\max\limits_{\|z\|=1}P_3(z)\leq\frac{2}{3\sqrt{3}}.
\end{equation*}
We thus see that it suffices to prove the statement of this theorem for all considered manifolds $\mathcal{H}$ with the additional restriction $\mathrm{dim}(\mathcal{H})=2$ in order to conclude that it holds true for all $\mathcal{H}$ with $\mathrm{dim}(\mathcal{H})\geq 2$. In the following, we will use the notation used in \cite{CDL} and consider $\mathbb{R}^3$ with linear coordinates $\left(\begin{smallmatrix}
x\\ y\\ z
\end{smallmatrix}
\right)$,
\begin{equation*}
h:\mathbb{R}^3\to\mathbb{R},\quad h=x^3-x(y^2+z^2)+P_3\left(\left(\begin{smallmatrix}
y\\ z
\end{smallmatrix}
\right)\right),
\end{equation*}
such that
\begin{equation*}
\max\limits_{\left\|\left(\begin{smallmatrix}
y\\ z
\end{smallmatrix}
\right)\right\|=1}P_3\left(\left(\begin{smallmatrix}
y\\ z
\end{smallmatrix}
\right)\right)\leq\frac{2}{3\sqrt{3}}.
\end{equation*}
As before, we consider the centro-affine surface $\mathcal{H}$ which is the connected component of the level set $\{h=1\}\subset\mathbb{R}^3$ that contains the point $\left(\begin{smallmatrix}
x\\ y\\ z
\end{smallmatrix}
\right)=\left(\begin{smallmatrix}
1\\ 0\\ 0
\end{smallmatrix}
\right)$, and we want to show that $\mathcal{H}$ is a CCPSR surface (which is equivalent to the condition \eqref{hyp_det_condition} in Lemma \ref{lemma_general_PSR_P_3(z,.,.)_estimate}). For $P_3\equiv 0$, the condition \eqref{hyp_det_condition} in Lemma \ref{lemma_general_PSR_P_3(z,.,.)_estimate} is immediately seen to be true. For $P_3\not\equiv 0$, Proposition \ref{prop_PSR_pathconnectedness} implies that it suffices to prove that $\mathcal{H}$ is a CCPSR surface if $\max\limits_{\left\|\left(\begin{smallmatrix}
y\\ z
\end{smallmatrix}
\right)\right\|=1}P_3\left(\left(\begin{smallmatrix}
y\\ z
\end{smallmatrix}
\right)\right)=\frac{2}{3\sqrt{3}}$, since for all non-vanishing cubic homogeneous polynomials $P_3:\mathbb{R}^2\to\mathbb{R}$ with $\max\limits_{\left\|\left(\begin{smallmatrix}
y\\ z
\end{smallmatrix}
\right)\right\|=1}P_3\left(\left(\begin{smallmatrix}
y\\ z
\end{smallmatrix}
\right)\right)<\frac{2}{3\sqrt{3}}$ we can always choose a positive real number $r>0$, such that $\max\limits_{\left\|\left(\begin{smallmatrix}
y\\ z
\end{smallmatrix}
\right)\right\|=1}rP_3\left(\left(\begin{smallmatrix}
y\\ z
\end{smallmatrix}
\right)\right)=\frac{2}{3\sqrt{3}}$. Consequently assume that $\max\limits_{\left\|\left(\begin{smallmatrix}
y\\ z
\end{smallmatrix}
\right)\right\|=1}P_3\left(\left(\begin{smallmatrix}
y\\ z
\end{smallmatrix}
\right)\right)=\frac{2}{3\sqrt{3}}$. We can, after a possible orthogonal transformation of the $\left(\begin{smallmatrix}
y\\ z
\end{smallmatrix}
\right)$-coordinates (which in particular does not change the form \eqref{standard_form_h} of $h$),
assume that $P_3|_{\left\{\left\|\left(\begin{smallmatrix}
y\\ z
\end{smallmatrix}
\right)\right\|=1\right\}}$ attains its maximum at $\left(\begin{smallmatrix}
y\\ z
\end{smallmatrix}
\right)=\left(\begin{smallmatrix}
1\\ 0
\end{smallmatrix}
\right)$, so that $P_3$ is of the form
\begin{equation*}
P_3\left(\left(\begin{smallmatrix}
y\\ z
\end{smallmatrix}
\right)\right)=\frac{2}{3\sqrt{3}}y^3+kyz^2+\ell z^3.
\end{equation*}
We immediately see that $\ell\in\mathbb{R}$ needs to fulfil $|\ell|\leq\frac{2}{3\sqrt{3}}$. Furthermore, we can without loss of generality assume that $\ell\geq 0$, which can be achieved via $z\mapsto -z$ if necessary.

Now we will show that for all $\ell\in\left[0,\frac{2}{3\sqrt{3}}\right]$, $\max\limits_{\left\|\left(\begin{smallmatrix}
y\\ z
\end{smallmatrix}
\right)\right\|=1}P_3\left(\left(\begin{smallmatrix}
y\\ z
\end{smallmatrix}
\right)\right)=\frac{2}{3\sqrt{3}}$ implies
\begin{equation}\label{eqn_k_not_domain}
k\in\left[-\frac{2}{\sqrt{3}},\frac{1}{\sqrt{3}}\right].
\end{equation}
It will become clear how to use this information in the step thereafter.

First assume $\ell=0$, so that $P_3\left(\left(\begin{smallmatrix}
y\\ z
\end{smallmatrix}
\right)\right)=\frac{2}{3\sqrt{3}}y^3+kyz^2$. We want to determine the positive extremal values and corresponding critical points of $P_3$ when restricted to the set $\left\{\left\|\left(\begin{smallmatrix}
y\\ z
\end{smallmatrix}
\right)\right\|=1\right\}$ aside from $\frac{2}{3\sqrt{3}}$, respectively $\left(\begin{smallmatrix}
y\\ z
\end{smallmatrix}
\right)=\left(\begin{smallmatrix}
1\\ 0
\end{smallmatrix}
\right)$. Suppose that there exists $k>\frac{1}{\sqrt{3}}$ or $k<-\frac{2}{\sqrt{3}}$, such that $\max\limits_{\left\|\left(\begin{smallmatrix}
y\\ z
\end{smallmatrix}
\right)\right\|=1}P_3\left(\left(\begin{smallmatrix}
y\\ z
\end{smallmatrix}
\right)\right)=\frac{2}{3\sqrt{3}}$. In order to find the extremal values of $P_3$ on $\left\{\left\|\left(\begin{smallmatrix}
y\\ z
\end{smallmatrix}
\right)\right\|=1\right\}$ we need to solve $dP_3|_{\left(\begin{smallmatrix}
y\\ z
\end{smallmatrix}
\right)}=r\left\langle\left(\begin{smallmatrix}
y\\ z
\end{smallmatrix}
\right),
\left(\begin{smallmatrix}
dy\\ dz
\end{smallmatrix}
\right)
\right\rangle$, $r\in\mathbb{R}$, that is
\begin{equation}\label{eqn_extremal_points_P3_2dim_ell_0}
\left(\begin{matrix}
\frac{2}{\sqrt{3}}y^2+kz^2 \\
2kyz
\end{matrix}
\right) =\left(\begin{matrix}
ry\\ rz
\end{matrix}
\right),\quad y^2+z^2=1.
\end{equation}
We already know that $\left(\begin{smallmatrix}
y\\ z
\end{smallmatrix}
\right) =\left(\begin{smallmatrix}
1\\ 0
\end{smallmatrix}
\right) $ is an extremal point with $P_3>0$, so we assume now that $z\ne 0$. Then by \eqref{eqn_extremal_points_P3_2dim_ell_0} $r=2ky$, which implies
\begin{equation}\label{eqn_extremal_points_P3_2dim_ell_0_z_ne0}
z^2=\frac{2\sqrt{3}k-2}{\sqrt{3}k}y^2.
\end{equation}
Note that
\begin{equation*}
\frac{2\sqrt{3}k-2}{\sqrt{3}k}>0\quad\forall k\in\mathbb{R}\setminus\left[-\frac{2}{\sqrt{3}},\frac{1}{\sqrt{3}}\right],
\end{equation*}
so \eqref{eqn_extremal_points_P3_2dim_ell_0_z_ne0} will always have non-trivial solutions. For $k>\frac{1}{\sqrt{3}}$ or $k<-\frac{2}{\sqrt{3}}$ consider the two points
\begin{equation*}
\eta_\pm=\sqrt{\frac{\sqrt{3}k}{3\sqrt{3}k-2}}\left(\begin{matrix}
1\\
\pm\sqrt{\frac{2\sqrt{3}k-2}{\sqrt{3}k}}
\end{matrix}
\right)\in\mathbb{R}^2.
\end{equation*}
One quickly checks that $\|\eta_\pm\|=1$
and that $\eta_\pm$ both solve equation \eqref{eqn_extremal_points_P3_2dim_ell_0_z_ne0}. We obtain
\begin{equation*}
P_3(\eta_\pm)=\frac{2k}{3}\sqrt{\frac{\sqrt{3}k}{3\sqrt{3}k-2}}=:\phi(k)
\end{equation*}
and
\begin{equation*}
\partial_k \phi(k)=\frac{2}{3}\sqrt{\frac{\sqrt{3}k}{3\sqrt{3}k-2}}\left(1-\frac{1}{3\sqrt{3}k-2}\right).
\end{equation*}
Furthermore,
\begin{align}
\lim\limits_{k\to \frac{1}{\sqrt{3}},\ k>  \frac{1}{\sqrt{3}}}\phi(k)&=\phi\left(\frac{1}{\sqrt{3}}\right)=\frac{2}{3\sqrt{3}},\notag\\ 
\lim\limits_{k\to -\frac{2}{\sqrt{3}},\ k< - \frac{2}{\sqrt{3}}}\phi(k)&=\phi\left(-\frac{2}{\sqrt{3}}\right)=-\frac{2}{3\sqrt{3}},\label{eqn_P_3_eta_pm_-_lim}
\end{align}
and we see that
\begin{equation*}
\partial_k\phi(k)>0\quad \forall k\in\mathbb{R}\setminus\left[-\frac{2}{\sqrt{3}},\frac{1}{\sqrt{3}}\right].
\end{equation*}
This shows that for $\ell=0$ there exists no $k\in\mathbb{R}\setminus\left[-\frac{2}{\sqrt{3}},\frac{1}{\sqrt{3}}\right]$, such that $\max\limits_{\left\|\left(\begin{smallmatrix}
y\\ z
\end{smallmatrix}
\right)\right\|=1}P_3\left(\left(\begin{smallmatrix}
y\\ z
\end{smallmatrix}
\right)\right)=\frac{2}{3\sqrt{3}}$. 

It remains to consider the case $\ell\in\left(0,\frac{2}{3\sqrt{3}}\right]$. For $P_3\left(\left(\begin{smallmatrix}
y\\ z
\end{smallmatrix}
\right)\right)=\frac{2}{3\sqrt{3}}y^3+kyz^2+\ell z^3$ we get
\begin{equation*}
P_3(\eta_\pm)=\phi(k)\pm\ell\left(\frac{2\sqrt{3}k-2}{3\sqrt{3}k-2}\right)^{\frac{3}{2}}
\end{equation*}
(note that $\|\eta_\pm\|=1$ independently of the chosen $\ell$). Since
\begin{equation*}
\frac{2\sqrt{3}k-2}{3\sqrt{3}k-2}>0\quad \forall k\in\mathbb{R}\setminus\left[-\frac{2}{\sqrt{3}},\frac{1}{\sqrt{3}}\right],
\end{equation*}
it follows that 
\begin{align}
\partial_\ell P_3(\eta_+)&=\left(\frac{2\sqrt{3}k-2}{3\sqrt{3}k-2}\right)^{\frac{3}{2}}>0,\label{eqn_d_ell_P_3_eta+}\\
\partial_\ell P_3(\eta_-)&=-\left(\frac{2\sqrt{3}k-2}{3\sqrt{3}k-2}\right)^{\frac{3}{2}}<0\label{eqn_d_ell_P_3_eta-}
\end{align} 
for all $k\in\mathbb{R}\setminus\left[-\frac{2}{\sqrt{3}},\frac{1}{\sqrt{3}}\right]$. With
\begin{equation*}
P_3(\eta_+)|_{\ell=0}>\frac{2}{3\sqrt{3}}\quad \forall k>\frac{1}{\sqrt{3}}
\end{equation*}
and
\begin{equation*}
P_3(\eta_-)|_{\ell=0}<-\frac{2}{3\sqrt{3}}\quad \forall k<-\frac{2}{\sqrt{3}}
\end{equation*}
we can now conclude that for all $\ell>0$, i.e. in particular for all $\ell\in\left(0,\frac{2}{3\sqrt{3}}\right]$, we have $P_3(\eta_+)>\frac{2}{3\sqrt{3}}$ and $P_3(\eta_-)<-\frac{2}{3\sqrt{3}}$.

Summarising, we have shown that for all $k\in\mathbb{R}\setminus\left[-\frac{2}{\sqrt{3}},\frac{1}{\sqrt{3}}\right]$ and all $\ell\in\left[0,\frac{2}{3\sqrt{3}}\right]$
\begin{equation*}
\max\limits_{\left\|\left(\begin{smallmatrix}
y\\ z
\end{smallmatrix}
\right)\right\|=1}P_3\left(\left(\begin{smallmatrix}
y\\ z
\end{smallmatrix}
\right)\right)>\frac{2}{3\sqrt{3}},
\end{equation*}
which in particular implies that for all $\ell\in\left[0,\frac{2}{3\sqrt{3}}\right]$, $\max\limits_{\left\|\left(\begin{smallmatrix}
y\\ z
\end{smallmatrix}
\right)\right\|=1}P_3\left(\left(\begin{smallmatrix}
y\\ z
\end{smallmatrix}
\right)\right)=\frac{2}{3\sqrt{3}}$ implies $k\in\left[-\frac{2}{\sqrt{3}},\frac{1}{\sqrt{3}}\right]$ as claimed in \eqref{eqn_k_not_domain}.

Next, we will deal with the cases with
\begin{equation*}
k\in\left\{-\frac{2}{\sqrt{3}},\frac{1}{\sqrt{3}}\right\}.
\end{equation*}
Equations \eqref{eqn_P_3_eta_pm_-_lim} and \eqref{eqn_d_ell_P_3_eta-} (for the lower limit $k=-\frac{2}{\sqrt{3}}$) imply that for $k=-\frac{2}{\sqrt{3}}$ and all $\ell\in\left(0,\frac{2}{3\sqrt{3}}\right]$
\begin{equation*}
\max\limits_{\left\|\left(\begin{smallmatrix}
y\\ z
\end{smallmatrix}
\right)\right\|=1}P_3\left(\left(\begin{smallmatrix}
y\\ z
\end{smallmatrix}
\right)\right)>\frac{2}{3\sqrt{3}}.
\end{equation*}
Hence, for $k=-\frac{2}{\sqrt{3}}$, $\ell=0$ is the only allowed value for $\ell\in\left[0,\frac{2}{3\sqrt{3}}\right]$ such that $\max\limits_{\left\|\left(\begin{smallmatrix}
y\\ z
\end{smallmatrix}
\right)\right\|=1}P_3\left(\left(\begin{smallmatrix}
y\\ z
\end{smallmatrix}
\right)\right)=\frac{2}{3\sqrt{3}}$.
The corresponding connected component $\mathcal{H}$ of $\{h=1\}$ is equivalent to the CCPSR surface a) in Theorem \ref{lowdimpsrclassTHM}, cf. equation \eqref{eqn_CDL_a_equiv_h_form} after a sign-flip in $y$ and, hence, in particular a CCPSR manifold. The case $k=\frac{1}{\sqrt{3}}$ is a little more complicated since then $\eta_\pm=\left(\begin{smallmatrix}
1\\ 0
\end{smallmatrix}
\right)$, for which in particular $\partial_\ell P_3(\eta_\pm)$ vanishes, see \eqref{eqn_d_ell_P_3_eta+} and \eqref{eqn_d_ell_P_3_eta-}. Instead of $\eta_\pm$ consider for $\ell\geq 0$ the point
\begin{equation*}
p=\frac{1}{\sqrt{27l^2+1}}\left(\begin{smallmatrix}
1\\ 3\sqrt{3}l
\end{smallmatrix}
\right),\quad \|p\|=1.
\end{equation*}
One can check that $dP_3|_{p}\in\mathbb{R}\langle p,\cdot\rangle$ and
\begin{align*}
P_3(p)=\frac{27\ell^2+2}{3\sqrt{3}\sqrt{27\ell^2+1}},\quad
\partial_\ell (P_3(p))=\left(\frac{3\sqrt{3}\ell}{\sqrt{27\ell^2+1}}\right)^3.
\end{align*}
For $\ell=0$ we have $P_3(p)=\frac{2}{3\sqrt{3}}$ and since $\partial_\ell (P_3(p))>0$ for all $\ell >0$ we deduce that 
\begin{equation*}
\forall\ell>0:\quad P_3(p)>\frac{2}{3\sqrt{3}}.
\end{equation*}
This proves that for $k=\frac{1}{\sqrt{3}}$, $\ell=0$ is the only value allowed for $\ell\in\left[0,\frac{2}{3\sqrt{3}}\right]$. For $k=\frac{1}{\sqrt{3}}$, $\ell=0$, the connected component $\mathcal{H}$ of $\{h=1\}$ is equivalent to the CCPSR surface b) in Theorem \ref{lowdimpsrclassTHM} which follows from equation \eqref{eqn_CDL_b_equiv_h_form}. Hence, $\mathcal{H}$ is a CCPSR manifold.

Now, as stated before, we will use \eqref{eqn_k_not_domain}. Considering \eqref{hyp_det_condition} in Lemma \ref{lemma_general_PSR_P_3(z,.,.)_estimate} for points in the set
\begin{equation*}
 (\mathbb{R}_{>0}\cdot\mathcal{H})\cap \left.\left\{\left(\begin{smallmatrix}
1\\
y\\
0
\end{smallmatrix}
\right)\in\mathbb{R}^{3}\ \right|\ y\in\mathbb{R}\right\}=\left.\left\{\left(\begin{smallmatrix}
1\\
y\\
0
\end{smallmatrix}
\right)\in\mathbb{R}^{3}\ \right|\ y\in\left(-\frac{\sqrt{3}}{2},\sqrt{3}\right)\right\}
\end{equation*}
yields
\begin{align*}
&\left.\left(3(dy^2+dz^2)-9P_3\left(\left(\begin{smallmatrix}
y\\ z
\end{smallmatrix}
\right),\left(\begin{smallmatrix}
dy\\ dz
\end{smallmatrix}
\right),\left(\begin{smallmatrix}
dy\\ dz
\end{smallmatrix}
\right)\right)+(ydy+zdz)^2\right)\right|_{\left(\begin{smallmatrix}
y\\ z
\end{smallmatrix}
\right)=\left(\begin{smallmatrix}
y\\ 0
\end{smallmatrix}
\right)}\\
	&=\left(y-\sqrt{3}\right)^2dy^2+3(1-ky)dz^2.
\end{align*}
With \eqref{eqn_k_not_domain}, that is $k\in\left[-\frac{2}{\sqrt{3}},\frac{1}{\sqrt{3}}\right]$, and $y\in\left(-\frac{\sqrt{3}}{2},\sqrt{3}\right)$
we deduce
\begin{equation*}
\left(y-\sqrt{3}\right)^2dy^2+3(1-ky)dz^2>0.
\end{equation*}
This means that the line segment $\left.\left\{\left(\begin{smallmatrix}
1\\
y\\
0
\end{smallmatrix}
\right)\in\mathbb{R}^{3}\ \right|\ y\in\left(-\frac{\sqrt{3}}{2},\sqrt{3}\right)\right\}\subset\mathbb{R}^3$ consists only of hyperbolic points of $h$, independently of the choice of $k\in\left[-\frac{2}{\sqrt{3}},\frac{1}{\sqrt{3}}\right]$, and the same statement is of course also true if we project it
to $\mathcal{H}$ via point-wise multiplication with $\frac{1}{\sqrt[3]{h\left(\left(1,y,0\right)^T\right)}}$. Since being a hyperbolic point of $h$ is an open condition in $\mathbb{R}^3$, we are in the setting of Proposition \ref{prop_std_form_h} and can transform $h$ with linear transformations of the form \eqref{p_moving_A_matrix} along that set\footnote{Note that this subset of $\mathcal{H}$ is connected and contains the point $(1,0,0)^T$. Furthermore $\partial_xh=3x^2-y$, which is positive at all points $(1,y,0)^T$, $y\in\left(-\frac{\sqrt{3}}{2},\sqrt{3}\right)$. Hence, we can in fact transform $h$ along these points via transformations of the form \eqref{p_moving_A_matrix}.}, that is along
\begin{equation*}
\left.\left\{\left(\begin{smallmatrix}
\frac{1}{\sqrt[3]{h\left(\left(\begin{smallmatrix}1\\ y\\ 0\end{smallmatrix}\right)\right)}}\\
\frac{y}{\sqrt[3]{h\left(\left(\begin{smallmatrix}1\\ y\\ 0\end{smallmatrix}\right)\right)}}\\
0
\end{smallmatrix}
\right)\in\mathbb{R}^{3}\ \right|\ y\in\left(-\frac{\sqrt{3}}{2},\sqrt{3}\right)\right\}\subset\mathcal{H}.
\end{equation*}
In order not to confuse coordinates with parametrisation of said subset of $\mathcal{H}$, we replace $y$ in the above set with the parameter $T\in\left(-\frac{\sqrt{3}}{2},\sqrt{3}\right)$. We start with $E=\mathbbm{1}$ in \eqref{p_moving_A_matrix} and assign for $T\in\left(-\frac{\sqrt{3}}{2},\sqrt{3}\right)$
\begin{equation}\label{eqn_A(T)}
A(T)=\left(
\begin{matrix}
\frac{1}{\sqrt[3]{1-T^2+\frac{2}{3\sqrt{3}}T^3}} & \frac{2T}{\sqrt{3}T+3} & 0 \\
\frac{T}{\sqrt[3]{1-T^2+\frac{2}{3\sqrt{3}}T^3}} & 1 & 0\\
0 & 0 & 1
\end{matrix}
\right)\in\mathrm{GL}(3).
\end{equation}
We obtain
\begin{align}
h\left(A(T)\cdot\left(\begin{smallmatrix}
x\\ y\\ z
\end{smallmatrix}
\right)\right)&=x^3-x\left(\frac{3\left(1-T^2+\frac{2}{3\sqrt{3}}T^3\right)^{\frac{2}{3}}}{\left(T+\sqrt{3}\right)^2}y^2+\frac{1-kT}{\sqrt[3]{1-T^2+\frac{2}{3\sqrt{3}}T^3}}z^2\right)\notag \\
	&\quad+\frac{2\left(1-T^2+\frac{2}{3\sqrt{3}}T^3\right)}{\left(T+\sqrt{3}\right)^3}y^3+\left(k-\frac{2T}{\sqrt{3}T+3}\right)yz^2+\ell z^3.\label{eqn_h_2dim_E_id_trafo}
\end{align}
Note that $1-T^2+\frac{2}{3\sqrt{3}}T^3>0$ and $1-kT>0$ for all $T\in\left(-\frac{\sqrt{3}}{2},\sqrt{3}\right)$ and all $k\in\left[-\frac{2}{\sqrt{3}},\frac{1}{\sqrt{3}}\right]$, which is in accordance with
the positivity of the bilinear form in equation \eqref{p_moving_pos_def_bilform_alternative}.
We have already shown that for $k\in\left\{-\frac{2}{\sqrt{3}},\frac{1}{\sqrt{3}}\right\}$, $\max\limits_{\left\|\left(\begin{smallmatrix}
y\\ z
\end{smallmatrix}
\right)\right\|=1}P_3\left(\left(\begin{smallmatrix}
y\\ z
\end{smallmatrix}
\right)\right)=\frac{2}{3\sqrt{3}}$ implies $\ell=0$ and that the corresponding surfaces $\mathcal{H}$ are indeed CCPSR manifolds. We will from here on assume that $k\in\left(-\frac{2}{\sqrt{3}},\frac{1}{\sqrt{3}}\right)$. Before bringing $h$ in \eqref{eqn_h_2dim_E_id_trafo} to the standard form \eqref{standard_form_h} we will check that we can always solve $k-\frac{2T}{\sqrt{3}T+3}=0$ (the left hand side of which can be viewed as the ``transformed $k$'', up to scale) for $k\in\left(-\frac{2}{\sqrt{3}},\frac{1}{\sqrt{3}}\right)$. We obtain
\begin{align*}
k-\frac{2T}{\sqrt{3}T+3}=0\quad
\Leftrightarrow\quad T=\frac{3k}{2-\sqrt{3}k}=:T(k).
\end{align*}
We have to check that for all $k\in\left(-\frac{2}{\sqrt{3}},\frac{1}{\sqrt{3}}\right)$, $T(k)\in\left(-\frac{\sqrt{3}}{2},\sqrt{3}\right)$. For the limit points $k\in\left\{-\frac{2}{\sqrt{3}},\frac{1}{\sqrt{3}}\right\}$ we have 
\begin{equation*}
T\left(-\frac{2}{\sqrt{3}}\right)=-\frac{\sqrt{3}}{2},\quad T\left(\frac{1}{\sqrt{3}}\right)=\sqrt{3},
\end{equation*}
and
\begin{equation*}
\partial_k T(k)=\frac{6}{(2-\sqrt{3}k)^2}>0
\end{equation*}
for all $k\in \left(-\frac{2}{\sqrt{3}},\frac{1}{\sqrt{3}}\right)$. Hence, 
\begin{equation*}
\forall k\in\left(-\frac{2}{\sqrt{3}},\frac{1}{\sqrt{3}}\right):\quad T(k)\in\left(-\frac{\sqrt{3}}{2},\sqrt{3}\right)
\end{equation*}
as required. Considering \eqref{eqn_h_2dim_E_id_trafo}, we rescale $y$ and $z$ with
\begin{equation}\label{eqn_E(T)}
E(T)=\left(\begin{matrix}
\frac{T+\sqrt{3}}{\sqrt{3}\sqrt[3]{1-T^2+\frac{2}{3\sqrt{3}}T^3}} & \\
 & \frac{\sqrt[6]{1-T^2+\frac{2}{3\sqrt{3}}T^3}}{\sqrt{1-kT}}
\end{matrix}
\right)
\end{equation}
and set $T=T(k)$ to obtain that $h$ is equivalent to
\begin{equation}\label{eqn_h_2dim_fulltrafo}
h\left(A(T)\cdot\left(
\begin{smallmatrix}
1 & \\
 & E(T)
\end{smallmatrix}
\right)\cdot\left(\begin{smallmatrix}
x\\ y\\ z
\end{smallmatrix}
\right)\right)=x^3-x(y^2+z^2)+\frac{2}{3\sqrt{3}}y^3+\ell \frac{\sqrt{1-T(k)^2+\frac{2}{3\sqrt{3}}T(k)^3}}{\left(1-kT(k)\right)^{\frac{3}{2}}}z^3.
\end{equation}
The next question one has to ask is if $k\in\left(-\frac{2}{\sqrt{3}},\frac{1}{\sqrt{3}}\right)$ and $\max\limits_{\left\|\left(\begin{smallmatrix}
y\\ z
\end{smallmatrix}
\right)\right\|=1}P_3\left(\left(\begin{smallmatrix}
y\\ z
\end{smallmatrix}
\right)\right)=\frac{2}{3\sqrt{3}}$ (for the $P_3$-term in $h$, i.e. $P_3\left(\left(\begin{smallmatrix}
y\\ z
\end{smallmatrix}
\right)\right)=\frac{2}{3\sqrt{3}}y^3+kyz^2+\ell z^3$) imply
\begin{equation}\label{eqn_transformed_z3_prefactor_condition}
\ell \frac{\sqrt{1-T(k)^2+\frac{2}{3\sqrt{3}}T(k)^3}}{\left(1-kT(k)\right)^{\frac{3}{2}}}\leq \frac{2}{3\sqrt{3}}
\end{equation}
which is a necessary requirement for
\begin{equation*}
\max\limits_{\left\|\left(\begin{smallmatrix}
y\\ z
\end{smallmatrix}
\right)\right\|=1}\left(\frac{2}{3\sqrt{3}}y^3+\ell \frac{\sqrt{1-T(k)^2+\frac{2}{3\sqrt{3}}T(k)^3}}{\left(1-kT(k)\right)^{\frac{3}{2}}}z^3\right)=\frac{2}{3\sqrt{3}}
\end{equation*}
and thus also a necessary requirement that the transformed cubic in \eqref{eqn_h_2dim_fulltrafo} needs to fulfil so that the corresponding connected component of its level set $\left\{ h\left(A(T)\cdot\left(
\begin{smallmatrix}
1 & \\
 & E(T)
\end{smallmatrix}
\right)\cdot\left(\begin{smallmatrix}
x\\ y\\ z
\end{smallmatrix}
\right)\right)=1\right\}$ which contains the point $\left(\begin{smallmatrix}
x\\ y\\ z
\end{smallmatrix}
\right)=\left(\begin{smallmatrix}
1\\ 0\\ 0
\end{smallmatrix}
\right)$ can be a CCPSR manifold, cf. Corollary \ref{cor_PSR_closedness_condition_P3}. Instead of attempting to calculate the supremum of $\ell \frac{\sqrt{1-T(k)^2+\frac{2}{3\sqrt{3}}T(k)^3}}{\left(1-kT(k)\right)^{\frac{3}{2}}}$ with conditions $k\in\left(-\frac{2}{\sqrt{3}},\frac{1}{\sqrt{3}}\right)$ and $\max\limits_{\left\|\left(\begin{smallmatrix}
y\\ z
\end{smallmatrix}
\right)\right\|=1}P_3\left(\left(\begin{smallmatrix}
y\\ z
\end{smallmatrix}
\right)\right)=\frac{2}{3\sqrt{3}}$ directly, we will choose another way to prove that \eqref{eqn_transformed_z3_prefactor_condition} does, in fact, hold true.

For $k=0$, $h$ is of the form $h=x^3-x(y^2+z^2)+\frac{2}{3\sqrt{3}}y^3+\ell z^3$. Consider for $T\in\left(-\frac{\sqrt{3}}{2},\sqrt{3}\right)$ arbitrary, $A(T)$ and $E(T)$ as in \eqref{eqn_A(T)} and \eqref{eqn_E(T)}, respectively,
\begin{equation}\label{eqn_k0_T_moving}
h\left(A(T)\cdot\left(
\begin{smallmatrix}
1 & \\
 & E(T)
\end{smallmatrix}
\right)\cdot\left(\begin{smallmatrix}
x\\ y\\ z
\end{smallmatrix}
\right)\right)=x^3-x(y^2+z^2)+\frac{2}{3\sqrt{3}}y^3-\frac{2T}{3}yz^2+\ell z^3\sqrt{1-T^2+\frac{2}{3\sqrt{3}}T^3}.
\end{equation}
For the following calculations, we define
\begin{equation*}
P_{(3,\ell,T)}\left(\left(\begin{smallmatrix}
y\\ z
\end{smallmatrix}
\right)\right):=\frac{2}{3\sqrt{3}}y^3-\frac{2T}{3}yz^2+\ell z^3\sqrt{1-T^2+\frac{2}{3\sqrt{3}}T^3}.
\end{equation*}
We will show that
\begin{equation}\label{eqn_k0_ell_toobig_estimate}
\forall \ell>\frac{2}{3\sqrt{3}}\ \forall T\in\left(-\frac{\sqrt{3}}{2},\sqrt{3}\right):\quad
\max\limits_{\left\|\left(\begin{smallmatrix}
y\\ z
\end{smallmatrix}
\right)\right\|=1}P_{(3,\ell,T)}\left(\begin{smallmatrix}
y\\ z
\end{smallmatrix}
\right)>\frac{2}{3\sqrt{3}}
\end{equation}
holds true. To do so we will for $T\in\left(-\frac{\sqrt{3}}{2},\sqrt{3}\right)$ and $\ell=\frac{2}{3\sqrt{3}}$ study a critical point of 
\begin{equation*}
P_{\left(3,\frac{2}{3\sqrt{3}},T\right)}\left(\left(\begin{smallmatrix}
y\\ z
\end{smallmatrix}
\right)\right)=\frac{2}{3\sqrt{3}}y^3-\frac{2T}{3}yz^2+\frac{2}{3\sqrt{3}}z^3\sqrt{1-T^2+\frac{2}{3\sqrt{3}}T^3}
\end{equation*}
on the set $\left\{\left\|\left(\begin{smallmatrix}
y\\ z
\end{smallmatrix}
\right)\right\|=1\right\}$, namely the point
\begin{equation*}
\left(\begin{matrix}
y\\ z
\end{matrix}
\right)=\frac{1}{T+\sqrt{3}}\left(\begin{matrix}
-T\\
 \sqrt{2\sqrt{3}T+3} 
\end{matrix}
\right)=:\zeta.
\end{equation*}
Note that $\zeta$ is well-defined for all $T\in\left(-\frac{\sqrt{3}}{2},\sqrt{3}\right)$, and it is indeed a critical point of $P_{\left(3,\frac{2}{3\sqrt{3}},T\right)}\left(\left(\begin{smallmatrix}
y\\ z
\end{smallmatrix}
\right)\right)$. Using the factorisation $1-T^2+\frac{2}{3\sqrt{3}}T^3=\frac{2}{3\sqrt{3}}\left(T-\sqrt{3}\right)^2\left(T+\frac{\sqrt{3}}{2}\right)$ and $T-\sqrt{3}<0$ for all $T\in\left(-\frac{\sqrt{3}}{2},\sqrt{3}\right)$, we find
\begin{align*}
\left.dP_{\left(3,\frac{2}{3\sqrt{3}},T\right)}\right|_\zeta&=\left.\left(\left(\frac{2}{\sqrt{3}}y^2-\frac{2T}{3}z^2\right)dy+\left(-\frac{4T}{3}yz+\frac{2}{\sqrt{3}}z^2\sqrt{1-T^2+\frac{2}{3\sqrt{3}}T^3}\right)dz\right)\right|_\zeta\\
	&=\frac{2}{\sqrt{3}}\left\langle\zeta,\left(\begin{smallmatrix}
	dy\\ dz
	\end{smallmatrix}
	\right)\right\rangle.
\end{align*}
The corresponding critical value is given by $P_{\left(3,\frac{2}{3\sqrt{3}},T\right)}(\zeta)=\frac{2}{3\sqrt{3}}$,
independent of $T\in\left(-\frac{\sqrt{3}}{2},\sqrt{3}\right)$. Note that $dz(\zeta)>0$ for all $T\in\left(-\frac{\sqrt{3}}{2},\sqrt{3}\right)$ and consider the derivative
\begin{equation*}
\partial_\ell \left(P_{(3,\ell,T)}\left(\begin{smallmatrix}
y\\ z
\end{smallmatrix}
\right)\right)=z^3\sqrt{1-T^2+\frac{2}{3\sqrt{3}}}.
\end{equation*}
Hence, $\partial_\ell \left(P_{(3,\ell,T)}\left(\begin{smallmatrix}
y\\ z
\end{smallmatrix}
\right)\right)>0$ for all $T\in\left(-\frac{\sqrt{3}}{2},\sqrt{3}\right)$ and all $z>0$, in particular for $z=dz(\zeta)$. We conclude that \eqref{eqn_k0_ell_toobig_estimate} holds true.

We can now use \eqref{eqn_k0_ell_toobig_estimate} to show that \eqref{eqn_transformed_z3_prefactor_condition} holds true for all $k\in\left(-\frac{2}{\sqrt{3}},\frac{1}{\sqrt{3}}\right)$. For $\ell=0$ equation \eqref{eqn_transformed_z3_prefactor_condition} is automatically true independently of the chosen $k\in\left(-\frac{2}{\sqrt{3}},\frac{1}{\sqrt{3}}\right)$. Suppose that there exist $k\in\left(-\frac{2}{\sqrt{3}},\frac{1}{\sqrt{3}}\right)$ and $\ell\in\left(0,\frac{2}{3\sqrt{3}}\right]$, with corresponding polynomial $h=x^3-x(y^2+z^2)+\frac{2}{3\sqrt{3}}y^3+kyz^2+\ell z^3$,
fulfilling
\begin{equation}\label{eqn_k_l_final_step_assumption_maxP3}
\max\limits_{\left\|\left(\begin{smallmatrix}
y\\ z
\end{smallmatrix}
\right)\right\|=1}P_3\left(\left(\begin{smallmatrix}
y\\ z
\end{smallmatrix}
\right)\right)=\max\limits_{\left\|\left(\begin{smallmatrix}
y\\ z
\end{smallmatrix}
\right)\right\|=1}\left(\frac{2}{3\sqrt{3}}y^3+kyz^2+\ell z^3\right)=\frac{2}{3\sqrt{3}},
\end{equation}
such that for $T=T(k)=\frac{3k}{2-\sqrt{3}k}$
\begin{equation*}
\ell \frac{\sqrt{1-T(k)^2+\frac{2}{3\sqrt{3}}T(k)^3}}{\left(1-kT(k)\right)^{\frac{3}{2}}}>\frac{2}{3\sqrt{3}},
\end{equation*}
which precisely means that \eqref{eqn_transformed_z3_prefactor_condition} does not hold true for the chosen $k$, $\ell$. Combining \eqref{eqn_h_2dim_fulltrafo} and \eqref{eqn_k0_T_moving}, one obtains that $h$ is equivalent to
\begin{equation*}
\widetilde{h}=x^3-x(y^2+z^2)+\frac{2}{3\sqrt{3}}y^3-\frac{2\widetilde{T}}{3}yz^2+\ell z^3 \frac{\sqrt{1-T(k)^2+\frac{2}{3\sqrt{3}}T(k)^3}}{\left(1-kT(k)\right)^{\frac{3}{2}}}\sqrt{1-\widetilde{T}^2+\frac{2}{3\sqrt{3}}\widetilde{T}^3}
\end{equation*}
for all $\widetilde{T}\in\left(-\frac{\sqrt{3}}{2},\sqrt{3}\right)$. Furthermore, \eqref{eqn_k0_ell_toobig_estimate} implies that for all $\widetilde{T}\in\left(-\frac{\sqrt{3}}{2},\sqrt{3}\right)$:
\begin{equation}\label{eqn_Tk_tildeT_combined_estimate}
\max\limits_{\left\|\left(\begin{smallmatrix}
y\\ z
\end{smallmatrix}
\right)\right\|=1}\left(\frac{2}{3\sqrt{3}}y^3-\frac{2\widetilde{T}}{3}yz^2+\ell z^3\frac{\sqrt{1-T(k)^2+\frac{2}{3\sqrt{3}}T(k)^3}}{\left(1-kT(k)\right)^{\frac{3}{2}}}\sqrt{1-\widetilde{T}^2+\frac{2}{3\sqrt{3}}\widetilde{T}^3}\right)>\frac{2}{3\sqrt{3}}.
\end{equation}
The above estimate \eqref{eqn_Tk_tildeT_combined_estimate} must thus in particular hold for $\widetilde{T}=-\frac{3k}{2}=:\widetilde{T}(k)$ (note that $\widetilde{T}(k)\in\left(-\frac{\sqrt{3}}{2},\sqrt{3}\right)$ for all $k\in\left(-\frac{2}{\sqrt{3}},\frac{1}{\sqrt{3}}\right)$). But
\begin{align*}
& \frac{2}{3\sqrt{3}}y^3-\frac{2\widetilde{T}(k)}{3}yz^2+\ell z^3\frac{\sqrt{1-T(k)^2+\frac{2}{3\sqrt{3}}T(k)^3}}{\left(1-kT(k)\right)^{\frac{3}{2}}}\sqrt{1-\widetilde{T}(k)^2+\frac{2}{3\sqrt{3}}\widetilde{T}(k)^3}\\
&=\frac{2}{3\sqrt{3}}y^3+kyz^2+\ell z^3,
\end{align*}
which implies that \eqref{eqn_Tk_tildeT_combined_estimate} for $\widetilde{T}=\widetilde{T}(k)$ is a contradiction to the assumption \eqref{eqn_k_l_final_step_assumption_maxP3}. We conclude that \eqref{eqn_transformed_z3_prefactor_condition} holds true.

In order to complete the proof of this theorem it thus suffices to show that for all $\ell\in\left[0,\frac{2}{3\sqrt{3}}\right]$ and corresponding polynomial $h_\ell:=x^3-x(y^2+z^2)+\frac{2}{3\sqrt{3}}y^3+\ell z^3$, the connected component $\mathcal{H}_\ell\subset\{h_\ell=1\}$ that contains the point $\left(\begin{smallmatrix}
x\\ y\\ z
\end{smallmatrix}\right)=\left(\begin{smallmatrix}
1\\ 0\\ 0
\end{smallmatrix}\right)$ is a CCPSR manifold. Define the $P_3$-part of $h_\ell$ as $P_{(3,\ell)}\left(\left(\begin{smallmatrix}
y\\ z
\end{smallmatrix}
\right)\right):=\frac{2}{3\sqrt{3}}y^3+\ell z^3$. One can easily check that 
\begin{equation*}
\frac{2}{3\sqrt{3}}\leq \max\limits_{\left\|\left(\begin{smallmatrix}
y\\ z
\end{smallmatrix}
\right)\right\|=1}P_{(3,\ell)}\left(\left(\begin{smallmatrix}
y\\ z
\end{smallmatrix}
\right)\right)
	\leq \max\limits_{\left\|\left(\begin{smallmatrix}
y\\ z
\end{smallmatrix}
\right)\right\|=1}\frac{2}{3\sqrt{3}}\left(|y|^3+ |z|^3\right)
	=\max\left\{\frac{2}{3\sqrt{3}},\frac{\sqrt{2}}{3\sqrt{3}}\right\}
	=\frac{2}{3\sqrt{3}},
\end{equation*}
which shows that $\max\limits_{\left\|\left(\begin{smallmatrix}
y\\ z
\end{smallmatrix}
\right)\right\|=1}P_{(3,\ell)}\left(\left(\begin{smallmatrix}
y\\ z
\end{smallmatrix}
\right)\right)=\frac{2}{3\sqrt{3}}$ as required. We use the linear transformation
\begin{equation*}
B=\left(\begin{smallmatrix}
1 & 0 & 0 \\
\sqrt{3} & 1 & 0 \\
0 & 0 & 1
\end{smallmatrix}
\right)\in\mathrm{GL}(3)
\end{equation*}
and transform $h_\ell$ to
\begin{equation*}
\breve{h}_\ell\left(\left(\begin{smallmatrix}
x\\ y\\ z
\end{smallmatrix}
\right)\right):=h_\ell\left(B\cdot\left(\begin{smallmatrix}
x\\ y\\ z
\end{smallmatrix}
\right)\right)=x(y^2-z^2)+\frac{2}{3\sqrt{3}}y^3+\ell z^3.
\end{equation*}
In the new coordinates, $\breve{\mathcal{H}}_\ell:=B^{-1}\left(\mathcal{H}_\ell\right)\subset\left\{\breve{h}=1\right\}$ is given by
\begin{equation*}
\breve{\mathcal{H}}=\left.\left\{\left(\begin{matrix}
\frac{1-\frac{2}{3\sqrt{3}}y^3-\ell z^3}{y^2-z^2} \\ y \\ z
\end{matrix}
\right)\ \right|\ y<0,\ y^2>z^2\right\}.
\end{equation*}
This follows easily from $B\cdot\left(\begin{smallmatrix}
1\\ -\sqrt{3}\\ 0
\end{smallmatrix}
\right)=\left(\begin{smallmatrix}
1\\ 0 \\ 0
\end{smallmatrix}
\right)\in\mathcal{H}_\ell$ and that $x\to \infty$ for all sequences in $\overline{\{y< 0,\ y^2> z^2\}}=\{y\leq 0,\ y^2\geq z^2\}$ that converge to a point in $\partial\left\{y<0,\ y^2>z^2\right\}=\{y\geq 0,\ y^2=z^2\}$. The latter follows from $\frac{2}{3\sqrt{3}} +\ell\leq\frac{4}{3\sqrt{3}}<1$ for all $\ell\in\left[0,\frac{2}{3\sqrt{3}}\right]$. We know that $\left(\begin{smallmatrix}
1\\ -\sqrt{3}\\ 0
\end{smallmatrix}
\right)=B^{-1}\cdot\left(\begin{smallmatrix}
1\\ 0 \\ 0
\end{smallmatrix}
\right)\in\breve{\mathcal{H}}_\ell$ is always a hyperbolic point of $\breve{h}_\ell$ for all $\ell\in\left[0,\frac{2}{3\sqrt{3}}\right]$. Hence, in order to show that $\breve{\mathcal{H}}_\ell$ consists only of hyperbolic points of $\breve{h}_\ell$, it suffices to show that
\begin{align*}
&\det\left(-\frac{1}{2}\partial^2\breve{h}_\ell\right)=\det\left(\begin{matrix}
0 & -y & z\\
-y & -x-\frac{2}{\sqrt{3}}y & 0\\
z & 0 & x-3\ell z
\end{matrix}
\right)\\
	&=\frac{1}{y^2-z^2}\Bigg(\underbrace{\frac{2}{3\sqrt{3}}y^5-\ell z^5 + 3\ell y^4 z - \frac{2}{\sqrt{3}}yz^4 + \frac{4}{3\sqrt{3}}y^3z^2-2\ell y^2 z^3}_{=:R_\ell(y,z)} -y^2+z^2 \Bigg)<0
\end{align*}
for all $\left(\begin{smallmatrix}
y\\ z
\end{smallmatrix}
\right)\in\{y<0,\ y^2>z^2\}$. The prefactor $\frac{1}{y^2-z^2}$ is always positive if $y^2>z^2$, and the term $-y^2+z^2$ is always negative. Hence, it suffices to show $R_\ell(y,z)\leq 0$ for all $\left(\begin{smallmatrix}
y\\ z
\end{smallmatrix}
\right)\in\overline{\{y<0,\ y^2>z^2\}}$. We calculate
\begin{equation*}
R_\ell(y,\pm y)=y^5\left(\frac{2}{3\sqrt{3}}\mp \ell \pm 3\ell -\frac{2}{\sqrt{3}}+\frac{4}{3\sqrt{3}}\mp 2\ell\right)=0,
\end{equation*}
which implies that $R_\ell(y,z)$ vanishes on $\partial\{y<0,\ y^2>z^2\}$. Since the set $\{y<0,\ y^2>z^2\}$ is a cone and $R_\ell(y,z)$ is for all $\ell\in\left[0,\frac{2}{3\sqrt{3}}\right]$ a homogeneous polynomial of degree $5$, it only remains to check that
\begin{equation}\label{eqn_R_ell_condition}
\forall s\in(-1,1)\ \forall \ell\in\left[0,\frac{2}{3\sqrt{3}}\right]: \quad R_\ell(-1,s)\leq 0.
\end{equation}
We find that $s=1$ and $s=-1$ are roots of $R_\ell(-1,s)$ for all $\ell\in\left[0,\frac{2}{3\sqrt{3}}\right]$, which allows us to consider
\begin{equation*}
N_\ell(s):=\frac{R_\ell(-1,s)}{(s-1)(s+1)}=\frac{R_\ell(-1,s)}{s^2-1}=-\ell s^3+\frac{2}{\sqrt{3}}s^2-3\ell s + \frac{2}{3\sqrt{3}}.
\end{equation*}
The condition \eqref{eqn_R_ell_condition} is equivalent to
\begin{equation}\label{eqn_N_ell_condition}
\forall s\in(-1,1)\ \forall\ell\in\left[0,\frac{2}{3\sqrt{3}}\right] :\quad N_\ell(s)\geq 0.
\end{equation}
This motivates checking solutions of $N_\ell(s)=0$. We get
\begin{equation*}
N_\ell(s)=0\quad \Leftrightarrow\quad \ell=\frac{2}{3\sqrt{3}}\cdot \frac{3s^2+1}{s(s^2+3)}.
\end{equation*}
We will show that $M(s):=\frac{3s^2+1}{s(s^2+3)}\not\in[0,1]$ for all $s\in(-1,1)$, which implies that there exists no pair $(\ell,s)\in\left[0,\frac{2}{3\sqrt{3}}\right]\times (-1,1)$, such that $N_\ell(s)=0$. Since $N_0(1)=\frac{2}{\sqrt{3}}>0$, this will then shows that $N_\ell(s)>0$ for all $(\ell,s)\in\left[0,\frac{2}{3\sqrt{3}}\right]\times (-1,1)$ and in particular imply \eqref{eqn_N_ell_condition}. We see that
\begin{equation*}
\mathrm{sgn}(M(s))=\left\{
\begin{tabular}{rl}
$1,$ & $\forall s>0,$\\
$-1,$ & $\forall s<0,$
\end{tabular}
\right.
\end{equation*}
which implies that we can reduce our studies to $s\in[0,1)$. The first derivative of $M(s)$ is easily seen to fulfil
\begin{equation}\label{eqn_dM_s_estimate}
\partial_s M(s)=\frac{-3(s^4-2s^2+1)}{s^2(s^2+3)^2}<0
\end{equation}
for all $s\in(0,1)$. Furthermore
\begin{equation}\label{eqn_M_s_limits}
\lim\limits_{s\to 0,\; s>0} M(s)=\infty.
\end{equation}
The estimate \eqref{eqn_dM_s_estimate} and the limit \eqref{eqn_M_s_limits} imply
\begin{equation*}
\forall s\in(0,1):\quad M(s)>M(1)=1.
\end{equation*}
Hence, the equation $M(s)=1$ has no solutions in the half-open interval $[0,1)$. We conclude that \eqref{eqn_N_ell_condition} holds true.

Summarising, we have proven that for all $\ell \in\left[0,\frac{2}{3\sqrt{3}}\right]$, $\breve{\mathcal{H}}_\ell$ is a CCPSR manifold of dimension $2$, which implies the same statement for $\mathcal{H}_\ell$. This finishes the proof of Theorem~\ref{thm_convex_compact_PSR_generating_set}.
\end{proof}
\end{Th}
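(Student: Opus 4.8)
The necessity of $\max_{\|z\|=1}P_3(z)\le\frac{2}{3\sqrt3}$ is already contained in Lemma \ref{lemma_P_3(y)_bounds} (recall $P_3$ is odd, so $\max_{\|z\|=1}P_3(z)=\max_{\|z\|=1}|P_3(z)|$), so the plan concentrates on sufficiency. Assuming the bound, by Lemma \ref{lemma_general_PSR_P_3(z,.,.)_estimate} it is enough to verify $3\langle v,v\rangle-9P_3(z,v,v)+\langle z,v\rangle^2>0$ for every $\left(\begin{smallmatrix}1\\ z\end{smallmatrix}\right)\in(\mathbb{R}_{>0}\cdot\mathcal{H})\cap\{x=1\}$ and every $v\ne 0$. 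First I would dispose of the case $v\in\mathbb{R}z$: using $\|z\|\le\sqrt3$ from Corollary \ref{corollary_PSR_boundary_domain_estimates} and the hypothesis, the inequality reduces to $\|z\|^2\bigl(3-2\sqrt3\,\|z\|+\|z\|^2\bigr)>0$, which can only fail at $\|z\|=\sqrt3$, where however $h(1,z)\le 0$ --- a contradiction. This already settles $n=1$. For $n\ge 2$ with $z,v$ linearly independent, restricting $h$ to $\mathbb{R}\oplus\mathrm{span}\{z,v\}$ in an orthonormal basis produces a cubic again of the form \eqref{standard_form_h} whose $P_3$-part is still bounded by $\frac{2}{3\sqrt3}$ on the unit circle, and the required inequality transfers; hence it suffices to prove the theorem for surfaces.

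For $n=2$, Proposition \ref{prop_PSR_pathconnectedness} (scaling $P_3\mapsto sP_3$) lets me assume $\max_{\|z\|=1}P_3(z)=\frac{2}{3\sqrt3}$. After an orthogonal rotation putting the maximizer at $(1,0)$ and a possible $z\mapsto -z$, I may write $P_3(y,z)=\frac{2}{3\sqrt3}y^3+kyz^2+\ell z^3$ with $0\le\ell\le\frac{2}{3\sqrt3}$. A Lagrange-multiplier study of $P_3|_{S^1}$ --- exhibiting the critical points $\eta_\pm$, the auxiliary one-variable function $\phi(k)=\frac{2k}{3}\sqrt{\tfrac{\sqrt3 k}{3\sqrt3 k-2}}$, and the sign of $\partial_\ell P_3(\eta_\pm)$ --- shows that $\max_{\|z\|=1}P_3=\frac{2}{3\sqrt3}$ forces $k\in[-\tfrac{2}{\sqrt3},\tfrac{1}{\sqrt3}]$. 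The two boundary values $k\in\{-\tfrac{2}{\sqrt3},\tfrac{1}{\sqrt3}\}$ in turn force $\ell=0$, and by Example \ref{example_PSRsurfaces_standard_form} (equations \eqref{eqn_CDL_a_equiv_h_form} and \eqref{eqn_CDL_b_equiv_h_form}) the corresponding surfaces are equivalent to cases a) and b) of Theorem \ref{lowdimpsrclassTHM}, hence CCPSR.

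It remains to handle $k\in(-\tfrac{2}{\sqrt3},\tfrac{1}{\sqrt3})$. Here I would first check that the whole segment $\{(1,y,0):\ y\in(-\tfrac{\sqrt3}{2},\sqrt3)\}$ consists of hyperbolic points (a direct computation using $k$ in this interval), so that I may move along it by transformations of the form \eqref{p_moving_A_matrix} followed by a diagonal rescaling; the choice $T=T(k)=\tfrac{3k}{2-\sqrt3 k}\in(-\tfrac{\sqrt3}{2},\sqrt3)$ kills the $yz^2$-term and brings $h$ to $x^3-x(y^2+z^2)+\tfrac{2}{3\sqrt3}y^3+\widetilde\ell z^3$. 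A separate comparison argument --- moving with the same transformations starting from the $k=0$ polynomial, and analyzing a critical point $\zeta$ of the resulting cubic on $S^1$ --- shows $\widetilde\ell\le\tfrac{2}{3\sqrt3}$ still holds. Thus everything reduces to showing that $h_\ell=x^3-x(y^2+z^2)+\tfrac{2}{3\sqrt3}y^3+\ell z^3$ defines a CCPSR surface for all $\ell\in[0,\tfrac{2}{3\sqrt3}]$. For this I would apply $B=\left(\begin{smallmatrix}1&0&0\\ \sqrt3&1&0\\ 0&0&1\end{smallmatrix}\right)$, bringing $h_\ell$ to $x(y^2-z^2)+\tfrac{2}{3\sqrt3}y^3+\ell z^3$, describe the surface explicitly as a graph over the cone $\{y<0,\ y^2>z^2\}$ (checking $x\to\infty$ at the boundary via $\tfrac{2}{3\sqrt3}+\ell<1$), and reduce hyperbolicity to $\det(-\tfrac12\partial^2 h_\ell)<0$ there. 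Factoring out the positive $\tfrac{1}{y^2-z^2}$ and the negative $-y^2+z^2$, this becomes $R_\ell(y,z)\le 0$ on the closed cone; homogeneity together with $R_\ell(y,\pm y)=0$ reduces it to $N_\ell(s):=\tfrac{R_\ell(-1,s)}{s^2-1}\ge 0$ on $(-1,1)$, and solving $N_\ell(s)=0$ for $\ell$ shows this is equivalent to $M(s)=\tfrac{3s^2+1}{s(s^2+3)}\notin[0,1]$ for $s\in(-1,1)$, which follows from $\partial_s M<0$ on $(0,1)$ and $\lim_{s\to 0^+}M(s)=\infty$.

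I expect the main obstacle to be this middle step: keeping track of how the cubic's coefficients transform under the $A(T)$ and rescaling maps, and in particular proving that the new $z^3$-coefficient $\widetilde\ell$ never leaves $[0,\tfrac{2}{3\sqrt3}]$. The clean route seems to be the device of comparing the ``transform-from-$k$'' and ``transform-from-$k=0$'' families and noticing that the special choice $\widetilde T=-\tfrac{3k}{2}$ returns one to the original cubic, so a violation would contradict the hypothesis on $P_3$ via Corollary \ref{cor_PSR_closedness_condition_P3}. By contrast, the concluding inequality $N_\ell(s)\ge 0$, while still requiring the monotonicity analysis of $M$, is routine.
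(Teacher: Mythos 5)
Your proposal is correct and follows essentially the same route as the paper's own proof: necessity from the $P_3$ bound, reduction to surfaces via planar sections, normalization to $\frac{2}{3\sqrt3}y^3+kyz^2+\ell z^3$ with $k\in[-\tfrac{2}{\sqrt3},\tfrac{1}{\sqrt3}]$, elimination of the $yz^2$-term via the $A(T(k))$ transformation with the $\widetilde T=-\tfrac{3k}{2}$ comparison trick to control the new $z^3$-coefficient, and the final reduction of $h_\ell$ to the sign analysis of $R_\ell$, $N_\ell$, and $M(s)$. All the key devices you identify, including the one you flag as the main obstacle, are exactly those used in the paper.
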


Now we have all necessary results at hand to complete the proof of our main result Theorem \ref{thm_Cn}.
\begin{proof}[Proof of Theorem \ref{thm_Cn}:]
The existence of $\widetilde{h}\in\mathcal{C}_n$ follows from Proposition \ref{prop_std_form_h} and Theorem \ref{thm_convex_compact_PSR_generating_set}. The statement $\widetilde{h}\in\partial\mathcal{C}_n=\left\{x^3-x\langle y,y\rangle +P_3(y)\ \left|\ \max\limits_{\|z\|=1} P_3(z)=\frac{2}{3\sqrt{3}}\right\}\right.$ if and only if the initial $\mathcal{H}$ does not have regular boundary behaviour follows from Lemma \ref{lemma_singular_PSR_max_P_3} and Theorem \ref{thm_irregularity_implications_CCPSR}.
The convexity of $\mathcal{C}_n$ follows from
	\begin{equation*}
		\max\limits_{\|z\|=1}\left((1-t)P_3(z)+t\widetilde{P_3}(z)\right) \leq (1-t)\left(\max\limits_{\|z\|=1}P_3(z)\right)+t\left(\max\limits_{\|z\|=1}\widetilde{P_3}(z)\right).
	\end{equation*}

It remains to show that $\mathcal{C}_n\subset \left\{x^3-x\langle y,y\rangle + P_3(y)\ \left|\ P_3\in\mathrm{Sym}^3\left(\mathbb{R}^n\right)^*\right\}\right.\subset \mathrm{Sym}^3\left(\mathbb{R}^{n+1}\right)^*$ is compact and that $\partial\mathcal{C}_n\subset\mathrm{Sym}^3\left(\mathbb{R}^{n+1}\right)^*$ is a continuous submanifold. For compactness we need to show that the condition $\max\limits_{\|z\|=1} P_3(z)\leq\frac{2}{3\sqrt{3}}$ automatically implies that $P_3(\cdot,\cdot,\cdot)$ viewed as a symmetric $3$-tensor is bounded entry-wise, and we need to show that $\mathcal{C}_n$ is closed in the subspace topology\footnote{With respect to the topology induced by the linear homeomorphy of $\mathrm{Sym}^3\left(\mathbb{R}^{n+1}\right)^*$ and $\mathbb{R}^{\frac{n^3+6n^2+11n+6}{6}}$. Note that said topology on $\mathrm{Sym}^3\left(\mathbb{R}^{n+1}\right)^*$ does not depend on the choice of the linear homeomorphism.}. This is equivalent to showing that all third derivatives of $P_3(z)$ are bounded on $\{\|z\|=1\}$. This follows from the fact that for all $P_3$ fulfilling $\max\limits_{\|z\|=1} P_3(z)=\frac{2}{3\sqrt{3}}$, the corresponding $h=x^3-x\langle y,y\rangle +P_3(y)\in\mathcal{C}_n$ defines a CCPSR manifold and, hence, we can use Corollary \ref{corollary_PSR_boundary_domain_estimates} and Proposition \ref{prop_P_3(z,.,.)_boundaries_on_del_dom(H)} to conclude that each entry in $P_3(\cdot,\cdot,\cdot)$ is indeed bounded. $\mathcal{C}_n$ being closed follows from the continuity of $\max\limits_{\|z\|=1} P_3(z)$ with respect to the prefactors of the monomials in $P_3(y)$, or equivalently the prefactors in the corresponding symmetric $3$-tensor $P_3(\cdot,\cdot,\cdot)$. We conclude that $\mathcal{C}_n\subset \mathrm{Sym}^3\left(\mathbb{R}^{n+1}\right)^*$ is compact in the subspace topology. The fact that $\partial\mathcal{C}$ is a continuous hypersurface in $\mathrm{Sym}^3\left(\mathbb{R}^{n+1}\right)^*$ also follows from the continuity of the map $P_3\mapsto \max\limits_{\|z\|=1} P_3(z)$. However, note that this map is for $n\geq 2$ in general not smooth, or even differentiable.
To see this, consider the one-parameter family $P^t_3(y)=y_1^3+ty_2^3$, $t\in\left[0,\frac{2}{3\sqrt{3}}\right]$, in $\mathrm{Sym}^3\left(\mathbb{R}^{n+1}\right)^*$ and observe that
\begin{equation*}
t\mapsto\max\limits_{\|z\|=1}P^t_3(z)=\max\limits_{\|z\|=1}\left(z_1^3+tz_2^3\right)=\left\{
\begin{tabular}{ll}
$1$,& $0\leq t\leq 1$,\\
$t$,&1 $\leq t \leq \frac{2}{3\sqrt{3}}$,
\end{tabular}
\right.
\end{equation*}
does depend only continuously on $t$ and is not continuously differentiable at $t=1$.
\end{proof}

\begin{rem}
\label{rem_Cn_comparison}
Note that for any compact set $C\subset \mathrm{Sym}^3\left(\mathbb{R}^{n+1}\right)^*$
that contains an open neighbourhood of $0$,
and any given CCPSR manifold $\mathcal{H}\subset\{h=1\}$, we can always choose $r>0$, such that $rh\in C$. Then $\mathcal{H}$ is equivalent to $r^{-\frac{1}{3}}\cdot\mathcal{H}\subset\{rh=1\}$. This shows that one can choose a generating set for the moduli set of $n$-dimensional CCPSR manifolds that is contained in a compact set $C$ and, hence, bounded. It was however until now for $n\geq 2$ not known whether one can choose a \textsf{compact generating set}, like $\mathcal{C}_n$ in Theorem \ref{thm_Cn}. For $n=1$ it was already shown in \cite[Cor.\,4]{CHM} that the moduli set of CCPSR curves is generated by the set $\{x^2y,\,x(x^2-y^2)\}\subset\mathrm{Sym}^3\left(\mathbb{R}^2\right)^{*}$, which is a compact set. One can show that $x^2y$ is equivalent to $x^3-xy^2+\frac{2}{3\sqrt{3}}y^3$. By comparing with $\mathcal{C}_1=\left\{x^3-xy^2+Ly^3\ \left|\ |L|\leq \frac{2}{3\sqrt{3}}\right\}\right.$, we see that $x(x^2-y^2)=x^3-xy^2$ is an inner point of $\mathcal{C}_1$ and $x^3-xy^2+\frac{2}{3\sqrt{3}}y^3$ is one of the two points in $\partial\mathcal{C}_1$.
\end{rem}

Theorem \ref{thm_Cn} in particular implies the following property of the moduli set of $n$-dimensional CCPSR manifolds (cf. Definition \ref{def_moduli_space_CCGPSR_mfs}).

\begin{Cor}
\label{cor_curve_between_CCPSR}
For $n\in\mathbb{N}$ fixed, let $h,\widetilde{h}\in\mathcal{C}_n$ and let $\mathcal{H}\subset\{h=x^3-x\langle y,y\rangle +P_3(y)=1\}$, respectively $\widetilde{\mathcal{H}}\subset\{\widetilde{h}=x^3-x\langle y,y\rangle +\widetilde{P_3}(y)=1\}$, denote the corresponding CCPSR manifolds containing the point $\left(\begin{smallmatrix}
x\\ y
\end{smallmatrix}
\right)=\left(\begin{smallmatrix}
1\\ 0
\end{smallmatrix}
\right)$. Then the smooth curve
\begin{equation*}
\gamma:[0,1]\to\mathcal{C}_n\subset\mathrm{Sym}^3\left(\mathbb{R}^{n+1}\right)^*,\quad \gamma(t)=(1-t)h+t\widetilde{h},
\end{equation*}
defines an $n$-dimensional CCPSR manifold $\mathcal{H}_{t}\subset\left\{\gamma(t)=(1-t)h+t\widetilde{h}=1\right\}$ as the connected component containing $\left(\begin{smallmatrix}
1\\ 0
\end{smallmatrix}
\right)$ for all $t\in[0,1]$, and $\mathcal{H}_0=\mathcal{H}$, $\mathcal{H}_1=\widetilde{\mathcal{H}}$.
\end{Cor}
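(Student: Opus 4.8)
The plan is to observe that Corollary \ref{cor_curve_between_CCPSR} is an almost immediate consequence of Theorem \ref{thm_Cn} together with the convexity of $\mathcal{C}_n$, which is already part of the statement of Theorem \ref{thm_Cn}. First I would note that, since $h,\widetilde{h}\in\mathcal{C}_n$, both are of the form $x^3-x\langle y,y\rangle + P_3(y)$ with $\max_{\|z\|=1}P_3(z)\leq\frac{2}{3\sqrt{3}}$, respectively for $\widetilde{P_3}$. The affine combination $\gamma(t)=(1-t)h+t\widetilde{h}$ then equals $x^3-x\langle y,y\rangle + \big((1-t)P_3+t\widetilde{P_3}\big)(y)$, because the $x^3$ and $-x\langle y,y\rangle$ parts are identical in $h$ and $\widetilde{h}$ and hence unchanged under convex combination. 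So $\gamma(t)$ lies in the affine subspace $\{x^3-x\langle y,y\rangle + P_3(y)\mid P_3\in\mathrm{Sym}^3(\mathbb{R}^n)^*\}$ for all $t$.

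Next I would invoke the convexity of $\mathcal{C}_n$: for $t\in[0,1]$, the inequality
\begin{equation*}
\max\limits_{\|z\|=1}\big((1-t)P_3(z)+t\widetilde{P_3}(z)\big)\leq (1-t)\max\limits_{\|z\|=1}P_3(z)+t\max\limits_{\|z\|=1}\widetilde{P_3}(z)\leq (1-t)\frac{2}{3\sqrt{3}}+t\frac{2}{3\sqrt{3}}=\frac{2}{3\sqrt{3}}
\end{equation*}
shows $\gamma(t)\in\mathcal{C}_n$. Then, by the converse direction of Theorem \ref{thm_Cn} (``each $h\in\mathcal{C}_n$ defines a closed connected PSR manifold which is given by the connected component of $\{h=1\}$ that contains the point $\left(\begin{smallmatrix}1\\0\end{smallmatrix}\right)$''), the connected component $\mathcal{H}_t$ of $\{\gamma(t)=1\}$ containing $\left(\begin{smallmatrix}1\\0\end{smallmatrix}\right)$ is an $n$-dimensional CCPSR manifold for every $t\in[0,1]$. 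Finally, $\gamma(0)=h$ and $\gamma(1)=\widetilde{h}$, so $\mathcal{H}_0=\mathcal{H}$ and $\mathcal{H}_1=\widetilde{\mathcal{H}}$ by definition, and smoothness of $\gamma$ as a curve in $\mathrm{Sym}^3(\mathbb{R}^{n+1})^*$ is clear since it is affine-linear in $t$.

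There is essentially no obstacle here — the corollary is a packaging of Theorem \ref{thm_Cn}. The only point that warrants a sentence of care is the verification that the $x^3-x\langle y,y\rangle$ part is genuinely common to all elements of $\mathcal{C}_n$ so that the convex combination stays in the prescribed affine subspace (rather than producing some other quadratic-in-$y$ term); this is immediate from the definition of $\mathcal{C}_n$. I would therefore write the proof in three or four lines: compute $\gamma(t)$ explicitly, apply the convexity estimate to conclude $\gamma(t)\in\mathcal{C}_n$, apply the converse part of Theorem \ref{thm_Cn} to get that $\mathcal{H}_t$ is CCPSR, and read off the endpoint values.
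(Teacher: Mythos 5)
Your proposal is correct and matches the paper's intent exactly: the paper states this corollary without a separate proof, presenting it as an immediate consequence of Theorem \ref{thm_Cn}, and the convexity estimate $\max_{\|z\|=1}\bigl((1-t)P_3(z)+t\widetilde{P_3}(z)\bigr)\leq(1-t)\max_{\|z\|=1}P_3(z)+t\max_{\|z\|=1}\widetilde{P_3}(z)$ that you invoke is precisely the one used in the proof of Theorem \ref{thm_Cn} to establish convexity of $\mathcal{C}_n$. Your additional remark that the quadratic part $x^3-x\langle y,y\rangle$ is common to all elements of $\mathcal{C}_n$, so the affine combination stays in the prescribed affine subspace, is the right (if minor) point to make explicit.
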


\begin{Ex}\label{ex_curves_CCPSRs}
Corollary \ref{cor_curve_between_CCPSR} does not say anything about whether the CCPSR manifolds $\mathcal{H}_t$ are pairwise equivalent or not.
\begin{enumerate}[i)]
\item
	For an example of a curve $\gamma$ as in Corollary \ref{cor_curve_between_CCPSR} where the CCPSR manifolds $\mathcal{H}_t$ are pairwise inequivalent consider the family f) in Theorem \ref{lowdimpsrclassTHM} which corresponds to hyperbolic positive level sets of Weierstra{\ss} polynomials with positive discriminant \cite[Lem.\,1]{CDL}. In Example \ref{example_PSRsurfaces_standard_form}, equation \eqref{eqn_CDL_f_equiv_h_form}, we calculated the standard form for these polynomials and have seen that this family interpolates between the CCPSR surfaces Thm. \ref{lowdimpsrclassTHM} d) and e).
	We see that after setting $b=1-2t^2$ and swapping the coordinates $y$ and $z$,
	the family Thm. \ref{lowdimpsrclassTHM} f) is realized as a curve of CCPSR surfaces with
		\begin{equation*}
			\gamma(t)=x^3-x(y^2+z^2)+\frac{2t}{3\sqrt{3}}y^3,\quad t\in(0,1),
		\end{equation*}
	connecting the CCPSR surface Thm. \ref{lowdimpsrclassTHM} d) corresponding to $P_3\left(\left(\begin{smallmatrix}y\\ z\end{smallmatrix}\right)\right)=0$ respectively $t=0$, cf. equation \eqref{eqn_CDL_d_equiv_h_form}, and the CCPSR surface Thm. \ref{lowdimpsrclassTHM} e) corresponding to $P_3\left(\left(\begin{smallmatrix}y\\ z\end{smallmatrix}\right)\right)=\frac{2}{3\sqrt{3}}y^3$ respectively $t=1$, cf. equation \eqref{eqn_b_alt_limit}.
\item
	Next, consider $P_3\left(\left(\begin{smallmatrix}y\\ z\end{smallmatrix}\right)\right)=\frac{2}{3\sqrt{3}}y^3+\frac{1}{\sqrt{3}}yz^2$ and $\widetilde{P_3}\left(\left(\begin{smallmatrix}y\\ z\end{smallmatrix}\right)\right)=\frac{2}{3\sqrt{3}}y^3-\frac{2}{\sqrt{3}}yz^2$. The corresponding CCPSR surfaces $\mathcal{H}$ and $\widetilde{\mathcal{H}}$ are equivalent to Thm. \ref{lowdimpsrclassTHM} a) and Thm. \ref{lowdimpsrclassTHM} b), respectively, cf. Example \ref{example_PSRsurfaces_standard_form} equations \eqref{eqn_CDL_a_equiv_h_form} and \eqref{eqn_CDL_b_equiv_h_form}. One can further show that $\mathcal{H}$ is a homogeneous space with vanishing scalar curvature and $\widetilde{\mathcal{H}}$ is a homogeneous space with scalar curvature given by $-9/4$. The curve $\gamma$ as in Corollary \ref{cor_curve_between_CCPSR} is given by
		\begin{equation*}
			\gamma(t)=x^3-x(y^2+z^2)+\frac{2}{3\sqrt{3}}y^3+\frac{1-3t}{\sqrt{3}}yz^2.
		\end{equation*}
	The associated curve of CCPSR surfaces $\mathcal{H}_t$ contains precisely $3$ inequivalent CCPSR surfaces. To show this, it suffices to show that for each $t\in(0,1)$, $\mathcal{H}_t$ is equivalent to the CCPSR surface in Thm. \ref{lowdimpsrclassTHM} e) which coincides by Example \ref{example_PSRsurfaces_standard_form} equation \eqref{eqn_CDL_e_equiv_h_form} with $\mathcal{H}_{1/2}$. Consider
		\begin{equation*}
			\mathcal{H}_{1/3}\subset\left\{h_{1/3}:=x^3-x(y^2+z^2)+\frac{2}{3\sqrt{3}}y^3=1\right\}.
		\end{equation*}
	We see that $(t,0)^T\in\mathrm{dom}(\mathcal{H}_{1/3})$ for all $r\in\left(-\frac{\sqrt{3}}{2},\sqrt{3}\right)$. Next, we construct a linear transformation $A(p)$ of the form \eqref{p_moving_A_matrix} for $p=p(r)=\frac{1}{\sqrt[3]{h_{1/3}\left((1,r,0)^T\right)}}\left(\begin{smallmatrix}1\\ r\\ 0\end{smallmatrix}\right)$. We leave it as an exercise for the reader to show that with appropriately chosen $E(p)$ in \eqref{p_moving_A_matrix},
		\begin{equation*}
			h_{1/3}\left(A(p)\cdot\left(\begin{smallmatrix}x\\ y\\ z\end{smallmatrix}\right)\right)=x^3-x(y^2+z^2)+\frac{2}{3\sqrt{3}}y^3 -\frac{2r}{3}yz^2.
		\end{equation*}
	Since $r\in(-\sqrt{3}/2,\sqrt{3})$, this implies that $\mathcal{H}_{1/3}$ is indeed equivalent to $\mathcal{H}_t$ for all $t\in(0,1)$, in particular to $\mathcal{H}_{1/2}$.
\end{enumerate}
\end{Ex}
Note that Example \ref{ex_curves_CCPSRs} shows that the moduli set of CCPSR surfaces when equipped with the topology induced by the $\mathrm{GL}(3)$-action is not Hausdorff. We expect this to be true in all dimensions, and also for CCGPSR manifolds of degree $\tau\geq 4$.

\section{Further applications}

We will now demonstrate how to use Theorem \ref{thm_Cn} to obtain global properties of CCPSR manifolds. The first application is the existence of curvature bounds for the scalar curvature and the sectional curvature:
\begin{Cor}\label{cor_general_CCPSR_S_bounds}
For any fixed dimension $n\geq 2$, there exist $l(n),\, u(n),\, \widetilde{l}(n),\, \widetilde{u}(n)\in\mathbb{R}$, such that $l(n)\leq S_\mathcal{H}\leq u(n)$ and $\widetilde{l}(n)\leq K_\mathcal{H}\leq \widetilde{u}(n)$ for any $n$-dimensional CCPSR manifold $\mathcal{H}$. Here $S_\mathcal{H}$ denotes the scalar curvatures of $\mathcal{H}$ and $K_\mathcal{H}$ denotes all possible sectional curvatures of $\mathcal{H}$.
\begin{proof}
Fix $n\geq 2$ and let $\mathcal{H}$ be a CCPSR manifold in standard form of dimension $n$. By Proposition \ref{prop_scal_GPSR}, equation \eqref{eqn_GPSR_scal_formula}, we know that $S_\mathcal{H}\left(\left(\begin{smallmatrix}1\\ 0\end{smallmatrix}\right)\right)$ depends continuously on $P_3$ and, hence, together with Theorem \ref{thm_Cn} we find that the following expressions are well defined real numbers:
	\begin{align*}
		l(n)&:=\min\limits_{P_3\in\mathrm{pr}_{\mathrm{Sym}(\mathbb{R}^n)^*}\mathcal{C}_n}\left(n(1-n)+\frac{27}{8}\sum\limits_{a,i,\ell}\left(-P_3(\partial_a,\partial_a,\partial_\ell)P_3(\partial_i,\partial_i,\partial_\ell)+P_3(\partial_a,\partial_i,\partial_\ell)^2\right)\right),\\
		u(n)&:=\max\limits_{P_3\in\mathrm{pr}_{\mathrm{Sym}(\mathbb{R}^n)^*}\mathcal{C}_n}\left(n(1-n)+\frac{27}{8}\sum\limits_{a,i,\ell}\left(-P_3(\partial_a,\partial_a,\partial_\ell)P_3(\partial_i,\partial_i,\partial_\ell)+P_3(\partial_a,\partial_i,\partial_\ell)^2\right)\right).
	\end{align*}
Proposition \ref{prop_std_form_h} now implies by the property that we can find a standard form of $\mathcal{H}$ with respect to \textit{any} point $p\in \mathcal{H}$ that the so defined $l(n),\ u(n)\in\mathbb{R}$ fulfil $l(n)\leq S_\mathcal{H}(p)\leq u(n)$ for all $p\in\mathcal{H}$.

For the sectional curvatures, the proof proceeds analogously by using equation \eqref{eqn_sectional_curvature_GPSR} in Lemma \ref{lemma_R_Ric_K_GPSR}.
\end{proof}
\end{Cor}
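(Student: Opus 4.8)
The plan is to combine the explicit curvature formulas from Proposition \ref{prop_scal_GPSR} and Lemma \ref{lemma_R_Ric_K_GPSR} with the compactness of the generating set $\mathcal{C}_n$ from Theorem \ref{thm_Cn}. Fix $n\geq 2$ throughout. By Proposition \ref{prop_std_form_h}, given any CCPSR manifold $\mathcal{H}$ of dimension $n$ and any point $p\in\mathcal{H}$, there is a linear coordinate change bringing $h$ into standard form \eqref{standard_form_h} with $p$ corresponding to $\left(\begin{smallmatrix}1\\0\end{smallmatrix}\right)$; this change of coordinates is an isometry by the Lemma after Definition \ref{equivalenceDEF}, so $S_\mathcal{H}(p)$ and the sectional curvatures at $p$ equal the corresponding quantities at $\left(\begin{smallmatrix}1\\0\end{smallmatrix}\right)$ for the standard-form polynomial. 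By Theorem \ref{thm_convex_compact_PSR_generating_set}, this standard-form polynomial lies in $\mathcal{C}_n$. Hence every value $S_\mathcal{H}(p)$, as $\mathcal{H}$ ranges over all $n$-dimensional CCPSR manifolds and $p$ over all of $\mathcal{H}$, is attained as $S_{\mathcal{H}'}\left(\left(\begin{smallmatrix}1\\0\end{smallmatrix}\right)\right)$ for some $h'\in\mathcal{C}_n$, and likewise for sectional curvatures.

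The key steps, in order: First, I would introduce $\mathrm{pr}_{\mathrm{Sym}^3(\mathbb{R}^n)^*}\mathcal{C}_n$, the image of $\mathcal{C}_n$ under the linear projection $x^3-x\langle y,y\rangle+P_3(y)\mapsto P_3$; since $\mathcal{C}_n$ is compact by Theorem \ref{thm_Cn} and this projection is continuous (in fact a linear homeomorphism onto its image), $\mathrm{pr}_{\mathrm{Sym}^3(\mathbb{R}^n)^*}\mathcal{C}_n$ is a compact subset of $\mathrm{Sym}^3(\mathbb{R}^n)^*$. Second, formula \eqref{eqn_GPSR_scal_formula} (with $\tau=3$, so the prefactor is $\tfrac{27}{8}$) exhibits $S_{\mathcal{H}}\left(\left(\begin{smallmatrix}1\\0\end{smallmatrix}\right)\right)$ as a polynomial, hence continuous, function of $P_3$; a continuous real-valued function on a nonempty compact set attains a minimum $l(n)$ and maximum $u(n)$, which I would define exactly as in the displayed formulas in the statement. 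Third, by the reduction in the first paragraph, $l(n)\leq S_\mathcal{H}(p)\leq u(n)$ for every $n$-dimensional CCPSR manifold $\mathcal{H}$ and every $p\in\mathcal{H}$.

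For the sectional curvature bounds the argument is parallel but uses \eqref{eqn_sectional_curvature_GPSR} from Lemma \ref{lemma_R_Ric_K_GPSR}: at $\left(\begin{smallmatrix}1\\0\end{smallmatrix}\right)$ the sectional curvature $K(v,w)$ of any plane equals $-1+\tfrac{9\tau}{8}\sum_\ell(-P_3(F\partial_i,F\partial_i,F\partial_\ell)P_3(F\partial_j,F\partial_j,F\partial_\ell)+P_3(F\partial_i,F\partial_j,F\partial_\ell)^2)$ for a suitable $F\in\mathrm{O}(n)$. I would observe that the map $(P_3,F)\mapsto K$ is continuous on the compact set $\left(\mathrm{pr}_{\mathrm{Sym}^3(\mathbb{R}^n)^*}\mathcal{C}_n\right)\times\mathrm{O}(n)$ (using that $\mathrm{O}(n)$ is compact), so it attains a minimum $\widetilde{l}(n)$ and maximum $\widetilde{u}(n)$; since every sectional curvature of every $n$-dimensional CCPSR manifold at every point arises this way, these are the desired uniform bounds. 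The only mild subtlety — and the one place a careful word is warranted — is the reduction step: one must invoke Proposition \ref{prop_std_form_h} to move the reference point to $\left(\begin{smallmatrix}1\\0\end{smallmatrix}\right)$ and Theorem \ref{thm_convex_compact_PSR_generating_set} to land inside $\mathcal{C}_n$, rather than merely knowing that $\mathcal{C}_n$ generates the moduli set up to equivalence. Everything else is the standard fact that a continuous function on a compact set is bounded and attains its extrema.
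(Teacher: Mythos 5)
Your proposal is correct and follows essentially the same route as the paper: reduce to standard form at an arbitrary point via Proposition \ref{prop_std_form_h}, land in the compact set $\mathcal{C}_n$ via Theorem \ref{thm_convex_compact_PSR_generating_set}, and then take extrema of the continuous curvature expressions \eqref{eqn_GPSR_scal_formula} and \eqref{eqn_sectional_curvature_GPSR} over $\mathrm{pr}_{\mathrm{Sym}^3(\mathbb{R}^n)^*}\mathcal{C}_n$ (and, for the sectional curvatures, over the additional compact factor $\mathrm{O}(n)$, a detail the paper leaves implicit in its "proceeds analogously" remark). No gaps.
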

The study of curvature bounds will be the main topic of an upcoming paper, in which we will expand our studies to manifolds in the image of the supergravity r- and q-map. Recall that, as mentioned in the introduction, sectional curvature bounds are the subject of active study in questions regarding the geometry of K\"ahler cones of Calabi-Yau threefolds \cite{W,TW,KW}.

Next we will give a proof that all closed PSR manifolds $\mathcal{H}$ equipped with their centro-affine fundamental form $g_\mathcal{H}$ are geodesically complete. This was first shown in \cite[Thm.\,2.5]{CNS}. In the corresponding proof it was used that the moduli set of closed PSR curves under the action of $\mathrm{GL}(2)$, which consists precisely of two elements, is compact \cite[Cor.\,4]{CHM}.
Note that geodesically complete PSR manifolds are necessarily closed, since otherwise we could always continuously extend $g_\mathcal{H}$ to each boundary point and construct a geodesic in $\mathcal{H}$ which reaches said point in finite time, cf. \cite[Prop.\,2.4]{CNS}.
We will use the compactness property of $\mathcal{C}_n$ in Theorem \ref{thm_Cn} and the following lemma.

\begin{Lem}
\label{lemma_geodballsfamilymetricscompactembedding}
Let $M$ be manifold of dimension $n\geq 1$ with a locally finite atlas, $C\subset\mathbb{R}^N$ be a compact subset for some $N\in\mathbb{N}$, and $g(\cdot):C\to\Gamma\left(\mathrm{Sym}^2\left(T^*M\right)\right)$, $c\mapsto g(c)$, be a family of Riemannian metrics depending continuously on $c\in C$ in the sense that the map
\begin{equation*}
g:C\times M\to\mathrm{Sym}^2(T^*M),\quad (c,q)\mapsto g(c)_q,
\end{equation*}
is continuous.
Let $p\in M$ be arbitrary and fixed. We denote by $B^{g(c)}_r(p)\subset M$ the geodesic ball of radius $r>0$ around $p\in M$ with respect to the Levi-Civita connection of $g(c)$. Then the following is true:
\begin{equation}\label{eqn_infsupfamilymetrics}
\inf\limits_{c\in C}\left(\sup\limits_{\overline{B^{g(c)}_r(p)}\subset M\mathrm{\ compactly\ embedded}}r\right)>0.
\end{equation}
\begin{proof}
Suppose \eqref{eqn_infsupfamilymetrics} is false. Then there exists a sequence $\{c_i,i\in\mathbb{N}\}\subset C$, such that
\begin{equation*}
\lim\limits_{i\to\infty} \underbrace{\sup\limits_{\overline{B^{g(c_i)}_r(p)}\subset M\text{ compactly embedded}}r}_{=:r_i}=0.
\end{equation*}
Since $C\subset \mathbb{R}^N$ is compact, we can restrict to a subsequence if necessary and assume without loss of generality that $\{c_i,i\in\mathbb{N}\}$ converges to a point $\overline{c}:=\lim\limits_{i\to\infty} c_i$ in $C$. Then, by assumption, 
\begin{equation*}
\sup\limits_{\overline{B^{g\left(\overline{c}\right)}_r(p)}\subset M\text{ compactly embedded}}r=\lim\limits_{i\to\infty}r_i=0.
\end{equation*}
But this is a contradiction to the fact that $g\left(\overline{c}\right)$ is a Riemannian metric and, hence, around every $p\in M$ there exists a positive maximal radius $r>0$, such that $\overline{B^{g\left(\overline{c}\right)}_r(p)}\subset M$ is compactly embedded (recall that independent of the considered Riemannian metric on $M$, the induced metric topology coincides with the given topology on $M$). Hence, \eqref{eqn_infsupfamilymetrics} holds true.
\end{proof}
\end{Lem}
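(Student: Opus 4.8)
The plan is to reformulate the claim in terms of the \emph{escape radius}
\[
R(c):=\sup\left\{r>0 \mid \overline{B^{g(c)}_r(p)}\subset M\text{ is compact}\right\}\in(0,\infty],
\]
so that the asserted inequality becomes simply $\inf_{c\in C}R(c)>0$. Since $C$ is compact, it then suffices to prove two things: first, that $R(c)>0$ for every fixed $c\in C$, and second, that $R$ is lower semicontinuous on $C$. A lower semicontinuous function on a compact set attains its infimum, and as all of its values lie in $(0,\infty]$ this minimum is strictly positive, which is exactly the statement of the lemma. (This is the structured version of the contradiction argument: a sequence $c_i\to\overline c$ with $R(c_i)\to0$ would violate $\liminf_i R(c_i)\ge R(\overline c)>0$.)

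The positivity $R(c)>0$ for fixed $c$ is immediate. Because $M$ carries a locally finite atlas it is in particular locally compact, so $p$ has a neighbourhood $V$ with compact closure; since the Riemannian distance $d^{g(c)}$ induces the given manifold topology on $M$, every sufficiently small metric ball $B^{g(c)}_r(p)$ is contained in $V$, whence $\overline{B^{g(c)}_r(p)}\subset\overline{V}$ is compact.

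The main point — and the main obstacle, caused by the possible non-compactness of $M$ — is the lower semicontinuity. Fix $\overline c\in C$ and any $r_0<R(\overline c)$, and choose an intermediate radius $r_1$ with $r_0<r_1<R(\overline c)$, so that $K:=\overline{B^{g(\overline c)}_{r_1}(p)}$ is compact. The key comparison step is that on the \emph{fixed} compact set $K$ the metrics $g(c)$ are uniformly equivalent to $g(\overline c)$: by continuity of the map $g\colon C\times M\to\mathrm{Sym}^2(T^*M)$ and compactness of $K$, for every $\lambda>1$ there is a neighbourhood $U\subset C$ of $\overline c$ such that $\lambda^{-1}g(\overline c)_q\le g(c)_q\le\lambda\, g(\overline c)_q$ on $T_qM$ for all $q\in K$ and all $c\in U$. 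Shrinking $U$ so that additionally $\lambda^{1/2}r_0<r_1$, I then run a first-exit argument: any path starting at $p$ with $g(c)$-length $<r_0$ that tried to leave $K$ would, along its initial segment inside $K$, have $g(\overline c)$-length at most $\lambda^{1/2}r_0<r_1$, yet that segment joins $p$ to a point at $g(\overline c)$-distance $r_1$ from $p$, a contradiction. Hence $B^{g(c)}_{r_0}(p)\subset K$, its closure is a closed subset of the compact set $K$ and therefore compact, so $R(c)\ge r_0$ for all $c\in U$. This yields $\liminf_{c\to\overline c}R(c)\ge r_0$, and letting $r_0\uparrow R(\overline c)$ gives $\liminf_{c\to\overline c}R(c)\ge R(\overline c)$, i.e. lower semicontinuity at $\overline c$ (the case $R(\overline c)=\infty$ being covered by letting $r_0$ range over all of $(0,\infty)$).

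Assembling these pieces, $R$ is an everywhere positive, lower semicontinuous function on the compact set $C$, hence attains a strictly positive minimum, so that $\inf_{c\in C}R(c)=\min_{c\in C}R(c)>0$ as claimed. I expect the comparison/first-exit step to require the most care, since it is precisely there that uniform control over the entire family of metrics — rather than over a single metric — is needed, and where the non-compactness of $M$ must be tamed by confining all the relevant geodesics inside the compact buffer $K$.
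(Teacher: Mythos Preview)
Your proof is correct and follows the same overall strategy as the paper --- compactness of $C$ plus positivity of the escape radius at each point --- but your version is considerably more complete. The paper's proof jumps directly from $c_i\to\overline c$ and $r_i\to 0$ to the claim that $R(\overline c)=\lim_i r_i=0$, writing only ``by assumption''; this is precisely the lower-semicontinuity step that you identify and actually prove via the uniform comparison of $g(c)$ with $g(\overline c)$ on the compact buffer $K$ and the first-exit argument. In other words, you have supplied the argument that the paper leaves implicit. Your remark that this ``is the structured version of the contradiction argument'' is exactly right: the paper's sequence argument and your lower-semicontinuity argument are logically equivalent, but only yours explains \emph{why} $R(c_i)\to 0$ cannot happen along a convergent sequence.
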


\begin{Prop}
\label{prop_alternative_CCPSR_completeness_proof_numero2}
Closed PSR manifolds are geodesically complete.
\begin{proof}
Let $\mathcal{H}\subset\mathbb{R}^{n+1}$ be an $n$-dimensional PSR manifold that is closed in its ambient space. Assume without loss of generality that $\mathcal{H}$ is connected, i.e. that $\mathcal{H}$ is a CCPSR manifold. Using Theorem \ref{thm_convex_compact_PSR_generating_set}, we can further without loss of generality assume that $\mathcal{H}=\mathcal{H}_{P_3}\subset \left\{h_{P_3}=x^3-x\langle y,y\rangle +P_3(y)=1\right\}$ is the connected component that contains the point $\left(\begin{smallmatrix}
x\\ y
\end{smallmatrix}
\right)=\left(\begin{smallmatrix}
1\\ 0
\end{smallmatrix}
\right)\in \{h=1\}$ and that $P_3\in\left\{ \max\limits_{\|z\|=1}P_3(z)\leq \frac{2}{3\sqrt{3}}\right\}\subset \mathrm{Sym}^3\left(\mathbb{R}^n\right)^*$, where we view the set $\left\{ \max\limits_{\|z\|=1}P_3(z)\leq \frac{2}{3\sqrt{3}}\right\}$ as a compact subset of $\mathbb{R}^N$ with $N=\dim\mathrm{Sym}^3\left(\mathbb{R}^n\right)^*=\frac{n^3+3n^2+2n}{6}$. Consider the set
\begin{equation*}
M:=\bigcap\limits_{P_3\in\left\{ \max\limits_{\|z\|=1}P_3(z)\leq \frac{2}{3\sqrt{3}}\right\}}\mathrm{dom}\left(\mathcal{H}_{P_3}\right).
\end{equation*}
Lemma \ref{lemma_roots_estimates_PSR} implies that $M=\left\{\|z\|<\frac{\sqrt{3}}{2}\right\}\subset\mathbb{R}^n$, in particular $M$ is a smooth submanifold of $\mathbb{R}^n$. Recall that with $\beta_{P_3}(z):=h_{P_3}\left(\left(\begin{smallmatrix}
1\\ z
\end{smallmatrix}\right)\right)$, $\left(\mathcal{H}_{P_3},g_{\mathcal{H}_{P_3}}\right)$ is isometric to
\begin{equation*}
\left(\mathrm{dom}\left(\mathcal{H}_{P_3}\right),-\frac{\partial^2 \beta_{P_3}}{3\beta_{P_3}}+ \frac{2d\beta_{P_3}^2}{9\beta_{P_3}^2}\right)
\end{equation*}
for all $P_3\in\left\{ \max\limits_{\|z\|=1}P_3(z)\leq \frac{2}{3\sqrt{3}}\right\}$, cf. \eqref{Phi_pullback_g_eqn}. Since $M\subset\mathrm{dom}\left(\mathcal{H}_{P_3}\right)$ independent of $P_3$,
we can consider the family of Riemannian metrics on $M$
\begin{equation*}
g(\cdot):P_3\mapsto -\frac{\partial^2 \beta_{P_3}}{3\beta_{P_3}}+ \frac{2d\beta_{P_3}^2}{9\beta_{P_3}^2}.
\end{equation*}
Since $g(\cdot)$ depends continuously on the compact subset $\left\{ \max\limits_{\|z\|=1}P_3(z)\leq \frac{2}{3\sqrt{3}}\right\}\subset \mathrm{Sym}^3\left(\mathbb{R}^n\right)^*$
in the sense of Lemma \ref{lemma_geodballsfamilymetricscompactembedding} (where we identify $\mathrm{Sym}^3\left(\mathbb{R}^n\right)^*$ with $\mathbb{R}^N$ as above and note that $M$ as an open submanifold of $\mathbb{R}^n$ is in particular equipped with a finite atlas consisting of a single chart), we can use Lemma \ref{lemma_geodballsfamilymetricscompactembedding} and obtain that there exists $r>0$, such that $\overline{B^{g\left(P_3\right)}_r(0)}\subset M$
is compactly embedded for all $P_3\in \left\{ \max\limits_{\|z\|=1}P_3(z)\leq \frac{2}{3\sqrt{3}}\right\}$. Together with Proposition \ref{prop_std_form_h} this implies that for all $P_3\in \left\{ \max\limits_{\|z\|=1}P_3(z)\leq \frac{2}{3\sqrt{3}}\right\}$ and all $p\in\mathcal{H}_{P_3}$, $\overline{B^{g_{\mathcal{H}_{P_3}}}_r(p)}\subset \mathcal{H}_{P_3}$ is compactly embedded. Now we use the fact that a Riemannian manifold $(M,g_M)$ is complete if and only if we can find $r>0$, such that $\overline{B_r^{g_M}(p)}$ is compact in $M$ for all $p\in M$, for a proof of that statement see \cite[Lem.\,2.21]{Li}.
We conclude that $\left(\mathcal{H}_{P_3},g_{\mathcal{H}_{P_3}}\right)$ is complete for all $P_3\in \left\{ \max\limits_{\|z\|=1}P_3(z)\leq \frac{2}{3\sqrt{3}}\right\}$.
\end{proof}
\end{Prop}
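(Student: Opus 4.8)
The plan is to deduce geodesic completeness of a closed PSR manifold from a uniform "geodesic ball radius" estimate over the compact generating set $\mathcal{C}_n$ of Theorem \ref{thm_Cn}, combined with the point-transitivity of the standard form in Proposition \ref{prop_std_form_h}. First I would reduce to the connected case: a closed PSR manifold is a disjoint union of CCPSR manifolds, and geodesic completeness is a property of each connected component, so it suffices to treat a CCPSR manifold $\mathcal{H}$. By Theorem \ref{thm_convex_compact_PSR_generating_set} (or equivalently by Proposition \ref{prop_std_form_h} together with Theorem \ref{thm_Cn}) I may assume $\mathcal{H}=\mathcal{H}_{P_3}$ is in standard form with $h_{P_3}=x^3-x\langle y,y\rangle+P_3(y)$ and $P_3$ lying in the compact set $K:=\{P_3\in\mathrm{Sym}^3(\mathbb{R}^n)^\ast\mid \max_{\|z\|=1}P_3(z)\le\frac{2}{3\sqrt3}\}$, which I view as a compact subset of $\mathbb{R}^N$, $N=\frac{n^3+3n^2+2n}{6}$.

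Next I would introduce the common domain $M:=\bigcap_{P_3\in K}\mathrm{dom}(\mathcal{H}_{P_3})$. By Lemma \ref{lemma_roots_estimates_PSR} (more precisely Corollary \ref{corollary_PSR_boundary_domain_estimates}, which gives $B_{\sqrt3/2}(0)\subset\mathrm{dom}(\mathcal{H}_{P_3})$ for every CCPSR manifold in standard form, independently of $P_3$) one gets $M=B_{\sqrt3/2}(0)\subset\mathbb{R}^n$, an open submanifold carried by a single coordinate chart. On $M$ I would consider the family of Riemannian metrics $g(P_3):=-\frac{\partial^2\beta_{P_3}}{3\beta_{P_3}}+\frac{2\,d\beta_{P_3}^2}{9\beta_{P_3}^2}$ with $\beta_{P_3}(z)=h_{P_3}\!\left(\left(\begin{smallmatrix}1\\ z\end{smallmatrix}\right)\right)$; by Lemma \ref{lemma_pullbackmetric_domH} (equation \eqref{Phi_pullback_g_eqn}) the metric $g(P_3)$ restricted to $\mathrm{dom}(\mathcal{H}_{P_3})$ is isometric to $g_{\mathcal{H}_{P_3}}$ via $\Phi$, and since $\beta_{P_3}>0$ on $M$ and depends polynomially (hence continuously) on the coefficients of $P_3$, the assignment $K\times M\ni(P_3,z)\mapsto g(P_3)_z$ is jointly continuous. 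Applying Lemma \ref{lemma_geodballsfamilymetricscompactembedding} with $p=0\in M$ then yields a uniform radius $r>0$ such that $\overline{B^{g(P_3)}_r(0)}\subset M$ is compactly embedded for all $P_3\in K$.

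Finally I would globalise using Proposition \ref{prop_std_form_h}: for a given CCPSR manifold and any point $p\in\mathcal{H}$ one can re-choose the standard form so that $p$ corresponds to $\left(\begin{smallmatrix}1\\ 0\end{smallmatrix}\right)$, and the new $P_3$ still lies in $K$ by Theorem \ref{thm_Cn}; hence the uniform radius $r$ obtained above gives $\overline{B^{g_{\mathcal{H}_{P_3}}}_r(p)}\subset\mathcal{H}_{P_3}$ compactly embedded for every $p$. I would then invoke the standard fact (e.g. \cite[Lem.\,2.21]{Li}) that a Riemannian manifold is complete iff there is a single $r>0$ with $\overline{B_r(p)}$ compact for all $p$, to conclude completeness of $\mathcal{H}$. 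The main obstacle I anticipate is purely the bookkeeping in patching together the continuity-of-the-metric-family hypothesis of Lemma \ref{lemma_geodballsfamilymetricscompactembedding} with the fact that completeness is governed by a uniform-in-basepoint radius rather than a single ball; the geometric content — uniform ball radius over a compact family of metrics — is immediate once the setup is arranged, and no new estimates beyond those of Section 4 are needed.
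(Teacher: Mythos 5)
Your proposal is correct and follows essentially the same route as the paper's own proof: reduction to the standard form with $P_3$ in the compact set $\left\{\max_{\|z\|=1}P_3(z)\leq \frac{2}{3\sqrt{3}}\right\}$, identification of the common domain $M=B_{\sqrt{3}/2}(0)$ via the root estimates, the pullback metric formula \eqref{Phi_pullback_g_eqn}, the uniform geodesic-ball-radius Lemma \ref{lemma_geodballsfamilymetricscompactembedding} applied at $0\in M$, globalisation over base points via Proposition \ref{prop_std_form_h}, and the completeness criterion of \cite[Lem.\,2.21]{Li}. No substantive differences or gaps.
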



\begin{thebibliography}{ABCD}

\bibitem[ACD]{ACD} D. V. Alekseevsky, V. Cort\'es, and C. Devchand, \textit{Special complex manifolds}, J. Geom. Phys. \textbf{42} (2002), 85--105.



\bibitem[CDL]{CDL}
V. Cort\'es, M. Dyckmanns, and D. Lindemann, {\it Classification of complete projective special real surfaces}, Proc.\ London Math.\ Soc.~{\bf 109}
(2014), no.\ 2, 423--445. 


\bibitem[CDJL]{CDJL} V. Cort\'es, M. Dyckmanns, M. J\"ungling, and D. Lindemann, \textit{A class of cubic hypersurfaces and quaternionic K\"ahler manifolds of co-homogeneity one} (2017), \href{https://arxiv.org/abs/1701.07882}{arxiv:1701.07882}.

\bibitem[CHM]{CHM} V. Cort\'es, X. Han, and T. Mohaupt, {\it 
 Completeness in supergravity constructions}, Commun.\ Math.\ Phys.\ {\bf 311} 
(2012), no. 1, 191--213.

\bibitem[CNS]{CNS}
V. Cort\'es, M. Nardmann, and S. Suhr, {\it Completeness of hyperbolic centroaffine hypersurfaces}, Comm. Anal. Geom., Vol. \textbf{24}, no. 1 (2016), 59--92.


\bibitem[DV]{DV} B. de Wit, A. Van Proeyen, {\it Special geometry, cubic polynomials and homogeneous quaternionic spaces}, Comm. Math. Phys. {\bf 149} (1992), no. 2, 307--333.

\bibitem[F]{F} D. S. Freed, \textit{Special K\"ahler manifolds}, Comm. Math. Phys. \textbf{203} (1999), no. 1, 31--52.

\bibitem[FS]{FS} S.\ Ferrara and S.\ Sabharwal, {\it 
Quaternionic manifolds for type II superstring vacua of Calabi-Yau spaces}, 
Nucl.\ Phys.\ {\bf B332} (1990), no.\ 2, 317--332. 

\bibitem[GST]{GST} M. G{\"u}naydin, G. Sierra, and P. K. Townsend, \textit{The geometry of $N=2$ Maxwell--Einstein supergravity and Jordan algebras}, Nucl. Phys. {\bf B242} (1984), 244--268.

\bibitem[KW]{KW} A. Kanazawa, P.M.H. Wilson, \textit{Trilinear forms and Chern classes of Calabi-Yau threefolds}, Osaka J. Math. \textbf{51} (2014) 203--213.

\bibitem[Le]{Le} J. M. Lee, \textit{Introduction to Smooth Manifolds} (2003), Springer-Verlag New York.

\bibitem[Li]{Li} D. Lindemann, \textit{Structure of the class of projective special real manifolds and their generalisations} (2018), PhD thesis.

\bibitem[M]{Me} R. B. Melrose, \textit{Geometric scattering theory}, Stanford Lectures, Cambridge University Press (1995).

\bibitem[T]{T} B. Totaro, \textit{The curvature of a Hessian metric}, Int. J. Math., \textbf{15}, 369 (2004).

\bibitem[TW]{TW} T. Trenner, P.M.H. Wilson, \textit{Asymptotic Curvature of Moduli Spaces for Calabi–Yau Threefolds}, J. Geometric Analysis \textbf{21} (2011), no. 2, 409--428.

\bibitem[W]{W} P.M.H. Wilson, \textit{Sectional curvatures of K\"ahler moduli}, Math. Ann. \textbf{330} (2004) 631--664.

\bibitem[Wu]{Wu} H. Wu, \textit{The spherical images of convex hypersurfaces}, J. Differential Geometry \textbf{9} (1974), 279--290.

\end{thebibliography}
\end{document}